\documentclass[reqno]{amsart}
\usepackage[left=2.7cm,right=2.7cm,top=3.5cm,bottom=3cm]{geometry}
\usepackage[english]{babel}
\usepackage{pifont}
\usepackage[latin1]{inputenc}
\usepackage{amsmath}
\usepackage{amsfonts}
\usepackage{amsthm}
\usepackage{amscd}
\usepackage{amssymb}
\usepackage[all]{xy}
\usepackage{graphicx}
\usepackage[usenames,dvipsnames]{color}
\usepackage{mathrsfs}
\usepackage{comment}
\usepackage{caption}
\usepackage{braket}
\usepackage{mathrsfs}
\usepackage[colorlinks = true,
linkcolor = black,
urlcolor = blue,
bookmarksopen = true ]{hyperref}

\theoremstyle{plain}
\newtheorem{thm}{Theorem}[section]
\newtheorem{cor}[thm]{Corollary}
\newtheorem{lem}[thm]{Lemma}
\newtheorem{prop}[thm]{Proposition}

\newtheorem{cnj}[thm]{Conjecture}

\newtheorem{innercustomgeneric}{\customgenericname}
\providecommand{\customgenericname}{}
\newcommand{\newcustomtheorem}[2]{\newenvironment{#1}[1]
{\renewcommand\customgenericname{#2}
\renewcommand\theinnercustomgeneric{##1}
\innercustomgeneric }
{\endinnercustomgeneric} }

\newcustomtheorem{customthm}{Theorem}

\theoremstyle{definition}
\newtheorem{dfn}[thm]{Definition}
\newtheorem{eg}[thm]{Example}
\newtheorem{eg+}[thm]{Examples}
\newtheorem{rmk}[thm]{Remark}
\newtheorem{rmks}[thm]{Remarks}

\newenvironment{prf}{\begin{proof}[Proof]}{\end{proof}}
\newenvironment{skprf}{\begin{proof}[Sketch of proof]}{\end{proof}}

\renewcommand{\ge}{\geqslant}
\renewcommand{\le}{\leslant}

\newcommand{\field}[1]{\mathbb{#1}}
\newcommand{\A}{\field{A}}
\newcommand{\C}{\field{C}}

\newcommand{\F}{\field{F}}

\newcommand{\N}{\field{N}}
\newcommand{\p}{\field{P}}
\newcommand{\Q}{\field{Q}}
\newcommand{\R}{\field{R}}

\newcommand{\Z}{\field{Z}}

\newcommand{\adl}{{\bf A}}

\newcommand{\za}{\widehat{\Z}}
\newcommand{\da}{\widehat D}

\newcommand{\xa}{\widehat X}
\newcommand{\dap}{\da_\gotp}
\newcommand{\wds}{\widehat{D^*}}
\newcommand{\hpi}{\widehat\pi}
\newcommand{\cpr}{\widehat{\calp}}

\newcommand{\sqf}{\rm Sqf}

\DeclareMathOperator{\val}{val}
\DeclareMathOperator{\ev}{ev}

\DeclareMathOperator{\GL}{GL}

\DeclareMathOperator{\Hom}{Hom}
\DeclareMathOperator{\Homc}{Hom_{cont}}

\DeclareMathOperator{\b1}{{\mathbf 1}}
\DeclareMathOperator{\supp}{supp}

\DeclareMathOperator{\Frac}{Frac}

\DeclareMathOperator{\car}{char}

\newcommand{\liminv}{\displaystyle \lim_{\leftarrow}}

\newcommand{\calc}{\mathcal C}
\newcommand{\cald}{\mathcal D}

\newcommand{\calh}{\mathcal H}
\newcommand{\cali}{\mathcal I}
\newcommand{\calj}{\mathcal J}
\newcommand{\calk}{\mathcal K}
\newcommand{\call}{\mathcal L}
\newcommand{\calm}{\mathcal M}

\newcommand{\calo}{\mathcal O}
\newcommand{\calp}{\mathcal P}

\newcommand{\cals}{\mathcal S}

\newcommand{\calu}{\mathcal U}

\newcommand{\gota}{\mathfrak a}

\newcommand{\gotm}{\mathfrak m}
\newcommand{\gotn}{\mathfrak n}
\newcommand{\gotp}{\mathfrak p}
\newcommand{\gotq}{\mathfrak q}

\newcommand{\mmu}{\boldsymbol\mu}
\newcommand{\bff}{{\rm\bf f}}

\renewcommand{\ge}{\geqslant}
\renewcommand{\le}{\leqslant}

\begin{document}

\title[Profinite Bateman--Horn]{Procounting measures and the Bateman--Horn conjecture}
 
\author[Demangos]{Luca Demangos}
\address{Department of Mathematics, Xi'an Jiaotong Liverpool University, Suzhou, China}
\email{Luca.Demangos@xjtlu.edu.cn}

\author[Longhi]{Ignazio Longhi}
\address{Dipartimento di Matematica dell'Universit\`a di Torino, via Carlo Alberto 10, 10123, Torino, Italy}
\email{ignazio.longhi@unito.it}

\author[Saettone]{Francesco Maria Saettone} 
\address{Department of Mathematics, Weizmann Institute of Science, Rehovot, Israel}
\email{francesco.saettone@weizmann.ac.il}

\begin{abstract}
Let $D$ be the ring of $S$-integers in a global field and $\da$ its profinite completion. We propose a profinite version of the  Bateman--Horn conjecture over $D$ and  provide a first comparison with the classical one and its generalizations. Our approach is based on the new notion of procounting measure: a distribution on $\da$ which should be seen as a profinite analogue of the counting function for a subset of $\R$. This allows us to deal with subsets of $\da$ having Haar measure $0$ (corresponding to density zero in $\R$).
\end{abstract}

\maketitle

\tableofcontents

\section{Introduction}



\medskip
\noindent\textbf{The classical conjecture and its kin.}\quad
Let $\calp$ denote the set of prime numbers and $\calp^k$ the product of $k$ copies of $\calp$ into $\Z^k$. The classical Bateman--Horn conjecture (\cite{1bh}, see also \cite{bh} for a recent introduction), states the following.

\begin{cnj}[Bateman--Horn] \label{cnj:BH}
Let $f_1,\dots,f_k\in\Z[x]$ be distinct nonconstant irreducible polynomials with positive leading coefficients and $f$ their product. Let
$$\omega_{f}(p):=\big\vert\{ a\in \Z/p\Z : f(a)=0 \bmod p\}\big\vert$$
and assume $\omega_f(p)<p$ for every prime $p$. By denoting
\begin{equation} \label{e:cffBH} C(f):=\prod_{p\in\calp}\left(1-\frac{1}{p}\right)^{-k}\left(1-\frac{\omega_{f}(p)}{p}\right) \end{equation}
it follows
\begin{equation} \label{e:bhclssc}
\big|\bff^{-1}(\calp^k)\cap [0,r]\big|\;\approx\;\dfrac{C(f)}{\prod_{i=1}^k\deg(f_{i})}\int_2^r\dfrac{dt}{(\log(t))^k}
\end{equation}
where $\bff\colon\Z\rightarrow\Z^k$ is the map $a\mapsto(f_1(a),\dots,f_k(a))$, and $\approx$  denotes the same asymptotic behaviour as $r\to+\infty$.  
\end{cnj}

This subsumes and refines a number of previous claims (among which we mention qualitative statements like the Bunyakovsky conjecture and Schinzel's hypothesis H). The problem of proving Bateman-Horn is widely open: the only completely solved case is for $k=1$ and $\deg f=1$, which amounts to  the prime number theorem for arithmetic progressions.

Nonetheless, there has recently been a tremendous activity around this conjecture, either with variations in the function fields setting as \cite{kc}, \cite{bp}, \cite{pol}, \cite{bs}, \cite{bs2}, \cite{ae} culminating with the remarkable  papers of Sawin and Shusterman \cite{sstwin}, \cite{sslandau} (see also \cite{kow} for a survey), or with a striking combination of algebraic geometry and probabilistic ideas as \cite{bst} and \cite{ss}. A result on the Bateman--Horn conjecture on average over a single Kummer polynomial can also be found in \cite{zhou} and \cite{br}. Moreover, Bateman--Horn-type problems in several variables have also been studied by  the multivariable heuristics in \cite[Appendix~A]{dsofos} and the homogeneous dynamics approach of \cite{ktw}. Other relevant, important results can be found, for instance, in \cite{friediw}, \cite{iw} and \cite{may}, where more classical analytic techniques are exploited.

\medskip
\noindent\textbf{The profinite conjecture.}\quad
In this work we propose a new avatar of Conjecture \ref{cnj:BH}, formulated by means of measures attached to {\em profinite} sets. 

The heuristics can be traced to the qualitative conjectures, which, in a nutshell, claim that the only obstructions to prime values of irreducible polynomials come from congruences (and the sign condition). The profinite ring $\za$ takes into account all congruences in a single swoop. In order to get a quantitative statement like \eqref{e:bhclssc}, one may then be tempted, in the light of \cite{dl}, to use $\mu_{\za}$, the Haar measure on $\za$, which, however, does not ``see'' the primes (meaning that the closure of $\calp$ in $\za$ is a set of measure $0$ - note that this corresponds\footnote{ Actually, it is stronger: see e.g. \cite[Corollary 4.14]{dl}.} 
to the fact that $\calp$ has density $0$, i.e., the counting function $\pi(x)$ grows much more slowly than $x$). Thus we introduce the new idea of {\em procounting measure}: given $S\subseteq\za$, we attach to it a probability measure $\mu_S$ on $\za$ by lifting counting measures on $\Z/n\Z$ and taking a limit (when it exists - as we shall see, there are instances where $\mu_S$ is not defined).

In order to state a first version of our profinite Bateman-Horn, we introduce a little notation. The function $\bff$ of \eqref{e:bhclssc} is extended to $\za\rightarrow\za^k$. Let $\za^*$ be the group of units in $\za$ and $\hpi_n\colon\za\rightarrow\Z/n\Z$ the canonical projection. Consider the normalized counting measures
$$\mu_{\bff^{-1}(\calp^k),n}:=\frac{1}{|\hpi_n(\bff|_\N^{-1}(\calp^k))|}\sum_{x\in\hpi_n(\bff|_\N^{-1}(\calp^k))}\delta_x$$
on $\Z/n\Z$, where $\delta_x$ is the Dirac delta at the point $x$. 

\begin{cnj}[profinite Bateman--Horn for $\Z$] \label{cnj:pBH} 
Let $f_1,\dots,f_k\in\Z[x]$ be distinct irreducible polynomials with positive leading coefficients and $\bff\colon\za\rightarrow\za^k$ the map $a\mapsto(f_1(a),\dots,f_k(a))$. Assume  $\bff\big(\za\big)\cap(\za^*)^k\neq\emptyset$. Then one has
\begin{equation} \label{e:prfBHZ}  \lim_{n\to 0}\widetilde\mu_{\bff^{-1}(\calp^k),n}=\mu_{\bff^{-1}((\za^*)^k)}\ . \end{equation}
\end{cnj}

\noindent This requires some explanations. The hypothesis on $\bff(\za)$ is simply a synthetic reformulation of the Bunyakovsky condition $\omega_f(p)<p$ for all $p$ (Lemma \ref{l:bunyakcnd}). On the left-hand side of \eqref{e:prfBHZ}, the terms $\widetilde\mu_{\bff^{-1}(\calp^k),n}$ are lifts of the $\mu_{\bff^{-1}(\calp^k),n}$'s to measures on $\za$, while the limit is taken in the weak-* topology, letting $n$ vary among positive integers ordered by divisibility. By definition, this limit would be the procounting measure attached to $\bff^{-1}(\calp^k)$: note that its existence is part of our conjecture (but we can prove it in the case $k=1$, $\deg(f_1)=1$). On the other hand, we are going to show that the procounting measure on the right-hand side is indeed well-defined (as a consequence of Proposition \ref{p:invimgelr}).

\begin{rmk} \label{r:intro} Equidistribution statements are usually expressed by considering the limit of a sequence of counting measures attached to finite subsets of an ambient space $X$, a formalism at first sight very similar to the one in \eqref{e:prfBHZ}. We note a couple of subtle differences. Our ambient space is $\za$, while the finite subsets are in the quotients $\Z/n\Z$. As a consequence, our indexes vary among positive integers (or, better, non-zero ideals) ordered by divisibility, and not in $\N$ with its natural order. 
\end{rmk}

The profinite formulation does not dispel the enigma of Bateman--Horn, but changes its shape: the problem is no longer to count prime values in intervals, but to understand their shadows in all finite quotients. On the left-hand sides we have replaced the mysterious counting function of \eqref{e:bhclssc} with the  equally mysterious procounting measure of \eqref{e:prfBHZ}. As for the right-hand sides, they are closer than it might appear at first sight: indeed, the measure $\mu_{\bff^{-1}((\za^*)^k)}$ is a product of local factors corresponding  exactly to those in \eqref{e:cffBH}. We will provide some explicit examples in Section~\ref{s:da-eg}; now we just mention that, for instance, in the linear case $f(x)=a+bx$ with $a$ and $b$ coprime, the profinite local correction is $b/\varphi(b)=C(f)$. This is one of the main consistencies between the two conjectures: although,  to the best of our current understanding,  our version cannot see the archimedean growth term\footnote{ Note that \eqref{e:bhclssc} counts objects inside $\R$, while \eqref{e:prfBHZ} estimates the size of a subject of $\za$. The natural way to combine the two ambient spaces $\R$ and $\za$ is, perhaps, to use adele classes: we hope to explore this in a future paper.}, it still retains the finite local factors of the classical Bateman--Horn constant.

We also note that our conjecture is strong enough to imply Schinzel's hypothesis H (Proposition \ref{p:pbhsh}).\\

More important, the profinite formalism extends naturally (and uniformly) to a much more general setting, replacing $\Z$ with $D$, the ring of $S$-integers in any global field $F$, and $\calp$ with $\calp(D)$, the set of prime elements in $D$. For technical reasons, in this paper we shall need the minor limitation of either $\car(F)\neq 2$ or $D^*$ finite. Under this restriction, we formulate Conjecture \ref{cnj:pBHD}, a profinite avatar of Bateman--Horn in a generality never attempted, to our knowledge, in the literature. 

In  view of the impressive progress on function field versions of Bateman--Horn in recent years, the positive characteristic case of Conjecture \ref{cnj:pBHD}  is expected to be of particular interest.\\

A final remark: the idea of looking at $\za$ (or $\da$) in order to estimate the ``size'' of subsets of $\Z$ (or $D$) was the philosophy underlying \cite{dl}, of which this paper is a sequel, and a substantial expansion and refinement. More precisely, in \cite{dl} the basic question was, given $X\subseteq D$, to determine its closure $\xa$ in $\da$; the ``size'' of $X$ would then be computed as $\mu_{\da}(\xa)$, a value often coinciding with the density of $X$ in $D$.\footnote{For $D=\Z$, densities are usually defined by some method of counting objects along the real line - that is, they should be thought of as approximations of counting functions like the one on the left-hand side of \eqref{e:bhclssc}.} Here we develop the machinery to deal with the case $\mu_{\da}(\xa)=0$, still keeping the principle that the interesting information should be contained in $\xa$. As a matter of fact, it turns out that the procounting measure $\mu_X$, if it exists, is the same as $\mu_{\xa}$ (Lemma \ref{l:muS-chs}) and is concentrated in the accumulation points (see \S\ref{sss:isltpnt}). \medskip

\medskip
\noindent\textbf{Structure and main results.}\, We now give a more detailed description of the structure and results of our paper. In this introduction, for the sake of simplicity, we reduce the discussion mostly to the case $D=\Z$.

\subsubsection*{Section \ref{s:dmps}} We start by recalling, in broad generality, the theory of measures and distributions on a profinite set $X$. Such objects appear in Iwasawa theory (see \cite[\S7.1]{msd}) as the habitat where $p$-adic $L$-functions live: for the moment there is only a vague connection between the latter and our constructions (see  Remark \ref{r:pLf}), but the hope of further developing this link in a future work motivates us to take, as long as possible, coefficients in a general topological ring, even if we will mostly need the cases of $\R$ and $\C$. Actually, our primary goal in this paper is a quantitative theory beyond \cite{dl} and we achieve it by means of $\R$-valued measures.

Counting distributions on finite sets are indeed the fountainhead of measure theory; taking their limits (in a suitable sense) we introduce our crucial new tool, the {\em procounting distributions} attached to subsets $S$ of $X$ (Definitions \ref{d:muS1} and \ref{d:muS2}). We discuss some of their basic properties and subtleties (such as conditions for the existence, dependence on choices, relation with Fourier analysis when $X$ is a group) and hint at how they can be put in a hierarchy reminiscent of the ``little-o'' notation for functions. Our main result in this section concerns what happens in (infinite) products and is summarized in the next theorem (a slightly simplified combination of Theorem \ref{t:dstrprd} and Proposition \ref{p:prddstr2}).

\begin{customthm}{A} 
Let $X=\prod_n Y_n$ be a product of profinite spaces $Y_n$ and consider a subset $S=\prod_nT_n$, where $T_n\subset Y_n$ for all $n$. If the procounting distribution exists for every $T_n$, then it exists also for $S$ and one has
\begin{equation} \label{e:thmA} \mu_S=\bigotimes_n\mu_{T_n}\,, \end{equation}
meaning that for any compact open $U=\prod_nU_n$, with $U_n\subseteq Y_n$, one has $\mu_S(U)=\prod_n\mu_{T_n}(U_n)$.
\end{customthm}

Note that this applies in situations of arithmetic interest, such as $\za=\prod_p\Z_p$. The decomposition \eqref{e:thmA} appears for example in the right-hand side of \eqref{e:prfBHZ} and is the reason why the latter recovers the local factors of \eqref{e:cffBH}. Actually, replacing a real product like \eqref{e:cffBH} with a formal one like \eqref{e:thmA} has the advantage that we can ignore convergence issues (the convergence of \eqref{e:cffBH} is a rather delicate matter: see \cite[Section 5]{1bh} for a detailed treatment).\\

Finally, we would like to express our hope that procounting measures will find many more applications. For example, it seems reasonable to expect that our method can be adapted to problems somehow similar to the one examined in this paper: the Artin conjecture on primitive roots and its cognate the Lang-Trotter conjecture immediately come to mind.

\begin{rmk} Profinite sets can often be realized as the boundary of some tree. The theory of measures on such has a long history (see for instance \cite{ccs} and its citation orbit), but, to our knowledge, the existent literature has almost empty intersection with our techniques and results.
\end{rmk}

\subsubsection*{Section~\ref{s:da}} Here arithmetic starts playing a role: we use the structure of $\za$ as an infinite product to identify (following \cite{dl}) a special class of subsets, the {\em openly Eulerian} ones: they are those $X\subseteq\za^n$ ($n\ge1$) such that $\xa$ decomposes as a product of compact open subsets of $\Z_p^n$, for all $p$; a typical example is $\za^*=\prod_p\Z_p^*$. 

By the results of Section \ref{s:dmps}, if $X$ is openly Eulerian then the procounting measure $\mu_X$ does exist and it can be decomposed into a product of the form \eqref{e:thmA}; moreover, each of the factors is obtained by restricting the Haar measure of $\Z_p$ and multiplying by a scalar, the local coefficient at $p$ (Theorem \ref{t:opnelr}). As the computations in Example \ref{eg:sqrfr} will show, divergence in the infinite product of these local coefficients corresponds to the case $\mu_{\za}(\xa)=0$. Finally, openly Eulerian sets are well-behaved with respect to polynomial images and preimages. This fully explains our previous claims about the right-hand side of \eqref{e:prfBHZ}. 
 
The relation with primes arises via a relatively elementary observation, namely the fact that the closure of $\calp$ in $\za$ is
\begin{equation} \label{e:prmZ} \widehat{\calp}=\calp \sqcup \za^* \end{equation} 
(this follows from Dirichlet's theorem on primes in arithmetic progressions and is actually equivalent to it, see e.g.~\cite[Theorem~3.27]{lms} for a proof). By \eqref{e:prmZ}, primes provide the ur-example of a set which is not openly Eulerian, but is close to being so: the non-isolated part of
$\widehat{\calp}$ is precisely the openly Eulerian set $\za^*$, and this topological fact strongly hints at the equality $\mu_\calp=\mu_{\za^*}$.

In order to justify this guess, we introduce the notion of {\em close pairs} (Definition \ref{d:clpr}; roughly, it consists of two subsets of a profinite ambient space having ``almost the same image" in the finite quotients). If $S$ and $T$ form a close pair, it follows $\mu_S=\mu_T$. The equality \eqref{e:prmZ} leads to the following statement (which recapitulates Theorem \ref{t:clprim} and Proposition \ref{p:measpol} in the case $D=\Z$). 

\begin{customthm}{B} \label{t:B} For $f_1,\dots,f_k\in\za[x]$, let $\bff\colon\za\rightarrow\za^k$ be the map $a\mapsto\big(f_1(a),\dots,f_k(a)\big)$. Assume that $\bff(\za)\cap(\za^*)^k$ is nonempty. Then both
\[ \big(\calp\ ,\ \za^*\big) \qquad \text{and} \qquad \big(\bff^{-1}(\widehat{\calp}^k)\ ,\ \bff^{-1}((\za^{*})^k)\big) \]
form a close pair, so that
\[ \mu_\calp=\mu_{\za^*} \qquad \text{and} \qquad \mu_{\bff^{-1}(\widehat{\calp}^k)}=\mu_{\bff^{-1}((\za^{*})^k)}\ . \]
\end{customthm} 

\noindent This is the basic model behind Conjecture~\ref{cnj:pBH}.\\

In the last part of Section \ref{s:da} we provide a more explicit description of Conjecture~\ref{cnj:pBH} in the cases corresponding to Dirichlet's theorem, the twin primes conjecture and Landau's conjecture, so to better illustrate its coherence with the classical statements. We conclude with a simple numerical experiment for our profinite twin primes conjecture, which gives us an empirical reason to be cautiously optimistic.

\begin{rmk} Eulerian sets were introduced in \cite{dl} as a profinite approach to local-to-global principles. Such a principle for primes is implicit in \cite[Theorem 1]{cm} and in following works like \cite{yam1}: we will not discuss this any further in the present paper, but it seems reasonable to relate our heuristics with those of \cite[formula (1.1)]{cm} and compare the factors $\mu_p(\mathbf s)$ of {\em loc.cit.} to our procounting measures on $\Z_p^*$. More qualitatively, we mention the ``prime Hasse principle" of \cite[Principle 1.1]{holdr}, which is satisfied by a given system of homogeneous Diophantine equations if the existence of a solution with all entries in $\calp$ is guaranteed by the existence of a solution with entries in $\Z_p^*$, for all $p$, and in the positive real numbers.\footnote{Essentially the same conditions had already appeared in \cite[Theorem 1.1]{yam2} as part of the hypotheses necessary for a lower bound on the growth of the number of prime solutions. It might be worth observing that one needs infinitely many prime solutions to force the existence of a solution in $\Z_p^*$ for all $p$.} One can argue that, like for our Conjecture \ref{cnj:pBH}, also the intuition behind such a principle stems from a model whose deep roots are summarized by \eqref{e:prmZ}.
\end{rmk}

\subsubsection*{Section~\ref{s:cmpr}} It is natural to ask about the logical relation between Conjectures \ref{cnj:BH} and \ref{cnj:pBH} (as well as between their extensions to more general $D$).

In trying to answer this question, we take as starting point the connections between densities in $\Z$ and the Haar measure on $\za$, as studied in \cite{dl}. In the current setting, consider two subsets $X\subseteq Y$ of $\Z$ and their respective counting functions $c_X(r)$, $c_Y(r)$, where $r\in\R_{>0}$. One can look at the asymptotic behaviour ot the ratio $c_X(r)/c_Y(r)$ to define the relative asymptotic density $d_Y(X)$. If the procounting measure $\mu_Y$ exists, then our yoga 
suggests one should compare $d_Y(X)$ and $\mu_Y(\xa)$.

We carry out this program in Section \ref{ss:dens}, working, as usual, on a general $D$. The notion of density we use is an abstract one, defined via a series of properties: the most delicate of them, labelled (Dn7), essentially asks for the elements of the set $Y$ to be equidistributed among residue classes modulo $\gotn$, for every non-zero ideal $\gotn$. When these properties are satisfied, we recover a number of results from \cite{dl}, useful for proving equalities of the form $d_Y(X)=\mu_Y(\xa)$.

Note that (Dn7) holds for $D=\Z$ and $Y=\calp$: therefore the relative asymptotic density $d_\calp$ does exist. On the other hand, constructing non-trivial examples of a relative density $d_Y$ for a general $Y$ is a hard problem, well beyond the scope of this paper.\\

In order to extend Bateman-Horn to $D$ one has to choose a counting function on $D$: once this is done, we write $c_X$ for the function counting the elements in $X\subset D$. For example, for $D=\Z$, the left-hand side of \eqref{e:bhclssc} expresses the counting function $c_Y(r)$ for $Y=\bff^{-1}(\calp^k)\cap\N$.

Let $Y$ be a subset of $D$. We say that a counting function $c_Y$ induces a relative density if condition (Dn7) is satisfied when one posits
$$d_{Y,c}(X):=\lim_{r\rightarrow\infty}\frac{c_{Y\cap X}(r)}{c_Y(r)}\,.$$

In Section \ref{ss:cmpr} we will recall the Bateman-Horn conjecture over $D=\F_q[t]$ (Conjecture \ref{cnj:BHfinite}). We shall prove the following result (a reformulation of Proposition \ref{p:4.11}; the set $\calp(D)$ shall be defined in \S\ref{sss:prmelm}).

\begin{customthm}{C} Let $D$ be either $\Z$ or $\F_q[t]$, with their respective counting function $c$. Let $f_1,\dots,f_k$ and $\bff$ be as in Conjecture \ref{cnj:BH} or \ref{cnj:BHmult}. Assume that $c_{\bff^{-1}(\calp(D)^k)}(r)$ induces a relative density for $\bff^{-1}(\calp(D)^k)$. Then the Bateman--Horn conjecture for $\bff$ implies its profinite version, Conjecture \ref{cnj:pBH} or \ref{cnj:pBHD}.
\end{customthm}

In the single polynomial case, $\bff=f$, the conditions of Theorem C are satisfied if $f$ has degree $1$. Moreover, thanks to the work of Sawin and Shusterman, some cases of Conjecture \ref{cnj:BHmult} are known to be true: hence the profinite version follows if (Dn7) is satisfied. Note that this requirement can be expressed as an equidistributed version of the Bateman-Horn conjecture, as it will appear from the asymptotic equality \eqref{e:thm4.11}. Also, the only obstacle to extending Theorem C to more general $D$ is the absence, to our knowledge, of an archimedean version of Bateman-Horn in such setting.\\

In the  conclusive paragraph \S\ref{sss:nonequiv}, we  argue that our profinite analogue \ref{cnj:pBH} seems weaker than its classical counterpart \ref{cnj:BH}: this alleged lack of equivalence may be a good motivation to further investigate our profinite case, as the classical one seems to be definitely out of reach.

\medskip
\noindent\textbf{Acknowledgments.}\quad The third author was supported by the ERC, SharpOS, 101087910, ISF grants 2067/23, 1963/20, and the BSF grant 2018250. We thank  Alejandro Vidal-L\'opez  for his assistance with Python and Zev Rosengarten for a correction in Lemma~\ref{l:qe}. We also thank Oren Ben-Bassat, Roman Panenko and Mark Shusterman for interesting conversations and Efthymios Sofos for his comments on a first draft of this paper. 

\section{Profinite distributions} \label{s:dmps} 

\noindent{\bf Notations and conventions.} In the following,  by topological group we mean a group $G$ endowed with a topological structure of Hausdorff space such that the group operation is a continuous map from $G\times G$ to $G$. A topological ring $R$ is defined similarly, with the request that addition and multiplication are continuous. If $R$ is a topological ring, unless otherwise stated, we shall assume that all $R$-modules are topological groups and the map $R\times M\rightarrow M$ expressing the ring action of $R$ on $M$ is continuous. Also, unless otherwise stated, we shall assume that all $R$-module morphisms are continuous. 

For any topological spaces $A$, $B$, we write $\calc(A,B)$ to denote the set of continuous functions from $A$ to $B$. The characteristic function of a set $S$ will be denoted by $\b1_S$, simplifying it to $\b1_x$ when $S=\{x\}$ is a singleton. (By abuse of notation, we will use the same symbol $\b1_S$ independently of the unitary ring the characteristic function takes values in.)

For a ring $R$, we shall write $R^*$ for its group of units.

\subsection{Distributions on profinite sets} Throughout Section \ref{s:dmps}, we fix a profinite set
\begin{equation}\label{e:X} X:=\varprojlim_{\alpha\in \calj}X_\alpha \end{equation}
Here $\{X_\alpha\}_{\alpha\in \calj}$ is an inverse system of {\em finite} sets, labeled by $\alpha$ ranging in some arbitrary directed set of indexes $\calj$, with maps 
$\pi^{\beta}_{\alpha}\colon X_\beta\to X_\alpha$ for all $\beta\ge\alpha$ in $\calj$. Without loss of generality, we can (and will) assume that the maps $\pi_\alpha^\beta$ are all surjective. Let $\pi_\alpha\colon X\rightarrow X_\alpha$ denote the natural projection.

For simplicity, we also postulate that $\calj$ contains a cofinal subset isomorphic to $\N$.

\subsubsection{The profinite topology} In this paper finite sets are always assumed to have the discrete topology. Then the inverse limit topology on $X$ (that is, the coarsest topology such that all the maps $\pi_\alpha$ are continuous) makes it a profinite topological space.

\begin{lem} \label{l:clsr} If $S$ is a subset of $X$, its closure is
$$\bigcap_{\alpha\in\calj_0}\pi_\alpha^{-1}(\pi_\alpha(S))\,,$$
where $\calj_0$ is any cofinal subset of $\calj$. As a consequence, $\pi_\alpha(S)$ is the image of the closure of $S$ in $X_\alpha$, for every $\alpha\in\calj$. \end{lem}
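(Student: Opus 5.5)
The proof rests on the fact that, in the inverse limit topology, the sets $\pi_\alpha^{-1}(x)$ with $\alpha\in\calj$ and $x\in X_\alpha$ form a basis of open (indeed clopen) sets. Because the transition maps satisfy $\pi_\alpha=\pi^\beta_\alpha\circ\pi_\beta$ for $\beta\ge\alpha$, we get $\pi_\beta^{-1}(y)\subseteq\pi_\alpha^{-1}(\pi^\beta_\alpha(y))$. So the fibres over larger indices refine those over smaller ones. Since $\calj_0$ is cofinal, the fibres $\pi_\beta^{-1}(y)$ with $\beta\in\calj_0$ already form a basis: given $z\in\pi_\alpha^{-1}(x)$, pick $\beta\in\calj_0$ with $\beta\ge\alpha$ and use $z\in\pi_\beta^{-1}(\pi_\beta(z))\subseteq\pi_\alpha^{-1}(x)$. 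Finite intersections of basic sets are also handled by directedness, since we can pass to a common upper bound.

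Write $C:=\bigcap_{\alpha\in\calj_0}\pi_\alpha^{-1}(\pi_\alpha(S))$. We show $\overline S=C$ by two inclusions.

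First, $\overline S\subseteq C$. Each set $\pi_\alpha^{-1}(\pi_\alpha(S))$ is closed, because it is the preimage under the continuous map $\pi_\alpha$ of a subset of the discrete space $X_\alpha$. It also contains $S$. Hence $C$ is a closed set containing $S$, and so it contains $\overline S$.

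Second, $C\subseteq\overline S$. Take $z\in C$ and a basic neighbourhood $\pi_\beta^{-1}(\pi_\beta(z))$ with $\beta\in\calj_0$. Since $z\in\pi_\beta^{-1}(\pi_\beta(S))$, there is some $s\in S$ with $\pi_\beta(s)=\pi_\beta(z)$. Then $s$ lies in this neighbourhood, so every basic neighbourhood of $z$ meets $S$, and $z\in\overline S$.

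For the consequence, fix any $\alpha\in\calj$. Clearly $\pi_\alpha(S)\subseteq\pi_\alpha(\overline S)$. For the reverse inclusion, choose $\beta\in\calj_0$ with $\beta\ge\alpha$ and let $z\in\overline S$. By the first part, $\pi_\beta(z)\in\pi_\beta(S)$. Applying $\pi^\beta_\alpha$ gives $\pi_\alpha(z)\in\pi_\alpha(S)$. Equivalently, $\pi_\alpha^{-1}(\pi_\alpha(S))$ is a closed set containing $S$, so it contains $\overline S$.

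The only step needing care is the basis claim: one must use both directedness of $\calj$ and cofinality of $\calj_0$, so that the fibres over the cofinal subset really generate the inverse limit topology. The rest is formal.
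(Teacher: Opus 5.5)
Your proof is correct and follows the paper's argument. Both rest on the fibres $\pi_\alpha^{-1}(\pi_\alpha(x))$ with $\alpha\in\calj_0$ forming a basis of the topology, which gives $C\subseteq\overline S$; the paper phrases this contrapositively, via a basic neighbourhood of a point outside $\overline S$ that misses $S$. The inclusion $\overline S\subseteq C$ and the final statement then follow from $\pi_\alpha^{-1}(\pi_\alpha(S))$ being a closed set containing $S$. You additionally spell out details the paper leaves as obvious: the basis claim via directedness and cofinality, and the projection step for the consequence.
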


\begin{prf} Assume $x$ is a point of $X$ outside the closure of $S$. Then there is a neighbourhood $U$ of $x$ such that $U\cap S=\emptyset$. One inclusion follows observing that, without loss of generality, we can take $U=\pi_\alpha^{-1}(\pi_\alpha(x))$ for some $\alpha\in\calj_0$. The opposite inclusion is obvious, as well as the final statement. \end{prf}

In the following, we shall say that $S\subseteq X$ is {\em $\alpha$-saturated} if $S=\pi_\alpha^{-1}(\pi_\alpha(S))$.

\subsubsection{Continuous and locally constant functions} Let $R$ be a (commutative and unitary) topological ring: then $\calc(X,R)$ becomes a topological $R$-module with the uniform 
convergence topology.

In order to simplify some arguments, we shall always assume that the topology on $R$ is induced by a non-trivial absolute value $|\cdot|_R$\,, so that $\calc(X,R)$ is endowed with the supremum norm $\|\cdot\|_\infty$\,. 

\begin{rmk} \label{r:lcfnstr} For further reference,  we note that $\calc(X_\alpha,R)$ is a free $R$-module with basis $\{\b1_x\}_{x\in X_\alpha}$, for every $\alpha\in\calj$, because $X_\alpha$ is finite and discrete. As a topological space, $\calc(X_\alpha,R)$ is homeomorphic to the product $R^{|X_\alpha|}$. \end{rmk}

For all $\beta\ge\alpha$ in $\calj$, we have a map 
$$(\pi_\alpha^\beta)^*\colon\calc(X_\alpha,R)\rightarrow\calc(X_\beta,R)$$
by $f\mapsto f\circ\pi_\alpha^\beta$\,. Thus we get a direct system, with limit
\begin{equation}\label{e:lcdf} \call_c(X,R):=\varinjlim_{\alpha\in \calj}\calc(X_\alpha,R)\,. \end{equation}
There is a continuous injection $\call_c(X,R)\hookrightarrow\calc(X,R)$ induced by the maps
\[
\pi_\alpha^*\colon\calc(X_\alpha,R)\rightarrow\calc(X,R)\qquad 
f\mapsto f\circ\pi_\alpha\;.
\]

\begin{lem} \label{l:lclmcst} The image of $\call_c(X,R)$ in $\calc(X,R)$ consists exactly of the $R$-valued locally constant functions on $X$.  \end{lem}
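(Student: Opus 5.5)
The proof has two inclusions. Functions in the image are locally constant, and every locally constant function comes from some finite level $X_\alpha$.

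\emph{Image is locally constant.} Take a class in $\call_c(X,R)$, represented by $f\in\calc(X_\alpha,R)$ for some $\alpha$. Its image is $f\circ\pi_\alpha$. For every $x\in X$, the set $\pi_\alpha^{-1}(\pi_\alpha(x))$ is an open neighbourhood of $x$, since $\pi_\alpha$ is continuous and $X_\alpha$ is discrete. On this set $f\circ\pi_\alpha$ is constant, so $f\circ\pi_\alpha$ is locally constant. The class does not depend on the choice of representative, because $(\pi_\alpha^\beta)^*f\circ\pi_\beta=f\circ\pi_\alpha$.

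\emph{Locally constant functions lie in the image.} Let $g\colon X\to R$ be locally constant.
\begin{itemize}
\item For each $x\in X$, choose an open $U_x\ni x$ on which $g$ is constant.
\item By definition of the inverse limit topology, the sets $\pi_\alpha^{-1}(y)$, with $\alpha\in\calj$ and $y\in X_\alpha$, form a basis. So we may shrink $U_x$ to $\pi_{\alpha_x}^{-1}(\pi_{\alpha_x}(x))$ for some $\alpha_x$. This uses the directedness of $\calj$: a finite intersection of such basic sets contains one of the same form at a larger index.
\item Since $X$ is compact, finitely many of these sets cover $X$, say those for $x_1,\dots,x_m$.
\item Because $\calj$ is directed, pick $\beta\ge\alpha_{x_1},\dots,\alpha_{x_m}$.
\end{itemize}
Every fibre $\pi_\beta^{-1}(z)$, for $z\in X_\beta$, lies inside some $\pi_{\alpha_{x_i}}^{-1}(\pi_{\alpha_{x_i}}(x_i))$. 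Indeed, the fibres of $\pi_\beta$ refine the fibres of $\pi_{\alpha_{x_i}}$, since $\pi_{\alpha_{x_i}}=\pi^\beta_{\alpha_{x_i}}\circ\pi_\beta$. Hence $g$ is constant on each fibre of $\pi_\beta$. Since $\pi_\beta$ is surjective, $g$ descends to a unique function $h\colon X_\beta\to R$ with $g=h\circ\pi_\beta$. Surjectivity of $\pi_\beta$ follows from the surjectivity of all transition maps together with the standard compactness argument for inverse limits of nonempty finite sets. The function $h$ is automatically continuous because $X_\beta$ is discrete, so $g=\pi_\beta^*h$ lies in the image.

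\emph{Main obstacle.} The step that needs care is the existence of a single uniform level $\beta$, which rests on two facts. First, the basic open sets of $X$ are fibres of the projections; this needs directedness so that finite intersections of fibres are again unions of fibres at a higher level. Second, compactness of $X$ gives a finite subcover, which is exactly why the argument fails for non-compact spaces. Surjectivity of $\pi_\beta$ is only a minor point; without it we would simply extend $h$ arbitrarily off the image.
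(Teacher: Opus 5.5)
Your proof is correct and follows essentially the same route as the paper: the fibres of the projections form a basis, compactness gives a finite subcover, and directedness yields a single index $\beta$ through which $g$ factors. You also spell out the easy inclusion (that $f\circ\pi_\alpha$ is locally constant) and the surjectivity of $\pi_\beta$, both of which the paper leaves implicit.
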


\begin{prf} By definition, the fibres $\pi_\alpha^{-1}(\pi_\alpha(x))$, as $\alpha$ varies in $\calj$ and $x\in X$, form a basis of the topology. Hence, if $f$ is locally constant, there 
is a cover of such fibres such that $f$ is constant on each of them. By compactness, one can extract a finite subcover, 
$$X=\bigcup_{i=1}^n\pi_{\alpha_i}^{-1}(\pi_{\alpha_i}(x_i)).$$
Take $\alpha\in\calj$ such that $\alpha\ge\alpha_i$ for $i=1,\dots,n$. Then $f$ factors through $X_\alpha$\,. \end{prf}

\begin{cor} \label{c:dl213} A subset of $X$ is compact open if and only if it is $\alpha$-saturated for some $\alpha\in\calj$. \end{cor}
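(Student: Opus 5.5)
The natural route is to pass through characteristic functions and reduce to Lemma \ref{l:lclmcst}. The key observation is that a subset $U\subseteq X$ is clopen exactly when $\b1_U$ is locally constant. In a profinite space, which is compact Hausdorff, a subset is compact open if and only if it is clopen: closed subsets of compact spaces are compact, and compact subsets of Hausdorff spaces are closed. So it suffices to show that $U$ is clopen if and only if it is $\alpha$-saturated for some $\alpha\in\calj$.

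For the direction "saturated implies compact open", let $U=\pi_\alpha^{-1}(\pi_\alpha(U))$. Since $X_\alpha$ is finite and discrete, $\pi_\alpha(U)$ is both open and closed in $X_\alpha$. Continuity of $\pi_\alpha$ then makes its preimage $U$ clopen, hence compact open. Equivalently, $\b1_U=\pi_\alpha^*\b1_{\pi_\alpha(U)}$ lies in the image of $\call_c(X,R)$.

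For the converse, let $U$ be compact open. Then $U$ is clopen, so $\b1_U$ (with values in $R$) is locally constant. By Lemma \ref{l:lclmcst} there exist $\alpha\in\calj$ and $g\in\calc(X_\alpha,R)$ with $\b1_U=g\circ\pi_\alpha$. If $x\in X$ satisfies $\pi_\alpha(x)\in\pi_\alpha(U)$, say $\pi_\alpha(x)=\pi_\alpha(u)$ with $u\in U$, then $\b1_U(x)=g(\pi_\alpha(u))=\b1_U(u)=1$, so $x\in U$. Hence $U=\pi_\alpha^{-1}(\pi_\alpha(U))$, which is $\alpha$-saturated.

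There is no real obstacle here. The only points needing care are using the Hausdorff property to identify compact open sets with clopen ones, and noting that $1\neq0$ in $R$, so that $\b1_U$ genuinely distinguishes $U$ from its complement. One could instead argue directly, mimicking the compactness argument in the proof of Lemma \ref{l:lclmcst}: cover $U$ by basic fibres contained in $U$, extract a finite subcover, and pass to a common upper bound $\alpha$ of the indices.
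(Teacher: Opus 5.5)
Your proof is correct and follows essentially the paper's route. Compact open $U$ has a locally constant characteristic function, which factors through some $X_\alpha$ by Lemma \ref{l:lclmcst}, and saturation follows. The converse holds because preimages of subsets of the finite discrete $X_\alpha$ are clopen. The one step you make explicit that the paper leaves implicit is the compact Hausdorff argument identifying compact open sets with clopen ones.
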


\begin{prf} If $U\subseteq X$ is compact open, then its characteristic function $\b1_U$ is locally constant, hence one has $\b1_U=f\circ\pi_\alpha$ for some $\alpha$ and $f\in\calc(X_\alpha,R)$. This is possible only if $f=\b1_S$ for some $S\subseteq X_\alpha$, with $U=\pi_\alpha^{-1}(S)$. Therefore $U=\pi_\alpha^{-1}(\pi_\alpha(U))$. 

The converse implication is obvious from the definition of the profinite topology. \end{prf}

\begin{lem} \label{l:lcdnsc} The image of $\call_c(X,R)$ is dense in $\calc(X,R)$. \end{lem}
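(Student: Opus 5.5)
We will show that every $f\in\calc(X,R)$ can be approximated uniformly by functions of the form $g\circ\pi_\alpha$ with $g\in\calc(X_\alpha,R)$. Since $X$ is compact, $f$ is uniformly continuous, and the goal is to turn this into factoring through a finite quotient up to $\varepsilon$.

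Fix $f\in\calc(X,R)$ and $\varepsilon>0$. For each $x\in X$, continuity of $f$ gives an open neighbourhood of $x$ on which $|f(y)-f(x)|_R<\varepsilon$. As in the proof of Lemma \ref{l:lclmcst}, the fibres $\pi_\alpha^{-1}(\pi_\alpha(x))$ form a basis of the topology. We may therefore shrink this neighbourhood to a fibre $V_x=\pi_{\alpha_x}^{-1}(\pi_{\alpha_x}(x))$.

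By compactness of $X$, finitely many of these fibres, say $V_{x_1},\dots,V_{x_n}$, cover $X$. Choose $\alpha\in\calj$ with $\alpha\ge\alpha_{x_i}$ for all $i$, which is possible because $\calj$ is directed. Every fibre $\pi_\alpha^{-1}(z)$, for $z\in X_\alpha$, is nonempty (the projections are surjective, since the transition maps are). Each such fibre is contained in some $V_{x_i}$, because $V_{x_i}$ is $\alpha_{x_i}$-saturated, hence $\alpha$-saturated, and the $V_{x_i}$ cover $X$.

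Define $g\in\calc(X_\alpha,R)$ by picking a point $y_z\in\pi_\alpha^{-1}(z)$ and setting $g(z):=f(y_z)$. Let $y\in X$, put $z=\pi_\alpha(y)$, and take $i$ with $\pi_\alpha^{-1}(z)\subseteq V_{x_i}$. Then both $y$ and $y_z$ lie in $V_{x_i}$. The ultrametric-free triangle inequality gives
\[
|f(y)-g(\pi_\alpha(y))|_R\le|f(y)-f(x_i)|_R+|f(x_i)-f(y_z)|_R<2\varepsilon .
\]
Hence $\|f-\pi_\alpha^*g\|_\infty\le2\varepsilon$, which proves density.

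The only point needing care is the passage from arbitrary open neighbourhoods to a single common level $\alpha$. It is handled by the basis of saturated fibres, compactness, and directedness of $\calj$, together with the fact that saturation at a smaller index implies saturation at a larger one.
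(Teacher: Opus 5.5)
Your proof is correct and follows the paper's argument essentially step for step. You cover $X$ by fibres $\pi_{\alpha_x}^{-1}(\pi_{\alpha_x}(x))$ on which $f$ varies little, extract a finite subcover, pass to a common index $\alpha$, and approximate $f$ by $\pi_\alpha^*g$ with $g(z)=f(y_z)$. Your $2\varepsilon$ estimate is actually more careful than the paper, which asserts $f(x)-f_U(x)\in U$ where the argument only gives $U-U$; this is harmless after shrinking $U$.
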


\begin{prf} Let $f\in\calc(X,R)$ and fix $U\subseteq R$ a neighbourhood of 0. By continuity, for any $x\in X$ we can find a neighbourhood $A_x$ such that $f(y)-f(x)\in U$ for all $y\in A_x$\,. We can also assume  $A_x=\pi_{\alpha_x}^{-1}(\pi_{\alpha_x}(x))$ for some $\alpha_x\in\calj$. By compactness, we can extract a finite subcover $\{A_{x_1},...,A_{x_n}\}$ from the open cover $\{A_x\}$. Since $\calj$ is directed, one can find an index $\alpha\ge\alpha_{x_i}$, $i=1,...,n$. For any $y\in X_\alpha$, choose $\tilde y\in\pi_\alpha^{-1}(y)$ and define a function $f_U$ by 
$$f_U(x)=\sum_{y\in X_\alpha}f(\tilde y)\b1_{\pi_\alpha^{-1}(y)}\,.$$
Then $f_U$ is locally constant and $f(x)-f_U(x)\in U$ for every $x\in X$. \end{prf}

By abuse of notation, in the following we will often identify $\call_c(X,R)$ and $\calc(X_\alpha,R)$ with their images in $\calc(X,R)$; note, however, that the topology on $\call_c(X,R)$ is not the one as a subspace of $\calc(X,R)$. More precisely, the topology on $\call_c(X,R)$ is induced by \eqref{e:lcdf}: that is, $\call_c(X,R)$ is given the finest topology such that all the embeddings $\calc(X_\alpha,R)\hookrightarrow\call_c(X,R)$ are continuous.

Finally, we observe that if $R$ is complete then a standard argument shows that so is $\calc(X,R)$. The same holds for $\call_c(X,R)$, with the direct limit topology, as we now prove.

\begin{prop} \label{p:cxrcmplt} If $R$ is complete then so is $\call_c(X,R)$. \end{prop}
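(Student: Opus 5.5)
The plan is to exploit the structure of $\call_c(X,R)$ as a countable strict inductive limit. By hypothesis $\calj$ contains a cofinal subset isomorphic to $\N$. Replacing $\calj$ by it does not change the direct limit \eqref{e:lcdf}, either as a module or as a topological space, because the final topology is determined by a cofinal family of embeddings. So I will assume $\calj=\N$, with $\call_c(X,R)=\bigcup_n\calc(X_n,R)$ and transition maps $(\pi_n^m)^*$.

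First I will check that each $\calc(X_n,R)\cong R^{|X_n|}$ (Remark \ref{r:lcfnstr}) is complete when $R$ is, which is immediate for a finite product. Next I will check that each $\calc(X_n,R)$ is closed in $\calc(X_{n+1},R)$ and carries the subspace topology. Since $\pi_n^{n+1}$ is surjective, the image of $(\pi_n^{n+1})^*$ consists of the functions on $X_{n+1}$ that are constant on the fibres of $\pi_n^{n+1}$. This is the intersection of finitely many kernels of continuous linear maps $g\mapsto g(y)-g(y')$, hence closed. The pullback is a linear bijection onto this subspace, and it is a homeomorphism in the product topology because coordinates are merely repeated. So the inductive system is strict, with closed steps.

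The main step is the standard fact that a countable strict inductive limit of complete spaces with closed steps is complete. Here I have to be careful: the direct limit of topological spaces need not a priori be a topological group. I will therefore interpret completeness through Cauchy nets or filters for the uniformity given by the group structure, and aim to show that the limit topology coincides with the inductive limit group topology. The key lemma I would prove is the following: every bounded, and in particular every Cauchy, filter in $\call_c(X,R)$ is eventually contained in some $\calc(X_n,R)$. I would argue by contradiction, in the style of Dieudonn\'e--Schwartz. If a Cauchy net escaped every level, I would pick indices $x_k$ with $x_k\notin\calc(X_k,R)$ and build a neighbourhood $U$ of $0$ that contains no difference $x_k-x_j$ for $j\neq k$ large. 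This is done level by level, using closedness of $\calc(X_k,R)$ in $\calc(X_{k+1},R)$ to choose open sets in $R^{|X_{k+1}|}$ that avoid the relevant points while restricting to the previously chosen set. Concretely, at each finite level the complement of the smaller coordinate subspace is open, so some coordinate of $x_k$ differs across a fibre. I can shrink the neighbourhood in that coordinate-difference direction, using the absolute value on $R$ to pick a positive radius.

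Once a Cauchy net is confined to a single $\calc(X_n,R)$, it is Cauchy there, because the subspace topology agrees. It then converges by completeness of $R^{|X_n|}$, and hence converges in $\call_c(X,R)$ since the embedding is continuous. I expect the confinement lemma to be the main obstacle, specifically the inductive construction of a neighbourhood that is compatible across all levels. If that becomes messy, a cleaner alternative is to identify $\call_c(X,R)$ explicitly as the increasing union of $R^{|X_n|}$. Its neighbourhoods of $0$ are then the sets whose trace at each level is a neighbourhood, and I can choose radii $\varepsilon_n\to0$ fast enough by hand.
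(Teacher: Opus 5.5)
Your overall strategy is the same as the paper's. You confine Cauchy objects to a single level by building a $0$-neighbourhood level by level (the paper does this, following Rudin's Theorem 6.5, with the oscillation functionals $m_\alpha$ and the open set $W$), and then use completeness of $R^{|X_n|}$.

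The gap is that you state and use the confinement lemma for Cauchy \emph{nets and filters}, and there it is false. Suppose $X$ is infinite and $R$ is not discrete (e.g.\ $R=\R$). Given $n$, choose $m\ge n$ and $y\in X_m$ whose $\pi^m_n$-fibre has at least two points. Then $a\mathbf{1}_y\in\calc(X_m,R)\setminus\calc(X_n,R)$ for every $a\neq0$, and $a\mathbf{1}_y$ lies in any prescribed $0$-neighbourhood $V$ of $\call_c(X,R)$ once $|a|_R$ is small. So no $0$-neighbourhood is contained in a level. Hence the neighbourhood filter of $0$ is a convergent, hence Cauchy, filter none of whose members lies in any $\calc(X_n,R)$. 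Equivalently, the net $x_{(V,n)}\in V\setminus\calc(X_n,R)$, indexed by pairs ordered by shrinking $V$ and increasing $n$, converges to $0$ with no tail inside a level.

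The slip is ``bounded, and in particular Cauchy'': Cauchy sequences are bounded, but Cauchy nets and filters need not be. Your contradiction argument fails at the same point. The countably many $x_k$ you extract need not be cofinal in the index set of a net. The Cauchy condition for the neighbourhood $U$ you construct only holds beyond an index depending on $U$, so it says nothing about them.

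Restricted to sequences, your argument is sound and is essentially the paper's proof. A Cauchy sequence is bounded, a bounded set lies in one $\calc(X_\alpha,R)$, and completeness of that level gives the limit. This yields sequential completeness, which is what the paper establishes.

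If you want completeness with respect to filters, replace confinement by the standard argument for strict inductive limits (K\"othe, Tr\`eves). For a Cauchy filter $\mathcal F$, show that for some $n$ all traces $(A+V)\cap\calc(X_n,R)$, with $A\in\mathcal F$ and $V$ a $0$-neighbourhood, are nonempty.
\begin{itemize}
\item If not, then for every $k$ there are $A_k\in\mathcal F$ and a $0$-neighbourhood $V_k$ with $(A_k+V_k)\cap\calc(X_k,R)=\emptyset$.
\item Build level by level a $0$-neighbourhood $W$ with $W\subseteq\calc(X_k,R)+V_k$ for all $k$.
\item Take $A\in\mathcal F$ with $A-A\subseteq W$, some $x\in A$, a $k$ with $x\in\calc(X_k,R)$, and $y\in A\cap A_k$.
\item Writing $x-y=e+v$ with $e\in\calc(X_k,R)$ and $v\in V_k$ gives $y+v=x-e\in(A_k+V_k)\cap\calc(X_k,R)$, a contradiction.
\end{itemize}
Once such an $n$ exists, the traces generate a Cauchy filter on $\calc(X_n,R)$, and its limit is a limit of $\mathcal F$. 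So your level-by-level construction is the right tool, but it must be applied to the sets $A_k+V_k$, not to points of a net.
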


\begin{prf} We are going to prove that, by definition of direct limit topology, a sequence $(f_n)_n$ is Cauchy in $\call_c(X,R)$ only if there is some $\alpha$ such that $f_n\in\calc(X_\alpha,R)$ for every $n\gg0$. Hence the completeness of $\calc(X_\alpha,R)$ for every $\alpha$ implies that also $\call_c(X,R)$ is complete. 

We reason as in \cite[Theorem 6.5]{rudin2}. Recall that $E\subseteq\call_c(X,R)$ is bounded if for every $U$ neighbourhood of $0$ there exists $\varepsilon>0$ such that $a\in R$, $|a|_R<\varepsilon$ implies $aE\subseteq U$. We are going to show that if there is no $\alpha$ such that $E\subseteq\calc(X_\alpha,R)$ then $E$ is unbounded (and hence it cannot consist of a Cauchy sequence).
 
For $\alpha\in\calj$, define a continuous function $m_\alpha\colon\calc(X,R)\rightarrow\R$ by letting $\{U_{\alpha,i}\}$ be the partition of $X$ into fibres of $\pi_\alpha$ and putting
$$m_\alpha(f):=\max_i\left\{\sup_{x,y\in U_{\alpha,i}}\big|f(x)-f(y)\big|_R\right\}\,.$$
Then $\beta\ge\alpha$ implies $m_\alpha(f)\ge m_\beta(f)$ and one has $f\in\calc(X_\alpha,R)$ if and only if $m_\alpha(f)=0$.

Let $\calj_0\simeq\N$ be a cofinal subset of $\calj$. If $E$ is not contained in any $\calc(X_\alpha,R)$, then for every $\alpha\in\calj_0$ there is $f_\alpha\in E$ such that $m_\alpha(f_\alpha)>0$. Choose non-zero elements $a_\alpha\in R$, indexed by $\calj_0$, so that $0$ is a limit point, and consider the set
$$W=\big\{f\in\call_c(X,R)\mid m_\alpha(f)<m_\alpha(a_\alpha f_\alpha)\;\forall\,\alpha\in\calj_0\big\}.$$
We claim that $W$ is open, i.e., $(\pi_\beta^*)^{-1}(W)$ is open for every $\beta\in\calj$. Indeed, the hypothesis on $\calj_0$ implies that for any fixed $\beta$ there are only finitely many $\alpha$'s in $\calj_0$ such that $m_\alpha$ does not vanish identically on $\calc(X_\beta,R)$.

On the other hand one has $aE\nsubseteq W$ for every $a\in R$, $a\neq0$, because $|a_\alpha|_R<|a|_R$ implies 
$$m_\alpha(a_\alpha f_\alpha)=|a_\alpha|_R\cdot m_\alpha(f_\alpha)<|a|_R\cdot m_\alpha(f_\alpha)=m_\alpha(af_\alpha)$$
and hence $af_\alpha\notin W$. Therefore $E$ cannot be bounded. \end{prf} 

\subsubsection{Measures and distributions} For a (pro)finite topological space, we define $R$-valued measures and distributions as the topological duals, respectively, of continuous and locally constant functions (so that they coincide when the space is finite). For $X$ as in \eqref{e:X} we obtain the $R$-modules 
$$\calm(X,R):=\Hom_R(\calc(X,R),R)$$
and
$$\cald(X,R):=\Hom_R(\call_c(X,R),R)\,.$$
(In both cases, we only consider continuous homomorphisms.) From \eqref{e:lcdf} one immediately gets
\begin{equation} \label{e:dxlmdxa} \cald(X,R)=\varprojlim_{\alpha\in \calj}\cald(X_\alpha,R) \end{equation}
where the inverse system is built by the obvious maps
$$(\pi_\alpha^\beta)_*\colon\cald(X_\beta,R)\rightarrow\cald(X_\alpha,R)\,.$$

\begin{rmk} For a more concrete grasp of $\cald(X,R)$, it might be useful to note that there is a natural bijection between distributions and finitely additive $R$-valued functions on compact open subsets of $R$. In particular, in order to define $\delta\in\cald(X,R)$ it suffices to specify $\delta(\b1_U)$ for all compact open $U$ (and check that $U=U_1\sqcup U_2$ corresponds to $\delta(\b1_U)=\delta(\b1_{U_1})+\delta(\b1_{U_2})$). \end{rmk}

The injection of $\call_c(X,R)$ into $\calc(X,R)$ implies that every measure becomes a distribution when restricting it to locally constant functions.

\begin{lem} \label{l:msrinjd} The natural map $\calm(X,R)\rightarrow\cald(X,R)$ is injective. \end{lem}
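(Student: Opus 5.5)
The map $\calm(X,R)\rightarrow\cald(X,R)$ is the restriction of a continuous $R$-linear functional $\mu\colon\calc(X,R)\rightarrow R$ to the image of $\call_c(X,R)$. To show it is injective, we take $\mu\in\calm(X,R)$ whose restriction vanishes and show $\mu=0$. The argument is a density argument, resting on Lemma \ref{l:lcdnsc}: the locally constant functions are dense in $\calc(X,R)$ for the sup-norm topology.

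First, fix $f\in\calc(X,R)$. By Lemma \ref{l:lcdnsc}, and since $\calc(X,R)$ is metrizable via $\|\cdot\|_\infty$, there is a sequence $(f_n)_n$ of locally constant functions with $\|f-f_n\|_\infty\rightarrow0$. The proof of Lemma \ref{l:lcdnsc} gives this directly: for each $n$, take $U$ to be the ball of radius $1/n$ around $0$ in $R$ and let $f_n:=f_U$.

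Second, each $f_n$ is locally constant, so by Lemma \ref{l:lclmcst} it lies in the image of $\call_c(X,R)$. Hence $\mu(f_n)=0$ for every $n$.

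Third, $\mu$ is continuous on $\calc(X,R)$ with the uniform convergence topology, which is the only continuity we need. Therefore
\[
\mu(f)=\lim_n\mu(f_n)=0 .
\]
Since $f$ was arbitrary, $\mu=0$. Equivalently, two measures that agree on the dense subspace of locally constant functions agree everywhere, because $R$ is Hausdorff.

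There is no real obstacle. The one point to watch is that continuity is taken in the sup-norm topology of $\calc(X,R)$, not in the finer direct-limit topology on $\call_c(X,R)$. The restriction map is defined through the continuous injection $\call_c(X,R)\hookrightarrow\calc(X,R)$, and density is also asserted in $\calc(X,R)$, so both steps use the same topology.
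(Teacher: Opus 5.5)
Your proof is correct and follows the paper's argument: you approximate uniformly by locally constant functions via Lemma \ref{l:lcdnsc} and then use continuity of the measure in the sup-norm topology. The only cosmetic difference is that you show the kernel of the linear restriction map is trivial, whereas the paper starts from two distinct measures and produces a locally constant function on which they differ.
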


\begin{prf} Assume $\mu_1$ and $\mu_2$ are different in $\calm(X,R)$. Then there is $f\in\calc(X,R)$ such that $\mu_1(f)\neq\mu_2(f)$. By Lemma \ref{l:lcdnsc} there is some sequence $(f_n)_{n\in\N}$ of locally constant functions which converge uniformly to $f$. Since both $\mu_i$'s are continuous, they satisfy
$$\mu_i(f)=\lim_{n\rightarrow\infty}\mu_i(f_n)\,.$$
Thus there must be some locally constant $f_k$ such that $\mu_1(f_k)\neq\mu_2(f_k)$. \end{prf}

\begin{lem} \label{l:msrdstrb} Let $\delta$ be an $R$-valued distribution on $X$. Assume that $R$ is complete and that, as $U$ varies among compact open subsets of $X$, the function $U\mapsto\delta(\b1_U)$ either is bounded (if $|\cdot|_R$ is non-archimedean) or takes values in $\R_{\ge0}$ (if $|\cdot|_R$ is archimedean). Then $\delta$ is a measure on $X$. \end{lem}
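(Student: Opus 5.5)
The plan is to show that $\delta$ extends continuously from $\call_c(X,R)$ to $\calc(X,R)$. The extension will be defined by density (Lemma \ref{l:lcdnsc}) and completeness of $R$. The key point is a uniform bound of the form
\[ |\delta(g)|_R\le C\,\|g\|_\infty \qquad \text{for all } g\in\call_c(X,R), \]
for some constant $C$. Once such a bound holds, $\delta$ is uniformly continuous on the dense subspace $\call_c(X,R)$ for the sup norm. Hence for $f\in\calc(X,R)$ and locally constant $f_n\to f$ uniformly, the sequence $\delta(f_n)$ is Cauchy in $R$. It converges by completeness, and the limit does not depend on the chosen approximating sequence. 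The resulting map is $R$-linear, continuous, and restricts to $\delta$, so it is an element of $\calm(X,R)$ mapping to $\delta$. Lemma \ref{l:msrinjd} guarantees it is the unique such measure.

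To get the bound, I would write any $g\in\call_c(X,R)$ via some $\alpha$ with $g\in\calc(X_\alpha,R)$. By Remark \ref{r:lcfnstr}, $g=\sum_{y\in X_\alpha}c_y\b1_{U_y}$ with $U_y=\pi_\alpha^{-1}(y)$ compact open and $|c_y|_R\le\|g\|_\infty$. Then $\delta(g)=\sum_y c_y\delta(\b1_{U_y})$.

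\emph{Non-archimedean case.} Let $B:=\sup_U|\delta(\b1_U)|_R<\infty$. The ultrametric inequality gives
\[ |\delta(g)|_R\le\max_y|c_y|_R\,|\delta(\b1_{U_y})|_R\le B\|g\|_\infty, \]
uniformly in $\alpha$. This is exactly where the non-archimedean hypothesis matters: the number of terms $|X_\alpha|$ is unbounded, so only the max (not a sum) can be controlled by boundedness alone.

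\emph{Archimedean case.} Since $\delta(\b1_U)\ge0$ and the $U_y$ partition $X$, finite additivity gives $\sum_y\delta(\b1_{U_y})=\delta(\b1_X)$. Therefore
\[ |\delta(g)|_R\le\sum_y|c_y|_R\,\delta(\b1_{U_y})\le\delta(\b1_X)\,\|g\|_\infty, \]
again uniformly in $\alpha$, so $C=\delta(\b1_X)$ works. I am assuming that $\R_{\ge0}$ sits inside $R$ with $|\cdot|_R$ restricting to the usual absolute value, as for $R=\R$ or $\C$; I would state this explicitly.

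The main obstacle is getting this uniform-in-$\alpha$ bound, and specifically recognising that the two hypotheses are tailored to the two kinds of absolute value as above; the rest is the standard extension-by-continuity argument. A minor point to check is that the extension is continuous for the topology of $\calc(X,R)$, but this is immediate from the same Lipschitz bound passing to the limit.
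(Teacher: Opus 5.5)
Your proposal is correct and follows the paper's proof. Both use the density of locally constant functions (Lemma \ref{l:lcdnsc}) together with the Lipschitz bound $|\delta(g)|_R\le C\|g\|_\infty$, with $C=\sup_U|\delta(\mathbf{1}_U)|_R$ in the non-archimedean case and $C=\delta(\mathbf{1}_X)$ in the archimedean one, to make $\delta(f_n)$ Cauchy and extend by completeness; you simply spell out the derivation of this bound and the well-definedness and continuity of the extension, which the paper leaves implicit.
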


\begin{prf} By Lemma \ref{l:lcdnsc}, any $f\in\calc(X,R)$ is the uniform limit of a sequence $(f_n)$ of locally constant functions. The inequalities
$$|\delta(f_n)-\delta(f_m)|_R\le\begin{cases} \|f_n-f_m\|_\infty\cdot\sup_U|\delta(\b1_U)|_R & \text{ if }|\,|_R\text{ is non-archimedean} \vspace{2pt} \\ 
\|f_n-f_m\|_\infty\cdot\delta(\b1_X) & \text{ if } \delta(\b1_U)\in\R_{\ge0}\;\forall\,U \end{cases} $$
show that the sequence $\delta(f_n)$ is Cauchy. Its limit will be $\delta(f)$. \end{prf}

\begin{rmk} Choosing a cofinal $\calj_0\simeq\N$ in $\calj$, the space $X$ can be realized as the boundary of a rooted tree 
and the results of \cite{ccs} apply to $\cald(X,\C)$. In particular, \cite[Theorem 3.2]{ccs} proves that a $\C$-valued distribution is a measure if and only if it is absolutely summable or, equivalently, its total variation is bounded (we refer to \cite{ccs} for the definition of these terms). \end{rmk}

Note that $\cald(X,R)$ is a module over the ring $\call_c(X,R)$, by
\begin{equation} \label{e:heckeprod} (f\cdot\delta)(g):=\delta(fg) \end{equation}
(which makes sense because $g\mapsto fg$ is a continuous endomorphism of $\call_c(X,R)$ for any locally constant $f$).\\

The simplest example of distributions are probably Dirac deltas, 
$$\delta_x(\b1_U):=\begin{cases}1 & \text{ if }x\in U, \\ 0 & \text{ if } x\notin U. \end{cases} $$

\begin{dfn} \label{d:spprt} The {\em support} of a distribution $\mu\in\cald(X,R)$ is the smallest closed subset $S\subseteq X$ with the property $\mu(\b1_U)=0$ for every compact open $U$ such that $S\cap U=\emptyset$. \end{dfn}

Equivalently, $x\in X$ is not in the support of $\mu$ if and only if $\mu$ is locally trivial at $x$ (that is, $x$ has an open neighbourhood $A$ such that $U\subseteq A$ implies $\mu(\b1_U)=0$). 

\begin{lem} \label{l:fntspprt} A distribution $\mu\in\cald(X,R)$ has finite support if and only if it is a linear combination of Dirac deltas. \end{lem}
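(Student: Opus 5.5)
We prove the two implications separately; the reverse one is nearly formal. If $\mu=\sum_{i=1}^n c_i\delta_{x_i}$, the set $F=\{x_1,\dots,x_n\}$ is finite, hence closed, since $X$ is Hausdorff. For any compact open $U$ with $U\cap F=\emptyset$ we have $\delta_{x_i}(\b1_U)=0$ for every $i$, so $\mu(\b1_U)=0$. By Definition \ref{d:spprt} the support of $\mu$ is contained in $F$, and is therefore finite.

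For the direct implication, let $S=\{x_1,\dots,x_n\}$ be the finite support of $\mu$. Set $c_i:=\mu(\b1_{U_i})$, where the $U_i$ are pairwise disjoint compact open neighbourhoods of the $x_i$. We will show that $\mu=\sum_i c_i\delta_{x_i}$.

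\begin{itemize}
\item \emph{Choice of the $U_i$.} Since $X$ is Hausdorff and $\calj$ is directed, there is an index $\alpha_0$ such that the images $\pi_{\alpha_0}(x_i)$ are pairwise distinct. We take $U_i:=\pi_{\alpha_0}^{-1}(\pi_{\alpha_0}(x_i))$, which are compact open by Corollary \ref{c:dl213}.
\item \emph{Independence of the choice.} For any compact open $V$ with $V\cap S=\{x_i\}$, we have $\mu(\b1_V)=c_i$. Indeed, $V\cap U_i$ is compact open, and the sets $V\setminus U_i$ and $U_i\setminus V$ are compact open and disjoint from $S$. Finite additivity then gives $\mu(\b1_V)=\mu(\b1_{V\cap U_i})=\mu(\b1_{U_i})$.
\item \emph{Evaluation on an arbitrary compact open $U$.} By Corollary \ref{c:dl213}, $U$ is $\beta$-saturated for some $\beta$, and we may take $\beta\ge\alpha_0$. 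Then $U$ is the disjoint union of the fibres $\pi_\beta^{-1}(y)$ for $y\in\pi_\beta(U)$. Each fibre meets $S$ in at most one point, because $\beta\ge\alpha_0$.
\item \emph{Conclusion.} A fibre disjoint from $S$ has measure $0$ by the definition of support. A fibre containing $x_i$ has measure $c_i$ by the previous step. Summing over the fibres gives $\mu(\b1_U)=\sum_{x_i\in U}c_i=\sum_i c_i\delta_{x_i}(\b1_U)$.
\end{itemize}

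Since a distribution is determined by its values on characteristic functions of compact open sets (these span $\call_c(X,R)$ by Lemma \ref{l:lclmcst}), the two distributions agree.

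The only step needing care is the use of the definition of support. We need that a compact open set disjoint from $S$ has measure zero, and this is exactly what Definition \ref{d:spprt} gives. Everything else is routine bookkeeping with saturated sets.
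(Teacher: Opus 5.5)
Your proof is correct and takes the same route as the paper. You separate the finitely many support points at some level $\alpha_0$, set $c_i$ to be the mass of the $\alpha_0$-fibre through $x_i$, and use vanishing on compact opens disjoint from $S$ together with finite additivity to show that every finer fibre through $x_i$ still has mass $c_i$. Your symmetric-difference argument and the explicit evaluation on an arbitrary saturated $U$ simply spell out what the paper compresses into its fibre-sum identity and the phrase ``It follows''.
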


\begin{prf} Assume that $\mu$ has finite support $S=\{x_1,\dots,x_n\}$. Fix $\alpha\in\calj$ so that the images $\pi_\alpha(x_i)$ are all distinct and put $c_i=\mu(\b1_{\pi_\alpha^{-1}(\pi_\alpha(x_i))})$. By definition of support one has $\mu(\b1_{\pi_\beta^{-1}(x)})=0$ for any $\beta\ge\alpha$ and $x\in X_\beta-\pi_\beta(S)$, hence 
$$\mu(\b1_{\pi_\beta^{-1}(\pi_\beta(x_i))})=\sum_{x\in(\pi_\alpha^\beta)^{-1}(\pi_\alpha(x_i))}\mu(\b1_{\pi_\beta^{-1}(x)})=\mu(\b1_{\pi_\alpha^{-1}(\pi_\alpha(x_i))})=c_i\,.$$
It follows $\mu=\sum_ic_i\delta_{x_i}$\,.

The opposite implication is obvious. \end{prf}

\subsubsection{The topology of $\cald(X,R)$} \label{sss:tplgd} Both $\calm(X,R)$ and $\cald(X,R)$ are endowed with the weak-* topology (that is, the coarsest topology which makes all evaluation maps $\ev_f\colon\mu\mapsto\mu(f)$ continuous, where $f$ varies respectively in either $\calc(X,R)$ or $\call_c(X,R)$\,). One easily checks that the $R$-module operations and the injection of Lemma \ref{l:msrinjd} are continuous.  Moreover, since $\calc(X,R)$ is a metric space, on $\calm(X,R)$ we also have the 
topology induced by the norm
\[
\|\mu\|:=\sup_{\substack{f\in\calc(X,R)\\f\neq0}}\frac{|\mu(f)|}{\|f\|_{\infty}}\ .
\]

\begin{lem} \label{l:darflx} Let $\alpha\in\calj$. The $R$-module $\cald(X_\alpha,R)$ is free on the basis $\{\delta_x\}_{x\in X_\alpha}$, where $\delta_x$ is the Dirac functional at $x$. The weak-* topology on $\cald(X_\alpha,R)$ is the same as the product topology as a free module. \end{lem}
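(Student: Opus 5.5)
Since $X_\alpha$ is finite and discrete, the plan is to identify $\cald(X_\alpha,R)$ with the full algebraic dual of $\calc(X_\alpha,R)$, and then read off both claims from the dual basis of $\{\b1_x\}_{x\in X_\alpha}$.

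First I check that every $R$-linear functional on $\calc(X_\alpha,R)$ is automatically continuous. By Remark \ref{r:lcfnstr}, $\calc(X_\alpha,R)$ is free with basis $\{\b1_x\}_{x\in X_\alpha}$ and is homeomorphic to $R^{|X_\alpha|}$. A linear map $\lambda$ is then given by
\[
f=\sum_x f(x)\b1_x\mapsto\sum_x f(x)\lambda(\b1_x),
\]
which is a finite sum of products of continuous coordinate maps with constants, hence continuous. So $\cald(X_\alpha,R)=\Hom_R(\calc(X_\alpha,R),R)$ with no continuity restriction.

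Next I show that the Dirac functionals $\delta_x(f)=f(x)$ are exactly the dual basis: one has $\delta_x(\b1_y)=1$ if $x=y$ and $0$ otherwise. Given $\mu$, put $c_x=\mu(\b1_x)$. Then $\mu=\sum_x c_x\delta_x$, because both sides agree on the basis. The coefficients are unique, since evaluating $\sum_x c_x\delta_x$ at $\b1_y$ returns $c_y$. Hence $\cald(X_\alpha,R)$ is free on $\{\delta_x\}$, and the coordinate isomorphism $\mu\mapsto(\mu(\b1_x))_x\in R^{|X_\alpha|}$ gives the module structure.

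For the topologies, compare the weak-* topology with the product topology transported through this coordinate map.
\begin{itemize}
\item The weak-* topology is generated by the maps $\ev_f$, $f\in\calc(X_\alpha,R)$. Each $\ev_f(\mu)=\sum_x f(x)\mu(\b1_x)$ is a finite linear combination of the coordinates, so it is continuous for the product topology. Thus weak-* is coarser than the product topology.
\item Conversely, each coordinate $\mu\mapsto\mu(\b1_x)$ equals $\ev_{\b1_x}$, which is weak-* continuous by definition. So the product topology, being the coarsest one making the coordinates continuous, is coarser than weak-*.
\end{itemize}
The two topologies therefore coincide.

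None of these steps should be a real obstacle. The only point needing care is the automatic continuity in the first step, and it rests entirely on finiteness of $X_\alpha$ together with continuity of the ring operations in $R$.
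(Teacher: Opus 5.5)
Your proof is correct and follows essentially the same route as the paper: the paper encodes your first two steps in the continuous pairing $\bigl(\sum_x a_x\delta_x,\sum_x b_x\b1_x\bigr)\mapsto\sum_x a_xb_x$, and then compares the two topologies by exactly your two inclusions, using that the coordinate maps are the evaluations $\ev_{\b1_x}$. The only difference is that you spell out the automatic continuity of linear functionals and the dual-basis argument, which the paper leaves as ``an immediate consequence''.
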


\begin{prf} Recall Remark \ref{r:lcfnstr}. The pairing $\displaystyle\bigoplus_{x\in X_\alpha}R\delta_x\times\calc(X_\alpha,R)\rightarrow R$ given by
\begin{equation} \label{e:pr1drc} \left(\sum_xa_x\delta_x\ ,\ \sum_xb_x\b1_x\right)\mapsto\sum_xa_xb_x \end{equation}
is continuous in both variables because $R$ is a topological ring. The first statement is an immediate consequence.

As for the second claim, it is clear from \eqref{e:pr1drc} that the product topology on $\cald(X_\alpha,R)$ makes $\ev_f$ continuous for any $f\in\calc(X_\alpha,R)$ and hence is finer than the weak-* topology. It is also coarser, because, by \eqref{e:pr1drc}, the coordinate maps defining it are $\{\ev_{\b1_x}\}_{x\in X_\alpha}$, which must be continuous with respect to the weak-* topology. \end{prf} 

Lemma \ref{l:darflx} yields an explicit description of the structure maps $(\pi_\alpha)_*\colon\cald(X,R)\rightarrow\cald(X_\alpha,R)$ from \eqref{e:dxlmdxa}, namely
\begin{equation} \label{e:pfesplc} (\pi_\alpha)_*(\mu)\colon f=\sum_{x\in X_\alpha}a_x\b1_x\mapsto\sum_{x\in X_\alpha}a_x\mu(\b1_{\pi_\alpha^{-1}(x)})=\sum_{x\in 
X_\alpha}\mu(\b1_{\pi_\alpha^{-1}(x)})\delta_x(f)\,. \end{equation}

\begin{prop} \label{p:dxtplg} On $\cald(X,R)$, the inverse limit topology from \eqref{e:dxlmdxa} is the same as the weak-* topology. \end{prop}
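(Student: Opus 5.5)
We show that each topology is coarser than the other, working with explicit subbases on both sides. Via \eqref{e:dxlmdxa}, the inverse limit topology on $\cald(X,R)$ is the coarsest one making all the maps $(\pi_\alpha)_*\colon\cald(X,R)\rightarrow\cald(X_\alpha,R)$ continuous. Here each $\cald(X_\alpha,R)$ carries its weak-* topology, which by Lemma \ref{l:darflx} is the product topology on $R^{|X_\alpha|}$. The weak-* topology on $\cald(X,R)$ is the coarsest one making every evaluation $\ev_f$ continuous, for $f\in\call_c(X,R)$.

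\emph{Weak-* is finer than the inverse limit topology.} By \eqref{e:pfesplc}, composing $(\pi_\alpha)_*$ with the coordinate projection onto the $\delta_x$-component, $x\in X_\alpha$, gives $\mu\mapsto\mu(\b1_{\pi_\alpha^{-1}(x)})=\ev_{\pi_\alpha^*\b1_x}(\mu)$. This is weak-* continuous. A map into the product $R^{|X_\alpha|}$ is continuous exactly when its coordinates are, so each $(\pi_\alpha)_*$ is weak-* continuous.

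\emph{The inverse limit topology is finer than weak-*.} Take $f\in\call_c(X,R)$. By definition of the direct limit \eqref{e:lcdf} (or Lemma \ref{l:lclmcst}), $f=\pi_\alpha^*g$ for some $\alpha$ and some $g=\sum_{x\in X_\alpha}b_x\b1_x\in\calc(X_\alpha,R)$. Then
$$\ev_f(\mu)=\sum_{x\in X_\alpha}b_x\,\mu(\b1_{\pi_\alpha^{-1}(x)})=\ev_g\big((\pi_\alpha)_*\mu\big).$$
So $\ev_f=\ev_g\circ(\pi_\alpha)_*$ is a composition of continuous maps, since $\ev_g$ is continuous on $\cald(X_\alpha,R)$ by the pairing \eqref{e:pr1drc}. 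Hence every $\ev_f$ is continuous for the inverse limit topology.

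The only point needing care is that the inverse limit topology really is the initial topology for the maps $(\pi_\alpha)_*$, once $\cald(X,R)$ has been identified with the limit through \eqref{e:dxlmdxa}. This identification is a bijection compatible with the projections, so the transfer of topology is formal. With that in place, the two topologies have mutually continuous subbases, and they coincide.
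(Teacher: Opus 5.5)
Your proof is correct and follows the paper's argument essentially step for step. Both directions rest on the same two facts. First, $\ev_f=\ev_g\circ(\pi_\alpha)_*$ whenever $f=\pi_\alpha^*g$. Second, \eqref{e:pfesplc} identifies the $\delta_x$-coordinate of $(\pi_\alpha)_*$ with evaluation at the characteristic function of $\pi_\alpha^{-1}(x)$, and Lemma \ref{l:darflx} identifies the weak-* topology on each $\cald(X_\alpha,R)$ with the product topology.
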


\begin{prf} Each of the two topologies is defined as the coarsest one which makes continuous the maps in a certain set, namely the evaluation maps $\{\ev_f\}_{f\in\call_c(X,R)}$ in the weak-* case and the projections $\{(\pi_\alpha)_*\}_{\alpha\in\calj}$ in the inverse limit case. So it is enough to show that if all the maps in one of the two sets are continuous 
then so are the ones in the other set and vice versa.

Take $f\in\call_c(X,R)$. Then, by \eqref{e:lcdf}, one has $f=\pi_\alpha^*(g)$ for some $g\in\calc(X_\alpha,R)$ and the equality
$$\mu(f)=\mu(\pi_\alpha^*(g))=(\pi_\alpha)_*(\mu)(g)$$
yields $\ev_f=\ev_g\circ(\pi_\alpha)_*$, with $\ev_g\colon\cald(X_\alpha,R)\rightarrow R$ the evaluation at $g$, which is continuous by Lemma \ref{l:darflx}. This proves that if all the projections are continuous then so are the evaluation maps.

On the other hand, \eqref{e:pfesplc} shows that the $\delta_x$-coordinate of $(\pi_\alpha)_*$ is simply the evaluation at $\b1_{\pi_\alpha^{-1}(x)}$\,. Therefore each $(\pi_\alpha)_*$ is continuous if all the evaluation maps are so. \end{prf}

\begin{cor} If $R$ is complete, then so is $\cald(X,R)$. \end{cor}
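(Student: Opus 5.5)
The corollary should follow directly from Proposition \ref{p:dxtplg}: $\cald(X,R)$ with the weak-* topology is isomorphic, as a topological $R$-module, to the inverse limit $\varprojlim_\alpha\cald(X_\alpha,R)$ with the inverse limit topology. So it is enough to show that this inverse limit is complete.

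First, each $\cald(X_\alpha,R)$ is complete. By Lemma \ref{l:darflx}, $\cald(X_\alpha,R)$ is homeomorphic, as a topological module, to $R^{|X_\alpha|}$ with the product topology. A finite product of complete spaces is complete, since a net is Cauchy in the product exactly when each coordinate is Cauchy.

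Second, the inverse limit is a closed subspace of a complete product. The limit $\varprojlim_\alpha\cald(X_\alpha,R)$ is the subset of $\prod_\alpha\cald(X_\alpha,R)$ consisting of the compatible families $(\mu_\alpha)$, that is, those with $(\pi^\beta_\alpha)_*\mu_\beta=\mu_\alpha$ for all $\beta\ge\alpha$. Its topology is the subspace topology coming from the product.
\begin{itemize}
\item An arbitrary product of complete uniform spaces is complete: a Cauchy net has Cauchy coordinates, and it converges to the tuple of the coordinate limits.
\item Each map $(\pi^\beta_\alpha)_*$ is continuous; in coordinates it is just a sum of coordinates, as \eqref{e:pfesplc} shows.
\item Every $\cald(X_\alpha,R)$ is Hausdorff, because $R$ is.
\end{itemize}
Hence each compatibility condition cuts out a closed set (an equalizer of two continuous maps into a Hausdorff space), and the inverse limit is an intersection of closed sets, hence closed. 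A closed subspace of a complete space is complete.

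Concretely, if $(\mu_i)$ is a Cauchy net in $\cald(X,R)$, then for each $\alpha$ its image $((\pi_\alpha)_*\mu_i)$ converges to some $\nu_\alpha$. The family $(\nu_\alpha)$ is compatible by continuity and uniqueness of limits. It therefore defines $\mu\in\cald(X,R)$ with $\mu_i\to\mu$, again by Proposition \ref{p:dxtplg}.

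The only point needing care is that completeness here is with respect to the uniform structure of a topological group, and not of a metric. I will work with Cauchy nets, or equivalently check that the uniform structures match: the uniformity of the inverse limit as a subgroup of the product coincides with that of the weak-* topology. This holds because both are group topologies and they are equal by Proposition \ref{p:dxtplg}. Everything else is routine.
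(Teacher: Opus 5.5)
Your proof is correct and follows the same route as the paper: identify $\cald(X,R)$ with $\varprojlim_\alpha\cald(X_\alpha,R)$ via Proposition \ref{p:dxtplg}, let the projected Cauchy family converge in each complete $\cald(X_\alpha,R)\simeq R^{|X_\alpha|}$, and use continuity and uniqueness of limits to obtain a compatible family. The paper works only with Cauchy sequences, which suffices because the countable cofinal chain makes $\cald(X,R)$ metrizable; your version with nets and the closed-subspace-of-a-product framing is a slightly more general packaging of the same argument.
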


\begin{prf} An inverse limit of complete topological modules is complete. In this particular case, if $(\mu_n)_{n\in\N}$ is a Cauchy sequence in $\cald(X,R)$, then each $((\pi_\alpha)_*(\mu_n))_{n\in\N}$ is also Cauchy, with limit $\mu_\alpha$. The equalities $(\pi_\alpha^\beta)_*(\mu_\beta)=\mu_\alpha$ hold (by continuity and uniqueness of the limit) for every $\beta\ge\alpha$, so there is $\mu\in\cald(X,R)$ such that $(\pi_\alpha)_*(\mu)=\mu_\alpha$. Finally, $\mu=\lim_n\mu_n$ is true by construction.  \end{prf}

\begin{rmk} When $X$ is a group, $\cald(X,R)$ is an algebra with the convolution product. More precisely, $\cald(X_\alpha,R)=\bigoplus_{x\in X_\alpha}R\delta_x$ is isomorphic to the group algebra $R[X_\alpha]$ and taking the limit one obtains
$$\cald(X,R)\simeq R[\![X]\!]:=\varprojlim_{\alpha\in \calj}R[X_\alpha]\,.$$
In the case $R=\Z_p$ and $X$ a $p$-adic Lie group, this additional structure plays an important role in Iwasawa theory. 
\end{rmk}

\subsubsection{Products} \label{sss:prdtt} Assume we are given a collection of profinite sets $\{Y_\kappa\}_{\kappa\in\calk}$, defined by $Y_\kappa=\varprojlim Y_{\kappa,\alpha_\kappa}$, where $\alpha_\kappa$ varies in a directed set $\calj_\kappa$ and each $Y_{\kappa,\alpha_\kappa}$ is finite. We want to describe distributions on $\prod_\kappa Y_\kappa$. 

\begin{lem} \label{l:prdst} Let $\{Y_\kappa\}_{\kappa\in\calk}$ be a collection of profinite sets, as above, and put $X:=\prod_{\kappa\in\calk}Y_\kappa$. Consider the directed set 
$\calj=\bigcup_K\prod_{\kappa\in K}\calj_\kappa$, where $K$ varies among all finite subsets of $\calk$ and the order is given by
$$\beta=(\beta_\kappa)_{\kappa\in K'}\ge\alpha=(\alpha_\kappa)_{\kappa\in K}\text{ if }K\subseteq K'\text{ and } \beta_\kappa\ge\alpha_\kappa\text{ for all }\kappa\in K.$$ 
Then $X=\displaystyle\varprojlim_{\alpha\in\calj}X_\alpha$\,, with $X_\alpha:=\prod_{\kappa\in K}Y_{\kappa,\alpha_\kappa}$ for $\alpha=(\alpha_\kappa)_{\kappa\in K}\in\calj$. \end{lem}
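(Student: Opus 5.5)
We have to check three things: $\calj$ is a directed set, the sets $X_\alpha$ with suitable transition maps form an inverse system of finite sets with surjective maps, and the natural map $X\to\varprojlim_\alpha X_\alpha$ is a homeomorphism for the product topology on $X=\prod_\kappa Y_\kappa$.

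\emph{Directedness and the system.} The relation on $\calj$ is reflexive and transitive because inclusion of finite subsets and each order on $\calj_\kappa$ are. Given $\alpha=(\alpha_\kappa)_{\kappa\in K}$ and $\alpha'=(\alpha'_\kappa)_{\kappa\in K'}$, put $K''=K\cup K'$, which is finite. For $\kappa\in K\cap K'$ choose $\beta_\kappa$ dominating both $\alpha_\kappa$ and $\alpha'_\kappa$ in the directed set $\calj_\kappa$; otherwise take whichever of $\alpha_\kappa,\alpha'_\kappa$ is defined. Then $\beta=(\beta_\kappa)_{\kappa\in K''}$ is an upper bound of $\alpha$ and $\alpha'$. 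For $\beta\ge\alpha$, define $\pi^\beta_\alpha\colon X_\beta\to X_\alpha$ by first projecting $\prod_{\kappa\in K'}Y_{\kappa,\beta_\kappa}$ onto the coordinates in $K$, then applying $\pi^{\beta_\kappa}_{\alpha_\kappa}$ in each coordinate. These maps are surjective, since coordinate projections of nonempty products are, and the compatibility $\pi^\beta_\alpha\circ\pi^\gamma_\beta=\pi^\gamma_\alpha$ follows coordinatewise from that of the systems $\{Y_{\kappa,\alpha_\kappa}\}$. Each $X_\alpha$ is finite because $K$ is finite. If one insists on the standing assumption of a cofinal copy of $\N$, this holds when $\calk$ is countable: enumerate $\calk$ and diagonalize over cofinal sequences in the $\calj_\kappa$. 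I would note this as a side remark, since the statement itself does not require it.

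\emph{The bijection.} Define $\pi_\alpha\colon X\to X_\alpha$ by $(y_\kappa)\mapsto(\pi_{\alpha_\kappa}(y_\kappa))_{\kappa\in K}$. These maps are compatible, so they induce $\Phi\colon X\to\varprojlim X_\alpha$.

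For injectivity, suppose two points differ at some coordinate $\kappa$. Then they differ in some $Y_{\kappa,\alpha_\kappa}$, and so already at the index $(\alpha_\kappa)_{\{\kappa\}}$.

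For surjectivity, take a compatible family $(x_\alpha)_{\alpha\in\calj}$. Restricting to the indices with $K=\{\kappa\}$ gives a compatible family in the system for $Y_\kappa$, hence a point $y_\kappa\in Y_\kappa$. Set $y=(y_\kappa)_{\kappa\in\calk}$. For a general $\alpha=(\alpha_\kappa)_{\kappa\in K}$, compatibility with the indices $(\alpha_\kappa)_{\{\kappa\}}\le\alpha$ shows that each coordinate of $x_\alpha$ equals $\pi_{\alpha_\kappa}(y_\kappa)$. Hence $\Phi(y)=(x_\alpha)$.

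\emph{Topology.} Both sides are compact Hausdorff: $X$ by Tychonoff, and the inverse limit as usual. It therefore suffices to check that $\Phi$ is continuous, that is, each $\pi_\alpha$ is continuous. This holds because $\pi_\alpha$ is a finite product of maps of the form (coordinate projection followed by $\pi_{\alpha_\kappa}$), each of which is continuous. Alternatively, one can match bases directly. The sets $\pi_\alpha^{-1}(z)$ are exactly the basic cylinders $\prod_{\kappa\in K}\pi_{\alpha_\kappa}^{-1}(z_\kappa)\times\prod_{\kappa\notin K}Y_\kappa$, and these form a basis of the product topology.

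The only step needing any care is the surjectivity argument, specifically the bookkeeping that the coordinates of an arbitrary $x_\alpha$ are determined by the singleton-$K$ indices. Everything else is formal.
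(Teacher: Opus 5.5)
Your proposal is correct. The paper disposes of this lemma with ``obvious by abstract nonsense'', and your argument is exactly the routine verification that phrase stands for: directedness of the index set, surjective compatible transition maps, the bijection $X\to\varprojlim_\alpha X_\alpha$ recovered from the singleton-$K$ indices, and the compact-to-Hausdorff homeomorphism.
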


\begin{prf} Obvious by abstract nonsense.  \end{prf}

\begin{lem} \label{l:prddstrbf} Let $Y_1,\dots,Y_n$ be finite sets. There is an isomorphism of $R$-modules
$$\bigotimes_{i=1}^n\cald(Y_i,R)\rightarrow\cald(Y_1\times\dots\times Y_n,R)$$
given by $\delta_{y_1}\otimes\dots\otimes\delta_{y_n}\mapsto\delta_{(y_1,\dots,y_n)}$, where $y_i\in Y_i$. \end{lem}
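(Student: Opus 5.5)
The plan is to reduce everything to bases and dimension counts, using Lemma \ref{l:darflx}. By that lemma, each $\cald(Y_i,R)$ is a free $R$-module with basis $\{\delta_{y}\}_{y\in Y_i}$. The same lemma applied to the finite set $Y_1\times\dots\times Y_n$ gives that $\cald(Y_1\times\dots\times Y_n,R)$ is free on $\{\delta_{(y_1,\dots,y_n)}\}$. Both facts hold because a finite set can be viewed as a profinite set with a trivial inverse system.

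First, I will define the map. By the universal property of the tensor product, it is enough to give an $R$-multilinear map
$$\cald(Y_1,R)\times\dots\times\cald(Y_n,R)\rightarrow\cald(Y_1\times\dots\times Y_n,R).$$
The most intrinsic choice is
$$(\mu_1,\dots,\mu_n)\mapsto\Big(\b1_{\{(y_1,\dots,y_n)\}}\mapsto\prod_i\mu_i(\b1_{y_i})\Big),$$
extended linearly, since the functions $\b1_{(y_1,\dots,y_n)}=\prod_i\b1_{y_i}\circ\mathrm{pr}_i$ form a basis of $\calc(Y_1\times\dots\times Y_n,R)$. This map is clearly multilinear, and on Dirac deltas it sends $(\delta_{y_1},\dots,\delta_{y_n})$ to $\delta_{(y_1,\dots,y_n)}$, as the statement requires.

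Next, I will check bijectivity. A tensor product of finitely many free modules of finite rank is free, with basis the elementary tensors $\delta_{y_1}\otimes\dots\otimes\delta_{y_n}$. The induced linear map therefore sends this basis bijectively onto the basis $\{\delta_{(y_1,\dots,y_n)}\}$ of the target, because $(y_1,\dots,y_n)\mapsto(y_1,\dots,y_n)$ is a bijection between $\prod_iY_i$ and $Y_1\times\dots\times Y_n$. A linear map between free modules that sends a basis bijectively onto a basis is an isomorphism.

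There is no serious obstacle. The only point needing a word is continuity, given the paper's convention that module morphisms are continuous. Lemma \ref{l:darflx} shows that the weak-* topology on each side is the product topology on $R^{|Y_1|\cdots|Y_n|}$. In these coordinates the map is a coordinate permutation, so it is a homeomorphism, provided the tensor product carries the product topology transported from the elementary-tensor basis, which I would take as its definition.
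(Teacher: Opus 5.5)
Your proposal is correct and follows the same route as the paper, whose proof simply reads ``Straightforward from Lemma~\ref{l:darflx}''. You spell out that argument: the map sends the elementary-tensor basis bijectively onto the Dirac basis of $\cald(Y_1\times\dots\times Y_n,R)$, and the topological remark is a harmless addition that the paper leaves implicit.
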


\begin{prf} Straightforward from Lemma \ref{l:darflx}. \end{prf}

\begin{prop} \label{p:prdfdstr} Let $\{Y_\kappa\}_{\kappa\in K}$ be a finite collection of profinite sets and put $X:=\prod_{\kappa\in K}Y_\kappa$. Then there is a natural morphism of $R$-modules
\begin{equation} \label{e:tnsr} \bigotimes_{\kappa\in K}\cald(Y_\kappa,R)\rightarrow\cald(X,R) \end{equation}
with dense image. \end{prop}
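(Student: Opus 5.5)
We plan to build the morphism level by level and then pass to the inverse limit. By Lemma \ref{l:prdst}, applied to the finite index set $K$, the space $X$ is the inverse limit of $X_\alpha=\prod_{\kappa\in K}Y_{\kappa,\alpha_\kappa}$, where $\alpha=(\alpha_\kappa)_\kappa$ runs over $\prod_{\kappa\in K}\calj_\kappa$; this subset is cofinal in the directed set of Lemma \ref{l:prdst}. For each such $\alpha$, Lemma \ref{l:prddstrbf} gives an isomorphism
\[ \phi_\alpha\colon\bigotimes_{\kappa\in K}\cald(Y_{\kappa,\alpha_\kappa},R)\rightarrow\cald(X_\alpha,R). \]
Composing with the tensor product of the projections $(\pi_{\alpha_\kappa})_*\colon\cald(Y_\kappa,R)\rightarrow\cald(Y_{\kappa,\alpha_\kappa},R)$ yields maps $\Phi_\alpha\colon\bigotimes_\kappa\cald(Y_\kappa,R)\rightarrow\cald(X_\alpha,R)$.

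Next we check compatibility: for $\beta\ge\alpha$ we need $(\pi_\alpha^\beta)_*\circ\phi_\beta=\phi_\alpha\circ\bigotimes_\kappa(\pi^{\beta_\kappa}_{\alpha_\kappa})_*$. It suffices to verify this on the basis tensors $\otimes_\kappa\delta_{y_\kappa}$, where both sides equal $\delta_{(\pi^{\beta_\kappa}_{\alpha_\kappa}(y_\kappa))_\kappa}$. By \eqref{e:dxlmdxa} (with the cofinal subsystem), the $\Phi_\alpha$ then glue to a morphism \eqref{e:tnsr}. Concretely, on compact open boxes $U=\prod_\kappa U_\kappa$ it sends $\otimes_\kappa\mu_\kappa$ to the distribution with value $\prod_\kappa\mu_\kappa(\b1_{U_\kappa})$. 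This also shows the morphism is natural.

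For density we use Proposition \ref{p:dxtplg}, which says the weak-* topology on $\cald(X,R)$ is the inverse limit topology. A basic neighbourhood of $\mu\in\cald(X,R)$ therefore contains a set $(\pi_\alpha)_*^{-1}(V)$, where $V$ is a neighbourhood of $(\pi_\alpha)_*\mu$ in $\cald(X_\alpha,R)$ and $\alpha$ is in the cofinal product set; a finite intersection of such sets can be handled by passing to a common larger index. It thus suffices to show that each $\Phi_\alpha$ is surjective.

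Since $\phi_\alpha$ is an isomorphism, surjectivity of $\Phi_\alpha$ reduces to surjectivity of each $(\pi_{\alpha_\kappa})_*\colon\cald(Y_\kappa,R)\rightarrow\cald(Y_{\kappa,\alpha_\kappa},R)$. This holds because every $\delta_y$ with $y\in Y_{\kappa,\alpha_\kappa}$ is the image of a Dirac delta $\delta_{\tilde y}$ for any lift $\tilde y\in\pi_{\alpha_\kappa}^{-1}(y)$, which exists because the transition maps are surjective.

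The only delicate point is the bookkeeping of directed sets: one has to justify replacing the index set of Lemma \ref{l:prdst} by its cofinal subset $\prod_\kappa\calj_\kappa$, which is harmless because $K$ is finite. Everything else is formal.
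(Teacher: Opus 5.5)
Your proposal is correct and follows the paper's proof. You build the map level by level from the isomorphism of Lemma \ref{l:prddstrbf}, check compatibility with the transition maps, pass to the inverse limit, and deduce density from surjectivity onto each $\cald(X_\alpha,R)$. You are slightly more explicit than the paper, which cites only Lemma \ref{l:prddstrbf} for that surjectivity and leaves two points implicit: the restriction to the cofinal subset $\prod_{\kappa\in K}\calj_\kappa$, and the surjectivity of each $(\pi_{\alpha_\kappa})_*$, which you justify by lifting Dirac deltas.
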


\begin{prf} We use the notation of Lemma \ref{l:prdst}. The isomorphism of Lemma \ref{l:prddstrbf} commutes with the structure maps induced by $\pi_{\kappa,\alpha_\kappa}^{\beta_\kappa}\colon Y_{\kappa,\beta_\kappa}\rightarrow Y_{\kappa,\alpha_\kappa}$ on the one side and $\pi_\alpha^\beta\colon X_\beta\rightarrow X_\alpha$ on the other. Taking the limit and composing with the natural morphism
$$\bigotimes_{\kappa\in K}\cald(Y_\kappa,R)=\bigotimes_{\kappa\in K}\varprojlim\cald(Y_{\kappa,\alpha_\kappa},R)\rightarrow\varprojlim\bigotimes_{\kappa\in K}\cald(Y_{\kappa,\alpha_\kappa},R),$$ 
we obtain \eqref{e:tnsr}. The image is dense because the composition with  $(\pi_\alpha)_*\colon\cald(X,R)\rightarrow\cald(X_\alpha,R)$ is surjective for every $\alpha$, by Lemma \ref{l:prddstrbf}. \end{prf}

\begin{rmk} In order for \eqref{e:tnsr} to be an isomorphism, one has to replace $\bigotimes\cald(Y_\kappa,R)$ with $\widehat\bigotimes\cald(Y_\kappa,R)$, the completed tensor product. For an explicit example of why this is needed, just consider the case $R=\Z_p$ and $Y_1=Y_2=\Z_p$. It is well-known that then one has $\cald(Y_i,R)\simeq\Z_p[\![t_i]\!]$ and $\cald(Y_1\times Y_2,R)\simeq\Z_p[\![t_1,t_2]\!]$. The natural map $\Z_p[\![t_1]\!]\otimes\Z_p[\![t_2]\!]\rightarrow\Z_p[\![t_1,t_2]\!]$ is not surjective. \end{rmk}

We shall denote the image via \eqref{e:tnsr} of $\bigotimes_\kappa\mu_\kappa$ by the same symbol and call it the {\em product of the distributions} $\{\mu_\kappa\}_{\kappa\in K}$\,. 

\begin{lem} Let $X$, $\{Y_\kappa\}_{\kappa\in K}$ be as in Proposition \ref{p:prdfdstr}. The product distribution $\bigotimes_\kappa\mu_\kappa$ is uniquely characterized by the property
\begin{equation} \label{e:tnsr1} \bigotimes_\kappa\mu_\kappa\left(\b1_{\prod U_\kappa}\right)=\prod_{\kappa\in K}\mu_\kappa(\b1_{U_\kappa})\;, \end{equation}
for  any choice of compact open subsets $U_\kappa\subseteq Y_\kappa$\,. \end{lem}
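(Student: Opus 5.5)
We have to show two things: the product distribution satisfies \eqref{e:tnsr1}, and a distribution on $X$ is determined by its values on the characteristic functions of boxes $\prod_\kappa U_\kappa$.

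\textbf{Step 1: the product satisfies \eqref{e:tnsr1}.} We use the notation of Lemma \ref{l:prdst}. Since $K$ is finite, the compact open sets $U_\kappa$ are each $\alpha_\kappa$-saturated for some $\alpha_\kappa\in\calj_\kappa$ (Corollary \ref{c:dl213}). Put $\alpha=(\alpha_\kappa)_{\kappa\in K}$. Then $\prod_\kappa U_\kappa=\pi_\alpha^{-1}\big(\prod_\kappa S_\kappa\big)$, where $S_\kappa=\pi_{\kappa,\alpha_\kappa}(U_\kappa)$. Hence
\[ \b1_{\prod U_\kappa}=\pi_\alpha^*\Big(\sum_{(y_\kappa)\in\prod S_\kappa}\b1_{(y_\kappa)}\Big). \]
To evaluate $\bigotimes_\kappa\mu_\kappa$ on this function, it suffices to compute its image $(\pi_\alpha)_*$, which by construction in Proposition \ref{p:prdfdstr} is the image of $\bigotimes_\kappa(\pi_{\kappa,\alpha_\kappa})_*\mu_\kappa$ under the isomorphism of Lemma \ref{l:prddstrbf}. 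By \eqref{e:pfesplc},
\[ (\pi_{\kappa,\alpha_\kappa})_*\mu_\kappa=\sum_{y\in Y_{\kappa,\alpha_\kappa}}\mu_\kappa\big(\b1_{\pi^{-1}(y)}\big)\delta_y. \]
The tensor product of these therefore maps to
\[ \sum_{(y_\kappa)}\prod_\kappa\mu_\kappa\big(\b1_{\pi^{-1}(y_\kappa)}\big)\,\delta_{(y_\kappa)}. \]
Pairing with the indicator of $\prod_\kappa S_\kappa$ and expanding the resulting sum of products as a product of sums gives
\[ \prod_\kappa\sum_{y\in S_\kappa}\mu_\kappa\big(\b1_{\pi^{-1}(y)}\big)=\prod_\kappa\mu_\kappa(\b1_{U_\kappa}), \]
where the last equality uses finite additivity of each $\mu_\kappa$.

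\textbf{Step 2: uniqueness.} Let $\nu\in\cald(X,R)$ satisfy \eqref{e:tnsr1}. By \eqref{e:dxlmdxa}, $\nu$ is determined by its projections $(\pi_\alpha)_*\nu$ for $\alpha\in\calj$. By \eqref{e:pfesplc}, these projections are determined by the values $\nu(\b1_{\pi_\alpha^{-1}(x)})$ for $x\in X_\alpha$. For $x=(y_\kappa)_{\kappa\in K'}$, the fibre $\pi_\alpha^{-1}(x)$ is the box $\prod_{\kappa\in K'}\pi_{\kappa,\alpha_\kappa}^{-1}(y_\kappa)$, which is a product of compact open sets. Here $K'\subseteq K$, and we take the full factor $Y_\kappa$ (which is compact open) for $\kappa\notin K'$. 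So the values of $\nu$ on these fibres are fixed by \eqref{e:tnsr1}, and therefore $\nu$ is determined. Equivalently, the functions $\b1_{\prod U_\kappa}$ span $\call_c(X,R)$, since every locally constant function factors through some finite $X_\alpha$, whose points correspond to boxes.

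\textbf{Main obstacle.} The only point needing care is bookkeeping: the index sets in Lemma \ref{l:prdst} may have support $K'\subsetneq K$. One should either pass to a cofinal family with full support $K$, which is possible because $K$ is finite, or note that the missing factors contribute $\mu_\kappa(\b1_{Y_\kappa})$. Apart from this, the argument is a direct expansion of the definitions.
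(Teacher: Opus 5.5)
Your proof is correct and follows essentially the same route as the paper. Uniqueness holds because the boxes $\prod_\kappa U_\kappa$, in particular the fibres of $\pi_\alpha$, determine a distribution; the identity itself is obtained by pushing forward to $X_\alpha$ and using the isomorphism of Lemma~\ref{l:prddstrbf} together with \eqref{e:pfesplc}. The only difference is that you treat a general saturated box directly, expanding the sum of products into a product of sums via finite additivity, which makes explicit the reduction to single fibres that the paper performs ``without loss of generality''.
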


\begin{prf} By definition of the product topology, any compact open subset of $X$ can be written as a finite union of sets of the form $\prod_\kappa U_\kappa$, with $U_\kappa\subseteq Y_\kappa$ compact open. Thus a distribution on $X$ is determined by its values on the functions $\b1_{\prod U_\kappa}$. 

Notations are as in the proofs of Lemma \ref{l:prdst} and Proposition \ref{p:prdfdstr}: in particular, we have structure maps $\pi_\alpha\colon X\rightarrow X_\alpha$ and $\pi_{\kappa,\alpha_\kappa}\colon Y_\kappa\rightarrow Y_{\kappa,\alpha_k}$. In order to check \eqref{e:tnsr1}, there is no loss of generality in taking sets of the form $U_\kappa=\pi_{\kappa,\alpha_\kappa}^{-1}(x_\kappa)$, so to have $\prod U_\kappa=\pi_\alpha^{-1}(x)$, with $x=(x_\kappa)$. Thus one has
\begin{equation} \label{e:pgnl} \bigotimes_\kappa\mu_\kappa\big(\b1_{\prod U_\kappa}\big)=\bigotimes_\kappa\mu_\kappa\big(\b1_{\pi_\alpha^{-1}(x)}\big)\stackrel{\dag}{=}\big((\pi_\alpha)_*\bigotimes_\kappa\mu_\kappa\big)(\b1_x)\stackrel{\ddag}{=}\big(\bigotimes_\kappa(\pi_{\kappa,\alpha_\kappa})_*\mu_\kappa\big)(\b1_x) \end{equation}
where $\dag$ applies \eqref{e:pfesplc} and $\ddag$ follows from the definition of the map \eqref{e:tnsr}. On the other hand, putting $(\pi_{\kappa,\alpha_\kappa})_*(\mu_\kappa)=\sum c_{y_\kappa}\delta_{y_\kappa}$, the isomorphism of Lemma \ref{l:prddstrbf} allows to evaluate the last term of \eqref{e:pgnl} as 
$$\left(\bigotimes_{\kappa\in K}\left(\sum_{y_\kappa\in Y_{\kappa,\alpha_\kappa}}c_{y_\kappa}\delta_{y_\kappa}\right)\right)(\b1_x)=\prod_{\kappa\in K}\left(\sum_{y_\kappa\in Y_{\kappa,\alpha_\kappa}}c_{y_\kappa}\delta_{y_\kappa}(\b1_{x_\kappa})\right)=\prod_{\kappa\in K}c_{x_\kappa}$$
and we conclude, because \eqref{e:pfesplc} yields $c_{x_\kappa}=\mu_\kappa(\b1_{U_\kappa})$ for all $\kappa$. \end{prf}

Equality \eqref{e:tnsr1} shows the problem in extending Proposition \ref{p:prdfdstr} to the case when the set of indexes $\calk$ is infinite. Indeed, taking $\mu_\kappa\in\cald(Y_\kappa,R)$ such that $\mu_\kappa(\b1_{Y_\kappa})=c$ for every $\kappa$, where $c\in R$ is such that the sequence $c^n$ has no limit, then we see no way of giving to $(\bigotimes_\kappa\mu_\kappa)(\b1_X)$ a meaning compatible with \eqref{e:tnsr1}.\\

We say that an infinite product $\prod_{\kappa\in\calk}c_\kappa$ {\em converges in $R$} if the equality $\lim_K\prod_{\kappa\in K}c_\kappa=r$ (where the limit is taken for $K$ varying among finite subsets of $\calk$) is satisfied for some $r\in R$.

\begin{thm} \label{t:dstrprd} Let $X=\prod_{\kappa\in\calk}Y_\kappa$, where each $Y_\kappa$ is profinite. For each $\kappa\in\calk$ let $\mu_\kappa\in\cald(Y_\kappa,R)$ be such that $\prod_{\kappa\in\calk}\mu_\kappa(\b1_{Y_\kappa})$ converges. Then the product distribution $\bigotimes_\kappa\mu_\kappa$ exists.  \end{thm}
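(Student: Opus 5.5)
We build $\bigotimes_\kappa\mu_\kappa$ via \eqref{e:dxlmdxa}, writing $X=\varprojlim_{\alpha\in\calj}X_\alpha$ as in Lemma \ref{l:prdst}. It suffices to give, for each $\alpha=(\alpha_\kappa)_{\kappa\in K}\in\calj$, a distribution $\nu_\alpha\in\cald(X_\alpha,R)$ with $(\pi_\alpha^\beta)_*\nu_\beta=\nu_\alpha$ for $\beta\ge\alpha$. The inverse limit is then a distribution $\nu$ on $X$; we check that it satisfies the infinite analogue of \eqref{e:tnsr1}, and that this property determines it.

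\textbf{The finite-level distributions.} Put $c_\kappa:=\mu_\kappa(\b1_{Y_\kappa})$ and, for a finite $K\subseteq\calk$,
$$r_K:=\lim_{K'}\prod_{\kappa\in K'\setminus K}c_\kappa,$$
the limit over finite $K'\supseteq K$. For $\alpha\in\calj$ indexed by $K$ we then set
$$\nu_\alpha:=r_K\cdot\bigotimes_{\kappa\in K}(\pi_{\kappa,\alpha_\kappa})_*\mu_\kappa,$$
using the isomorphism of Lemma \ref{l:prddstrbf}. Concretely, $\nu_\alpha(\b1_x)=r_K\prod_{\kappa\in K}\mu_\kappa(\b1_{\pi_{\kappa,\alpha_\kappa}^{-1}(x_\kappa)})$.

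\textbf{Existence of $r_K$.} This is the main obstacle: we only know that $\lim_{K'}\prod_{K'}c_\kappa=r$ exists, and $\prod_{K}c_\kappa$ may be non-invertible, even $0$, so we cannot just divide. I would argue through the Cauchy criterion.

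If $\prod_{K}c_\kappa$ is a unit, then $r_K=r\cdot\big(\prod_K c_\kappa\big)^{-1}$ exists. In the general case, I would first try to show that convergence over all finite subsets forces convergence of tails. If that fails, there are two fallbacks. One is to note that the paper's intended meaning of convergence already includes this. The other is to define $\nu_\alpha$ directly, without splitting off $r_K$:
$$\nu_\alpha(\b1_x):=\lim_{K'\supseteq K}\Big(\prod_{\kappa\in K}\mu_\kappa(\b1_{U_\kappa})\Big)\Big(\prod_{\kappa\in K'\setminus K}c_\kappa\Big),$$
and prove that this limit exists. That reduces to the same tail question, so in the non-unit case I expect to need the assumption anyway. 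The conceptual point is that $(\bigotimes\mu_\kappa)(\b1_{\prod U_\kappa})$ with $U_\kappa=Y_\kappa$ for $\kappa\notin K$ has to be $\prod_{K}\mu_\kappa(\b1_{U_\kappa})\cdot\prod_{\kappa\notin K}c_\kappa$.

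\textbf{Compatibility.} Take $\beta\ge\alpha$, indexed by $K'\supseteq K$. Pushing $\nu_\beta$ forward to $X_\alpha$ does two things.

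For $\kappa\in K$, the factor $(\pi_{\kappa,\beta_\kappa})_*\mu_\kappa$ pushes forward to $(\pi_{\kappa,\alpha_\kappa})_*\mu_\kappa$, by the finite compatibility used in Proposition \ref{p:prdfdstr}.

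For $\kappa\in K'\setminus K$, the coordinate is summed out, contributing the total mass $c_\kappa$.

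So $(\pi_\alpha^\beta)_*\nu_\beta=r_{K'}\prod_{K'\setminus K}c_\kappa\cdot\bigotimes_{K}(\pi_{\kappa,\alpha_\kappa})_*\mu_\kappa$. This equals $\nu_\alpha$ because $r_K=r_{K'}\prod_{K'\setminus K}c_\kappa$, which follows from the definition of $r_K$ as a limit by continuity of multiplication in $R$.

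\textbf{Characterization.} Finally, the argument of the finite-product lemma applies verbatim. Every compact open subset of $X$ is a finite disjoint union of cylinders $\prod U_\kappa$ with $U_\kappa=Y_\kappa$ for almost all $\kappa$. Hence $\nu$ is uniquely determined by the formula
$$\nu\big(\b1_{\prod U_\kappa}\big)=\lim_{K'}\prod_{\kappa\in K'}\mu_\kappa(\b1_{U_\kappa}),$$
and this property justifies calling $\nu$ the product distribution $\bigotimes_\kappa\mu_\kappa$.
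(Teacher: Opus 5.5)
There is a genuine gap, and it is exactly the step you call the main obstacle. The proposal never shows that the tail limits $r_K=\lim_{K'}\prod_{\kappa\in K'\setminus K}c_\kappa$ exist, and without them neither $\nu_\alpha$ nor the value of the product on a cylinder is defined.

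Moreover, this step cannot be derived from the stated hypothesis in general.
\begin{itemize}
\item \textbf{Nonzero case.} If $u:=\prod_{\kappa\in K}c_\kappa\neq0$, you do not need $u$ to be a unit. Write $t_{K'}:=\prod_{\kappa\in K'\setminus K}c_\kappa$. Multiplicativity of $|\cdot|_R$ gives $|t_{K'}-t_{K''}|_R=|u|_R^{-1}|ut_{K'}-ut_{K''}|_R$. Hence the tails form a Cauchy net and converge whenever $R$ is complete.
\item \textbf{Incomplete $R$.} Without completeness this already fails. In $R=\Z[\sqrt2]\subset\R$, take $c_{\kappa_0}=2$ and choose the remaining $c_\kappa\in R$, by density, with absolutely convergent product equal to $1/2$. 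Then $\prod_{\calk}c_\kappa=1\in R$, but the tail tends to $1/2\notin R$ in $\R$, so it does not converge in $R$.
\item \textbf{Zero case.} If $u=0$, the implication is false even for $R=\R$. Take $\calk=\N$, $Y_0=\{a,b\}$ with $\mu_0=\delta_a-\delta_b$, and $\mu_n=2\delta_{y_n}$ on arbitrary nonempty $Y_n$ for $n\ge1$. Then $\prod_{\kappa\in K}c_\kappa=0$ as soon as $0\in K$, so the hypothesis holds. Yet a product distribution would have to assign to $\{a\}\times\prod_{n\ge1}Y_n$ the value $\lim_m 2^m$.
\end{itemize}
So the theorem needs a stronger hypothesis. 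For instance, require that $\prod_{\kappa\in\calk-K}\mu_\kappa(\b1_{Y_\kappa})$ converges for every finite $K\subseteq\calk$, or that $R$ is complete and no total mass vanishes.

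The paper's proof does not fill this hole either. It asserts that the hypothesis makes the right-hand side of \eqref{e:tnsr2} converge, and then:
\begin{itemize}
\item defines the product on cylinders by \eqref{e:tnsr2};
\item extends it to general compact open sets through decompositions with a common $K$;
\item checks additivity by hand.
\end{itemize}
So you identified the real crux correctly. However, your fallback that ``the intended meaning of convergence already includes this'' has to be stated as an extra hypothesis, not used as a proof step.

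Once that hypothesis is granted, your argument is complete and somewhat cleaner than the paper's. You build the compatible family $\nu_\alpha=r_K\bigotimes_{\kappa\in K}(\pi_{\kappa,\alpha_\kappa})_*\mu_\kappa$ and pass to the limit through \eqref{e:dxlmdxa}. This gives well-definedness and finite additivity for free from Lemma~\ref{l:prddstrbf}, leaving only the identity $r_K=r_{K'}\prod_{\kappa\in K'\setminus K}c_\kappa$ to verify. In the paper's applications (Proposition~\ref{p:prddstr2} and Theorem~\ref{t:opnelr}) all total masses equal $1$, so the strengthened hypothesis holds trivially there.
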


\begin{prf} Let $U$ be a compact open subset of $X$.  We first consider the situation
\begin{equation} \label{e:Uprd} U=\prod_{\kappa\in K}U_\kappa\times\prod_{\kappa\in\calk-K}Y_\kappa\;, \end{equation}
where $K\subseteq\calk$ is finite and each $U_\kappa\subseteq Y_\kappa$ compact open. In this case \eqref{e:tnsr1} forces us to define 
\begin{equation} \label{e:tnsr2} \bigotimes_\kappa\mu_\kappa(\b1_U):=\prod_{\kappa\in K}\mu_\kappa(\b1_{U_\kappa})\cdot\prod_{\kappa\in\calk-K}\mu_\kappa(\b1_{Y_\kappa})\,. \end{equation}
The hypothesis implies that the product on the right-hand side of \eqref{e:tnsr2} converges. Moreover, one immediately sees that the limit is independent of the choice of $K$ in \eqref{e:Uprd}.

For the general case, Corollary \ref{c:dl213} and Lemma \ref{l:prdst} together imply that any compact open $U$ is $\alpha$-saturated for some $\alpha=(\alpha_\kappa)_{\kappa\in K}$. Hence one can write $U=\bigsqcup_{i=1}^nU_i$ with each $U_i=\prod_\kappa U_{i,\kappa}$ as in \eqref{e:Uprd}. Then we put
\begin{equation} \label{e:tnsr3} \bigotimes_\kappa\mu_\kappa(\b1_U):=\sum_{i=1}^n\bigotimes_\kappa\mu_\kappa(\b1_{U_i})=\sum_{i=1}^n\prod_{\kappa\in K}\mu_\kappa(\b1_{U_\kappa})\cdot\prod_{\kappa\in\calk-K}\mu_\kappa(\b1_{Y_\kappa})\,. \end{equation}
(Note that we can use the same auxiliary set $K$ for all $U_i$, since the value in \eqref{e:tnsr2} does not depend on this choice.)

We have to check that \eqref{e:tnsr3} is independent of the decomposition $U=\bigsqcup_iU_i$. So let $U$ and $V$ be compact open subsets of $X$, with empty intersection, and such that $U,V$ and $U\cup V$ are all of the form \eqref{e:Uprd} with respect to the same $K$. Then one has 
\begin{align*} \bigotimes_\kappa\mu_\kappa(\b1_{U\cup V}) & = \prod_{\kappa\in K}\mu_\kappa(\b1_{(U\cup V)_\kappa})\cdot\prod_{\kappa\notin K}\mu_\kappa(\b1_{Y_\kappa})=\\
& \stackrel{\ast}{=}\bigotimes_{\kappa\in K}\mu_\kappa(\b1_{\prod(U\cup V)_\kappa})\cdot\prod_{\kappa\notin K}\mu_\kappa(\b1_{Y_\kappa})=\\
& \stackrel{**}{=}\left(\bigotimes_{\kappa\in K}\mu_\kappa(\b1_{\prod U_\kappa})+\bigotimes_{\kappa\in K}\mu_\kappa(\b1_{\prod V\kappa})\right)\cdot\prod_{\kappa\notin K}\mu_\kappa(\b1_{Y_\kappa})=\\
& \stackrel{*}{=}\left(\prod_{\kappa\in K}\mu_\kappa(\b1_{\prod U_\kappa})+\prod_{\kappa\in K}\mu_\kappa(\b1_{\prod V\kappa})\right)\cdot\prod_{\kappa\notin K}\mu_\kappa(\b1_{Y_\kappa})= \bigotimes_\kappa\mu_\kappa(\b1_U)+\bigotimes_\kappa\mu_\kappa(\b1_V) 
\end{align*}
where $\ast$ is \eqref{e:tnsr1} and $\ast\ast$ uses the fact that a finite product of distributions is a distribution (hence, finitely additive).
\end{prf}

\begin{rmk} By the above definition of convergence, we allowed $\lim_K\prod_{\kappa\in K}\mu_\kappa\big(\b1_{Y_\kappa}\big)=0$. When this happens, $\bigotimes_\kappa\mu_\kappa$ need not be trivial: for example, fix a finite set $K_0$ and choose the distributions $\mu_\kappa$ so that $\mu_\kappa(\b1_{Y_\kappa})$ is $0$ if $\kappa\in K_0$ and $1$ otherwise, but for every $\kappa$ there is $U_\kappa\subset Y_\kappa$ such that $\mu_\kappa\big(\b1_{U_\kappa}\big)\neq0$. Distributions with total mass $0$ play an important role in certain parts of non-archimedean analysis (see e.g. \cite{t})  and  appear also in harmonic analysis on profinite spaces (as in \cite{ccs}), so it seems more convenient to keep this possibility open also for our definition of product distribution.  
\end{rmk}

In general, one cannot reconstruct the distributions $\mu_\kappa$ from their product. However, we have the following result.

\begin{cor} Let $X=\prod_{\kappa\in\calk}Y_\kappa$, as in Theorem \ref{t:dstrprd}, with projections $\pi_\kappa\colon X\rightarrow Y_\kappa$. For each $\kappa$ choose $\mu_\kappa\in\cald(Y_\kappa,R)$ such that $\mu_\kappa(\b1_{Y_\kappa})=1$. Then $(\pi_\iota)_*\big(\bigotimes_\kappa\mu_\kappa\big)=\mu_\iota$ holds for every $\iota\in\calk$. \end{cor}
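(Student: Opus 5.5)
Since $\mu_\kappa(\b1_{Y_\kappa})=1$ for every $\kappa$, the infinite product $\prod_\kappa\mu_\kappa(\b1_{Y_\kappa})$ converges trivially to $1$, so Theorem \ref{t:dstrprd} applies and $\mu:=\bigotimes_\kappa\mu_\kappa$ is a well-defined element of $\cald(X,R)$. The pushforward $(\pi_\iota)_*\mu$ is the distribution on $Y_\iota$ defined by $f\mapsto\mu(f\circ\pi_\iota)$ for $f\in\call_c(Y_\iota,R)$. This makes sense because $\pi_\iota$ is continuous: it sends locally constant functions to locally constant functions, and the composition respects the direct-limit structure of \eqref{e:lcdf}, since $\pi_\iota$ factors through the finite quotients $X_\alpha$ for every $\alpha=(\alpha_\kappa)_{\kappa\in K}$ with $\iota\in K$.

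By the Remark identifying distributions with finitely additive functions on compact open sets (and by $R$-linearity), it suffices to check the equality on characteristic functions $\b1_V$, with $V\subseteq Y_\iota$ compact open. For such $V$ we have $\b1_V\circ\pi_\iota=\b1_{\pi_\iota^{-1}(V)}$, and
\[ \pi_\iota^{-1}(V)=V\times\prod_{\kappa\neq\iota}Y_\kappa \]
is exactly of the form \eqref{e:Uprd} with $K=\{\iota\}$ and $U_\iota=V$. Hence the defining formula \eqref{e:tnsr2} gives directly
\[ (\pi_\iota)_*\mu(\b1_V)=\mu_\iota(\b1_V)\cdot\prod_{\kappa\neq\iota}\mu_\kappa(\b1_{Y_\kappa})=\mu_\iota(\b1_V)\,, \]
because every factor of the infinite product equals $1$, so its limit over finite $K$ is $1$. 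Since the values on characteristic functions of compact opens determine a distribution, this gives $(\pi_\iota)_*\mu=\mu_\iota$.

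The only point needing care is the pushforward itself: one must check that $(\pi_\iota)_*$ is well defined and continuous from $\cald(X,R)$ to $\cald(Y_\iota,R)$. That requires $f\mapsto f\circ\pi_\iota$ to be continuous $\call_c(Y_\iota,R)\to\call_c(X,R)$ for the direct-limit topologies. This follows because, for each index $\alpha_\iota$, the map restricts to $\calc(Y_{\iota,\alpha_\iota},R)\to\calc(X_{(\alpha_\iota)},R)$, which is a linear map between finite free modules and hence continuous. Beyond this verification, the argument is a direct unwinding of definition \eqref{e:tnsr2}.
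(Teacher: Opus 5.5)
Your proof is correct and follows the paper's argument: evaluate on $\b1_V$ for compact open $V\subseteq Y_\iota$, observe that $\pi_\iota^{-1}(V)$ has the form \eqref{e:Uprd} with $K=\{\iota\}$, and apply \eqref{e:tnsr2} with all remaining factors equal to $1$. Your extra checks are sound but are details the paper takes for granted: that Theorem \ref{t:dstrprd} applies because the product of $1$'s converges, and that the push-forward $(\pi_\iota)_*$ is well defined and continuous.
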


\begin{prf} Let $U\subseteq Y_\iota$ be compact open. The equalities
$$\big((\pi_\iota)_*\big(\bigotimes_\kappa\mu_\kappa\big)\big)(\b1_U)=\bigotimes_\kappa\mu_\kappa\big(\b1_{\pi_\iota^{-1}(U)}\big)= \mu_\iota(\b1_U)\cdot\prod_{\kappa\neq\iota}\mu_\kappa(\b1_{Y_\kappa})=\mu_\iota(\b1_U)$$
(which hold by definition of push-forward and by \eqref{e:tnsr2}, respectively) yield our thesis. \end{prf}

\subsection{Procounting distributions} \label{sss:Hrlkd} From now on we assume that  $R$ is a $\Q$-algebra, so to have
$$\frac{1}{|X_\alpha|}\in R$$
for all $\alpha\in\calj$.

\subsubsection{A categorical limit} We define:
\begin{equation} \label{e:mualfa} \mu_\alpha:=\frac{1}{|X_\alpha|}\sum_{x\in X_\alpha}\delta_x\in\cald(X_\alpha,R). \end{equation}
This is a distribution on $X_\alpha$\,.

\begin{dfn} \label{d:muS1} The {\em procounting distribution} on $X$ is
$$\mu_X:=\varprojlim_{\alpha\in \calj}\mu_\alpha\in \cald(X,R)$$
(when this limit exists). \end{dfn}

\begin{prop} \label{p:hrl} The inverse limit $\mu_X$ exists as a distribution in $\cald(X,R)$ if and only if
\begin{equation} \label{e:condhaar} |(\pi^\beta_\alpha)^{-1}(x)|=\frac{|X_\beta|}{|X_\alpha|}\;\;\text{ for all }x\in X_\alpha\,,\beta\ge\alpha\text{ in }\calj.\end{equation}
\end{prop}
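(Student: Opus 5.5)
By \eqref{e:dxlmdxa}, the inverse limit $\varprojlim_\alpha\mu_\alpha$ exists in $\cald(X,R)$ exactly when the family $(\mu_\alpha)_{\alpha\in\calj}$ is compatible, i.e.\ when
\[
(\pi_\alpha^\beta)_*(\mu_\beta)=\mu_\alpha \qquad\text{for all }\beta\ge\alpha \text{ in } \calj .
\]
Compatibility is necessary, since any $\mu\in\cald(X,R)$ has compatible projections $(\pi_\alpha)_*\mu$, and it is sufficient by the universal property of the inverse limit. So the plan is to reduce the statement to this compatibility condition and then compute the push-forward explicitly.

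To do the computation, I would use the basis description of Lemma~\ref{l:darflx}, in which both sides are determined by their values on the indicator functions $\b1_x$ for $x\in X_\alpha$. Adapting \eqref{e:pfesplc} to the finite map $\pi_\alpha^\beta$ gives
\[
(\pi_\alpha^\beta)_*(\mu_\beta)(\b1_x)=\mu_\beta\bigl(\b1_{(\pi_\alpha^\beta)^{-1}(x)}\bigr)=\frac{|(\pi_\alpha^\beta)^{-1}(x)|}{|X_\beta|},
\]
while $\mu_\alpha(\b1_x)=1/|X_\alpha|$. Compatibility for the pair $\beta\ge\alpha$ is therefore equivalent to
\[
\frac{|(\pi_\alpha^\beta)^{-1}(x)|}{|X_\beta|}=\frac{1}{|X_\alpha|}\qquad\text{for all }x\in X_\alpha .
\]

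The only delicate point is converting this identity in $R$ into the integer identity \eqref{e:condhaar}. Since $R$ is a $\Q$-algebra, $|X_\beta|$ is invertible in $R$, so we may multiply through by it. Moreover the canonical map $\Q\to R$ is injective: $R$ carries a nontrivial absolute value, so $R\neq0$, and a ring map out of a field into a nonzero ring is injective. Hence equality of these rational numbers in $R$ is the same as equality in $\Q$, which is exactly \eqref{e:condhaar}. Running these equivalences in both directions proves the ``if and only if''.
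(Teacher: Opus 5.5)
Your proposal is correct and follows essentially the same route as the paper: reduce existence of $\mu_X$ to the compatibility $(\pi_\alpha^\beta)_*(\mu_\beta)=\mu_\alpha$, compute the push-forward as $\frac{1}{|X_\beta|}\sum_{x\in X_\alpha}|(\pi^\beta_\alpha)^{-1}(x)|\,\delta_x$, and compare coefficients using the freeness in Lemma~\ref{l:darflx}. Your extra remark that $\Q\to R$ is injective, which turns the identity in $R$ into the integer identity \eqref{e:condhaar}, is a valid detail that the paper leaves implicit.
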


\begin{prf} By definition of inverse limit, $\mu_X$ exists if and only if the equality 
\begin{equation} \label{e:condhaar0} \mu_\alpha=(\pi_\alpha^\beta)_*(\mu_\beta) \end{equation} 
is satisfied for every $\beta\ge\alpha$ in $\calj$. By \eqref{e:mualfa}, condition \eqref{e:condhaar0} can be rewritten as
\begin{align*} \frac{1}{|X_\alpha|}\sum_{x\in X_\alpha}\delta_x & = \mu_\alpha=(\pi_\alpha^\beta)_*(\mu_\beta)=\frac{1}{|X_\beta|}\sum_{y\in X_\beta}(\pi_\alpha^\beta)_*(\delta_y)= \frac{1}{|X_\beta|}\sum_{y\in X_\beta}\delta_{\pi^\beta_\alpha(y)}=\\
& =\frac{1}{|X_\beta|}\sum_{x\in X_\alpha}\sum_{y\in (\pi^\beta_\alpha)^{-1}(x)}\delta_x=\frac{1}{|X_\beta|}\sum_{x\in X_\alpha}|(\pi^\beta_\alpha)^{-1}(x)|\delta_x \end{align*}
The equivalence between \eqref{e:condhaar} and \eqref{e:condhaar0} now follows by Lemma \ref{l:darflx}. \end{prf}

\begin{rmks} \label{r:msrx} \begin{itemize} \item[]\end{itemize}
\noindent{\bf \ref{r:msrx}.1.} Condition \eqref{e:condhaar} is obviously satisfied when $X$ is a profinite group. In this case the inverse limit $\mu_X$ is precisely the Haar measure on $X$.\\ 
\noindent{\bf \ref{r:msrx}.2.} Assume $\mu_X$ exists and let $U\subseteq X$ be compact open. Then $U=\pi_\alpha^{-1}(\pi_\alpha(U))$ for some $\alpha$, by Corollary \ref{c:dl213}, and thus 
$\b1_U=\b1_{\pi_\alpha(U)}\circ\,\pi_\alpha$. Definition \ref{d:muS1} yields
$$\mu_\alpha=(\pi_\alpha)_*(\mu_X)=\mu_X\circ(\pi_\alpha)^*\,,$$
which, together with \eqref{e:mualfa}, implies
\begin{equation} \label{e:muxca} \mu_X(\b1_U)=\mu_X((\pi_\alpha)^*\b1_{\pi_\alpha(U)})=\mu_\alpha(\b1_{\pi_\alpha(U)})=
\frac{1}{|X_\alpha|}\sum_{x\in X_\alpha}\delta_x(\b1_{\pi_\alpha(U)})=\frac{|\pi_\alpha(U)|}{|X_\alpha|}\;.\end{equation}
\noindent{\bf \ref{r:msrx}.3.} In general, we cannot expect $\mu_X$ to be a measure: indeed, it is well-known that $\mu_{\Z_p}$ is a distribution and not a measure when $R=\Q_p$ (see below for a proof). However $\mu_X$ is always a measure in the cases of most interest for this paper, with $R$ either $\R$ or $\C$, as the following proposition proves.  \end{rmks}

\begin{prop} \label{p:dstrms} Assume $\mu_X$ exists. It is a measure only if the set $\big\{\frac{1}{|X_\alpha|}\big\}_{\alpha\in\calj}$ is bounded in $R$. This condition is also sufficient if $R$ is complete. \end{prop}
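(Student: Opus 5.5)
Sufficiency follows from Lemma \ref{l:msrdstrb} applied to $\delta=\mu_X$, and necessity from a test on single fibres of the projections $\pi_\alpha$, so the plan splits into two directions. The basic input is formula \eqref{e:muxca}: for every compact open $U$ which is $\alpha$-saturated, $\mu_X(\b1_U)=|\pi_\alpha(U)|/|X_\alpha|$.

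\emph{Sufficiency.} Assume $R$ is complete and $\{1/|X_\alpha|\}$ is bounded. If $|\cdot|_R$ is archimedean, then $R$ is a complete $\Q$-algebra with archimedean absolute value, hence contains $\R$ (in practice $R=\R$ or $\C$). By \eqref{e:muxca} the values $\mu_X(\b1_U)$ lie in $\Q_{\ge0}\cap[0,1]\subseteq\R_{\ge0}$, so the archimedean case of Lemma \ref{l:msrdstrb} applies. If $|\cdot|_R$ is non-archimedean, then $|\pi_\alpha(U)|_R\le 1$ because it is an integer, so
\[
|\mu_X(\b1_U)|_R\le \bigl|1/|X_\alpha|\bigr|_R\le C ,
\]
where $C$ is the assumed bound. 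Thus $U\mapsto\mu_X(\b1_U)$ is bounded and Lemma \ref{l:msrdstrb} again gives that $\mu_X$ is a measure.

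\emph{Necessity.} Suppose $\mu_X$ is a measure, i.e.\ a continuous linear functional on $\calc(X,R)$ with the sup norm. Continuity at $0$ gives $\delta>0$ and $\varepsilon>0$ such that $\|f\|_\infty<\delta$ implies $|\mu_X(f)|_R<\varepsilon$. Since $|\cdot|_R$ is non-trivial, choose $a\in R$ with $0<|a|_R<\delta$; this yields the bound $|\mu_X(f)|_R\le M\|f\|_\infty$ for all $f$, with $M=\varepsilon/|a|_R$ up to the usual scaling argument. Now take $f=\b1_{\pi_\alpha^{-1}(x)}$ for a point $x\in X_\alpha$. Then $\|f\|_\infty=1$ and, by \eqref{e:muxca}, $\mu_X(f)=1/|X_\alpha|$. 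Hence $\bigl|1/|X_\alpha|\bigr|_R\le M$ for every $\alpha$, which is the required boundedness.

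The only delicate points are the scaling argument that turns continuity into a uniform bound $M$ when $|\cdot|_R$ is merely a non-trivial absolute value, and, in the archimedean sufficiency case, checking that the hypothesis of Lemma \ref{l:msrdstrb} (values in $\R_{\ge0}$) makes sense. Both are routine, since $\mu_X(\b1_U)$ is always a nonnegative rational number by \eqref{e:muxca}. In the archimedean case boundedness is automatic, because $1/|X_\alpha|\le1$.
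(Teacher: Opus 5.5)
Your proof is correct and is essentially the paper's: sufficiency is exactly \eqref{e:muxca} plus Lemma \ref{l:msrdstrb}, and necessity tests continuity on scaled indicators of single fibres of $\pi_\alpha$, on which $\mu_X$ equals $1/|X_\alpha|$.

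The only difference is in how necessity is phrased. The paper argues contrapositively and scales by the integers $c_n=|X_{\alpha_n}|$ themselves, so that $c_n\b1_{\pi_{\alpha_n}^{-1}(\pi_{\alpha_n}(z))}\to0$ while $\mu_X$ of it stays $1$; this avoids your auxiliary $a$. That matters slightly, because for a general ring non-triviality of $|\cdot|_R$ does not by itself give an $a$ with $0<|a|_R<\delta$. The fix is one line: if no such $a$ exists, then $|n|_R\ge1$ for all nonzero integers $n$, and boundedness is automatic.
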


\begin{prf} The last statement is a straightforward consequence of formula \eqref{e:muxca} and Lemma \ref{l:msrdstrb} (using the fact that $\N$ is bounded in $R$ if $|\cdot|_R$ is non-archimedean and $0\le|\pi_\alpha(U)|\le|X_\alpha|$ otherwise). As for necessity, assume that $\{|X_\alpha|^{-1}\}_{\alpha\in\calj}$ is unbounded. Then there exists a sequence $(\alpha_n)_{n\in\N}\subseteq\calj$ such that $|X_{\alpha_n}|=c_n$ with 
\begin{equation} \label{e:cnvrg0} |c_n|_R<\frac{1}{n}\;. \end{equation}
Fix $z\in X$ and consider 
$$f_n:=c_n\b1_{\pi_{\alpha_n}^{-1}(\pi_{\alpha_n}(z))}=g_n\circ\pi_{\alpha_n}$$ 
with $g_n=c_n\b1_{\pi_{\alpha_n}(z)}$. The sequence $(f_n)_{n\in\N}$ converges to $0$ in $\calc(X,R)$, by \eqref{e:cnvrg0}. However, reasoning as in \eqref{e:muxca} shows
$$\mu_X(f_n)=\mu_X((\pi_{\alpha_n})^*g_n)=\mu_{\alpha_n}(g_n)=\frac{1}{|X_{\alpha_n}|}\sum_{x\in X_{\alpha_n}}\delta_x\big(c_n\b1_{\pi_{\alpha_n}(z)}\big)=\frac{1}{c_n}\cdot c_n=1$$
for every $n$. Thus $\mu_X$ cannot extend to a continuous functional on $\calc(X,R)$.  \end{prf} 

\subsubsection{The Hecke submodule} Assume that $\mu_X$ exists (for example because $X$ is a group). Then we can use \eqref{e:heckeprod} to define a map
$$\calh\colon\call_c(X,R)\rightarrow\cald(X,R)$$
by $f\mapsto f\cdot\mu_X$\,. We define the {\em Hecke submodule} to be the image of $\calh$. The equality
\begin{equation} \label{e:Hck} (\pi_\alpha)_*\big(\calh(\b1_{\pi_\alpha^{-1}(x)})\big)=(\pi_\alpha)_*(\b1_{\pi_\alpha^{-1}(x)}\cdot\mu_X)\stackrel{\ast}{=} \b1_x\cdot(\pi_\alpha)_*(\mu_X)=\b1_x\cdot\mu_\alpha=\frac{1}{|X_\alpha|}\,\delta_x \end{equation}
(where $\ast$ comes from the fact that $\b1_{\pi_\alpha^{-1}(x)}$ factors through $X_\alpha$) implies that the Hecke submodule surjects onto all $\cald(X_\alpha,R)$ and hence is dense in $\cald(X,R)$.

\begin{rmk}
When $X$ is a group, the $R$-module $\cald(X,R)$ becomes an algebra with the convolution product. Readers might be familiar with the case $X=\GL_n(\adl_K^{\infty})$, where $K$ is a global field and $\adl_K^{\infty}$ the non-archimedean part of its adele ring: in this case, the Hecke submodule is the  Hecke algebra, as appearing in the theory of automorphic forms. 
\end{rmk}

The distribution functor is covariant. In fact, if $\phi\colon X\rightarrow Y$ is a continuous map of (pro)finite sets, it induces $\phi_*\colon\cald(X,R)\rightarrow\cald(Y,R)$ by $\phi_*(\mu)(f) =\mu(f\circ\phi)$ (because $f\circ\phi$ is locally constant if so is $f$). However, restriction to the Hecke submodule yields a contravariant functor: in particular, this allows us to lift distributions from $X_\alpha$ to $X$. The idea is summarized by the commutative diagram 
$$\begin{CD} \call_c(X,R)@>{\calh}>> \cald(X,R) \\ @A{(\pi_\alpha)^*}AA @VV{(\pi_\alpha)_*}V \\ \calc(X_\alpha,R) @>>\calh_\alpha> \cald(X_\alpha,R) \end{CD}$$
(where $\calh_\alpha$ is defined in the obvious way). The equality $(\pi_\alpha)_*\circ\calh\circ(\pi_\alpha)^*=\calh_\alpha$ is implicit in \eqref{e:Hck}, since $\b1_{\pi_\alpha^{-1}(x)}=(\pi_\alpha)^*(\b1_x)$\,.

\subsubsection{A topological limit} \label{sss:tplglm} As above, assume that $\mu_X$ exists. For a nonempty $S\subseteq X$, let
$$\mu_{S,\alpha}:=\frac{1}{|\pi_\alpha(S)|}\sum_{x\in\pi_\alpha(S)}\delta_x\in\cald(X_\alpha,R)\,.$$
We also put $\mu_{\emptyset,\alpha}:=0$. If $S$ is closed we have $S=\liminv\pi_\alpha(S)$. However, there is no reason why the counting distributions $\mu_{S,\alpha}$ should satisfy condition \eqref{e:condhaar} and form an inverse system. Define 
\begin{equation} \label{e:dmusa} \widetilde\mu_{S,\alpha}:=\frac{|X_\alpha|}{|\pi_\alpha(S)|}\sum_{x\in\pi_\alpha(S)}\b1_{\pi_\alpha^{-1}(x)}\cdot\mu_X
=\frac{|X_\alpha|}{|\pi_\alpha(S)|}\b1_{\pi_\alpha^{-1}(\pi_\alpha(S))}\cdot\mu_X\in\cald(X,R)\,, \end{equation}
so that 
\begin{align*} (\pi_\alpha)_*(\widetilde\mu_{S,\alpha}) & = \frac{|X_\alpha|}{|\pi_\alpha(S)|}(\pi_\alpha)_*(\b1_{\pi_\alpha^{-1}(\pi_\alpha(S))}\cdot\mu_X)=\frac{|X_\alpha|}{|\pi_\alpha(S)|} \;\b1_{\pi_\alpha(S)}\cdot\mu_\alpha= \\
& = \frac{|X_\alpha|}{|\pi_\alpha(S)|}\;\frac{1}{|X_\alpha|}\sum_{x\in\pi_\alpha(S)}\delta_x=\mu_{S,\alpha}\,.\end{align*}
In particular, if $U$ is $\alpha$-saturated one has
\begin{equation} \label{e:prmusa} \widetilde\mu_{S,\alpha}(\b1_U)=\mu_{S,\alpha}(\b1_{\pi_\alpha(U)})=\frac{|\pi_\alpha(U)\cap\pi_\alpha(S)|}{|\pi_\alpha(S)|} \end{equation}
by the same reasoning as used in \eqref{e:muxca}.

\begin{dfn} \label{d:muS2} Let $S$ be a subset of $X$. The {\em procounting distribution} $\mu_S$ attached to $S$ is the limit (if it exists)
\begin{equation} \label{e:dmus2} \mu_S=\lim_{\alpha\in\calj}\widetilde\mu_{S,\alpha} \end{equation}
of the net $(\widetilde\mu_{S,\alpha})_\alpha$ in $\cald(X,R)$. \end{dfn}

The limit in \eqref{e:dmus2} is taken with respect to the topology of $\cald(X,R)$, as discussed in Subsection \ref{sss:tplgd}. Thus $\mu_S=\lim_\alpha\widetilde\mu_{S,\alpha}$ means that for every neighbourhood $\calu$ of $\mu_S$ one can find an index $\alpha_0$ such that $\widetilde\mu_{S,\alpha}\in\calu$ if $\alpha\ge\alpha_0$. By Proposition \ref{p:dxtplg}, it is enough to check the convergence of the values $\widetilde\mu_{S,\alpha}(\b1_U)$ for every compact open $U\subseteq X$.

\begin{rmk} Definitions \ref{d:muS1} and \ref{d:muS2} are compatible: if $\mu_X$ exists in the sense of Definition \ref{d:muS1} then it is also the limit of the net $\widetilde\mu_{X,\alpha}$\,. Indeed, \eqref{e:dmusa} yields $\widetilde\mu_{X,\alpha}=\mu_X$ for every $\alpha$. \end{rmk}

\begin{eg} If $S=\{x\}$ then $\mu_S$ exists and is the Dirac delta at $x$, as immediate by taking \eqref{e:prmusa} with $\alpha$ such that $U=\pi_\alpha^{-1}(\pi_\alpha(U))$. \end{eg}

\begin{lem} \label{l:muS-chs} Let $\overline S$ be the closure of $S\subseteq X$. Then $\mu_S$ exists if and only if so does $\mu_{\overline S}$\,. Moreover, these two distributions are equal. \end{lem}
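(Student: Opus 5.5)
The plan is to show that the two nets $(\widetilde\mu_{S,\alpha})_\alpha$ and $(\widetilde\mu_{\overline S,\alpha})_\alpha$ coincide term by term. Then one converges if and only if the other does, and the limits agree. The key input is the last statement of Lemma \ref{l:clsr}: for every $\alpha\in\calj$ one has $\pi_\alpha(\overline S)=\pi_\alpha(S)$.

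The steps, in order, are as follows.

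First, dispose of the trivial case. We have $S=\emptyset$ if and only if $\overline S=\emptyset$, and then both nets are identically $0$ by the convention $\mu_{\emptyset,\alpha}=0$.

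Second, for nonempty $S$, insert $\pi_\alpha(\overline S)=\pi_\alpha(S)$ into the definition \eqref{e:dmusa}. Both the normalizing factor $|X_\alpha|/|\pi_\alpha(S)|$ and the compact open set $\pi_\alpha^{-1}(\pi_\alpha(S))$ depend on $S$ only through $\pi_\alpha(S)$. Hence
\[
\widetilde\mu_{S,\alpha}=\frac{|X_\alpha|}{|\pi_\alpha(S)|}\,\b1_{\pi_\alpha^{-1}(\pi_\alpha(S))}\cdot\mu_X=\frac{|X_\alpha|}{|\pi_\alpha(\overline S)|}\,\b1_{\pi_\alpha^{-1}(\pi_\alpha(\overline S))}\cdot\mu_X=\widetilde\mu_{\overline S,\alpha}
\]
in $\cald(X,R)$, for every $\alpha$. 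Equivalently, via Proposition \ref{p:dxtplg}, one can check this on each compact open $U$ using \eqref{e:prmusa}, for all $\alpha$ large enough that $U$ is $\alpha$-saturated (Corollary \ref{c:dl213}).

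Third, conclude. Two nets indexed by the same directed set $\calj$ with identical terms have the same limit behaviour in the (Hausdorff, since $R$ is) weak-* topology. So $\mu_S$ exists if and only if $\mu_{\overline S}$ does, and in that case they are equal.

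I do not expect a real obstacle. The only point needing care is to make sure Lemma \ref{l:clsr} is applied for every index $\alpha$, not just along a cofinal subset, and its final statement gives exactly that.
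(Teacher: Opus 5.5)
Your proposal is correct and follows the paper's proof: both use the final statement of Lemma \ref{l:clsr}, $\pi_\alpha(S)=\pi_\alpha(\overline S)$ for every $\alpha\in\calj$, and substitute it into \eqref{e:dmusa} to get $\widetilde\mu_{S,\alpha}=\widetilde\mu_{\overline S,\alpha}$ for every $\alpha$. The nets are therefore identical, so the existence and equality of the limits follow immediately. Your explicit treatment of the empty case is a harmless extra that the paper leaves implicit.
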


\begin{prf} Lemma \ref{l:clsr} immediately yields that the equality $\pi_\alpha(S)=\pi_\alpha(\overline S)$ holds for every $\alpha\in\calj$. By \eqref{e:dmusa}, this implies 
$\widetilde\mu_{S,\alpha}=\widetilde\mu_{\overline S,\alpha}$\,. \end{prf}

By Lemma \ref{l:muS-chs}, in the following we shall mostly consider closed $S$. If $S$ is also open, the situation is particularly nice, as the next result shows.

\begin{lem} \label{l:mucmpap} Let $U\subseteq X$ be compact open. Then $\widetilde\mu_{U,\alpha}$ converges to
\begin{equation} \label{e:mucmpap} \mu_U=\frac{\b1_U}{\mu_X(\b1_U)}\cdot\mu_X\;. \end{equation}
\end{lem}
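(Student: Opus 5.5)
By Corollary \ref{c:dl213}, the compact open set $U$ is $\alpha_0$-saturated for some $\alpha_0\in\calj$, that is, $U=\pi_{\alpha_0}^{-1}(\pi_{\alpha_0}(U))$. The plan is to show that the net $(\widetilde\mu_{U,\alpha})_\alpha$ is eventually constant, equal to the right-hand side of \eqref{e:mucmpap}; convergence is then trivial.

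First, for $\beta\ge\alpha_0$ the set $U$ is also $\beta$-saturated, since $\pi_{\alpha_0}=\pi_{\alpha_0}^\beta\circ\pi_\beta$. Hence $\pi_\beta^{-1}(\pi_\beta(U))=U$, and the second expression in \eqref{e:dmusa} gives
$$\widetilde\mu_{U,\beta}=\frac{|X_\beta|}{|\pi_\beta(U)|}\,\b1_U\cdot\mu_X\,.$$
Second, formula \eqref{e:muxca}, applied to the $\beta$-saturated set $U$, gives $\mu_X(\b1_U)=|\pi_\beta(U)|/|X_\beta|$. This value does not depend on $\beta$, since $\mu_X$ is a fixed distribution. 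Substituting, we get $\widetilde\mu_{U,\beta}=\mu_X(\b1_U)^{-1}\,\b1_U\cdot\mu_X$ for every $\beta\ge\alpha_0$.

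A constant tail of a net converges to that constant in any topology, and in particular in the weak-* topology on $\cald(X,R)$ described in Subsection \ref{sss:tplgd}. This yields \eqref{e:mucmpap}.

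There is no real obstacle here. The only points to check are that the inverse $\mu_X(\b1_U)^{-1}$ makes sense and that the empty case is handled. If $U\neq\emptyset$, then $|\pi_\beta(U)|$ is a positive integer and $R$ is a $\Q$-algebra, so $\mu_X(\b1_U)=|\pi_\beta(U)|/|X_\beta|$ is invertible in $R$. The case $U=\emptyset$ must be excluded, or read with the convention $\mu_{\emptyset,\alpha}=0$.
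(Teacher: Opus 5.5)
Your proof is correct and follows the paper's argument: you saturate $U$ at some level $\alpha_0$, observe that $\widetilde\mu_{U,\beta}$ is constant for $\beta\ge\alpha_0$, and identify that constant with the right-hand side of \eqref{e:mucmpap}. The only difference is how you get constancy of $|\pi_\beta(U)|/|X_\beta|$. The paper derives it from the equal-fibre condition \eqref{e:condhaar} of Proposition \ref{p:hrl}; you read it off directly from \eqref{e:muxca} at each saturating level, which is equally valid and slightly more economical.
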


\begin{prf} By Corollary \ref{c:dl213}, we can assume $U=\pi_\gamma^{-1}(\pi_\gamma(U))$ for some $\gamma\in\calj$. Then $\pi_\alpha(U)=(\pi_\gamma^\alpha)^{-1}(\pi_\gamma(U))$ holds for $\alpha\ge\gamma$. Since $\mu_X$ exists, all fibres of the transition maps $\pi_\alpha^\beta$ have the same cardinality, by Proposition \ref{p:hrl}, and \eqref{e:condhaar} yields
$$\frac{|X_\alpha|}{|X_\gamma|}=\frac{|\pi_\alpha(U)|}{|\pi_\gamma(U)|}\;.$$
showing that the ratio $|\pi_\alpha(U)|/|X_\alpha|$ is constant (and equal to $\mu_X(\b1_U)$, as explained in Remark \ref{r:msrx}.2) for $\alpha\ge\gamma$. Therefore \eqref{e:dmusa} becomes $\widetilde\mu_{U,\alpha}=c_U\b1_U\cdot\mu_X$, where $c_U=1/\mu_X(\b1_U)$, for any such $\alpha$. \end{prf}

We conclude with an example where $\mu_S$ does not exist.

\begin{eg} \label{eg:nonex} Let $p$ be an odd prime and take $X=\Z_p$ (with defining maps $\pi_n\colon\Z_p\rightarrow\Z/p^n\Z$). Define $S=S_0\cup S_1$ by
$$S_0:=\left\{\sum_{k=0}^\infty a_kp^k\mid a_k=0\text{ if }k\text{ is even },a_k\in\{1,\dots,p-1\}\text{ if }k\text{ is odd}\right\}$$
and
$$S_1:=\left\{\sum_{k=0}^\infty a_kp^k\mid a_k=0\text{ if }k\text{ is odd },a_k\in\{1,\dots,p-1\}\text{ if }k\text{ is even}\right\}\,.$$
(Note that $S_0$ and $S_1$ are both closed sets.) Thus one has $\pi_1(S_0)=\{\pi_1(0)\}$, $\pi_1(S_1)=(\Z/p\Z)^*$ and, for $x\in\pi_n(S_0)$,
\begin{equation}\label{e:egfiber} |(\pi_n^{n+1})^{-1}(x)|=\begin{cases} p-1 & \text{ if }n\text{ is odd}\\ 1 & \text{ if }n\text{ is even}\end{cases} \end{equation}
while the reverse holds for $x\in\pi_n(S_1)$. A simple induction then shows
$$|\pi_n(S_0)|=(p-1)^{\lfloor n/2\rfloor}\;\;\text{ and }\;\;|\pi_n(S_1)|=(p-1)^{\lfloor (n+1)/2\rfloor}\,.$$
Since $\pi_n(S)\cap\pi_n(p\Z_p)=\pi_n(S_0)$, formula \eqref{e:prmusa} yields
\begin{equation} \label{e:egnonex} \widetilde\mu_{S,n}(\b1_{p\Z_p})=\frac{|\pi_n(S_0)|}{|\pi_n(S)|}=
\begin{cases}\displaystyle\frac{1}{p} & \text{ if }n\text{ is odd},\vspace{2pt} \vspace{3pt}\\ \displaystyle\frac{1}{2} & \text{ if }n\text{ is even}.\end{cases} \end{equation}
This proves that the distributions $\widetilde\mu_{S,n}$ do not converge (independently of the choice of $R$). \end{eg}

\begin{rmk} \label{r:union} By formula \eqref{e:egfiber} and its analogue for $S_1$, Proposition \ref{p:hrl}  implies that the distributions $\mu_{S_0}$ and $\mu_{S_1}$ exist. Therefore, Example \ref{eg:nonex} shows that, for two disjoint sets $A,B\subset X$, the existence of both $\mu_A$ and $\mu_B$ does not yield the existence of the distribution $\mu_{A\cup B}$.

The example also illustrates how sensitive pro-counting distributions are to the choice of the maps defining the inverse system: indeed, $\mu_S$ exists if one takes as inverse system either $\big(\Z/p^{2k}\Z,\pi_{2k}^{2k+2}\big)_{k\in\N}$ or $\big(\Z/p^{2k+1}\Z,\pi_{2k+1}^{2k+3}\big)_{k\in\N}$, because the quantity $|(\pi_n^{n+2})^{-1}(x)|$ is constant on $\pi_n(S)$, for every $n$. However, these choices yield two different distributions, as shown by \eqref{e:egnonex}.  \end{rmk}

\subsubsection{The ambient space} \label{sss:ambspc} In \S\ref{sss:tplglm}, the distribution $\mu_S$ was defined assuming the existence of $\mu_X$\,. Now we will show that this definition is in fact independent of the ambient space.\\

If $S\subseteq X$ is closed then one has $\displaystyle S=\varprojlim_{\alpha\in\calj}\pi_\alpha(S)$. The inclusions $\iota_\alpha\colon\pi_\alpha(S)\hookrightarrow X_\alpha$ induce
$$\cald(\pi_\alpha(S),R)\hookrightarrow\cald(X_\alpha,R)$$
for all $\alpha\in\calj$. Thus, taking the limit, $\iota_*\colon\cald(S,R)\hookrightarrow\cald(X,R)$.

\begin{lem} \label{l:clsemb} In the setting above, the map $\iota_*$ is a closed embedding, with image
$$\iota_*\big(\cald(S,R)\big)=\big\{\mu\in\cald(X,R)\mid\mu(\b1_U)=0\text{ if }U\cap S=\emptyset\big\}$$
where $U$ varies among all the compact open subsets of $X$. \end{lem}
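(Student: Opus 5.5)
The plan is to work one level $\alpha$ at a time and then pass to the limit using \eqref{e:dxlmdxa}. For each $\alpha\in\calj$, the inclusion $\iota_\alpha\colon\pi_\alpha(S)\hookrightarrow X_\alpha$ induces $(\iota_\alpha)_*\colon\cald(\pi_\alpha(S),R)\rightarrow\cald(X_\alpha,R)$. By Lemma \ref{l:darflx}, this map sends the basis $\{\delta_x\}_{x\in\pi_\alpha(S)}$ to part of the basis $\{\delta_x\}_{x\in X_\alpha}$. Hence it is the inclusion of a direct summand $\bigoplus_{x\in\pi_\alpha(S)}R\delta_x\subseteq\bigoplus_{x\in X_\alpha}R\delta_x$, and it is a homeomorphism onto its image for the product topologies. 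That image is the closed subset cut out by the conditions $\ev_{\b1_x}=0$ for $x\notin\pi_\alpha(S)$; it is closed because $R$ is Hausdorff and the evaluations are continuous.

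Next we check compatibility with the transition maps. For $\beta\ge\alpha$ we have $\pi_\alpha^\beta(\pi_\beta(S))=\pi_\alpha(S)$, so the maps $(\iota_\alpha)_*$ commute with the maps $(\pi_\alpha^\beta)_*$ on both sides. Taking inverse limits then gives $\iota_*$. Injectivity passes to the limit, since an inverse limit of injective maps is injective. For the topology, Proposition \ref{p:dxtplg} says both $\cald(S,R)$ and $\cald(X,R)$ carry the inverse limit topology. A compatible family of embeddings therefore induces an embedding, because the topology on $\cald(S,R)$ is the initial topology for the projections to the $\cald(\pi_\alpha(S),R)$, and these factor through the embeddings into $\cald(X_\alpha,R)$. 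The image is the intersection over $\alpha$ of the preimages under $(\pi_\alpha)_*$ of the closed images at finite level, so it is closed.

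It remains to identify the image with the set $\{\mu\mid \mu(\b1_U)=0 \text{ if } U\cap S=\emptyset\}$. From the previous step, $\mu$ lies in the image if and only if, for every $\alpha$, $(\pi_\alpha)_*\mu$ is supported on $\pi_\alpha(S)$. By \eqref{e:pfesplc}, this means $\mu(\b1_{\pi_\alpha^{-1}(x)})=0$ for all $x\notin\pi_\alpha(S)$. For such $x$ we have $\pi_\alpha^{-1}(x)\cap S=\emptyset$, so every $\mu$ in the displayed set satisfies the condition.

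Conversely, suppose $\mu$ lies in the image and let $U$ be compact open with $U\cap S=\emptyset$. By Corollary \ref{c:dl213}, $U$ is $\alpha$-saturated for some $\alpha$, so $U=\bigsqcup_{x\in\pi_\alpha(U)}\pi_\alpha^{-1}(x)$. Each such $x$ lies outside $\pi_\alpha(S)$: if $x=\pi_\alpha(s)$ with $s\in S$, saturation would give $s\in U$. Finite additivity then gives $\mu(\b1_U)=0$.

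The only mildly delicate point is the topological part, namely that a limit of embeddings is an embedding with closed image. I expect this to follow formally from Proposition \ref{p:dxtplg} together with the finite-level description, without needing completeness of $R$.
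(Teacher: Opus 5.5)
Your proposal is correct and follows the same route as the paper: closed embeddings at each finite level via Lemma~\ref{l:darflx}, passage to the inverse limit, and identification of the image through \eqref{e:pfesplc} together with saturation of compact open sets. The differences are minor: you get $\pi_\alpha(U)\cap\pi_\alpha(S)=\emptyset$ directly from the $\alpha$-saturation of $U$ instead of going through Lemma~\ref{l:clsr} and the closedness of $S$, and you spell out why a compatible family of closed embeddings gives a closed embedding in the limit, which the paper leaves implicit.
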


\begin{prf} Lemma \ref{l:darflx} immediately yields that the image of $(\iota_\alpha)_*$ is the space
$$\big\{\mu\in\cald(X_\alpha,R)\mid\mu(\b1_x)=0\text{ if }x\notin\pi_\alpha(S)\big\}$$
with basis $\{\delta_x\mid x\in\pi_\alpha(S)\}$. Hence the maps $(\iota_\alpha)_*$ are all closed embeddings. This implies the same for $\iota_*$\,.

Let $\mu\in\iota_*\big(\cald(S,R)\big)$ and fix a compact open $U\subseteq X$. Since $S$ is closed, Lemma \ref{l:clsr} shows that $U\cap S=\emptyset$ holds only if there is some $\alpha\in\calj$ such that $\pi_\alpha(S)\cap\pi_\alpha(U)=\emptyset$. Replacing, if needed, $\alpha$ with a bigger index, we can assume that $U$ is $\alpha$-saturated: but then
$$\mu(\b1_U)=\mu(\b1_{\pi_\alpha^{-1}(\pi_\alpha(U))})=(\pi_\alpha)_*(\mu)(\b1_{\pi_\alpha(U)})=0,$$
because $(\pi_\alpha)_*(\mu)$ is in  $\iota_*\big(\cald(\pi_\alpha(S),R)\big)$.

Vice versa, if $\mu(\b1_U)=0$ for every compact open $U$ disjoint from $S$, it follows 
$$(\pi_\alpha)_*(\mu)(\b1_x)=\mu\big(\b1_{\pi_\alpha^{-1}(x)}\big)=0$$ 
for all $x\notin\pi_\alpha(S)$. This shows that $(\pi_\alpha)_*(\mu)$ is in the image of $(\iota_\alpha)_*$ for all $\alpha$ and hence $\mu\in\cald(S,R)$. \end{prf}

In the following, we shall identify $\cald(S,R)$ with its image via $\iota_*$\,. Recalling Definition \ref{d:spprt}, Lemma \ref{l:clsemb} can be reformulated as the statement that $\cald(S,R)$ consists of those distributions having support in $S$.
\begin{cor} \label{c:musintr}
Let $S\subseteq X$. The distribution $\mu_S$ exists if and only if
\begin{equation} \label{e:dmus3}
\mu_S(\b1_V)=\lim_\alpha\frac{|\pi_\alpha(V)|}{|\pi_\alpha(S)|}
\end{equation}
holds for every $V\subseteq \overline S$ which is compact open in the
induced topology. Moreover, $\mu_S$ is in $\cald(\overline S,R)$.
\end{cor}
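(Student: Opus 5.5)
By Lemma \ref{l:muS-chs} I would first replace $S$ by $\overline S$. This changes neither the projections $\pi_\alpha(S)$ nor the distributions $\widetilde\mu_{S,\alpha}$, so from now on $S$ is assumed closed. The plan has three steps: relate compact opens of $S$ to compact opens of $X$, express $\widetilde\mu_{S,\alpha}$ on such sets via \eqref{e:prmusa}, and use Lemma \ref{l:clsemb} for the support statement.

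\emph{Compact opens of $S$.} In the induced topology, the compact open subsets of $S$ are exactly the sets $V=U\cap S$ with $U\subseteq X$ compact open. One direction is clear. For the other, $V$ is a finite union of basic opens $\pi_\alpha^{-1}(x)\cap S$, so $U$ can be taken $\alpha$-saturated. Since $S=\varprojlim\pi_\alpha(S)$ is itself profinite, Corollary \ref{c:dl213} applied to $S$ gives the same description.

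\emph{Evaluation on $U\cap S$.} Fix $U$ compact open and $\alpha$-saturated, and set $V=U\cap S$. For every $\beta\ge\alpha$ the set $U$ is $\beta$-saturated, so $\pi_\beta(U)\cap\pi_\beta(S)=\pi_\beta(U\cap S)=\pi_\beta(V)$. By \eqref{e:prmusa},
\[\widetilde\mu_{S,\beta}(\b1_U)=\frac{|\pi_\beta(V)|}{|\pi_\beta(S)|}\qquad\text{for all }\beta\ge\alpha.\]
Proposition \ref{p:dxtplg} says that convergence of the net $\widetilde\mu_{S,\beta}$ is equivalent to convergence of $\widetilde\mu_{S,\beta}(\b1_U)$ for every compact open $U$. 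Hence $\mu_S$ exists if and only if the limit in \eqref{e:dmus3} exists for every $V$. In that case $\mu_S(\b1_U)$ equals this limit, and it depends only on $V=U\cap S$. This is what makes \eqref{e:dmus3} a meaningful formula for $\mu_S$ regarded on $S$.

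\emph{Support in $\overline S$.} Suppose $U\cap S=\emptyset$. Lemma \ref{l:clsr} gives some $\alpha$ with $\pi_\alpha(U)\cap\pi_\alpha(S)=\emptyset$. After enlarging $\alpha$ so that $U$ is $\alpha$-saturated, the formula above gives $\widetilde\mu_{S,\beta}(\b1_U)=0$ for all $\beta\ge\alpha$, so $\mu_S(\b1_U)=0$. Lemma \ref{l:clsemb} then yields $\mu_S\in\cald(\overline S,R)$.

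The only step needing care is checking that $\pi_\beta(U)\cap\pi_\beta(S)=\pi_\beta(U\cap S)$ holds for saturated $U$, and that it persists for all $\beta\ge\alpha$. Once that is settled, the rest is bookkeeping with earlier lemmas.
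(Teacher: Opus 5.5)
Your proposal is correct and follows essentially the same route as the paper. You reduce to closed $S$ via Lemma~\ref{l:muS-chs}, write each compact open of $S$ as $U\cap S$ with $U$ saturated, and apply \eqref{e:prmusa} using $\pi_\beta(U)\cap\pi_\beta(S)=\pi_\beta(U\cap S)$ for $\beta$ large. You then obtain the support statement from Lemma~\ref{l:clsemb}. One small simplification: in the disjoint case you do not need Lemma~\ref{l:clsr}, since saturation of $U$ already gives $\pi_\beta(U)\cap\pi_\beta(S)=\pi_\beta(U\cap S)=\emptyset$.
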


\begin{prf}
By Lemma \ref{l:muS-chs}, we may replace $S$ by $\overline S$, since $\pi_\alpha(S)=\pi_\alpha(\overline S)$ for every $\alpha$,
and $\mu_S=\mu_{\overline S}$ whenever either side exists. Thus we may assume $S$ closed.

If $V\subseteq S$ is a  compact open, then there is $U$ compact open in $X$ such that $V=U\cap S$. By Definition~\ref{d:muS2}, $\mu_S$ exists if and
only if the net $\widetilde\mu_{S,\alpha}(\b1_U)$ converges for every such
$U$, and then the limit is $\mu_S(\b1_U)$. This condition is equivalent to
\eqref{e:dmus3} by \eqref{e:prmusa}, since
$U=\pi_\beta^{-1}(\pi_\beta(U))$ for some $\beta$
(Corollary \ref{c:dl213}) and hence
$\pi_\alpha(V)=\pi_\alpha(U)\cap\pi_\alpha(S)$
if $\alpha\ge\beta$. This also shows $\mu_S(\b1_U)=0$ if
$S\cap U=\emptyset$, and thus $\mu_S\in\cald(S,R)$ by
Lemma~\ref{l:clsemb}.
\end{prf}

\begin{rmk} 
Corollary \ref{c:musintr} makes it clear that $\mu_S$ is independent of the ambient space $X$. The intrinsic characterization of $\mu_S$ provided by \eqref{e:dmus3} could be used as definition of the procounting distribution attached to $S$, in alternative to \eqref{e:dmus2}. We have chosen to start with the former because in the situations of interest to us there is always a natural ambient space $X$ such that $\mu_X$ exists in the sense of Definition \ref{d:muS1}.  
\end{rmk}

The assumption about the existence of $\mu_X$ can be made under mild conditions: we show that, subject to our hypotheses, any profinite space can be embedded into a bigger one for which the procounting distribution exists.

\begin{prop} \label{p:XY} Assume $\calj$ contains a countable cofinal chain $\calj_0=\{\alpha_n\}_{n\in\N}\subseteq\calj$. Then there exists a profinite space $Y$ such that it contains $X$ as a closed subset and $\mu_Y$ exists (in the sense of Definition \ref{d:muS1}). \end{prop}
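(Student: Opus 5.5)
Restricting to the cofinal chain $\calj_0$ gives $X=\varprojlim_n X_{\alpha_n}$, with surjective transition maps $X_{\alpha_{n+1}}\to X_{\alpha_n}$; write $X_n:=X_{\alpha_n}$ and $\pi_n^{n+1}$ for these maps. Replacing $\calj$ by $\calj_0$ changes neither the profinite space nor its topology, so we may work with this $\N$-indexed system. The plan is to build an inverse system $(Y_n)_{n\in\N}$ of finite sets, with injections $\iota_n\colon X_n\hookrightarrow Y_n$ compatible with the transition maps. All fibres of $Y_{n+1}\to Y_n$ should have the same cardinality $d_n$, so that $Y:=\varprojlim Y_n$ has $\mu_Y$ by Proposition \ref{p:hrl}.

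For the construction, first set $d_n:=\max_{x\in X_n}|(\pi_n^{n+1})^{-1}(x)|$, which is finite and at least $1$. Then proceed inductively, starting from $Y_0:=X_0$. Given $Y_n\supseteq X_n$, define
\[ Y_{n+1}:=\big(X_{n+1}\sqcup E_{n+1}\big), \]
where $E_{n+1}$ consists of enough new points to fill every fibre up to size $d_n$. Concretely, over a point $x\in X_n$ we add $d_n-|(\pi_n^{n+1})^{-1}(x)|$ new points mapping to $x$. Over a point $y\in Y_n\setminus X_n$ we add $d_n$ new points mapping to $y$. The map $Y_{n+1}\to Y_n$ is $\pi_n^{n+1}$ on $X_{n+1}$ and the prescribed map on the new points. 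By construction it is surjective with all fibres of size $d_n$, and it restricts to $\pi_n^{n+1}$ on $X_{n+1}$. A simple induction then gives $|Y_n|=|Y_0|\prod_{k<n}d_k$, and hence condition \eqref{e:condhaar} holds for all $m\ge n$, since fibres of composites have size $\prod_{n\le k<m}d_k$.

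It remains to check that $X\hookrightarrow Y:=\varprojlim Y_n$ is a closed embedding. The inclusions $\iota_n$ form a compatible family of injections, so the induced map $X\to Y$ is continuous and injective. It is a homeomorphism onto its image because $X$ is compact and $Y$ is Hausdorff. Its image is $\varprojlim X_n$ viewed inside $Y$, which is closed as an intersection of the closed sets $\pi_n^{-1}(X_n)$; this also agrees with Lemma \ref{l:clsr}. Since $\calj_0$ is cofinal, the resulting $Y$ is indexed by a directed set satisfying the standing assumptions.

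The only point needing care, and the main potential obstacle, is that the new points must be attached \emph{over all of} $Y_n$, including the previously added points. Otherwise the fibres over $Y_n\setminus X_n$ would be empty and uniformity would fail. The uniform choice of $d_n$ as a maximum, together with filling fibres over the old padding, handles this. The remaining hypotheses of Proposition \ref{p:hrl} are then immediate.
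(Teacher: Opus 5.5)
Your proof is correct and follows the same route as the paper's sketch: start from $Y_0=X_0$, recursively pad $X_{n+1}$ with new points so that all fibres of $Y_{n+1}\to Y_n$ have a common cardinality, and conclude by Proposition \ref{p:hrl}. You also make explicit two details the sketch leaves implicit, namely taking $d_n$ to be the maximal fibre size and attaching new points over the earlier padding in $Y_n\setminus X_n$.
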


\begin{skprf} We use the shortenings $X_n$ for $X_{\alpha_n}$ and so on.

The idea is to construct $Y$ by embedding each $X_n$ into a set $Y_n$ so that for every $n$ the fibres of $\pi_n^{n+1}\colon Y_{n+1}\rightarrow Y_n$ have all the same cardinality. This can be achieved starting with $Y_0=X_0$ and then defining recursively $Y_n$ by adding enough points to $X_n$. The existence of $\mu_Y$ then follows from Proposition \ref{p:hrl}. \end{skprf}

As an (in)conclusive remark, we expect that Proposition \ref{p:XY} may be used to expand the scope of our theory beyond arithmetic (analysis on graphs seems to be a plausible candidate).

\subsubsection{The procounting distribution of a product} \label{sss:prcntprd} Let $X=\prod_{\kappa\in\calk}Y_\kappa$ be as in Subsection \ref{sss:prdtt} and consider a subset of the form $S=\prod_\kappa T_\kappa$, where $T_\kappa\subseteq Y_\kappa$\,. By Lemma \ref{l:prdst}, $X$ is the limit of $X_\alpha$\,, with indexes $\alpha=(\alpha_\kappa)_{\kappa\in K}\in\bigcup_K\prod_{\kappa\in K}\calj_\kappa$. One has
$$\pi_\alpha(S)=\prod_{\kappa\in K}\pi_{\alpha_\kappa}(T_\kappa)$$
and hence the isomorphism of Lemma \ref{l:prddstrbf} yields
\begin{equation} \label{e:prdtns} 
\bigotimes_{\kappa\in K}\mu_{T_\kappa,\alpha_\kappa}=\bigotimes_{\kappa\in K}\left(\frac{1}{|\pi_{\alpha_\kappa}(T_\kappa)|}\sum_{y\in\pi_{\alpha_\kappa}(T_\kappa)}\delta_y\right)\;\mapsto\;\frac{1}{|\pi_\alpha(S)|}\sum_{x\in\pi_\alpha(S)}\delta_x=\mu_{S,\alpha}\,. \end{equation}

\begin{prop} \label{p:prddstr2} Assume $X=\prod_{\kappa\in\calk}Y_\kappa$, as in Theorem \ref{t:dstrprd}, and let $S=\prod_\kappa T_\kappa$ with $T_\kappa\subseteq Y_k$\,. If the procounting distribution $\mu_{T_\kappa}$ exists for every index $\kappa$, then $\mu_S$ also exists and is the product of the $\mu_{T_\kappa}$\!'s.  \end{prop}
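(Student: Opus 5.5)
Since each $\mu_{T_\kappa}$ exists, the total masses satisfy $\mu_{T_\kappa}(\b1_{Y_\kappa})=1$ when $T_\kappa\neq\emptyset$: this is \eqref{e:prmusa} with $U=Y_\kappa$, which gives $\widetilde\mu_{T_\kappa,\alpha_\kappa}(\b1_{Y_\kappa})=1$ for every $\alpha_\kappa$. The degenerate case where some $T_\kappa$ is empty makes $S$ empty, and then everything is $0$. So $\prod_\kappa\mu_{T_\kappa}(\b1_{Y_\kappa})$ converges trivially to $1$, and Theorem \ref{t:dstrprd} provides the product distribution $\nu:=\bigotimes_\kappa\mu_{T_\kappa}$. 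It remains to show that the net $\widetilde\mu_{S,\alpha}$ converges to $\nu$. By Proposition \ref{p:dxtplg}, it is enough to check that $\widetilde\mu_{S,\alpha}(\b1_U)\to\nu(\b1_U)$ for every compact open $U\subseteq X$.

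First reduction. By Corollary \ref{c:dl213} and Lemma \ref{l:prdst}, every compact open $U$ is a finite disjoint union of boxes $\prod_{\kappa\in K}U_\kappa\times\prod_{\kappa\notin K}Y_\kappa$ for a fixed finite $K$. Both $\widetilde\mu_{S,\alpha}$ and $\nu$ are finitely additive, and a finite sum of convergent nets converges. We may therefore assume $U$ is such a box. Choose $\gamma=(\gamma_\kappa)_{\kappa\in K}$ so that each $U_\kappa$ is $\gamma_\kappa$-saturated. Then $U$ is $\alpha$-saturated for every $\alpha=(\alpha_\kappa)_{\kappa\in K'}\ge\gamma$.

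Core computation. For such $\alpha$, I apply \eqref{e:prmusa} together with $\pi_\alpha(S)=\prod_{\kappa\in K'}\pi_{\alpha_\kappa}(T_\kappa)$; equivalently, I use \eqref{e:prdtns} and evaluate via Lemma \ref{l:prddstrbf}. Both sides factor over $K'$, and coordinates in $K'\setminus K$ contribute $1$ because the corresponding factor of $U$ is all of $Y_\kappa$. This gives
\[
\widetilde\mu_{S,\alpha}(\b1_U)=\prod_{\kappa\in K}\frac{|\pi_{\alpha_\kappa}(U_\kappa)\cap\pi_{\alpha_\kappa}(T_\kappa)|}{|\pi_{\alpha_\kappa}(T_\kappa)|}=\prod_{\kappa\in K}\widetilde\mu_{T_\kappa,\alpha_\kappa}(\b1_{U_\kappa}).
\]
The right-hand side depends only on the components $\alpha_\kappa$ with $\kappa\in K$. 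As $\alpha$ runs over the directed set $\calj$, these components run cofinally in the directed set $\prod_{\kappa\in K}\calj_\kappa$. Since $K$ is finite and multiplication in $R$ is continuous, the product converges to $\prod_{\kappa\in K}\mu_{T_\kappa}(\b1_{U_\kappa})$. By \eqref{e:tnsr2}, and using that the factors outside $K$ equal $1$, this limit is exactly $\nu(\b1_U)$.

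Main obstacle. The hardest step is the net bookkeeping for the limit of the product. The net $\alpha\mapsto(\alpha_\kappa)_{\kappa\in K}$ must be monotone and cofinal onto $\prod_{\kappa\in K}\calj_\kappa$. This holds because $\beta\ge\alpha$ in $\calj$ forces $\beta_\kappa\ge\alpha_\kappa$, and because every element of $\prod_{\kappa\in K}\calj_\kappa$ already lies in $\calj$. Once this is in place, convergence of each $\widetilde\mu_{T_\kappa,\alpha_\kappa}(\b1_{U_\kappa})$ along $\calj_\kappa$ passes to the product net.

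A minor point also needs care: for $\alpha\ge\gamma$ we have $\pi_\alpha(U\cap S)=\pi_\alpha(U)\cap\pi_\alpha(S)$, which is the identity used in \eqref{e:prmusa}. This holds precisely because $U$ is $\alpha$-saturated, and it is what makes the coordinatewise factorization legitimate.
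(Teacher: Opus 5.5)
Your proposal is correct and follows essentially the paper's route: Theorem~\ref{t:dstrprd} applies because each $\mu_{T_\kappa}$ has total mass $1$. Then, for a box $U$ and indices $\alpha$ with $K\subseteq K'$, the value of $\widetilde\mu_{S,\alpha}$ on $U$ factors via \eqref{e:prdtns} into a finite product of convergent terms, which is compared with \eqref{e:tnsr2}. Your explicit reduction from arbitrary compact open sets to boxes by finite additivity, the case of an empty $T_\kappa$, and the cofinality check for the index net are details the paper leaves implicit.
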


\begin{prf} Since, by definition, $\mu_{T_\kappa}(\b1_{T_\kappa})=1$ holds for every $\kappa$, Theorem \ref{t:dstrprd} ensures the existence of the distribution $\bigotimes_\kappa\mu_{T_\kappa}\in\cald(X,R)$ and we just have to check that this product satisfies Definition \ref{d:muS2}, i.e., that the equality
\begin{equation} \label{e:lmtmprd} \bigotimes_\kappa\mu_{T_\kappa}(\b1_U)=\lim_{\alpha}\widetilde\mu_{S,\alpha}(\b1_U) \end{equation}
holds for every compact open $U\subseteq X$. We can write $U$ as in \eqref{e:Uprd}, so that \eqref{e:tnsr2} yields
\begin{equation} \label{e:prdtns1} \bigotimes_\kappa\mu_{T_\kappa}(\b1_U)=\prod_{\kappa\in K}\mu_{T_\kappa}(\b1_{U_\kappa})\cdot\prod_{\kappa\in\calk-K}\mu_{T_\kappa}(\b1_{Y_\kappa})=\prod_{\kappa\in K}\mu_{T_\kappa}(\b1_{U_\kappa})=\prod_{\kappa\in K}\lim_{\alpha
_\kappa\in\calj_\kappa}\widetilde\mu_{T_\kappa,\alpha_\kappa}(\b1_{U_\kappa}) \end{equation}
where the last equality is the definition of $\mu_{T_\kappa}$. On the other hand, in computing the right-hand side of \eqref{e:lmtmprd}, we can restrict to those indexes  $\alpha=(\alpha_\kappa)_{\kappa\in K'}$ such that $K'$ contains $K$. For such an $\alpha$ one has
\begin{equation} \label{e:prdtns2} \widetilde\mu_{S,\alpha}(\b1_U)=\mu_{S,\alpha}(\b1_{\pi_\alpha(U)})\stackrel{*}{=}\prod_{\kappa\in K}\frac{|\pi_{\alpha_k}(U)\cap\pi_{\alpha_\kappa}(T_\kappa)|}{|\pi_{\alpha_\kappa}(T_\kappa)|}\cdot\prod_{\kappa\in K'-K}1=\prod_{\kappa\in K}\mu_{T_\kappa,\alpha_\kappa}(\b1_{\pi_{\alpha_k}(U)})\,.
\end{equation}
where equality $*$ uses \eqref{e:prdtns} and $\pi_{\alpha_\kappa}(U)=Y_{\kappa,\alpha_\kappa}$ for $\kappa\notin K$. As a finite product of convergent factors, the most right-hand term of \eqref{e:prdtns2} has a limit as $\alpha$ grows. This proves that $\mu_S$ exists; as for \eqref{e:lmtmprd}, it follows from comparing \eqref{e:prdtns1} with \eqref{e:prdtns2}. 
\end{prf}

\subsection{Procounting measures} From now on, we take $\R$, with the usual absolute value, as our ring of coefficients $R$. Proposition \ref{p:dstrms} implies that if $\mu_X$ exists then it can be extended to a positive functional on $\calc(X,\R)$ and hence, by the Riesz representation theorem \cite[2.14]{rudin1}, it defines a regular Borel measure (i.e., a Radon measure), which we shall still denote as $\mu_X$. Accordingly to this double interpretation, we shall write $\mu_X(f)$ for $f$ a continuous function and $\mu_X(A)$ for $A$ an element in the Borel $\sigma$-algebra. In particular, any closed set $C\subseteq X$ is measurable and one has $\mu_X(C)=\inf_U\mu_X(\b1_U)$, where $U$ varies among compact open subsets containing $C$. By Lemma \ref{l:clsr} this becomes
\begin{equation} \label{e:musc1} \mu_X(C)=\inf_{\alpha\in\calj}\mu_X\big(\b1_{\pi_\alpha^{-1}(\pi_\alpha(C))}\big)=\lim_{\alpha\in\calj}\mu_X\big(\b1_{\pi_\alpha^{-1}(\pi_\alpha(C))}\big) \end{equation}
 (since the sets $\pi_\alpha^{-1}(\pi_\alpha(C)))$ decrease as $\alpha$ increases).
 
\begin{rmk} More generally, given $S\subseteq X$, the existence of $\mu_S$ as in Definition \ref{d:muS2} implies it is a Radon measure on $X$ (with support inside $\overline S$) , by the same reasoning as above: $0\le\mu_S(\b1_U)\le1$ holds for every compact open $U$ by \eqref{e:prmusa} and thus Lemma \ref{l:msrdstrb} yields $\mu_S\in\calm(X,\R)$. Moreover, for $C\subseteq S$ closed, the analogue of \eqref{e:musc1} leads to
\begin{equation} \label{e:musc} \mu_S(C)=\lim_{\alpha\in\calj}\mu_S\big(\b1_{\pi_\alpha^{-1}(\pi_\alpha(C))}\big)=\lim_{\alpha\in\calj}\dfrac{|\pi_\alpha(C)\cap\pi_\alpha(S)|}{|\pi_\alpha(S)|}\,, \end{equation}
with the last equality coming from \eqref{e:prmusa}. Note also that \eqref{e:musc} can fail if $C$ is not closed (for example, take $S=X=\Z_p$, $C=\N$ and recall that the Haar measure of a countable subset of $\Z_p$ is $0$).  On the other hand, if $S$ is infinite then every finite set has measure $0$. 
 \end{rmk}

Having $\R$ as ring of scalars, we obtain the following extension of Lemma \ref{l:mucmpap}.

\begin{prop} \label{p:msrS>0} The procounting measure $\mu_S$ exists for every $S\subseteq X$ satisfying $\mu_X\big(\overline S\big)>0$. For such $S$ one has
\begin{equation} \label{e:msrs>0} \mu_S=\frac{\b1_{\overline S}}{\mu_X\big(\overline S\big)}\cdot\mu_X \,.\end{equation}
 Moreover, this is the only case when two procounting measures are related by relation \eqref{e:msrs>0}. 
\end{prop}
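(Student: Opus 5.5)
By Lemma \ref{l:muS-chs} we may replace $S$ by its closure. So throughout we take $S=\overline S$ closed, with $m:=\mu_X(S)>0$.

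\textbf{Existence and formula.} By Corollary \ref{c:dl213} and Proposition \ref{p:dxtplg}, it suffices to compute $\lim_\alpha\widetilde\mu_{S,\alpha}(\b1_U)$ for a compact open $U$. We may assume $U$ is $\gamma$-saturated for some $\gamma$. For $\alpha\ge\gamma$, formula \eqref{e:prmusa} gives
$$\widetilde\mu_{S,\alpha}(\b1_U)=\frac{|\pi_\alpha(U)\cap\pi_\alpha(S)|}{|\pi_\alpha(S)|}=\frac{|\pi_\alpha(U\cap S)|/|X_\alpha|}{|\pi_\alpha(S)|/|X_\alpha|}.$$
The equality $\pi_\alpha(U)\cap\pi_\alpha(S)=\pi_\alpha(U\cap S)$ holds because $U$ is $\alpha$-saturated. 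By \eqref{e:muxca}, the numerator is $\mu_X(\b1_{\pi_\alpha^{-1}\pi_\alpha(U\cap S)})$ and the denominator is $\mu_X(\b1_{\pi_\alpha^{-1}\pi_\alpha(S)})$. Both $U\cap S$ and $S$ are closed, so \eqref{e:musc1} shows these converge to $\mu_X(U\cap S)$ and $\mu_X(S)$ respectively. Since $\mu_X(S)>0$, the quotient converges to $\mu_X(U\cap S)/\mu_X(S)=\big(\tfrac{\b1_S}{m}\cdot\mu_X\big)(\b1_U)$.

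Here $\b1_S\cdot\mu_X$ has to be read as the restriction of the Radon measure $\mu_X$ to $S$, since $\b1_S$ is not locally constant. Its values on compact open sets define a distribution, which proves existence and \eqref{e:msrs>0}.

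\textbf{Converse.} Suppose $\mu_S$ exists and $\mu_S=c\,\b1_{\overline S}\cdot\mu_X$ for some scalar $c$. The natural reading of ``related by \eqref{e:msrs>0}'' is that $\mu_S$ is a multiple of the restriction of $\mu_X$ to $\overline S$; with this reading the argument is as follows. Evaluating at $\b1_X$ gives $1=\mu_S(\b1_X)=c\,\mu_X(\overline S)$. This forces $\mu_X(\overline S)\neq0$, hence $\mu_X(\overline S)>0$ and $c=1/\mu_X(\overline S)$.

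When $\mu_X(\overline S)=0$, the measure $\b1_{\overline S}\cdot\mu_X$ is identically zero. It therefore cannot equal the probability measure $\mu_S$, whose total mass is $\mu_S(\b1_X)=1$ by \eqref{e:prmusa}.

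\textbf{Main obstacle.} The only delicate point is not the estimate itself but the interpretation. One has to make sense of $\b1_{\overline S}\cdot\mu_X$ via the Riesz extension of $\mu_X$ to a Radon measure, and pin down what the uniqueness clause actually asserts.
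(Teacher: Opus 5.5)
Your argument is correct, and it is a streamlined form of the paper's proof rather than a different strategy. The paper splits the ratio as $\frac{|\pi_\alpha(U)\cap\pi_\alpha(S)|}{|\pi_\alpha(U)|}\cdot\frac{|\pi_\alpha(U)|}{|X_\alpha|}\cdot\frac{|X_\alpha|}{|\pi_\alpha(S)|}$. It identifies the limit of the first factor as $\mu_U(\overline S)$, and then needs Lemma~\ref{l:mucmpap} to rewrite $\mu_U(\overline S)=\mu_X(U\cap\overline S)/\mu_X(U)$, so that $\mu_X(U)$ cancels. You merge the first two factors via $\pi_\alpha(U)\cap\pi_\alpha(S)=\pi_\alpha(U\cap S)$ and apply \eqref{e:musc1} directly to the closed set $U\cap S$, so the detour through $\mu_U$ disappears.

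This also means you rely only on \eqref{e:musc1}, which is safe because $\widetilde\mu_{X,\alpha}=\mu_X$ for every $\alpha$. The general formula \eqref{e:musc} invoked by the paper is reliable only when the net $\widetilde\mu_{S,\alpha}$ is eventually constant. That is the case for the compact open $U$ actually used in the paper's proof, but it fails in general. For instance, take $S=\{0\}\cup\{p^n\}_{n\ge0}\subseteq\Z_p$ and $C=\{0\}$: then $\mu_S(C)=1$ by Example~\ref{eg:ptaccm}, while the ratios in \eqref{e:musc} equal $1/(n+1)$.

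The paper's proof does not address the final ``only case'' clause at all, so your total-mass argument is a genuine addition. It rests on $\mu_S(\b1_X)=1$, whereas the restriction of $\mu_X$ to $\overline S$ vanishes when $\mu_X(\overline S)=0$. Two small points remain:
\begin{enumerate}
\item The argument needs $S\neq\emptyset$. With the convention $\mu_{\emptyset,\alpha}=0$ one gets $\mu_\emptyset=0=c\,\b1_\emptyset\cdot\mu_X$ for every scalar $c$, even though $\mu_X(\emptyset)=0$.
\item To upgrade agreement on compact open sets to the equality \eqref{e:msrs>0} of Radon measures, you should cite Lemma~\ref{l:msrinjd}, as the paper does.
\end{enumerate}
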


\begin{prf} 
By Corollary \ref{c:musintr}, for the existence of $\mu_S$ it is enough to show that the ratios
\begin{equation} \label{e:trpprd} \frac{|\pi_\alpha(U)\cap\pi_\alpha(S)|}{|\pi_\alpha(S)|}=\frac{|\pi_\alpha(U)\cap\pi_\alpha(S)|}{|\pi_\alpha(U)|}\cdot
\frac{|\pi_\alpha(U)|}{|X_\alpha|}\cdot\frac{|X_\alpha|}{|\pi_\alpha(S)|} \end{equation}
converge for every compact open $U\subseteq X$. By \eqref{e:musc} one sees that the three factors on the right-hand side of \eqref{e:trpprd} have limits, respectively, $\mu_U\big(\overline S\big)$, $\mu_X(U)$ and $1/\mu_X\big(\overline S\big)$ (the existence of $\mu_U$ is ensured by Lemma \ref{l:mucmpap}).

As for \eqref{e:msrs>0}, by Lemma \ref{l:msrinjd} (and Subsection \ref{sss:ambspc}) 
it suffices to check it as an equality of distributions: that is, we need to prove
\begin{equation} \label{e:vlrmsrsu} \mu_S(\b1_U)\stackrel?=\left(\frac{\b1_{\overline S}}{\mu_X\big(\overline S\big)}\cdot\mu_X\right)(\b1_U):=\frac{\mu_X\big(U\cap\overline S\big)}{\mu_X\big(\overline S\big)} \end{equation} 
for every compact open $U\subseteq X$. Now note that \eqref{e:mucmpap} yields
$$\mu_U\big(\overline S\big)=\left(\frac{\b1_U}{\mu_X(\b1_U)}\,\mu_X\right)\!\big(\overline S\big)=\frac{\mu_X\big(U\cap\overline S\big)}{\mu_X(U)}\;.$$
Also, the right-hand side of \eqref{e:trpprd} converges to $\mu_S(\b1_U)$, by \eqref{e:dmus3}. Therefore the limit of \eqref{e:trpprd} becomes
$$\mu_S(\b1_U)=\mu_U\big(\overline S\big)\cdot\mu_X(\b1_U)\cdot\frac{1}{\mu_X\big(\overline S\big)}=\frac{\mu_X\big(U\cap\overline S\big)}{\mu_X\big(\overline S\big)}$$
as claimed.
\end{prf}

\begin{rmk} In particular, $\mu_S$ exists if $\overline S$ has nonempty interior, because \eqref{e:condhaar} implies $\mu_X(A)>0$ for every nonempty open $A$. This will be crucial in the proof of Proposition \ref{p:dernot0}. On the other hand, a procounting measure usually ``fails to see'' isolated points (as we are going to discuss in Section \ref{sss:isltpnt}), so $\mu_S(A)$ need not be positive if $A$ is open as a subset of $\overline S$.
\end{rmk} 

\begin{prop} \label{p:brd0} Let $\partial_XS$ denote the boundary of $S\subseteq X$. If $\mu_X(\partial S)=0<\mu_X\big(\overline S\big)$ then one has
\begin{equation} \label{e:muslmtu} \mu_S=\lim_{U\subseteq S}\mu_U \end{equation}
where the limit is taken over all compact open subsets of $S$ (ordered by inclusion). \end{prop}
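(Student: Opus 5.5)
Since $\mu_X(\overline S)>0$, Proposition \ref{p:msrS>0} gives the existence of $\mu_S$ together with the explicit formula
$$\mu_S=\frac{\b1_{\overline S}}{\mu_X(\overline S)}\cdot\mu_X.$$
Likewise, Lemma \ref{l:mucmpap} gives $\mu_U=\frac{\b1_U}{\mu_X(U)}\cdot\mu_X$ for every compact open $U$. So the whole statement reduces to a comparison of restrictions of the Haar-type measure $\mu_X$. By Proposition \ref{p:dxtplg}, weak-* convergence of distributions only needs to be checked on characteristic functions. Thus it suffices to prove that for every compact open $V\subseteq X$
$$\lim_{U\subseteq S}\frac{\mu_X(U\cap V)}{\mu_X(U)}=\frac{\mu_X(\overline S\cap V)}{\mu_X(\overline S)},$$
the limit being over the directed set of compact open $U\subseteq S$ ordered by inclusion. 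This set is directed because a finite union of compact opens is compact open. I will also have to check that it is nonempty, or at least eventually contains sets of positive measure.

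The key step is the claim $\sup_{U\subseteq S}\mu_X(U)=\mu_X(\overline S)$. The interior $S^\circ$ is open, and $\overline S=S^\circ\sqcup\partial S$, so the hypothesis $\mu_X(\partial S)=0$ gives $\mu_X(S^\circ)=\mu_X(\overline S)>0$. Since $X$ is profinite, $S^\circ$ is a union of compact open sets, for instance the fibres $\pi_\alpha^{-1}(\pi_\alpha(x))$ contained in it. $\mu_X$ is a Radon measure, hence inner regular on open sets, so
$$\mu_X(S^\circ)=\sup\{\mu_X(K): K\subseteq S^\circ\ \text{compact}\}.$$
Each compact $K\subseteq S^\circ$ is covered by finitely many compact open subsets of $S^\circ$, and their union is a compact open $U$ with $K\subseteq U\subseteq S$. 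Combining these facts proves the claim, and in particular compact open subsets of $S$ of positive measure exist.

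Now the limit follows. Given $\varepsilon>0$, choose $U_0\subseteq S$ compact open with $\mu_X(\overline S\setminus U_0)<\varepsilon$. For $U\supseteq U_0$ we have $U_0\subseteq U\subseteq\overline S$, so
$$|\mu_X(U)-\mu_X(\overline S)|<\varepsilon\quad\text{and}\quad|\mu_X(U\cap V)-\mu_X(\overline S\cap V)|<\varepsilon.$$
Since $\mu_X(\overline S)>0$, the denominators stay bounded away from $0$ eventually, and the ratios converge. This gives \eqref{e:muslmtu}.

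The main obstacle is the inner approximation step, that is, correctly using regularity of the Riesz measure $\mu_X$ and the identity $\mu_X(S^\circ)=\mu_X(\overline S)$ from the boundary hypothesis. The remaining steps are routine.
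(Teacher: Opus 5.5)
Your proof is correct, and its skeleton matches the paper's. Both arguments use \eqref{e:msrs>0} and Lemma \ref{l:mucmpap} to reduce the statement to showing that $\mu_X(U)\to\mu_X(\overline S)$ and $\mu_X(U\cap V)\to\mu_X(V\cap\overline S)$ as $U$ runs over compact open subsets of $S$.

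The difference lies in the inner-approximation step.

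\textbf{Your route.} You argue measure-theoretically. The decomposition $\overline S=S^\circ\sqcup\partial S$ together with $\mu_X(\partial S)=0$ gives $\mu_X(S^\circ)=\mu_X(\overline S)$. Inner regularity of the Riesz measure on the open set $S^\circ$ then produces a compact open $U_0\subseteq S$ that exhausts $\overline S$ up to $\varepsilon$.

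\textbf{The paper's route.} The paper stays in the finite quotients. It sets $A_\alpha=\{x\in X_\alpha\mid\pi_\alpha^{-1}(x)\subseteq S\}$ and $B_\alpha=\pi_\alpha(S)-A_\alpha$. It notes that the sets $U_\alpha=\pi_\alpha^{-1}(A_\alpha)$ are cofinal among compact open subsets of $S$, and that the decreasing clopen sets $\pi_\alpha^{-1}(B_\alpha)$ have intersection $\partial S$. Everything is then read off the ratios $|A_\alpha|/|X_\alpha|$ and $|B_\alpha|/|X_\alpha|$, the latter tending to $\mu_X(\partial S)=0$. The only measure-theoretic input is \eqref{e:musc1}, which computes the measure of a closed set as the limit of its saturations.

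\textbf{Comparison.} Your version is shorter, and because $U_0$ is chosen independently of $V$, it makes explicit that the estimates are uniform in $V$; the convergence in fact holds in the norm of $\calm(X,\R)$. The paper's version relies on regularity only through \eqref{e:musc1}, and it supplies an explicit cofinal family described at finite level, in keeping with the procounting viewpoint.
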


\begin{prf} Proposition \ref{p:msrS>0} ensures the existence of $\mu_S$. The conditions imply that $S$ is $\mu_X$-measurable and its interior is nonempty. Assume $U\subseteq S$. Lemma \ref{l:mucmpap} yields, for any compact open $V$,
$$\mu_U(\b1_V)=\frac{\mu_X(V\cap U)}{\mu_X(U)}\;.$$
As $U$ increases, so do $\mu_X(V\cap U)$ and $\mu_X(U)$, so they converge to some limits and, by \eqref{e:msrs>0}  and \eqref{e:vlrmsrsu}, we just need to check that these limits are respectively $\mu_X(V\cap\overline S)$ and $\mu_X(\overline S)$.
 
For any $\alpha\in\calj$, let
$$A_\alpha:=\{x\in X_\alpha\mid\pi_\alpha^{-1}(x)\subseteq S\}$$
and $B_\alpha:=\pi_\alpha(S)-A_\alpha$\,. Then $A:=\cup_\alpha\pi_\alpha^{-1}(A_\alpha)$ is the interior of $S$, while $\cap_\alpha\pi_\alpha^{-1}(B_\alpha)=\partial S$. Put $U_\alpha:=\pi_\alpha^{-1}(A_\alpha)$. The $U_\alpha$'s form a cofinal subsystem of the compact open subsets of $S$, hence one can restrict to them for computing the limits. The hypotheses imply
$$0=\mu_X\big(\partial S \big)=\lim_{\alpha\in\calj}\mu_X(B_\alpha)=\lim_{\alpha\in\calj}\frac{|B_\alpha|}{|X_\alpha|}$$
(because $\beta\ge\alpha$ implies $B_\beta\subseteq B_\alpha$ and \eqref{e:muxca} applies to each $B_\alpha$), hence
$$\mu_X\big(\overline S\big)=\lim_{\alpha\in\calj}\frac{|A_\alpha|+|B_\alpha|}{|X_\alpha|}=\lim_{\alpha\in\calj}\frac{|A_\alpha|}{|X_\alpha|}=\lim_{\alpha\in\calj}\mu_X(U_\alpha)\,.$$
Similarly, one has
$$\lim_{\alpha\in\calj}\mu_X(V\cap U_\alpha)=\lim_{\alpha\in\calj}\frac{|\pi_\alpha(V)\cap A_\alpha|}{|X_\alpha|}=\lim_{\alpha\in\calj}\frac{|\pi_\alpha(V)\cap A_\alpha|+|\pi_\alpha(V)\cap B_\alpha|}{|X_\alpha|}=\mu_X(V\cap\overline S)$$
(where the first equality uses $\mu_X(V\cap U_\alpha)=|\pi_\alpha(V)\cap A_\alpha|/|X_\alpha|$, which holds for $\alpha$ big enough, since then $V$ is $\alpha$-saturated). \end{prf}

\subsubsection{Isolated points and support} \label{sss:isltpnt} For a Radon measure $\mu$, the classical notion of support (the set of those $x\in X$ such that $\mu(A)>0$ for every $A$ open neighbourhood of $x$) coincides with the one introduced in Definition \ref{d:spprt}.

\begin{lem}\label{l:isolated} Let $S$ be a closed set such that $\mu_S$ exists. If $S$ is infinite, then the isolated points of $S$ do not lie in the support of $\mu_S$. \end{lem}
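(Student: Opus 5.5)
Let $x\in S$ be an isolated point of $S$. The plan is to produce an open neighbourhood of $x$ of $\mu_S$-measure zero. Since $x$ is isolated in $S$, there is an open set meeting $S$ only in $x$. Shrinking it to a basic fibre, we find $\gamma\in\calj$ with
$$U:=\pi_\gamma^{-1}(\pi_\gamma(x))\quad\text{satisfying}\quad U\cap S=\{x\}.$$
Every compact open $V\subseteq U$ also meets $S$ in at most the point $x$. By the equivalent description of support after Definition \ref{d:spprt}, it therefore suffices to show that $\mu_S(\b1_V)=0$ for every compact open $V\subseteq U$. Since $0\le\mu_S(\b1_V)\le\mu_S(\b1_U)$ (positivity from \eqref{e:prmusa}), it is enough to prove $\mu_S(\b1_U)=0$.

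We compute $\mu_S(\b1_U)$ with Corollary \ref{c:musintr}, applied to $V=U\cap S=\{x\}$, which is compact open in $S$. This gives
$$\mu_S(\b1_U)=\lim_{\alpha}\frac{|\pi_\alpha(\{x\})|}{|\pi_\alpha(S)|}=\lim_\alpha\frac{1}{|\pi_\alpha(S)|}.$$
Alternatively, using \eqref{e:prmusa} directly for $\alpha\ge\gamma$, we get $|\pi_\alpha(U)\cap\pi_\alpha(S)|=1$. Indeed, by Lemma \ref{l:clsr} and closedness of $S$, $\pi_\alpha(U)\cap\pi_\alpha(S)=\pi_\alpha(U\cap S)$ once $U$ is $\alpha$-saturated. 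The existence of this limit is given by the hypothesis that $\mu_S$ exists.

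It remains to show that $|\pi_\alpha(S)|\to\infty$ along the net. Since $S$ is infinite, for any $N$ pick $N$ distinct points of $S$. They are separated by some $\pi_{\alpha_0}$, because $X$ is Hausdorff and the fibres form a basis, and $\calj$ is directed. The maps $\pi^\beta_\alpha$ are surjective, so $|\pi_\alpha(S)|$ is nondecreasing in $\alpha$. Hence $|\pi_\alpha(S)|\ge N$ for all $\alpha\ge\alpha_0$, and the limit above is $0$. Thus $\mu_S(\b1_U)=0$, and $x$ is not in the support.

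The only delicate step is the identification $\pi_\alpha(U)\cap\pi_\alpha(S)=\{\pi_\alpha(x)\}$ for large $\alpha$; a priori the two images could meet elsewhere. Since $U$ is $\alpha$-saturated for $\alpha\ge\gamma$, any $y\in S$ with $\pi_\alpha(y)\in\pi_\alpha(U)$ lies in $U$, so $y=x$. This settles it.
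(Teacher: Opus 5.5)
Your proof is correct and follows essentially the paper's argument: you isolate $x$ in a fibre $\pi_\gamma^{-1}(\pi_\gamma(x))$ meeting $S$ only in $x$, use \eqref{e:prmusa} to bound the relevant finite-level values by $1/|\pi_\alpha(S)|$, and conclude from $|\pi_\alpha(S)|\to\infty$. The differences are small: the paper bounds every compact open inside the fibre directly, whereas you compute the mass of the fibre and pass to subsets by positivity and additivity, and you also justify the divergence of $|\pi_\alpha(S)|$, which the paper leaves implicit (monotonicity comes from $\pi_\alpha^\beta(\pi_\beta(S))=\pi_\alpha(S)$, not from surjectivity of the transition maps $X_\beta\to X_\alpha$).
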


\begin{prf} If $x$ is an isolated point of $S$, by definition there exists a compact open $U_0$ not containing any other point of $S$. Without loss of generality, we assume $U_0=\pi_\alpha^{-1}(y)$, for $y\in X_\alpha$. If $\beta >\alpha$, then \eqref{e:prmusa} yields
\[ \widetilde{\mu}_{S,\beta}(\b1_U)\le\dfrac{1}{|\pi_\beta(S)|}  \]
for any compact open $U\subseteq U_0$. As $S$ is infinite, we have $\lim_\beta|\pi_\beta(S)|=\infty$. \end{prf}

\begin{eg+} \label{eg:ptaccm} Let us consider some procounting distribution attached to sets with few limit points.\\
{\bf \ref{eg:ptaccm}.1.} First we consider the case when $S$ has one limit point. We can write $S$ as a union $\{\ell\}\cup\{a_n\}_{n\in\N}$, where the sequence $\{a_n\}$ has no repeated values and converges to $\ell$. We claim that in this case one has $\mu_S=\delta_\ell$. Indeed, let $U$ be a compact open subset of $X$. If $\ell\notin U$, then $|S\cap U|$ is finite and so
\begin{equation} \label{e:eg0accm} \widetilde\mu_{S,\alpha}(\b1_U)=\dfrac{|\pi_\alpha(S)\cap\pi_\alpha(U)|}{|\pi_\alpha(S)|}\le\dfrac{|S\cap U|}{|\pi_\alpha(S)|} \end{equation}
converges to $0$. On the other hand, if $\ell\in U$, then there is $n_U$ such that $n>n_U$ implies $a_n\in U$, so that
\[ 1\ge\widetilde\mu_{S,\alpha}(\b1_U)=\dfrac{|\pi_\alpha(S)\cap\pi_\alpha(U)|}{|\pi_\alpha(S)|}\ge\dfrac{|\pi_\alpha(S)|-n_U}{|\pi_\alpha(S)|} \]
yields $\mu_S(\b1_U)=1$.

For a concrete instance of such behaviour, we can take $X=\za$ and $T=\{n!:n\in\N\}$. Then the closure of $T$ is $S=T\cup\{0\}$, the projection $\pi_{k!}(T)$ consists of the classes $n!$ (mod $k!$), for $0\le n\le k$, and the reasoning above proves
$$\delta_0=\mu_T=\lim_{k\rightarrow\infty}\widetilde\mu_{T,k!}=\lim_{k\rightarrow\infty}\frac{1}{k}\sum_{n=0}^k\delta_{n!}\,.$$
{\bf \ref{eg:ptaccm}.2.} When $S$ has finitely many limit points $\ell_1,\dots,\ell_r$, with $r>1$, the only possible general statement is that $\mu_S$, if it exists, must be a linear combination $\sum_ic_i\delta_{\ell_i}$, with $\sum_ic_i=1$ and $0\le c_i\le1$ for all $i$ (as follows from Lemmata \ref{l:isolated} and \ref{l:fntspprt}). However, we cannot even guarantee the existence of $\mu_S$ in this generality, much less determine the coefficients $c_i$, as shown by the following reasoning.

Take $X=\za$ as before and consider
$$T=\{n!:n\in A\}\cup\{n!+1: n\in B\},$$
where $A$ and $B$ define a partition of $\N$ in two infinite subsets, so that $T$ has limit points $\{0,1\}$ and closure $S=T\cup\{0,1\}$. Moreover, $|\pi_{k!}(S)|$ is roughly $k$. Let $U\subseteq\za$ be compact open: then the same reasoning as in \eqref{e:eg0accm} yields $\lim_k\tilde\mu_{S,k!}(\b1_U)=0$ if $U\cap\{0,1\}$ is empty. On the other hand, the formula
\[
\tilde\mu_{S,k!}(\b1_U)=\frac{|\pi_{k!}(U)|}{|\pi_{k!}(S)|}\approx\begin{cases} \dfrac{|\{n\in A:n\le k\}|}{k} & \text{ if }0\in U\not\ni 1 \\ \\
\dfrac{|\{n\in B:n\le k\}|}{k} & \text{ if }1\in U\not\ni 0\end{cases}
\]
shows that if $U$ contains exactly one of the limit points then $\tilde\mu_{S,k!}(\b1_U)$ converges to the asymptotic density of $A$ or $B$. But there is no need for this density to exist and, if it does, it could take any value in $[0,1]$. 

This construction also yields instances of different procounting measures sharing the same support. \end{eg+}

\subsubsection{Close pairs of subsets} \label{sss:clpr}

\begin{dfn} \label{d:clpr} We say that two subsets $S,T$ of $X$ {\em form a close pair} if 
\begin{equation} \label{e:dfclpr} \lim_{\alpha\in\calj}\frac{|\pi_\alpha(S)\triangle\pi_\alpha(T)|}{|\pi_\alpha(S\cup T)|}=0 \end{equation}
where $\triangle$ denotes the symmetric difference.\end{dfn}

\begin{eg} \label{eg:clprbnl} Lemma \ref{l:clsr} implies that the image under $\pi_\alpha$ of a subset and of its closure are the same, for every $\alpha\in\calj$. Therefore any $S\subseteq X$ forms a close pair with its own closure. \end{eg}

Lemma \ref{l:muS-chs} has the following generalization. 

\begin{prop} \label{p:muST} 
Let $S,T\subseteq X$. Assume that $S$ and $T$ form a close pair. If one of the procounting distributions $\mu_S,\mu_T$ exists, then the other also exists and
\[
\mu_S=\mu_T.
\]
\end{prop}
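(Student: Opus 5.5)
By symmetry we may assume that $\mu_S$ exists. We will show that for every compact open $U\subseteq X$ the net $\widetilde\mu_{T,\alpha}(\b1_U)$ converges to $\mu_S(\b1_U)$. By Proposition \ref{p:dxtplg} this is enough to get both the existence of $\mu_T$ and the equality $\mu_T=\mu_S$. Fix $U$. By Corollary \ref{c:dl213}, $U$ is $\gamma$-saturated for some $\gamma$, so for $\alpha\ge\gamma$ formula \eqref{e:prmusa} applies to both $S$ and $T$. We abbreviate $A_\alpha:=\pi_\alpha(S)$, $B_\alpha:=\pi_\alpha(T)$, $V_\alpha:=\pi_\alpha(U)$ and $\varepsilon_\alpha:=|A_\alpha\triangle B_\alpha|/|A_\alpha\cup B_\alpha|$. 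Note that $\pi_\alpha(S\cup T)=A_\alpha\cup B_\alpha$, so by hypothesis $\varepsilon_\alpha\to0$.

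The key step is an elementary estimate comparing the ratios $|V_\alpha\cap A_\alpha|/|A_\alpha|$ and $|V_\alpha\cap B_\alpha|/|B_\alpha|$. Put $N:=|A_\alpha\cup B_\alpha|$. Then $|A_\alpha|$ and $|B_\alpha|$ both lie in $[(1-\varepsilon_\alpha)N,N]$, because $(A_\alpha\cup B_\alpha)-A_\alpha\subseteq A_\alpha\triangle B_\alpha$, and similarly for $B_\alpha$. Likewise $\bigl||V_\alpha\cap A_\alpha|-|V_\alpha\cap B_\alpha|\bigr|\le\varepsilon_\alpha N$. It follows that
\[
\frac{|V_\alpha\cap A_\alpha|}{|A_\alpha|}-\frac{|V_\alpha\cap B_\alpha|}{|B_\alpha|}
\]
is bounded in absolute value by something like $\varepsilon_\alpha/(1-\varepsilon_\alpha)+\bigl(1/(1-\varepsilon_\alpha)-1\bigr)$, i.e.\ by $O(\varepsilon_\alpha)$. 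To see this, write the second ratio as $\bigl(|V_\alpha\cap A_\alpha|+e\bigr)/\bigl(|A_\alpha|+e'\bigr)$ with $|e|,|e'|\le\varepsilon_\alpha N$, and use that the numerators are bounded by the denominators. Hence $\widetilde\mu_{T,\alpha}(\b1_U)-\widetilde\mu_{S,\alpha}(\b1_U)\to0$, and the claim follows.

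We also have to dispose of degenerate cases. If one of the sets is empty, then $\varepsilon_\alpha=1$ for every $\alpha$ unless both are empty, in which case both distributions are $0$. Moreover, once $\varepsilon_\alpha<1$ the sets $A_\alpha$ and $B_\alpha$ are nonempty, so the ratios are defined along a cofinal tail. The only mildly delicate point is the bookkeeping in the ratio estimate, which is routine; no deeper obstacle is expected. The convergence takes place in $R=\R$, or more generally in any $R$ where $\varepsilon_\alpha\to0$ makes sense. Since the paper has at this point taken $R=\R$, we work there.
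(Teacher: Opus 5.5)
Your proposal is correct and follows essentially the same route as the paper: reduce to evaluations on a $\gamma$-saturated compact open via Proposition \ref{p:dxtplg} and \eqref{e:prmusa}, then bound the difference of the two finite-level ratios by $O(\varepsilon_\alpha)$ (your direct estimate gives $2\varepsilon_\alpha/(1-\varepsilon_\alpha)$). The only cosmetic difference is that the paper compares each ratio with the common pivot $|V_\alpha\cap(\pi_\alpha(S)\cap\pi_\alpha(T))|/|\pi_\alpha(S)\cap\pi_\alpha(T)|$, whereas you compare the two ratios directly.
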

\begin{prf}
Assume that $\mu_S$ exists. We prove that $\mu_T$ exists and that
$\mu_T=\mu_S$. The other case is symmetric.

By Proposition~\ref{p:dxtplg}, it is enough to check convergence after
evaluation on every compact open subset $V\subseteq X$. Fix such a $V$.
Choose $\gamma\in\calj$ such that $V$ is $\gamma$-saturated. For
$\alpha\ge\gamma$, set
\[
B_\alpha:=\pi_\alpha(S)\cap\pi_\alpha(T)\qquad
A_\alpha:=\pi_\alpha(S)- B_\alpha\qquad
C_\alpha:=\pi_\alpha(T)- B_\alpha
\]
and write $V_\alpha:=\pi_\alpha(V)$.
Then, by \eqref{e:prmusa},
\[
\widetilde\mu_{S,\alpha}(\b1_V)
=
\frac{|V_\alpha\cap A_\alpha|+|V_\alpha\cap B_\alpha|}
     {|A_\alpha|+|B_\alpha|}
\]
and
\[
\widetilde\mu_{T,\alpha}(\b1_V)
=
\frac{|V_\alpha\cap C_\alpha|+|V_\alpha\cap B_\alpha|}
     {|C_\alpha|+|B_\alpha|}.
\]

The closeness hypothesis says that
\[
\frac{|A_\alpha|+|C_\alpha|}
     {|A_\alpha|+|B_\alpha|+|C_\alpha|}
\rightarrow 0.
\]
In particular, for $\alpha$ sufficiently large, $B_\alpha\neq\emptyset$, and
\[
\frac{|A_\alpha|}{|B_\alpha|}\rightarrow0
\qquad
\frac{|C_\alpha|}{|B_\alpha|}\rightarrow0.
\]

Now we compare both finite-level expressions with the common term
$\frac{|V_\alpha\cap B_\alpha|}{|B_\alpha|}$.
One has
\[
\left|
\widetilde\mu_{S,\alpha}(\b1_V)
-
\frac{|V_\alpha\cap B_\alpha|}{|B_\alpha|}
\right|
\le
2\frac{|A_\alpha|}{|B_\alpha|}
\]
and similarly
\[
\left|
\widetilde\mu_{T,\alpha}(\b1_V)
-
\frac{|V_\alpha\cap B_\alpha|}{|B_\alpha|}
\right|
\le
2\frac{|C_\alpha|}{|B_\alpha|}.
\]
Therefore
\[
\left|
\widetilde\mu_{S,\alpha}(\b1_V)
-
\widetilde\mu_{T,\alpha}(\b1_V)
\right|
\le
2\frac{|A_\alpha|}{|B_\alpha|}
+
2\frac{|C_\alpha|}{|B_\alpha|}
\rightarrow0.
\]

Since $\mu_S$ exists, the net $\widetilde\mu_{S,\alpha}(\b1_V)$
converges to $\mu_S(\b1_V)$. The estimate above shows that
$\widetilde\mu_{T,\alpha}(\b1_V)$
also converges, and has the same limit. Hence, for every compact open $V$, we have $\lim_\alpha \widetilde\mu_{T,\alpha}(\b1_V)=\mu_S(\b1_V)$.
\end{prf}

Proposition \ref{p:muST} immediately implies that forming a close pair is an equivalence relation.

\begin{lem} \label{l:inclprssm} Let $A\subset B\subset C$ be a chain of subsets of $X$. If $A,C$ form a close pair, then so do $A,B$ and $B,C$.  \end{lem}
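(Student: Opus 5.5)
Everything follows from monotonicity of images under $\pi_\alpha$. Since $A\subseteq B\subseteq C$, we have $\pi_\alpha(A)\subseteq\pi_\alpha(B)\subseteq\pi_\alpha(C)$ for every $\alpha\in\calj$. The symmetric differences are therefore plain set differences, and the unions are the larger sets:
\[
\pi_\alpha(A)\triangle\pi_\alpha(C)=\pi_\alpha(C)-\pi_\alpha(A),\qquad \pi_\alpha(A\cup C)=\pi_\alpha(C),
\]
and similarly for the pairs $(A,B)$ and $(B,C)$. The hypothesis \eqref{e:dfclpr} for $A,C$ thus reads
\[
\lim_\alpha\frac{|\pi_\alpha(C)|-|\pi_\alpha(A)|}{|\pi_\alpha(C)|}=0.
\]
Equivalently, $|\pi_\alpha(A)|/|\pi_\alpha(C)|\to1$. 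One should note that $C$ is nonempty for the quotient to make sense eventually; if $C=\emptyset$ everything is trivial.

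For the pair $B,C$, the quantity to control is $(|\pi_\alpha(C)|-|\pi_\alpha(B)|)/|\pi_\alpha(C)|$. Since $|\pi_\alpha(B)|\ge|\pi_\alpha(A)|$, it is bounded above by $(|\pi_\alpha(C)|-|\pi_\alpha(A)|)/|\pi_\alpha(C)|$, which tends to $0$. It is also nonnegative, so it tends to $0$.

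For the pair $A,B$, the quantity is $(|\pi_\alpha(B)|-|\pi_\alpha(A)|)/|\pi_\alpha(B)|=1-|\pi_\alpha(A)|/|\pi_\alpha(B)|$. Here we use $|\pi_\alpha(A)|/|\pi_\alpha(B)|\ge|\pi_\alpha(A)|/|\pi_\alpha(C)|\to1$, together with the upper bound $|\pi_\alpha(A)|/|\pi_\alpha(B)|\le1$. This squeezes the ratio to $1$, so the quantity tends to $0$.

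There is no real obstacle. The only care needed is to handle the degenerate case $\pi_\alpha(B)=\emptyset$; this forces $B$, hence $A$, to be empty, and the statement is then vacuous or trivial. One also has to observe that all the inequalities used hold for every sufficiently large $\alpha$, so the limits along the directed net $\calj$ pass through the comparisons.
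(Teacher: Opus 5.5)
Your proof is correct and is exactly the straightforward verification from \eqref{e:dfclpr} that the paper leaves to the reader. Monotonicity of the images $\pi_\alpha(A)\subseteq\pi_\alpha(B)\subseteq\pi_\alpha(C)$ turns each closeness ratio into a quantity of the form $1-|\pi_\alpha(\cdot)|/|\pi_\alpha(\cdot)|$, and each such quantity is dominated by the one for the pair $(A,C)$.
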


\begin{prf} Straightforward from \eqref{e:dfclpr}.\end{prf}

\subsubsection{A hierarchy of sets and measures} \label{sss:hier} A classical approach to estimating ``how big'' is an infinite discrete set $S$ (such as a subset of $\N$) is by means of a counting function $c_S$: in this way one can, for example, compare the size of $S$ and $T$ by looking at the order of growth of $c_S$ and $c_T$. We expect that the theory here developed can enjoy a similar flexibility and propose a definition in this direction.\\

Let $S,T$ be subsets of our profinite space $X$. We say that $S$ is {\em little-o} of $T$, and write $S=o(T)$, if
\begin{equation} \label{e:commensurable}  \lim_\alpha\dfrac{|\pi_\alpha(S)|}{|\pi_\alpha(T)|}=0. \end{equation}
Note that if $S=o(T)$ then $T$ and $S\cup T$ form a close pair.

\begin{eg} \label{eg:opccgrp} Assume $H\subseteq G$ are closed subset of $X$, such that the structure maps $\pi_\alpha$ make $G$ a profinite group and $H$ a subgroup. (We will meet such a situation in Section \ref{s:da}, with $X=\da$, $G=\da^*$ and $H=\wds$.) Then one has 
$$H=o(G)\quad\text{if and only if}\quad[G:H]=\infty.$$
Indeed, $\beta\ge\alpha$ implies
$$\frac{|\pi_\beta(H)|}{|\pi_\beta(G)|}=\frac{1}{[\pi_\beta(G):\pi_\beta(H)]}\le\frac{1}{[\pi_\alpha(G):\pi_\alpha(H)]}=\frac{|\pi_\alpha(H)|}{|\pi_\alpha(G)|}\;,$$
showing that in this setting the left-hand side of \eqref{e:commensurable} always converges to $[G:H]^{-1}$.

\end{eg}

We say that two sets $S$ and $T$ are {\em pro-commensurable} if they are both contained in a closed $Y\subseteq X$ such that the pro-counting measure $\mu_Y$ exists and  $\mu_Y\big(\overline S\big)\cdot\mu_Y\big(\overline T\big)>0$. On the other hand, under the same hypotheses, we have  $S=o(T)$ if $\mu_Y\big(\overline T\big)>0$ and $\mu_Y\big(\overline S\big)=0$.

\subsubsection{Fourier coefficients} \label{sss:fourier}

Let $G$ be a profinite abelian group and write
\[ G^\vee:=\Homc(G,\C^*). \]
Every $\chi\in G^\vee$ factors through a finite quotient. Fix a cofinal system $G=\lim_{\alpha}G_\alpha$ with $\pi_\alpha\colon G\to G_\alpha$.

Let $\mu_G$ be the Haar measure of $G$. For $\chi\in G^\vee$, we set
\[
e_\chi:=\chi^{-1}\cdot\mu_G\in\cald(G,\C).
\]
For $\xi\in G_\alpha^\vee$, we also set
\[
e_{\xi,\alpha}:=
\frac{1}{|G_\alpha|}
\sum_{g\in G_\alpha}\xi(g)^{-1}\delta_g
\in \cald(G_\alpha,\C).
\]

\begin{lem}\label{l:finite-fourier}
Let $H$ be a finite abelian group. For $\xi\in H^\vee$, set $e_{\xi,H}:=\frac{1}{|H|}\sum_{h\in H}\xi(h)^{-1}\delta_h$.
Then $e_{\xi,H}(\eta)=
\begin{cases}
1 & \text{if }\eta=\xi,\\
0 & \text{if }\eta\neq\xi,
\end{cases}$ for $\eta\in H^\vee$.
Hence, for every $\nu\in\cald(H,\C)$,
\[
\nu=\sum_{\xi\in H^\vee}\nu(\xi)e_{\xi,H}.
\]
Moreover, $e_{\xi,H}e_{\eta,H}
=\begin{cases}
e_{\xi,H} & \text{if }\xi=\eta,\\
0 & \text{if }\xi\neq\eta.
\end{cases}$
\end{lem}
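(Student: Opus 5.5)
The plan is to evaluate the distribution $e_{\xi,H}$ on a character $\eta$, viewed as a function in $\calc(H,\C)$, and reduce everything to the orthogonality relations for characters of a finite abelian group. By definition,
\[
e_{\xi,H}(\eta)=\frac{1}{|H|}\sum_{h\in H}\xi(h)^{-1}\eta(h)=\frac{1}{|H|}\sum_{h\in H}(\xi^{-1}\eta)(h).
\]
If $\eta=\xi$, every summand equals $1$ and the value is $1$. If $\eta\neq\xi$, the character $\psi:=\xi^{-1}\eta$ is nontrivial. Pick $h_0$ with $\psi(h_0)\neq1$ and reindex the sum by $h\mapsto h_0h$. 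This gives $\psi(h_0)\sum_h\psi(h)=\sum_h\psi(h)$, so the sum vanishes. This settles the first claim.

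For the expansion, I would use Lemma \ref{l:darflx}. It shows that $\cald(H,\C)$ is the dual of $\calc(H,\C)$, which has dimension $|H|$. Since $|H^\vee|=|H|$ for finite abelian $H$, and characters are linearly independent by the orthogonality just proved (or Artin's lemma), the characters form a basis of $\calc(H,\C)$. The first claim says $\{e_{\xi,H}\}$ is exactly the dual basis to $\{\xi\}$. Therefore any $\nu$ agrees with $\sum_\xi\nu(\xi)e_{\xi,H}$ on every character, hence on all of $\calc(H,\C)$.

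For the multiplicative statement, the product must be the convolution product on $\cald(H,\C)\simeq\C[H]$ mentioned in the earlier remark. The key fact is that convolution is characterized by $(\mu*\nu)(\chi)=\mu(\chi)\nu(\chi)$ for every character $\chi$, because $\chi(gh)=\chi(g)\chi(h)$. I would check this on Dirac deltas, $\delta_g*\delta_h=\delta_{gh}$, and extend bilinearly. Then $(e_{\xi,H}*e_{\eta,H})(\chi)=[\chi=\xi][\chi=\eta]$, which equals $e_{\xi,H}(\chi)$ if $\xi=\eta$ and $0$ otherwise. By the expansion, a distribution is determined by its values on characters, so this gives the claim.

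The only delicate point is identifying which product is meant. The pointwise module product \eqref{e:heckeprod} does not make sense between two distributions, so the statement must refer to convolution, and the rest is routine.
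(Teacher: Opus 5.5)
Your proposal is correct and is the standard character-orthogonality argument that the paper gives only as ``well-known'': the $e_{\xi,H}$ form the basis of $\cald(H,\C)$ dual to the characters, and in $\cald(H,\C)\simeq\C[H]$ the product is convolution, for which $(\mu*\nu)(\chi)=\mu(\chi)\nu(\chi)$ yields the idempotent relations. You are also right that convolution is the only reading under which the last claim holds, since a pointwise product of coefficient functions would give $\frac{1}{|H|}e_{\xi\eta,H}$ instead.
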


\begin{prf}
Well-known.
\end{prf}

\begin{lem}\label{l:fourier-quotient}
For $\xi\in G_\alpha^\vee$, viewed  as a character of $G$, one has $(\pi_\alpha)_*(e_\xi)=e_{\xi,\alpha}$.
Moreover, for every $\mu\in\cald(G,\C)$ and every $\alpha$,
\[
(\pi_\alpha)_*(\mu)
=
\sum_{\xi\in G_\alpha^\vee}\mu(\xi)e_{\xi,\alpha}.
\]
\end{lem}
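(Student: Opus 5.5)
The plan is to compute $(\pi_\alpha)_*(e_\xi)$ directly on the basis $\{\b1_x\}_{x\in G_\alpha}$, and then deduce the expansion formula from Lemma \ref{l:finite-fourier} applied to $H=G_\alpha$.

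\textbf{First claim.} Since $\xi\in G_\alpha^\vee$, the character $\chi=\xi\circ\pi_\alpha$ of $G$ is locally constant; indeed it equals $\pi_\alpha^*(\xi)$. For $x\in G_\alpha$, formula \eqref{e:pfesplc} together with the definition \eqref{e:heckeprod} of the module structure gives
\[
(\pi_\alpha)_*(e_\xi)(\b1_x)=e_\xi\big(\b1_{\pi_\alpha^{-1}(x)}\big)=\mu_G\big(\chi^{-1}\b1_{\pi_\alpha^{-1}(x)}\big).
\]
On the fibre $\pi_\alpha^{-1}(x)$ the function $\chi^{-1}$ is constant, equal to $\xi(x)^{-1}$. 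Hence the right-hand side equals $\xi(x)^{-1}\mu_G(\b1_{\pi_\alpha^{-1}(x)})$. By Remark \ref{r:msrx}.2 (formula \eqref{e:muxca}), $\mu_G(\b1_{\pi_\alpha^{-1}(x)})=1/|G_\alpha|$, so the value is $\xi(x)^{-1}/|G_\alpha|$. Equivalently, by \eqref{e:Hck} we have $(\pi_\alpha)_*(\calh(\pi_\alpha^*\xi^{-1}))=\calh_\alpha(\xi^{-1})$. Comparing coefficients on the basis $\{\delta_x\}$ of Lemma \ref{l:darflx} then gives $(\pi_\alpha)_*(e_\xi)=e_{\xi,\alpha}$.

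\textbf{Second claim.} Set $\nu:=(\pi_\alpha)_*(\mu)\in\cald(G_\alpha,\C)$. Lemma \ref{l:finite-fourier} with $H=G_\alpha$ gives
\[
\nu=\sum_{\xi\in G_\alpha^\vee}\nu(\xi)\,e_{\xi,\alpha}.
\]
By definition of push-forward, $\nu(\xi)=\mu(\xi\circ\pi_\alpha)=\mu(\xi)$, where $\xi$ is viewed as a locally constant function on $G$. This yields the stated formula.

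\textbf{Expected obstacle.} Nothing here is genuinely hard. The only point needing care is the bookkeeping of identifications: a character of $G_\alpha$ must be regarded both as a character of $G$ and as an element of $\call_c(G,\C)$. One must also check that \eqref{e:heckeprod} is applied with the locally constant function $\chi^{-1}$, which is legitimate because every continuous character of a profinite group factors through a finite quotient.
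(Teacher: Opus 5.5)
Your proof is correct and follows the paper's route. The paper dismisses the first claim as straightforward from the definitions; your fibre-by-fibre computation on $\pi_\alpha^{-1}(x)$ just writes that out. The second claim you prove exactly as the paper does, via Lemma \ref{l:finite-fourier} with $H=G_\alpha$, $\nu=(\pi_\alpha)_*(\mu)$, and $\nu(\xi)=\mu(\xi\circ\pi_\alpha)$.
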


\begin{prf}
The first claim of the Lemma is straightforward from the definitions.
For the second claim, apply Lemma~\ref{l:finite-fourier} to $H=G_\alpha$ and 
$\nu=(\pi_\alpha)_*(\mu)$.
Since $(\pi_\alpha)_*(\mu)(\xi)=\mu(\xi)$, the formula follows.
\end{prf}

\begin{prop}\label{p:fourier-profinite}
For every $\mu\in\cald(G,\C)$,
\[
\mu=
\lim_\alpha \sum_{\xi\in G_\alpha^\vee}\mu(\xi)e_\xi=\sum_{\chi\in G^\vee}\mu(\chi)e_\chi
\]
in the weak-* topology on $\cald(G,\C)$.
\end{prop}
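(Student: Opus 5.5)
Since the weak-* topology on $\cald(G,\C)$ coincides with the inverse limit topology (Proposition \ref{p:dxtplg}), convergence of a net of distributions $\nu_\alpha\to\mu$ can be tested by checking that $(\pi_\beta)_*(\nu_\alpha)\to(\pi_\beta)_*(\mu)$ in $\cald(G_\beta,\C)$ for each fixed $\beta$. Equivalently, it suffices to check that $\nu_\alpha(\b1_U)\to\mu(\b1_U)$ for every compact open $U$. The plan is therefore to set
\[ \nu_\alpha:=\sum_{\xi\in G_\alpha^\vee}\mu(\xi)e_\xi \]
(here $\xi$ is viewed as a character of $G$ via $\pi_\alpha$) and to show that $(\pi_\beta)_*(\nu_\alpha)=(\pi_\beta)_*(\mu)$ as soon as $\alpha\ge\beta$. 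Once this holds, the net is eventually constant at each finite level, and the limit is $\mu$.

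For $\alpha\ge\beta$ I will compute $(\pi_\beta)_*(e_\xi)$ for $\xi\in G_\alpha^\vee$. By Lemma \ref{l:fourier-quotient} we have $(\pi_\alpha)_*(e_\xi)=e_{\xi,\alpha}$, so $(\pi_\beta)_*(e_\xi)=(\pi^\alpha_\beta)_*(e_{\xi,\alpha})$. A direct computation gives
\[ (\pi^\alpha_\beta)_*(e_{\xi,\alpha})=\frac{1}{|G_\alpha|}\sum_{h\in G_\beta}\Big(\sum_{g\in(\pi^\alpha_\beta)^{-1}(h)}\xi(g)^{-1}\Big)\delta_h . \]
The inner sum runs over a coset of $K=\ker\pi^\alpha_\beta$. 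By character orthogonality on $K$ it vanishes unless $\xi|_K$ is trivial. When $\xi|_K$ is trivial, $\xi$ factors through a character $\eta\in G_\beta^\vee$ and the inner sum equals $|K|\eta(h)^{-1}$. Since the $\pi^\alpha_\beta$ are surjective homomorphisms, $|K|=|G_\alpha|/|G_\beta|$. Hence $(\pi_\beta)_*(e_\xi)=e_{\eta,\beta}$ if $\xi$ is inflated from $\eta\in G_\beta^\vee$, and $(\pi_\beta)_*(e_\xi)=0$ otherwise.

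Summing over $\xi\in G_\alpha^\vee$ then gives
\[ (\pi_\beta)_*(\nu_\alpha)=\sum_{\eta\in G_\beta^\vee}\mu(\eta)e_{\eta,\beta}, \]
using that $\mu(\xi)=\mu(\eta)$ when $\xi$ is inflated from $\eta$, since these are the same function on $G$. By Lemma \ref{l:fourier-quotient} the right-hand side is exactly $(\pi_\beta)_*(\mu)$. This proves the first equality.

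For the second equality, I interpret $\sum_{\chi\in G^\vee}\mu(\chi)e_\chi$ as the limit of the partial sums over the finite sets $G_\alpha^\vee$, which exhaust $G^\vee$ because every continuous character factors through some $G_\alpha$ and the $G_\alpha^\vee$ increase with $\alpha$. This makes the second equality a matter of definition, provided the limit is shown independent of the exhaustion. I plan to handle that by noting that any finite set of characters $F$ containing $G_\beta^\vee$ has the same pushforward to $G_\beta$: the characters not inflated from $G_\beta$ push forward to $0$. So the net over finite subsets of $G^\vee$ converges to $\mu$ as well.

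The main obstacle is the kernel computation, namely the orthogonality and fibre-cardinality step. This is routine finite group theory. The only point needing care is the convention that $\mu(\xi)$ means $\mu$ evaluated on the locally constant function $\xi\circ\pi_\alpha$. This requires continuity of the characters, which holds because each character of a profinite group takes values in a finite subgroup of $\C^*$ and factors through a finite quotient.
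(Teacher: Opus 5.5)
Your proof is correct and rests on the same mechanism as the paper's: the partial sums $\nu_\alpha$ agree with $\mu$ on every locally constant function that factors through a fixed level, so the net is eventually constant on each test function. The paper gets this without your orthogonality computation, by applying Lemma~\ref{l:fourier-quotient} at level $\alpha$ itself and noting that a function factoring through $G_{\alpha_0}$ also factors through $G_\alpha$ for $\alpha\ge\alpha_0$ (equivalently, $(\pi_\beta)_*=(\pi^\alpha_\beta)_*\circ(\pi_\alpha)_*$). Your kernel computation does earn its keep in the second equality: it is exactly what justifies reading $\sum_{\chi\in G^\vee}$ as a net over arbitrary finite subsets of $G^\vee$, a point the paper dismisses as ``the same reasoning''.
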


\begin{prf}
Set $\mu^{[\alpha]}:=\sum_{\xi\in G_\alpha^\vee}\mu(\xi)e_\xi$.
By Lemma~\ref{l:fourier-quotient}, $(\pi_\alpha)_*(\mu^{[\alpha]})=(\pi_\alpha)_*(\mu)$.
If $f\in\call_c(G,\C)$, then $f$ factors through some $G_{\alpha_0}$. Hence, for
all $\alpha\geq\alpha_0$, we have $\mu^{[\alpha]}(f)=\mu(f)$,
which expresses weak-* convergence.
\noindent
The same reasoning also applies to the second equality.
\end{prf}
\begin{cor}\label{c:fourier-procounting}
Let $S\subseteq G$ be nonempty, and assume that $\mu_S$ exists. Then
\[
\mu_S=\sum_{\chi\in G^\vee}\mu_S(\chi)e_\chi\ .
\]

\end{cor}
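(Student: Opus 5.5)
The corollary is a direct specialization of Proposition~\ref{p:fourier-profinite}, so I plan to apply that proposition with $\mu=\mu_S$.

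First, I will check that $\mu_S$ is a legitimate element of $\cald(G,\C)$. By hypothesis, the procounting distribution $\mu_S$ exists as the weak-* limit of the net $\widetilde\mu_{S,\alpha}$ of Definition~\ref{d:muS2}. Here the ambient space is $X=G$ and the coefficient ring is $\C$. The ambient procounting distribution $\mu_G$ exists, since by Remark~\ref{r:msrx}.1 it is the Haar measure. Hence each $\widetilde\mu_{S,\alpha}$ is a well-defined element of $\cald(G,\C)$. Since $\cald(G,\C)$ is complete and the limit exists, $\mu_S$ is a distribution on $G$. The same conclusion holds if one prefers the real-valued procounting measure: that measure extends $\C$-linearly to an element of $\cald(G,\C)$, because $\call_c(G,\C)=\call_c(G,\R)\otimes\C$.

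Next, I will make sense of the coefficient $\mu_S(\chi)$. Every $\chi\in G^\vee$ is continuous into $\C^*$ and factors through a finite quotient $G_\alpha$. It is therefore locally constant and lies in $\call_c(G,\C)$, so $\mu_S(\chi)$ is well defined. One could even write it explicitly as $\lim_\alpha \frac{1}{|\pi_\alpha(S)|}\sum_{x\in\pi_\alpha(S)}\chi(x)$, using \eqref{e:prmusa} for large enough $\alpha$, but the proof does not need this.

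Finally, I will apply Proposition~\ref{p:fourier-profinite} to $\mu=\mu_S$. It gives $\mu_S=\sum_{\chi\in G^\vee}\mu_S(\chi)e_\chi$. The sum is understood in the weak-* sense, as the limit over $\alpha$ of the finite partial sums over $G_\alpha^\vee$, which coincides with the convention of that proposition.

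I expect no real obstacle. The only point needing care is the bookkeeping about coefficients: the procounting measure was defined with $R=\R$, while the Fourier expansion lives in $\cald(G,\C)$. This is settled by the observation above that a real distribution extends to a complex one.
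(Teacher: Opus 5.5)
Your proposal is correct and matches the paper's proof, which likewise just applies Proposition~\ref{p:fourier-profinite} to $\mu=\mu_S$. Your extra checks are sound: characters are locally constant, and the real procounting measure extends $\C$-linearly to an element of $\cald(G,\C)$. The appeal to completeness is unnecessary, though, since $\mu_S$ is by definition a limit taken inside $\cald(G,\C)$.
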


\begin{prf}
Apply Proposition~\ref{p:fourier-profinite} to $\mu=\mu_S$. 
\end{prf}

\section{Procounting measures on $\da$}\label{s:da}

Let $F$ be a global field. We fix a nonempty finite set $\cals$ of places of $F$ (containing the archimedean ones, if there are any) and let $D$ be the ring of $\cals$-integers in $F$: that is,
$$D:=\big\{x\in F\mid \val_v(x)\ge0\text{ for all }v\notin\cals\big\}$$ 
(where $\val_v$ is the valuation attached to $v$).
We define
\begin{equation}\label{e:dfnda} \da:=\varprojlim D/\gota \end{equation}
with $\gota$ varying among all non-zero ideals of $D$. Each reduction-modulo-$\gota$ map $\pi_\gota\colon D\twoheadrightarrow D/\gota$ extends by continuity to a ring homomorphism 
$$\hpi_\gota\colon\da\twoheadrightarrow D/\gota\,.$$
 By abuse of notation, we shall use the symbols $\hpi_\bullet$ also when the domain is $\da^n$, with $n>1$.

By construction there is a canonical injection of $D$ into $\da$ and in the following we will always think of $D$ as a (dense) subring of $\da$. Since the set of ideals of $D$ is ordered by inclusion, with final object $0$, we shall write $\displaystyle\lim_{\gotn\rightarrow0}$ in reference to the inverse system \eqref{e:dfnda}.

For every $\gotp$ non-zero prime ideal of $D$, , we also have the $\gotp$-adic completion
$$\dap:=\varprojlim D/\gotp^n.$$
These objects are related by a canonical isomorphism of topological rings
\begin{equation}\label{e:crt} \da\simeq\prod_\gotp\dap\,, \end{equation}
the product being over all  non-zero prime ideals of $D$ (a proof can be found in \cite[Theorem 2.1]{dl}), with canonical projections
$$\hpi_{\gotp^\infty}\colon\da\twoheadrightarrow\dap\,.$$
 The decomposition \eqref{e:crt} is compatible with the conventions of Lemma \ref{l:prdst}: indeed, taking as $\calk$ the set of maximal ideals of $D$ and $\calj_\gotp:=\{\gotp^n\}_{n\in\N}$ for each $\gotp\in\calk$, there is an obvious correspondence between $\calj$ (as in Lemma \ref{l:prdst}) and the set $\cali(D)$ of non-zero ideals of $D$. We also note that $\calj=\cali(D)$ contains cofinal subsets which are countable and totally ordered (e.g., index the non-zero prime ideals by $i\in\N$ and define $\calj_0=\{\gotm_n\}_{n\in\N}$, with $\gotm_n=\prod_{i\le n}\gotp_i^n$). 

For the sake of simplicity, in the following we shall usually think of \eqref{e:crt} as an equality. Each ring $\dap$ is endowed with a discrete valuation, which yields a valuation
$$v_\gotp\colon\da\rightarrow\N\cup\{\infty\}$$
extending the $\gotp$-adic valuation on $D$.\\

\noindent{\bf Caveat}: from now on, we shall write $\xa$ (and not $\overline X$) to denote the closure of $X\subseteq\da^n$.\\

In the case of an ideal $\gota$ of $D$, it is easy to check that one has $\widehat\gota=\gota\da$; moreover the equality
$$D/\gota=\da/\widehat\gota$$
holds for every non-zero $\gota$.

\subsection{Eulerian sets} Following \cite[Definition 6.1]{dl}, we say that $X\subset\da^n$ is {\em Eulerian} if
$$\xa=\prod_\gotp X(\gotp)\,,$$ 
where $X(\gotp)$ is the closure of $\hpi_{\gotp^\infty}(X)$ in $\da_\gotp^n$\,. If moreover each $X(\gotp)$ is open, we say that $X$ is {\em openly Eulerian}.
For example, coprime pairs in $\Z^2$ are an openly Eulerian subset of $\za^2$ (this is a special case of \cite[Corollary 6.11]{dl}). Also, we note that the definition above is slightly different from its analogue in \cite{dl}, which  assumed $X\subseteq D^n$.

\begin{eg}\label{eg:fourier-arithmetic-progressions}
Let $a,b\in\Z$, with $b\neq0$, and set $A:=a+b\Z$. Then $A$ is the basic example of openly Eulerian, as  $\widehat A=a+b\za=\prod_p(a+b\Z_p)$.
By Lemma~\ref{l:mucmpap}, the procounting measure $\mu_A$ is the normalized
Haar measure on the compact open coset $a+b\za$. Equivalently, $\mu_A$ is the
translate by $a$ of the normalized Haar measure on the compact open subgroup $b\za$. Indeed, for every $n$, given the natural projection $\pi_n\colon\Z\rightarrow\Z/n\Z$, the set $\pi_n(a+b\Z)$
is the translate by $\pi_n(a)$ of the subgroup $\pi_n(b\Z)$. Hence the
normalized counting measure on $\pi_n(a+b\Z)$ is the corresponding translate
of the normalized counting measure on $\pi_n(b\Z)$. Passing to the procounting
limit gives the same statement for $\mu_A$.

Let $\chi\in\za^\vee$. In the notation of Subsection~\ref{sss:fourier}, and in view of
Corollary~\ref{c:fourier-procounting}, we compute the Fourier coefficient $\mu_A(\chi)$: 
\[
\mu_A(\chi)
=\int_{a+b\za}\chi\,d\mu_A
=\chi(a)\int_{b\za}\chi\,d\mu_{b\za}.
\]
The last integral is equal to $1$ if $\chi$ is trivial on $b\za$. If $\chi$ is
not trivial on $b\za$, choose $h\in b\za$ with $\chi(h)\neq1$. By
translation-invariance of $\mu_{b\za}$,
\[
\int_{b\za}\chi(x)\,d\mu_{b\za}(x)
=
\int_{b\za}\chi(x+h)\,d\mu_{b\za}(x)
=
\chi(h)\int_{b\za}\chi(x)\,d\mu_{b\za}(x),
\]
and therefore the integral is $0$.

Thus
\[
\mu_{a+b\Z}(\chi)
=\begin{cases}
\chi(a) & \text{if }\operatorname{cond}(\chi)\mid b\\
0 & \text{otherwise}
\end{cases}
\]
where $\operatorname{cond}(\chi)$ is the least positive integer $m$ such that
$\chi$ factors through $\Z/m\Z$.
\end{eg}

\begin{rmk} Since each $X(\gotp)$ is compact, so is their product. Because $\da$ is Hausdorff, it follows that the inclusion $\xa\subseteq\prod_\gotp X(\gotp)$ always holds. The hard part in showing that a set is Eulerian is to prove the opposite inclusion.\end{rmk}

\begin{thm} \label{t:opnelr} If $X\subseteq\da^n$ is openly Eulerian, then its procounting measure exists, given by the formula
\begin{equation} \label{e:msrapelr} \mu_X=\bigotimes_\gotp\frac{1}{\mu_{\dap^n}\big(X(\gotp)\big)}\b1_{X(\gotp)}\mu_{\dap^n}\,. \end{equation}
\end{thm}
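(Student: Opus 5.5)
The plan is to reduce everything to the product machinery of Section \ref{s:dmps}: Lemma \ref{l:muS-chs}, Proposition \ref{p:prddstr2} and Lemma \ref{l:mucmpap}. First, Lemma \ref{l:muS-chs} shows that $\mu_X$ exists if and only if $\mu_{\xa}$ exists, and that the two coincide; so we may replace $X$ by its closure. Since $X$ is Eulerian, $\xa=\prod_\gotp X(\gotp)$ inside $\da^n=\prod_\gotp\dap^n$. Via \eqref{e:crt} and Lemma \ref{l:prdst}, this product decomposition is compatible with the inverse system $\{D/\gota\}$ (or rather $\{(D/\gota)^n\}$): the index set $\cali(D)$ corresponds to $\calj=\bigcup_K\prod_{\gotp\in K}\{\gotp^m\}$, and by the Chinese remainder theorem $\hpi_\gota$ is identified with the product of the local projections. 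Hence $\xa$ has the shape $S=\prod_\kappa T_\kappa$ required by Proposition \ref{p:prddstr2}, with $Y_\gotp=\dap^n$ and $T_\gotp=X(\gotp)$.

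Second, we verify the local hypothesis of Proposition \ref{p:prddstr2}, namely that each $\mu_{X(\gotp)}$ exists. Here openness is used. $X(\gotp)$ is closed in the compact space $\dap^n$, hence compact, and by assumption it is also open. The procounting measure $\mu_{\dap^n}$ exists by Remark \ref{r:msrx}.1, since $\dap^n$ is a profinite group and $\mu_{\dap^n}$ is its Haar measure. Lemma \ref{l:mucmpap} then gives
\[
\mu_{X(\gotp)}=\frac{1}{\mu_{\dap^n}(X(\gotp))}\b1_{X(\gotp)}\mu_{\dap^n}.
\]
The denominator is positive because $X(\gotp)$ is a nonempty compact open set; nonemptiness holds as long as $X$ is nonempty, and the case $X=\emptyset$ is trivial or excluded.

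Third, Proposition \ref{p:prddstr2}, which relies on Theorem \ref{t:dstrprd}, shows that $\mu_{\xa}$ exists and equals $\bigotimes_\gotp\mu_{X(\gotp)}$. Convergence of the infinite product of total masses is automatic, since each $\mu_{X(\gotp)}(\b1_{X(\gotp)})=1$. Combined with the first step, this yields \eqref{e:msrapelr}.

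The main obstacle is the bookkeeping in the first step. One must check that the index set $\cali(D)$, ordered by reverse inclusion, is the same directed set as the product index set of Lemma \ref{l:prdst}, so that the net defining $\mu_{\xa}$ in Definition \ref{d:muS2} is literally the net used in Proposition \ref{p:prddstr2}. Concretely, for $\gota=\prod_{\gotp\in K}\gotp^{m_\gotp}$ we have $(D/\gota)^n\simeq\prod_{\gotp\in K}(D/\gotp^{m_\gotp})^n$. Under this identification $\hpi_\gota(\xa)$ corresponds to $\prod_{\gotp\in K}\hpi_{\gotp^{m_\gotp}}(X(\gotp))$, since the factors with $\gotp\notin K$ are simply forgotten. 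Once this identification is made explicit, the rest follows formally from the cited results.
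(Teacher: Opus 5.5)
Your proposal is correct and follows the paper's proof: Lemma \ref{l:mucmpap} for each compact open local factor $X(\gotp)$, then Proposition \ref{p:prddstr2} to obtain $\mu_{\xa}$ as the product, then Lemma \ref{l:muS-chs} to pass from $\xa$ to $X$. The index-set bookkeeping you single out as the main obstacle is handled in the paper by the remark following \eqref{e:crt}, which identifies $\cali(D)$ with the directed set of Lemma \ref{l:prdst}.
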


\noindent We shall call the numbers $1/\mu_{\dap^n}\big(X(\gotp)\big)$ appearing in \eqref{e:msrapelr} the {\em local coefficients} of $\mu_X$ and often use the shortening $c_{X,\gotp}$ to denote them.

\begin{prf} Each $X(\gotp)$ is compact open, so Lemma \ref{l:mucmpap} shows that $\mu_{X(\gotp)}\in\cald(\dap^n,\R)$ exists and has the form appearing as a local factor on the right-hand side of \eqref{e:msrapelr}. Now apply Proposition \ref{p:prddstr2} to obtain the procounting measure of $\xa$. By Lemma \ref{l:muS-chs}, this is the same as $\mu_X$\,. \end{prf}

\begin{rmk} \label{r:pLf} The results used in the proof of Theorem \ref{t:opnelr} are true for distributions with coefficients in any field of characteristic $0$: therefore, if $X$ is openly Eulerian, one has $\mu_X\in\cald(X,\Q_p)$ for every prime $p$. This gives hopes for a possible connection of our theory with $p$-adic zeta functions. Indeed, taking (say) $X=\za^*$, the procounting measure is just the Haar measure on $\za^*$, which is deeply connected with the Riemann zeta function; and the latter notoriously admits $p$-adic interpolation. \end{rmk} 

In the following, we shall say that $Y\subseteq\da^n$ is {\em almost openly Eulerian} if it forms a close pair with an openly Eulerian set $X$. Then, by Proposition \ref{p:muST}, the closure $\xa$ is uniquely determined by $Y$ (because \eqref{e:msrapelr} implies that different openly Eulerian sets originate different measures).

\begin{cor}\label{c:3.4}
If $X\subseteq\da^n$ is almost openly Eulerian, then its procounting measure exists. \end{cor}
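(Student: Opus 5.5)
By definition, an almost openly Eulerian $Y\subseteq\da^n$ forms a close pair with some openly Eulerian set $X\subseteq\da^n$. The plan is simply to chain two earlier results. First, Theorem \ref{t:opnelr} gives the existence of the procounting measure $\mu_X$, together with the explicit product formula \eqref{e:msrapelr}. Second, Proposition \ref{p:muST} says that when two subsets form a close pair and the procounting distribution of one of them exists, so does that of the other, and the two coincide. Applying it to the pair $(Y,X)$ yields that $\mu_Y$ exists and
$$\mu_Y=\mu_X=\bigotimes_\gotp c_{X,\gotp}\,\b1_{X(\gotp)}\mu_{\dap^n}\,,$$
which also records the explicit form of the measure for later use.

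Two small points need checking, though neither should be a real obstacle.

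\begin{itemize}
\item \emph{The ambient space.} Proposition \ref{p:muST} is stated for subsets of a profinite $X$ whose procounting distribution exists. Here the ambient space is $\da^n$, which is a profinite group, so its procounting measure is the Haar measure by Remark \ref{r:msrx}.1. Its indexing set of non-zero ideals contains a countable cofinal chain, as noted after \eqref{e:crt}, so the standing hypotheses of Section \ref{s:dmps} are satisfied.
\item \emph{Distribution versus measure.} Proposition \ref{p:muST} only produces a distribution. With $R=\R$, the values $\widetilde\mu_{Y,\alpha}(\b1_U)$ all lie in $[0,1]$ by \eqref{e:prmusa}, so their limits do too. Lemma \ref{l:msrdstrb} then upgrades $\mu_Y$ to a (Radon) measure, exactly as in the remark following \eqref{e:musc1}.
\end{itemize}
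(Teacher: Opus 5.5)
Your proposal is correct and matches the paper's proof. The paper simply writes ``Obvious from Proposition~\ref{p:muST}'', implicitly using Theorem~\ref{t:opnelr} for the existence of the measure of the openly Eulerian partner, exactly as you do; your two side checks (ambient space, distribution versus measure) are sound and already covered by the paper's standing conventions.
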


\begin{prf} Obvious from Proposition \ref{p:muST}. \end{prf}

\begin{eg} \label{eg:sqrfr} 
Let $\sqf\subset\Z$ denote the set of squarefree integers. Then $\sqf$ is openly Eulerian, with closure $\prod_p(\Z_p-p^2\Z_p)$ (see \cite[Corollary 6.15 and formula (75)]{dl}), so that, by Theorem \ref{t:opnelr}, its associated procounting measure is
\begin{equation} \label{e:msrsqf} \mu_{\sqf}=\bigotimes_p\mu_{\sqf(p)}=\bigotimes_p\left(1-\dfrac{1}{p^2}\right)^{-1}\b1_{\Z_p-p^2\Z_p}\cdot\mu_{\Z_p}\,. \end{equation}
As well-known, the product of the local coefficients of $\mu_{\sqf}$ converges to $\zeta(2)$. Hence one can rewrite \eqref{e:msrsqf} as $\mu_{\sqf}=\zeta(2)\b1_{\sqf}\mu_{\za}$\,. Actually, for any compact open $U\subseteq\za$, formulae \eqref{e:msrsqf} and \eqref{e:tnsr2} yield
\begin{equation} \label{e:msrsqf1} \mu_{\sqf}(\b1_U)=\prod_p\left(1-\dfrac{1}{p^2}\right)^{-1}\cdot\prod_p\mu_{\Z_p}\big(U\cap(\Z_p-p^2\Z_p)\big)=\zeta(2)\cdot\mu_{\za}\big(U\cap\widehat\sqf\big) \end{equation}
where the first passage is justified by absolute convergence of both products (with almost all factors being $1$ in the second one). For more insight, we also recall the equality $\zeta(2)=1/\mu_{\za}\big(\widehat\sqf\big)$, which reduces \eqref{e:msrsqf1} to a special case of \eqref{e:msrs>0}.

It is important to note that this is an exceptional situation: in general, in the setting of \eqref{e:msrapelr}, the product of the local coefficients $c_{X,\gotp}$ has no limit in $\R_{>0}$ and therefore $\mu_X$ cannot be written as the restriction to $\xa$ of a scalar multiple of $\mu_{\da}$. (Since $1\le c_{X,\gotp}\le\infty$ holds by construction, the product diverges if and only $\mu_{\da}(\xa)=0$.)

By Corollary~\ref{c:fourier-procounting}, the Fourier coefficient of
$\mu_{\sqf}$ at $\chi\in\za^\vee$ is $\mu_{\sqf}(\chi)$. Under the
decomposition $\za=\prod_p\Z_p$, write $\chi=\prod_p\chi_p$,
with almost all $\chi_p$ trivial. Let $m=\operatorname{cond}(\chi)$. From
\eqref{e:msrsqf} and \eqref{e:tnsr2}, we get
\[
\mu_{\sqf}(\chi)
=
\prod_p
\left(1-\frac{1}{p^2}\right)^{-1}
\int_{\Z_p-p^2\Z_p}\chi_p\,d\mu_{\Z_p}.
\]
Elementary computations show that the $p$-th factor is
\[
\begin{cases}
1 & \text{if }v_p(m)=0,\\[4pt]
-\dfrac{1}{p^2-1} & \text{if }v_p(m)=1\text{ or }2,\\[8pt]
0 & \text{if }v_p(m)\geq 3.
\end{cases}
\]
Indeed, the average of a nontrivial character on a compact group is zero, while
\[
\int_{p^2\Z_p}\chi_p\,d\mu_{\Z_p}
=
\begin{cases}
p^{-2} & \text{if }\chi_p|_{p^2\Z_p}=1,\\
0 & \text{otherwise.}
\end{cases}
\]
Consequently,
\[
\mu_{\sqf}(\chi)
=
\begin{cases}
\displaystyle\prod_{p\mid m}\left(-\dfrac{1}{p^2-1}\right)
& \text{if }v_p(m)\leq 2\text{ for every }p,\\[12pt]
0 & \text{if }v_p(m)\geq 3\text{ for some }p,
\end{cases}
\]
with the convention that the empty product is $1$.
\\
By Proposition~\ref{p:fourier-profinite}, applied with $G=\za$ and
$\mu=\mu_{\sqf}$, we therefore obtain
\[
\mu_{\sqf}
=
\sum_{\chi\in\za^\vee}\mu_{\sqf}(\chi)e_\chi
=
\sum_{\substack{\chi\in\za^\vee\\ v_p(\operatorname{cond}(\chi))\leq 2\ \forall p}}
\left(
\prod_{p\mid \operatorname{cond}(\chi)}
\frac{-1}{p^2-1}
\right)e_\chi\ .
\]
in the weak-* topology on $\cald(\za,\C)$.

\end{eg}

\subsubsection{Prime elements} \label{sss:prmelm} Recall that the prime elements of $D$ are those irreducible $x\in D$ such that the ideal $xD$ is prime. We shall denote the set of such elements by $\calp(D)$.  For notational convenience, we fix a generator $t_\gotp\in D$ for each principal prime ideal $\gotp$  and let $T:=\{t_\gotp:\gotp\text{ principal}\}$.

The closure of $\calp(D)$  in $\da$ was computed in \cite[Theorem 3.2]{dl}, which  states the equality \footnote{Caveat: the notation in \cite{dl} is not always coherent with the one we use here. In particular, the meaning of $\calp(D)$ is different.}
\begin{equation} \label{e:clirr} \widehat{\calp(D)}=\da^*\sqcup\wds\calp(D)=\da^*\sqcup\bigsqcup_{t_\gotp\in T}t_\gotp\wds\,. \end{equation}
Here $\wds$ is the closure of $D^*$ in $\da$. Note that $\wds$ is a subgroup of $\da^*$,  of infinite index (as proved in \cite[Proposition 3.7]{dl}).

\begin{thm}\label{t:clprim} 
The sets $\da^*$ and $\calp(D)$ form a close pair. 
\end{thm}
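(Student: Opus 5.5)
By Example \ref{eg:clprbnl} and the transitivity of closeness (a consequence of Proposition \ref{p:muST}), it suffices to show that $\da^*$ and $\widehat{\calp(D)}$ form a close pair. By \eqref{e:clirr} we have the chain $\da^*\subset\widehat{\calp(D)}$. Since $\pi_\gota$ of a set and of its closure coincide (Lemma \ref{l:clsr}), we can compute everything with \eqref{e:clirr}. We then need, for every non-zero ideal $\gota$,
\[
\frac{|\hpi_\gota(\widehat{\calp(D)})\triangle\hpi_\gota(\da^*)|}{|\hpi_\gota(\widehat{\calp(D)})|}\le\frac{|\hpi_\gota(\wds\calp(D))-(D/\gota)^*|}{|(D/\gota)^*|}\longrightarrow0 .
\]
Here $\hpi_\gota(\da^*)=(D/\gota)^*$. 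Moreover, $t_\gotp\wds\subseteq\da^*$ whenever $\gotp\nmid\gota$, so those pieces contribute nothing new.

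The only extra classes come from the finitely many principal primes $\gotp\mid\gota$. For each of these, the image $\hpi_\gota(t_\gotp\wds)$ has cardinality at most $|\hpi_\gota(\wds)|$, since multiplication by $t_\gotp$ is a map on $D/\gota$. Hence the ratio is bounded by
\[
\omega(\gota)\cdot\frac{|\hpi_\gota(\wds)|}{|(D/\gota)^*|},
\]
where $\omega(\gota)$ is the number of prime divisors of $\gota$. The second factor is $[\hpi_\gota(\da^*):\hpi_\gota(\wds)]^{-1}$. By Example \ref{eg:opccgrp} and the infinite index of $\wds$ in $\da^*$ \cite[Proposition 3.7]{dl}, this factor tends to $0$.

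The main obstacle is that $\omega(\gota)$ is unbounded, so the two factors must be balanced. I would compare $|\hpi_\gota(\wds)|$ with $|(D/\gota)^*|$ quantitatively. The group $D^*$ is finitely generated of rank $r=|\cals|-1$, so $|\hpi_\gota(\wds)|=|\pi_\gota(D^*)|$, and this image is a quotient of $D^*$. The crude bound is $|\pi_\gota(D^*)|\le |D^*_{\rm tors}|\cdot\prod_i \mathrm{ord}_\gota(\epsilon_i)$, which is weak.

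A better approach is to not bound $\omega(\gota)$ separately. Instead, refine $\gota$ to $\gotb=\gota\gotq$, with $\gotq$ a large prime power coprime to $\gota$. Each extra piece $t_\gotp\wds$ with $\gotp\mid\gota$ projects into a coset of $\pi_\gotb(D^*)$ inside the appropriate set, and the count of non-unit classes becomes $\omega(\gota)\cdot|\pi_\gotb(D^*)|$. Meanwhile $|(D/\gotb)^*|=|(D/\gota)^*|\cdot|(D/\gotq)^*|$. Since $\pi_\gotq(D^*)$ has size at most polynomial in $\log N\gotq$ times a bound depending on the rank, while $|(D/\gotq)^*|$ grows like $N\gotq$, the ratio is at most $\omega(\gota)|(D/\gota)^*|^{-1}|\pi_\gota(D^*)|\cdot o(1)$ as $\gotq$ grows along powers of a fixed prime. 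Concretely, $|\pi_{\gotq^n}(D^*)|$ grows at most like $C\cdot N\gotq^{\,n\,\min(r,1)\cdot\epsilon}$: unit images in $1+\gotq$ lie in a $\gotq$-adic Lie group of dimension at most $r$ (Leopoldt-type bounds are unnecessary; rank $\le r<[F_\gotq:\Q_p]$ is not guaranteed, so I would instead take $\gotq$ of degree $>r$ or use many distinct primes).

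Because the net is directed, it is enough to show that for every $\gota$ and every $\varepsilon>0$ there is $\gota_0\subseteq\gota$ such that all $\gotb\subseteq\gota_0$ give a ratio below $\varepsilon$. Monotonicity is the delicate point. For $\gotb\subseteq\gota_0$, the bound is
\[
\omega(\gotb)\cdot\frac{|\pi_\gotb(D^*)|}{|(D/\gotb)^*|}.
\]
I expect the genuinely hard step to be proving this tends to $0$ uniformly: $\omega(\gotb)$ grows but each new prime $\gotp$ multiplies $|(D/\gotb)^*|$ by at least $N\gotp-1$ while multiplying $|\pi_\gotb(D^*)|$ by at most $|\pi_{\gotp^e}(D^*)|$. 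I would try to show the index $[(D/\gotb)^*:\pi_\gotb(D^*)]$ grows at least like $2^{\omega(\gotb)}$ eventually, after excluding the finitely many primes where units can be surjective. Failing that, I would fall back on the finiteness of $D^*$ in the case the paper allows, where the ratio is trivially $\le\omega(\gotb)|D^*|/|(D/\gotb)^*|$.
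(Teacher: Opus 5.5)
Your reduction matches the paper's. You replace $\calp(D)$ by its closure, use \eqref{e:clirr}, note that only principal $\gotp\mid\gotn$ contribute non-unit classes, and bound the ratio by $\omega(\gotn)\,|\hpi_\gotn(D^*)|/|(D/\gotn)^*|$. The gap is the next step. You need the index $[(D/\gotn)^*:\hpi_\gotn(D^*)]$ to grow faster than $\omega(\gotn)$, uniformly along the whole net, and none of your sketched routes gives this.
\begin{itemize}
\item Example \ref{eg:opccgrp} only says the index tends to infinity; it gives no rate.
\item Refining along powers of a fixed prime $\gotq$ controls only the vertical direction. The net runs over all ideals, so any $\gota_0$ can be refined to $\gota_0\gotp_1\cdots\gotp_m$ with $m$ new primes at exponent one. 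This raises $\omega$ by $m$, while $|\hpi(D^*)|/|(D/\cdot)^*|$ is only known to be non-increasing. The danger is horizontal.
\item ``Excluding the finitely many primes where units can be surjective'' is not available. For $D=\Z[1/2]$, the group $\langle -1,2\rangle$ is expected (Artin's conjecture, known under GRH) to be all of $(\Z/p\Z)^*$ for infinitely many $p$.
\item Non-surjectivity at each prime separately would not multiply anyway. The image of $D^*$ in $\prod_\gotp(D/\gotp^{a_\gotp})^*$ is not the product of the local images.
\end{itemize}

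The missing idea is to pass to square classes. Put $G_\gotn=(D/\gotn)^*$, $H_\gotn=\hpi_\gotn(D^*)$ and $\psi\colon G_\gotn\to G_\gotn/G_\gotn^2$. Then
$|H_\gotn|/|G_\gotn|\le|\psi(H_\gotn)|/|\psi(G_\gotn)|$, because $|H_\gotn G_\gotn^2|\ge|H_\gotn|$.
\begin{itemize}
\item The numerator is at most $|D^*/(D^*)^2|\le 2^{|\cals|}$. This holds because $H_\gotn$ is a quotient of $D^*$, which is finitely generated of rank $|\cals|-1$ with cyclic torsion. This is the uniform control your rank remarks were reaching for.
\item The denominator is at least $2^{\omega(\gotn)-c}$ by the Chinese remainder theorem. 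Each factor $(D/\gotp^{a})^*$ has even order unless $\gotp\mid 2$ and $a=1$. This is where $\operatorname{char}(F)\neq 2$ enters.
\end{itemize}
The ratio is therefore at most $C\,\omega(\gotn)\,2^{-\omega(\gotn)}$, which tends to $0$ because $\omega(\gotn)\to\infty$ as $\gotn\to 0$.

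Your fallback for finite $D^*$ is correct, but it is only the special case treated in Remark \ref{r:indcunt}.2. It does not prove the theorem in general.
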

\begin{proof}
By Lemma~2.1, we have that $\calp(D)$ and its closure have the same images modulo every nonzero ideal $\gotn$. Hence it is enough to prove that $\da^*$ and $\widehat{\calp(D)}$ form a close pair.
Consider the decomposition in the second equality of \eqref{e:clirr}.
Let $\gotn$ be a non-zero ideal of $D$. Since $\hpi_\gotn(\da^*)=(D/\gotn)^*$, we set
\[
P_\gotn:=\hpi_\gotn\bigl(\widehat{\calp(D)}\bigr)
-
(D/\gotn)^* .
\]
Then $\hpi_\gotn\bigl(\widehat{\calp(D)}\bigr)=
(D/\gotn)^*\sqcup P_\gotn$.
Thus it is enough to prove
\begin{equation}\label{e:corrected-untprim}
\lim_{\gotn\to0}
\frac{|P_\gotn|}{|(D/\gotn)^*|}
=0.
\end{equation}
If $t_\gotp\in T$ and $\gotp\nmid\gotn$, then $t_\gotp$ is a unit modulo $\gotn$,  so that $\hpi_\gotn(t_\gotp\wds)\subseteq (D/\gotn)^*$. Consequently, only the principal prime ideals $\gotp\mid\gotn$ can contribute
to $P_\gotn$. Hence
\[
P_\gotn
\subseteq
\bigcup_{\substack{t_\gotp\in T \\ \gotp\mid\gotn}}
\hpi_\gotn(t_\gotp\wds).
\]
Moreover, $|\hpi_\gotn(t_\gotp\wds)|\le|\hpi_\gotn(\wds)|=|\hpi_\gotn(D^*)|$ where the last equality follows because  $\hpi_\gotn$ has finite target, so the image of $D^*$ is the same as the image of its closure.
Therefore
\begin{equation}\label{e:Pn-bound}
|P_\gotn|\le\omega(\gotn)\,|\hpi_\gotn(D^*)|
\end{equation}
where $\omega(\gotn)$ denotes the number of distinct prime ideals dividing $\gotn$.

Let us set
\[
G_\gotn:=(D/\gotn)^*
\qquad\text{and}\qquad
H_\gotn:=\hpi_\gotn(D^*)\subseteq G_\gotn
\]
and  consider the quotient map
\[
\psi_\gotn\colon G_\gotn\rightarrow G_\gotn/G_\gotn^2.
\]
For every subgroup $H\subseteq G_\gotn$, one has
\begin{equation}\label{e:square-class-ineq}
\frac{|H|}{|G_\gotn|}
\le
\frac{|\psi_\gotn(H)|}{|\psi_\gotn(G_\gotn)|}.
\end{equation}
Indeed $\frac{|\psi_\gotn(H)|}{|\psi_\gotn(G_\gotn)|}
=\frac{|H G_\gotn^2/G_\gotn^2|}{|G_\gotn/G_\gotn^2|}=
\frac{|H G_\gotn^2|}{|G_\gotn|}
=\frac{|H|}{|G_\gotn|}\cdot\frac{|G_\gotn^2|}{|H\cap G_\gotn^2|}
\ge\frac{|H|}{|G_\gotn|}$. We apply this with $H=H_\gotn$. Since $D^*$ is finitely generated of rank
$|S|-1$, and since its torsion subgroup is cyclic, we have uniformly that

\begin{equation}\label{e:unit-square-bound}
|\psi_\gotn(H_\gotn)|
\le|D^*/(D^*)^2|
\le2^{|S|}.
\end{equation}
On the other hand, by the Chinese remainder theorem,
$G_\gotn\simeq\prod_{\gotp\mid\gotn}
(D/\gotp^{v_\gotp(\gotn)})^*$.

If $F$ is a function field of odd characteristic, then every local factor $(D/\gotp^{v_\gotp(\gotn)})^*$ has even order, as its residue field has odd characteristic.  Hence every
prime divisor $\gotp\mid\gotn$ contributes at least one nontrivial square class, and therefore
\begin{equation}\label{e:odd-char-square-lower}
|\psi_\gotn(G_\gotn)| \ge 2^{\omega(\gotn)}.
\end{equation}
If $F$ is a number field,  the same argument applies except possibly for those $\gotp\mid 2$ with $v_\gotp(\gotn)=1$. Let $c$ denote the number of prime ideals of $D$ above $2$.
Then
\begin{equation}\label{e:number-field-square-lower}
|\psi_\gotn(G_\gotn)|\ge2^{\omega(\gotn)-c}.
\end{equation}
Combining \eqref{e:square-class-ineq}, \eqref{e:unit-square-bound},
\eqref{e:odd-char-square-lower} and \eqref{e:number-field-square-lower}, we
obtain a constant $C$ independent of $\gotn$, such that
$\frac{|H_\gotn|}{|G_\gotn|}\le C\cdot2^{-\omega(\gotn)}$.
Together with \eqref{e:Pn-bound}, this gives
\[
\frac{|P_\gotn|}{|(D/\gotn)^*|}\le\omega(\gotn)\frac{|H_\gotn|}{|G_\gotn|}\le
C\cdot\frac{\omega(\gotn)}{2^{\omega(\gotn)}}.
\]
Finally,
\[
\lim_{\gotn\to 0}\omega(\gotn)=\infty\ .
\]
Indeed, given $N$, choose an ideal $\gotn_0$ divisible by $N$ distinct
prime ideals; then every $\gotn$ divisible by $\gotn_0$ satisfies
$\omega(\gotn)\ge N$. Hence $\lim_{\gotn}C\,\frac{\omega(\gotn)}{2^{\omega(\gotn)}}=0$,
which proves \eqref{e:corrected-untprim}. Therefore $\da^*$ and
$\widehat{\calp(D)}$, hence also $\da^*$ and $\calp(D)$, form a close pair.
\end{proof}

\begin{rmks} \label{r:indcunt}
The proof of Theorem~\ref{t:clprim} is ``horizontal'': the key point is that the
number of distinct prime divisors of $\gotn$ tends to infinity as
$\gotn\to0$.

\noindent{\bf \ref{r:indcunt}.1.}
Our horizontal argument works uniformly for number fields and for function
fields of odd characteristic. In characteristic $2$, however, the square-class lower bound used in the proof breaks down when one adds new prime divisors with exponent $1$. Indeed, if $\operatorname{char}(F)=2$, then the group $(D/\gotp)^*$ has odd order, and therefore it need not contribute a non-trivial square class. This is precisely the horizontal direction which must be controlled in the limit $\gotn\to0$.

\noindent{\bf \ref{r:indcunt}.2.}
The characteristic $2$ obstruction disappears whenever $D^*$ is finite. Indeed, the proof of Theorem~\ref{t:clprim} gives
\[
|P_\gotn|\le \omega(\gotn)\,|\hpi_\gotn(D^*)|.
\]
Thus $|P_\gotn|\le |D^*|\,\omega(\gotn)$. On the other hand,
\[
|(D/\gotn)^*|
=\prod_{\gotp\mid\gotn}
(N\gotp-1)(N\gotp)^{v_\gotp(\gotn)-1}.
\]
There are only finitely many prime ideals with $N\gotp=2$; let $c_2$ be
their number. Hence $|(D/\gotn)^*|\ge 2^{\omega(\gotn)-c_2}$. Therefore
\[
\frac{|P_\gotn|}{|(D/\gotn)^*|}
\le
|D^*|\,\frac{\omega(\gotn)}{2^{\omega(\gotn)-c_2}}
\rightarrow0.
\]
\end{rmks}

\begin{cor} \label{c:msrprmd} The set $\calp(D)$ is almost openly Eulerian. The procounting measure $\mu_{\calp(D)}$ is the Haar measure of the compact group $\da^*$. \end{cor}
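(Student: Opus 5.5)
The plan is to check the two claims of Corollary \ref{c:msrprmd} in order: first that $\da^*$ is openly Eulerian, then that the procounting measure it carries is the Haar measure of $\da^*$. Both follow quickly from Theorem \ref{t:clprim} and the machinery of Section \ref{s:dmps}.

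\textbf{Step 1: $\da^*$ is openly Eulerian.} By \eqref{e:crt} we have $\da^*=\prod_\gotp\dap^*$. Indeed, a unit of the product ring is exactly an element whose components are all units. Since $\da^*$ is already closed, and $\hpi_{\gotp^\infty}(\da^*)=\dap^*$ is closed in $\dap$, we get $\widehat{\da^*}=\prod_\gotp X(\gotp)$ with $X(\gotp)=\dap^*$. Each $\dap^*=\dap-\gotp\dap$ is compact open, being the complement of the open ideal $\gotp\dap$. This gives the openly Eulerian property.

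\textbf{Step 2: $\calp(D)$ is almost openly Eulerian.} Theorem \ref{t:clprim} says that $\calp(D)$ and $\da^*$ form a close pair. By definition, this means $\calp(D)$ is almost openly Eulerian.

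\textbf{Step 3: the measure exists and equals $\mu_{\da^*}$.} Corollary \ref{c:3.4}, or directly Theorem \ref{t:opnelr} combined with Proposition \ref{p:muST}, shows that $\mu_{\calp(D)}$ exists and equals $\mu_{\da^*}$.

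\textbf{Step 4: $\mu_{\da^*}$ is the Haar measure of $\da^*$.} By Theorem \ref{t:opnelr}, $\mu_{\da^*}=\bigotimes_\gotp\mu_{\dap}(\dap^*)^{-1}\b1_{\dap^*}\mu_{\dap}$. Each factor is the normalized restriction of the additive Haar measure of $\dap$ to $\dap^*$. The additive Haar measure restricted to the units is invariant under multiplication by a unit $u$, because multiplication by $u$ is an additive automorphism of $\dap$ preserving the module (its absolute value is $1$). Hence each factor is the normalized Haar measure of the compact group $\dap^*$.

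The product over $\gotp$ is then a translation-invariant probability measure on $\prod_\gotp\dap^*=\da^*$. To see this, use \eqref{e:tnsr2}: for a basic compact open set $U=\prod_\gotp U_\gotp$, translation by $u=(u_\gotp)$ preserves each factor's value. By uniqueness of Haar measure, the product is the Haar measure of $\da^*$.

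A more intrinsic alternative for Step 4 uses \eqref{e:prmusa}. Since $\hpi_\gotn(\da^*)=(D/\gotn)^*$, we get $\widetilde\mu_{\da^*,\gotn}(\b1_U)=|\pi_\gotn(U)\cap(D/\gotn)^*|/|(D/\gotn)^*|$. This is the normalized counting measure on the finite group $(D/\gotn)^*$, so its limit is the Haar measure on $\varprojlim(D/\gotn)^*=\da^*$ (Remark \ref{r:msrx}.1).

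The only point needing care is the standing restriction in Theorem \ref{t:clprim}: either $\car(F)\neq2$, or $D^*$ is finite via Remark \ref{r:indcunt}.2. I would state this explicitly, since the corollary inherits that hypothesis. Otherwise no step is a real obstacle.
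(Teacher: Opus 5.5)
Your proposal is correct and follows the paper's argument. The paper's proof consists only of observing that $\da^*=\prod_\gotp\dap^*$, with each $\dap^*$ compact open, so that Theorem~\ref{t:clprim}, Theorem~\ref{t:opnelr} and Proposition~\ref{p:muST} yield both claims; your Step~4 just spells out the identification of $\mu_{\da^*}$ with the Haar measure (Remark~\ref{r:msrx}.1), which the paper leaves implicit, and you are right that the corollary inherits the standing hypothesis $\car(F)\neq2$ or $D^*$ finite.
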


\begin{prf} It suffices to observe that $\da^*$ is openly Eulerian, since \eqref{e:crt} implies $\da^*=\prod_\gotp\dap^*$ and the group of units is compact open in each ring $\dap$. \end{prf}

\begin{eg}[{\bf primes in $\Z$}] \label{eg:PZ} In the case $D=\Z$, one has $\calp(\Z)=\calp\sqcup\{-p\mid p\in\calp\}$ and also $\cpr=\calp\sqcup\za^*$, as we already observed in \eqref{e:prmZ}. Thus Theorem \ref{t:clprim} implies that $\calp$ and $\za^*$ form a close pair: in particular, $\mu_\calp$ exists and is equal to $\mu_{\za^*}$. For a direct proof of this last statement, note that two Radon measures on a compact space are equal if they coincide on closed sets: hence $\mu_{\za^*}=\mu_\calp$ amounts to the claim that for any $T\subseteq\calp$ one has
\begin{equation} \label{e:msrprmZ} \mu_{\za^*}\big(\widehat T\cap \za^*\big)=\mu_\calp\big(\widehat T\big)\stackrel{\text{by }\eqref{e:musc}}{=}\lim_{n\to 0}\frac{|\pi_n(T)|}{|\pi_n(\calp)|}\;.
\end{equation} 
For the sake of clarity, we give a direct proof of \eqref{e:msrprmZ}. Writing $\pi_n(\calp)=(\Z/n\Z)^*\sqcup\supp(n)$ (where $\supp(n)$ denotes the set of primes dividing $n$), we obtain 
$$|\pi_n(\calp)|=\varphi(n)+\omega(n)\,,$$
with $\varphi$ the Euler totient function and $\omega$ counting the distinct prime divisors of $n$. Also,
$$\pi_n(T)=\pi_n(\widehat T\cap \za^*)\sqcup X_n$$
for some $X_n\subseteq\supp(n)$. Elementary considerations show
$$\lim_{n\rightarrow0}\frac{\omega(n)}{\varphi(n)}=0$$
and hence
$$\lim_{n\rightarrow0}\frac{|\pi_n(T)|}{|\pi_n(\calp)|}=\lim_{n\rightarrow0}\frac{|\pi_n(\widehat T\cap\za^*)|}{\varphi(n)}=\lim_{n\rightarrow0}\frac{|\pi_n(\widehat T\cap\za^*)|}{|\pi_n(\za^*)|}=\mu_{\za^*}\big(\widehat T\cap \za^*\big).$$ 
The equality $\mu_\calp=\mu_{\za^*}$ is in accordance with Lemma \ref{l:isolated}: the measure $\mu_\calp$ cannot detect the isolated points of $\cpr$, which are exactly the prime numbers.

We also remark that, when $T$ is an arithmetic progression, Dirichlet's theorem on primes implies that \eqref{e:msrprmZ} yields exactly the $\pi$-measure defined in \cite[Section III]{gol2}. \end{eg}

\begin{eg}[Fourier coefficients of $\mu_\calp$]
    In the case $D=\Z$, by Corollary~\ref{c:msrprmd} we have $\mu_{\calp}=\mu_{\za^*}$.
Hence, by Proposition~\ref{p:fourier-profinite} applied to $G=\za$,
\[
\mu_{\calp}
=
\sum_{\chi\in\za^\vee}\mu_{\calp}(\chi)e_\chi
=
\sum_{\chi\in\za^\vee}\mu_{\za^*}(\chi)e_\chi .
\]
Let $n>0$, and let $\chi\in\Hom(\Z/n\Z,\C^*)$, viewed also as a character of
$\za$. Since $(\hpi_n)_*(\mu_{\za^*})$ is the uniform probability measure on
$(\Z/n\Z)^*$, we get
\[
\mu_{\calp}(\chi)
=
\mu_{\za^*}(\chi)
=
\frac{1}{\varphi(n)}
\sum_{u\in(\Z/n\Z)^*}\chi(u).
\]
In terms of Ramanujan's sums, if $\chi_a(x):=\exp(2\pi i ax/n)$ and $a\in\Z/n\Z$,
then
\[
\mu_{\calp}(\chi_a)
=
\frac{1}{\varphi(n)}
\sum_{\substack{u\bmod n\\ (u,n)=1}}
\exp(2\pi i au/n)\ .
\]
In particular, if
$m=\operatorname{cond}(\chi)$, then, by  \cite[Theorem~8.6]{apostol}, $\mu_{\calp}(\chi)=
\dfrac{\mu_{\mathrm{Mob}}(m)}{\varphi(m)}$,
where $\mu_{\mathrm{Mob}}$ denotes the M\"obius function. Thus
\[
\mu_{\calp}
=
\sum_{\chi\in\za^\vee}
\frac{\mu_{\mathrm{Mob}}(\operatorname{cond}(\chi))}
{\varphi(\operatorname{cond}(\chi))}
\,e_\chi.
\]

\end{eg}

\subsubsection{Polynomial images and preimages} 
Given $f\in D[x_1,\dots,x_n]$, it induces maps (which, by abuse of notation, we denote by the same letter) $f\colon\da^n\rightarrow\da$  and $f\colon\dap^n\rightarrow\dap$. It is straightforward to check that such maps are well-behaved with respect to the decomposition \eqref{e:crt}: given any sets $A_\gotp\subseteq\dap^n$, $B_\gotp\subseteq\dap$, one has
\begin{equation} \label{e:plynmprd} f\left(\prod_\gotp A_\gotp\right)=\prod_\gotp f(A_\gotp)\;\;\text{ and }\;\;f^{-1}\left(\prod_\gotp B_\gotp\right)=\prod_\gotp f^{-1}(B_\gotp)\,. \end{equation}
For more detail on \eqref{e:plynmprd} and how it relates with Eulerian sets, we refer to \cite[Sections 6.1.1 and 6.3]{dl}. Here we just want to use it to produce procounting measures attached to images and preimages. We start with a couple of easy examples, to get a feeling about what to expect for $\mu_{f(D)}$.

\begin{eg+} \label{eg:imgx2} \begin{itemize}  \item[] \end{itemize}
\noindent {\bf \ref{eg:imgx2}.1.} The image of a degree $1$ polynomial $ax+b$ is an ideal coset $aD+b$ and the closure is a product of terms $a\dap+b$ (which become $\dap$ if $a\notin\gotp$).\\
\noindent {\bf \ref{eg:imgx2}.2.} The next simplest example is $f(x)=x^2$, in which case the local factors have the form
$$f(\dap)=\{0\}\sqcup\bigsqcup_{n\in\N}\gotp^{2n}f(\dap^*)$$
and $f(\dap^*)$ is easy to compute explicitly: e.g., for $D=\Z$ one has
$$f(\Z_p^*)=\begin{cases}1+8\Z_2 & \text{ if }p=2 \\ \mmu_{(p-1)/2}\times(1+p\Z_p) & \text{ if }p\neq2 \end{cases}$$
(where $\mmu_n$ denote the group of roots of unity of order $n$ in $\Z_p$), as discussed in \cite[II, \S3.3]{ser}.
\end{eg+}

We propose another example, which reconnects to Section \ref{sss:hier}.

\begin{eg}[\bf Landau's primes]\label{eg:LandauLittleO}
Denote by $\call:=\{p\in\calp:\ p=m^2+1\text{ for some }m\in\N\}$
the set of Landau's primes. We shall show that
\[
\call=o(\calp).
\]
Let $S:=\{m^2+1:\ m\in\N\}$.
We first show that $\mu_{\za^*}\big(\widehat S\cap\za^*\big)=0$.
Let $f(x)=x^2+1$. Since $\N$ is dense in $\za$ and $f$ is continuous, one has $\widehat S=f(\za)=\prod_p f(\Z_p)$.
Therefore $\widehat S\cap\za^*=\prod_p\big(f(\Z_p)\cap\Z_p^*\big)$.

For an odd prime $p$, the reduction modulo $p$ of $f(\Z_p)\cap\Z_p^*$ is contained in
$V_p:=\{a^2+1\in\F_p^*:\ a\in\F_p\}$.
Since the set of squares in $\F_p$ has cardinality $(p+1)/2$, we have $|V_p|\le \frac{p+1}{2}$.
For every finite set $\Sigma$ of primes, we have
$\widehat S\cap\za^*\subset \prod_{p\in\Sigma}\big(f(\Z_p)\cap\Z_p^*\big)
\times
\prod_{p\notin\Sigma}\Z_p^*$.
As $\mu_{\za^*}=\bigotimes_p\mu_{\Z_p^*}$, it follows that
\[
\mu_{\za^*}\big(\widehat S\cap\za^*\big)
\le
\prod_{p\in\Sigma}
\mu_{\Z_p^*}\big(f(\Z_p)\cap\Z_p^*\big)\ .
\]
For odd $p$, the reduction of $f(\Z_p)\cap\Z_p^*$ modulo $p$ is contained
in $V_p$, and each residue class in $\F_p^*$ has measure $1/(p-1)$.
Hence
\[
\mu_{\Z_p^*}\big(f(\Z_p)\cap\Z_p^*\big)
\le
\frac{|V_p|}{p-1}.
\]
Hence, for every finite set $\Sigma$ of primes $p\ge5$, 
\[
\mu_{\za^*}\big(\widehat S\cap\za^*\big)
\le
\prod_{p\in \Sigma}\frac{|V_p|}{p-1}
\le
\prod_{p\in \Sigma}\frac{p+1}{2(p-1)}
\le
\left(\frac34\right)^{|\Sigma|}.
\]
Letting $|\Sigma|\to\infty$, we obtain $\mu_{\za^*}\big(\widehat S\cap\za^*\big)=0$.
As $\call\subseteq S$, it follows $\widehat{\call}\cap\za^*
\subseteq\widehat S\cap\za^*$.
Therefore $\mu_{\za^*}\big(\widehat{\call}\cap\za^*\big)=0$.
Using \eqref{e:msrprmZ} with $T=\call$, we get
\[
\lim_{n\to0}
\frac{|\pi_n(\call)|}{|\pi_n(\calp)|}=\mu_{\za^*}\big(\widehat{\call}\cap\za^*\big)=0
\]
which is our definition of $\call=o(\calp)$.
\end{eg} 

The example \ref{eg:imgx2}.2 of $x^2$ suggests that, if a polynomial image fails to be openly Eulerian  (as in this case), nonetheless the failure must be ``small'',  in the sense that if $X$ is openly Eulerian then $f(X)$ can be approximated by openly Eulerian sets $f(X_i)$ so that $\mu_{f(X_i)}$ converge to $\mu_{f(X)}$ (in the sense of Proposition \ref{p:brd0}). 

\begin{lem}\label{l:qe}
Let $\mathfrak p\subset D$ be a nonzero prime, and consider the fraction field
$K_{\mathfrak p}:=\Frac(\da_{\mathfrak p})$.
Let $f\in D[x_1,\dots,x_n]$ be non-constant. Set
\[
Z_{\mathfrak p}:=\Big\{\alpha\in \da_{\mathfrak p}^n:\ 
\frac{\partial f}{\partial x_1}(\alpha)=\cdots=\frac{\partial f}{\partial x_n}(\alpha)=0\Big\}.
\]
Define $A_{\mathfrak p}\subset f(\da_{\mathfrak p}^n)$ to be the set of values $a$ for which
there exists $\alpha\in \da_{\mathfrak p}^n- Z_{\mathfrak p}$ with $f(\alpha)=a$, and set\footnote{\noindent
Note that in general $B_{\mathfrak p}\subseteq f(Z_{\mathfrak p})$, and the inclusion may be strict.}
\[
B_{\mathfrak p}:=f(\da_{\mathfrak p}^n)- A_{\mathfrak p}.
\]
Then $f(\da_{\mathfrak p}^n)=A_{\mathfrak p}\sqcup B_{\mathfrak p}$ such that:
\begin{enumerate}
\item $A_{\mathfrak p}$ is an open subset of $\da_{\mathfrak p}$.
\item Assume that the generic fiber of $f\colon\A^n_{K_{\mathfrak p}}\to \A^1_{K_{\mathfrak p}}$ is smooth.
Then $A_{\mathfrak p}$ is nonempty and $B_{\mathfrak p}$ is finite.
\item If $\partial f/\partial x_i\equiv 0$ for all $i$, then $A_{\mathfrak p}=\emptyset$ and
$B_{\mathfrak p}=f(\da_{\mathfrak p}^n)$. 
\end{enumerate}
\end{lem}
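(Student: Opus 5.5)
The decomposition $f(\da_{\mathfrak p}^n)=A_{\mathfrak p}\sqcup B_{\mathfrak p}$ is tautological from the definition of $B_{\mathfrak p}$, so the plan is to handle the three items separately.

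For (1) we use the $\mathfrak p$-adic implicit function theorem, or equivalently the multivariable Hensel lemma. Let $a=f(\alpha)$ with $\alpha\notin Z_{\mathfrak p}$, say $\partial f/\partial x_1(\alpha)\neq0$, and put $k:=v_{\mathfrak p}(\partial f/\partial x_1(\alpha))$. Freeze $x_2,\dots,x_n$ at $\alpha_2,\dots,\alpha_n$ and consider the one-variable polynomial $g(x):=f(x,\alpha_2,\dots,\alpha_n)-b$. For any $b$ with $v_{\mathfrak p}(b-a)>2k$, one has $v_{\mathfrak p}(g(\alpha_1))>2v_{\mathfrak p}(g'(\alpha_1))$. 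Hensel's lemma (Newton's method in the complete DVR $\da_{\mathfrak p}$) then gives a root $\beta_1\in\da_{\mathfrak p}$ with $v_{\mathfrak p}(\beta_1-\alpha_1)>k$. It follows that $v_{\mathfrak p}(g'(\beta_1))=k$, so $\beta=(\beta_1,\alpha_2,\dots)\notin Z_{\mathfrak p}$ and $f(\beta)=b$. Hence $a+\mathfrak p^{2k+1}\da_{\mathfrak p}\subseteq A_{\mathfrak p}$, and $A_{\mathfrak p}$ is open.

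For (3), if all partials vanish identically then $Z_{\mathfrak p}=\da_{\mathfrak p}^n$, so $A_{\mathfrak p}=\emptyset$ and $B_{\mathfrak p}$ is the whole image; this is immediate.

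For (2), first note that $B_{\mathfrak p}\subseteq f(Z_{\mathfrak p})$, so it suffices to show that $f(Z_{\mathfrak p})$ is finite. Consider the critical locus $Z=V(\partial f/\partial x_1,\dots,\partial f/\partial x_n)\subseteq\A^n_{K_{\mathfrak p}}$. Smoothness of the generic fiber means that $f$ is smooth over a dense open $V\subseteq\A^1$, so $f(Z)$ avoids $V$ and its image is contained in a proper closed subset of $\A^1$, i.e. a finite set. Concretely, for each irreducible component $W$ of $Z$, the restriction $f|_W$ is a regular function whose image is constructible. If that image were infinite, it would be dense, and $W$ would dominate $\A^1$. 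The generic point of $\A^1$ would then lie in the image of the critical locus, contradicting generic smoothness, since the fiber over the generic point is the generic fiber and is smooth.

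I expect the main subtlety to be in char $p$, where generic smoothness can fail. Here it is exactly the hypothesis, so no Sard-type theorem is needed; I only need that a smooth generic fiber has no critical points over the generic point. This follows because the Jacobian criterion for the fiber $f=t$ over $K_{\mathfrak p}(t)$ gives smoothness exactly when the partials have no common zero on that fiber.

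Hence $B_{\mathfrak p}\subseteq f(Z(\da_{\mathfrak p}))\subseteq f(Z)(K_{\mathfrak p})$ is finite. Nonemptiness of $A_{\mathfrak p}$ follows because $f(\da_{\mathfrak p}^n)$ is infinite: $f$ is non-constant, so restricting to a generic line gives a non-constant one-variable polynomial, whose image over the infinite ring $\da_{\mathfrak p}$ is infinite since fibers have bounded size. Removing the finite set $B_{\mathfrak p}$ leaves $A_{\mathfrak p}$ nonempty.
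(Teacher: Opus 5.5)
Your proposal is correct and follows essentially the same route as the paper: Hensel's lemma in a single frozen variable for (1), and for (2) the finiteness of the image of the critical locus $V(\partial f/\partial x_1,\dots,\partial f/\partial x_n)$, obtained from constructibility together with smoothness of the generic fiber, combined with the infinitude of $f(\da_\gotp^n)$. Your explicit check that $v_\gotp\big(\partial f/\partial x_1(\beta)\big)=k$, which guarantees that the Hensel lift $\beta$ stays outside $Z_\gotp$, is a detail the paper's proof of (1) leaves implicit; your restriction to a line to get $|f(\da_\gotp^n)|=\infty$ is a minor variant of the paper's Zariski-density argument and rests on the same non-vanishing fact.
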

\begin{proof}
By definition one always has the disjoint union
$f(\da_{\mathfrak p}^n)=A_{\mathfrak p}\sqcup B_{\mathfrak p}$.

\smallskip\noindent
(1) \emph{Openness of $A_{\mathfrak p}$.}
Fix $a_0\in A_{\mathfrak p}$ and choose $\alpha=(\alpha_1,\dots,\alpha_n)\in
\da_{\mathfrak p}^n- Z_{\mathfrak p}$ with $f(\alpha)=a_0$.
Then $\partial f/\partial x_i(\alpha)\neq 0$ for some $i$.
Freezing all variables except $x_i$, consider
\[
g_a(T):=f(\alpha_1,\dots,\alpha_{i-1},T,\alpha_{i+1},\dots,\alpha_n)-a\in \da_{\mathfrak p}[t],
\]
where $a\in \da_{\mathfrak p}$ is a parameter. We have $g_{a_0}(\alpha_i)=0$ and
$g'_{a_0}(\alpha_i)=\partial f/\partial x_i(\alpha)\neq 0$.
By Hensel's lemma (as stated in \cite[II, Section~2, Theorem~1]{ser}), for all $a$ sufficiently $\mathfrak p$-adically close to $a_0$ there exists
$\beta_i\in \da_{\mathfrak p}$ close to $\alpha_i$ such that $g_a(\beta_i)=0$.
Replacing only the $i$-th coordinate of $\alpha$ by $\beta_i$ yields
$\beta\in \da_{\mathfrak p}^n$ with $f(\beta)=a$, hence $a\in A_{\mathfrak p}$.
Thus a neighbourhood of $a_0$ is contained in $A_{\mathfrak p}$, so $A_{\mathfrak p}$ is open.

 \smallskip\noindent
(2) \emph{Finiteness of $B_{\mathfrak p}$.}
We first show that $\da_{\mathfrak p}^n\subset \A^n(K_{\mathfrak p})$ is Zariski dense.
Indeed, let
$F\in K_{\mathfrak p}[x_1,\dots,x_n]$ vanish on $\da_{\mathfrak p}^n$.
Fix $(a_2,\dots,a_n)\in \da_{\mathfrak p}^{\,n-1}$ and consider
$F_{a_2,\dots,a_n}(T):=F(T,a_2,\dots,a_n)\in K_{\mathfrak p}[t]$.
Since $\da_{\mathfrak p}$ is infinite and a nonzero polynomial over the field $K_{\mathfrak p}$
has only finitely many zeros, it follows that $F_{a_2,\dots,a_n}\equiv 0$.
Writing $F(x_1,\dots,x_n)=\sum_{m=0}^d c_m(x_2,\dots,x_n)\,x_1^m$
with $c_m\in K_{\mathfrak p}[x_2,\dots,x_n]$,
we deduce $c_m(a_2,\dots,a_n)=0$ for all $m$ and all $(a_2,\dots,a_n)\in \da_{\mathfrak p}^{\,n-1}$.
By induction on $n$ this forces each $c_m$ to be the zero polynomial, hence $F=0$.
Equivalently, the Zariski closure of $\da_{\mathfrak p}^n$ in $\A^n_{K_{\mathfrak p}}$ is all of
$\A^n_{K_{\mathfrak p}}$.
Let
\[
C(f):=V\!\left(\frac{\partial f}{\partial x_1},\dots,\frac{\partial f}{\partial x_n}\right)
\subset \A^n_{K_{\mathfrak p}}\ .
\]
This is the non-smooth locus of the morphism $f\colon\A^n_{K_{\mathfrak p}}\rightarrow \A^1_{K_{\mathfrak p}}$. By Chevalley theorem (see \cite[Section~29.22]{stacks-project}), the image $f(C(f))\subset \A^1_{K_{\mathfrak p}}$ is constructible.
By the hypothesis that the generic fiber is smooth, the generic point of $\A^1_{K_{\mathfrak p}}$ does not lie in $f(C(f))$, hence the Zariski closure
\[
Z:=\overline{f(C(f))}\subset \A^1_{K_{\mathfrak p}}
\]
is a proper closed subset, and therefore finite. Set $W:=\A^1_{K_{\mathfrak p}}- Z$.
By construction, $f$ is smooth over $W$, i.e., $f^{-1}(W)\cap C(f)=\emptyset$.
We claim that $f(\da_{\mathfrak p}^n)$ is infinite. Otherwise
$f(\da_{\mathfrak p}^n)\subset\{a_1,\dots,a_m\}$ would be finite, hence
$\da_{\mathfrak p}^n \subset \bigcup_{j=1}^m f^{-1}(a_j)
=V(f-a_1)\cup\cdots\cup V(f-a_m)$,
a finite union of proper Zariski closed subsets of $\A^n_{K_{\mathfrak p}}$,
contradicting Zariski density.

Since $\A^1_{K_{\mathfrak p}}- W=Z$ is finite while $f(\da_{\mathfrak p}^n)$ is infinite, we have
$W\cap f(\da_{\mathfrak p}^n)\neq\emptyset$. Pick $a\in W\cap f(\da_{\mathfrak p}^n)$ and choose
$\alpha\in \da_{\mathfrak p}^n$ with $f(\alpha)=a$.
Since $a\in W$, we have $\alpha\notin C(f)$, hence $\alpha\notin Z_{\mathfrak p}$.
Therefore $a\in A_{\mathfrak p}$, so
$W\cap f(\da_{\mathfrak p}^n)\subset A_{\mathfrak p}.$
This implies that
\[
B_{\mathfrak p}=f(\da_{\mathfrak p}^n)- A_{\mathfrak p}
\subset f(\da_{\mathfrak p}^n)- W \subset Z,
\]
and since $Z$ is finite, $B_{\mathfrak p}$ is finite. In particular $A_{\mathfrak p}\neq\emptyset$.

\smallskip\noindent
(3) \emph{The case $\nabla f\equiv 0$.}
If $\partial f/\partial x_i\equiv 0$ for all $i$, then $Z_{\mathfrak p}=\da_{\mathfrak p}^n$,
hence $A_{\mathfrak p}=\emptyset$ and $B_{\mathfrak p}=f(\da_{\mathfrak p}^n)$.
In characteristic $0$ this forces $f$ to be constant, while in characteristic $p>0$ it is the purely inseparable case.
\end{proof}

\begin{prop}\label{p:dernot0}
Let $f\in D[x]$ be a polynomial and let $X\subseteq\da$ be a nonempty openly
Eulerian set. Assume that either $\deg f=0$ or $f'\neq0$. Then the
procounting measure $\mu_{f(X)}$ exists, of the form
\[
\mu_{f(X)}=\bigotimes_\gotp\mu_{f(X(\gotp))}.
\]
Moreover, if the formal derivative $f'$ is non-zero, each local factor
$\mu_{f(X(\gotp))}$ is a limit as in Proposition \ref{p:brd0}.
\end{prop}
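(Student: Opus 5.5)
The strategy is to reduce everything to the local factors via \eqref{e:plynmprd} and then invoke Proposition \ref{p:prddstr2}. Since $X$ is openly Eulerian, $\xa=\prod_\gotp X(\gotp)$ with each $X(\gotp)$ compact open in $\dap$. By continuity and compactness, $\widehat{f(X)}=f(\xa)$. Then \eqref{e:plynmprd} gives
\[ \widehat{f(X)}=\prod_\gotp f(X(\gotp)). \]
Each $f(X(\gotp))$ is compact, hence closed. By Lemma \ref{l:muS-chs} it therefore suffices to construct the procounting measure of this product. By Proposition \ref{p:prddstr2}, that reduces to proving that each local measure $\mu_{f(X(\gotp))}$ exists in $\cald(\dap,\R)$. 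The product formula $\mu_{f(X)}=\bigotimes_\gotp\mu_{f(X(\gotp))}$ then follows automatically.

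The case $\deg f=0$ is trivial. Here $f(X(\gotp))$ is a single point $c$, so its procounting measure exists and equals $\delta_c$ (the singleton example after Definition \ref{d:muS2}). So assume $f'\neq0$. We split $f(X(\gotp))=A\sqcup B$ as in Lemma \ref{l:qe}, applied to $f$ restricted to the open set $X(\gotp)$. Put
\[ A:=f\bigl(X(\gotp)-Z_\gotp\bigr), \]
where $Z_\gotp$ is the zero set of $f'$ in $\dap$. Since $f'\neq0$ is a one-variable polynomial, $Z_\gotp$ is finite; hence $B\subseteq f(Z_\gotp)$ is finite. The Hensel argument of Lemma \ref{l:qe}(1) works verbatim inside the open set $X(\gotp)$, so $A$ is open. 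It is also nonempty, because $X(\gotp)$ is a nonempty open set and hence infinite, while $Z_\gotp$ is finite. Consequently $\overline{f(X(\gotp))}=f(X(\gotp))$ has nonempty interior and positive Haar measure, by the remark after Proposition \ref{p:msrS>0}. Proposition \ref{p:msrS>0} then gives existence, with
\[ \mu_{f(X(\gotp))}=\frac{\b1_{f(X(\gotp))}}{\mu_{\dap}\bigl(f(X(\gotp))\bigr)}\cdot\mu_{\dap}. \]

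For the ``moreover'' clause, we check the hypotheses of Proposition \ref{p:brd0}, namely that $\partial f(X(\gotp))$ has Haar measure zero. The set $f(X(\gotp))$ is closed and contains the open set $A$, so its boundary lies in the closure of $A$ minus $A$, together with $B$. Since $B$ is finite, the main step is to show that $\partial A$ is null.

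This is the step I expect to be the main obstacle. My plan is to use compactness of $X(\gotp)$. Around each of the finitely many critical points $z\in Z_\gotp\cap X(\gotp)$, I would choose small balls $V_z$. On the compact complement $K:=X(\gotp)-\bigcup_z V_z$, the derivative $|f'|$ is bounded below. So $f|_K$ is a local homeomorphism, and $f(K)$ is a compact set that is also open: by the Hensel argument, each $a\in f(K)$ has a uniform neighbourhood inside $f(K)$. Hence $\partial f(K)=\emptyset$.

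Near a critical point $z$, write
\[ f(x)-f(z)=c\,(x-z)^m(1+\cdots). \]
Then $f(V_z)$ is, up to a unit, a set of values $f(z)+c\,w^m u$. Its boundary is contained in $\{f(z)\}$ plus a set of small measure: a Newton-polygon or valuation argument shows that the image of the ball $V_z$ is a finite union of compact opens together with the point $f(z)$.

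Letting the radii of the $V_z$ shrink to $0$, the boundary of $f(X(\gotp))$ becomes contained in $f(Z_\gotp)$ together with sets of measure tending to $0$. Hence the boundary is null, and Proposition \ref{p:brd0} applies. In characteristic $p$ the local expansion at a critical point may involve inseparability issues. If the direct argument stalls there, I would fall back on the cruder fact that $f$ maps null sets to null sets, because $f$ is locally Lipschitz. I would then argue that $\overline A-A\subseteq f(Z_\gotp)$, which I would try to prove directly from compactness and the Hensel neighbourhoods.
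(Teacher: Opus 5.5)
You reduce to the local factors via the closure of $f(X)$, \eqref{e:plynmprd}, Lemma \ref{l:muS-chs} and Proposition \ref{p:prddstr2}, exactly as the paper does. The constant case and the splitting $f(X(\gotp))=A\sqcup B$ (with $A$ nonempty and open by Hensel, and $B\subseteq f(Z_\gotp)$ finite) also match. Getting existence of the local factor from Proposition \ref{p:msrS>0} rather than Proposition \ref{p:brd0} is harmless, since the existence part of the latter is proved via the former.

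The gap is in the ``moreover'' clause: you never show that $\partial f(X(\gotp))$ is $\mu_{\dap}$-null. You call the nullity of $\overline{A}-A$ the main obstacle and offer only a sketch via critical-point expansions, Newton polygons and a Lipschitz fallback. Its central claim is false as stated: the image of a small ball around a critical point is in general not a finite union of compact opens plus the point $f(z)$. For $f(x)=x^2$ and odd $p$, the image of $p^k\Z_p$ is $\{0\}\cup\bigcup_{n\ge k}p^{2n}(\Z_p^*)^2$. Its part away from $0$ accumulates at $0$, so it is not closed and cannot be a finite union of compact sets.

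The missing observation is one line, and you already had both ingredients. Put $C:=f(X(\gotp))$. It is compact, hence closed, so $\overline{A}\subseteq C$ and therefore $\overline{A}-A\subseteq C-A=B$. Equivalently: $A$ is open, so it lies in the interior of $C$. Then $\partial C$, being $C$ minus its interior, is contained in $C-A=B\subseteq f(Z_\gotp)$. That set is finite, hence $\mu_{\dap}$-null. Also $\mu_{\dap}(C)>0$, because $C$ contains the nonempty open set $A$. So Proposition \ref{p:brd0} applies directly, and this is precisely the paper's argument.

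The step is purely topological. The only inputs from $f'\neq0$ are the finiteness of $Z_\gotp$ and the Hensel step. No local analysis at critical points, and no special care about inseparability in positive characteristic, is needed.
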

\begin{prf}
By Lemma~\ref{l:muS-chs}, it is enough to work with the closure of $f(X)$.
Since $X$ is openly Eulerian, we have
$\xa=\prod_\gotp X(\gotp)$.
By compactness, continuity, and \eqref{e:plynmprd},
\[
\widehat{f(X)}
=
f(\xa)
=
\prod_\gotp f(X(\gotp)).
\]
Thus, by Proposition~\ref{p:prddstr2}, it is enough to prove that every local
factor $f(X(\gotp))\subseteq\dap$ has a procounting measure.
\\
If $\deg f=0$, say $f=c$, then $f(X)=\{c\}$, so
$\mu_{f(X)}=\delta_c=\bigotimes_\gotp \delta_{c_\gotp}$,
where $c_\gotp$ denotes the image of $c$ in $\dap$.

Assume now that $f'\neq0$. Fix a non-zero prime ideal $\gotp$. Since
$f'$ is a non-zero polynomial, the set
\[
Z_\gotp:=\{\alpha\in X(\gotp):f'(\alpha)=0\}
\]
is finite. As in Lemma~\ref{l:qe}, Hensel's lemma shows that $A_\gotp:=f\bigl(X(\gotp)- Z_\gotp\bigr)$ is open in $\dap$. Moreover $A_\gotp\neq\emptyset$, because
$X(\gotp)$ is a nonempty open subset of $\dap$, hence infinite, whereas
$Z_\gotp$ is finite. Finally,
\[
f(X(\gotp))- A_\gotp
\subseteq f(Z_\gotp),
\]
so $f(X(\gotp))$ is the union of a nonempty open set and a finite set. 

Since $X(\gotp)$ is compact and $f$ is continuous, $f(X(\gotp))$ is
compact, hence closed. Also $A_\gotp$ is contained in its interior, so $\partial f(X(\gotp))\subseteq f(Z_\gotp)$.
Thus $\mu_{\dap}\bigl(\partial f(X(\gotp))\bigr)=0$.
By Proposition~\ref{p:brd0}, the procounting measure
$\mu_{f(X(\gotp))}$ exists and is obtained as in that Proposition.

Therefore all local factors exist, and Proposition~\ref{p:prddstr2}  allows us to conclude.
\end{prf}

Now we look at inverse images. Our treatment will be limited to what is needed for Conjecture \ref{cnj:pBHD}.

\begin{prop} \label{p:invimgelr} Given $f_1,\dots,f_k\in D[x]$, let $\bff\colon\da\rightarrow\da^k$ be the map $a\mapsto\big(f_1(a),\dots,f_k(a)\big)$. If $X\subseteq\da^k$ is openly Eulerian, then the distribution $\mu_{\bff^{-1}(\xa)}$ exists. \end{prop}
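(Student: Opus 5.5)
Since $X$ is openly Eulerian we have $\xa=\prod_\gotp X(\gotp)$ with each $X(\gotp)\subseteq\dap^k$ compact open. By \eqref{e:plynmprd}, applied componentwise to the map $\bff$,
\[
\bff^{-1}(\xa)=\prod_\gotp \bff^{-1}\big(X(\gotp)\big),
\]
where now $\bff\colon\dap\rightarrow\dap^k$ is the local map. The plan is to show that this preimage is itself a product of compact open sets, so that the argument of Theorem \ref{t:opnelr} applies. Concretely, this means invoking Lemma \ref{l:mucmpap} for each factor and then Proposition \ref{p:prddstr2} for the product.

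The local step is purely topological. The map $\bff\colon\dap\rightarrow\dap^k$ is continuous, since it is polynomial. Hence $\bff^{-1}(X(\gotp))$ is open in $\dap$, because $X(\gotp)$ is open. It is also closed, because $X(\gotp)$ is closed, and therefore compact, since $\dap$ is compact. So each local factor $Y_\gotp:=\bff^{-1}(X(\gotp))$ is compact open in $\dap$.

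We must deal with the possibility that some $Y_\gotp$ is empty. In that case the whole preimage is empty. The convention $\mu_{\emptyset,\alpha}=0$ then gives $\mu_\emptyset=0$ trivially, so the distribution exists (it is zero). Otherwise every $Y_\gotp$ is nonempty. Lemma \ref{l:mucmpap}, applied in $\dap$ where the Haar measure exists, gives the existence of $\mu_{Y_\gotp}=\mu_{\dap}(Y_\gotp)^{-1}\b1_{Y_\gotp}\mu_{\dap}$, which has total mass $1$. Proposition \ref{p:prddstr2} then yields the existence of $\mu_{\prod_\gotp Y_\gotp}=\bigotimes_\gotp\mu_{Y_\gotp}$, via Theorem \ref{t:dstrprd}, whose convergence hypothesis holds because all total masses equal $1$.

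The only point needing care is the identification of $\bff^{-1}(\xa)$ with the product of the local preimages, namely that \eqref{e:plynmprd} extends to the vector-valued map $\bff$. This follows because preimages under a map between products that acts coordinatewise in $\gotp$ are products of the local preimages. I expect no real obstacle here. Note that no closure issue arises, since $\bff^{-1}(\xa)$ is already closed, so Lemma \ref{l:muS-chs} is not even needed.
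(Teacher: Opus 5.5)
Your proof is correct and follows the paper's argument: you decompose $\bff^{-1}(\xa)$ via \eqref{e:plynmprd} into the product of the compact open local preimages $\bff^{-1}(X(\gotp))$, then apply Lemma \ref{l:mucmpap} to each factor and Proposition \ref{p:prddstr2} to the product. You also treat separately the case where some local preimage is empty, where Lemma \ref{l:mucmpap} would involve dividing by zero; the paper leaves this case implicit.
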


\begin{prf} By hypothesis we have $\xa=\prod_\gotp X(\gotp)$, which, by \eqref{e:plynmprd}, yields $\bff^{-1}(\xa)=\prod_\gotp\bff^{-1}\big(X(\gotp)\big)$, by \eqref{e:plynmprd}. Each factor of $\xa$ is compact open, so the same applies to $\bff^{-1}(\xa)$ and we conclude by Lemma \ref{l:mucmpap} and Proposition \ref{p:prddstr2}.
\end{prf}

In general, we don't know if a (nonempty) polynomial inverse image of an openly Eulerian set is also openly Eulerian (actually, this was asked in \cite[Section 3.3]{dl} as question\footnote{In \cite{dl}, Eulerian sets are subsets of $D^n$. Here we allow
subsets of $\da^n$, so the counterexample in \cite[Remark 6.7]{dl} concerns
a different definition.} {\bf (Q1)} and would have interesting consequences, see {\em loc.~cit.}) or at least close\footnote{In the sense of Definition \ref{d:clpr}.} to being such. On the other hand, it is easy to see that close pairs need not be preserved. The next example illustrates how wrong things can go.

\begin{eg} \label{eg:prdinfdlt} Consider $f(x)=2x^2$, in characteristic different from $2$.  The inverse image of $f\colon D\rightarrow D$ is just $Y=\{1,-1\}$, but  $f\colon\da\rightarrow\da$ yields $X=f^{-1}\big(\{2\}\big)=\prod_\gotp X(\gotp)=\prod_\gotp\{1,-1\}_\gotp$\,. Looking at the associated procounting measures, one finds $\mu_Y=\frac{1}{2}(\delta_1+\delta_{-1})$, while $\mu_X$ is better described by \eqref{e:tnsr2}, which leads, for $U=\prod_\gotp U_\gotp$ compact open in $\da$, to
\[
\mu_X(\b1_U)=\frac{|X_U\cap U_0|}{2^{|S(U)|}}\;,
\]
where $S(U)=\big\{\gotp\mid U_\gotp\neq\dap\big\}$, $X_U=\prod_{\gotp\in S(U)}\{1,-1\}_\gotp$ and $U_0=\prod_{\gotp\in S(U)}U_\gotp$\,.\\
Now take $D=\Z$ and observe that $f^{-1}(\calp)=f^{-1}\big(\{2\}\big)=X$, while $f^{-1}\big(\za^*\big)=\emptyset$  (so that $\mu_{f^{-1}(\za^*)}=0$), in spite of the closeness of $\calp$ and $\za^*$. 
\end{eg}

For the application to the profinite Bateman--Horn conjecture, we shall not use
a general preservation result for close pairs under inverse images. Instead, we
prove directly the finite level estimate required by Definition~\ref{d:clpr}. The new input is a  non-concentration estimate for polynomial values in square classes.

For the rest of this section, we shall set $q_\gotp:=|D/\gotp|$.

\begin{lem}\label{l:square-class-nonconcentration}
Assume that $\operatorname{char}(F)\neq 2$. Let $g\in D[x]$ be nonconstant,
irreducible and separable. Then, for every $\rho\in(1/2,1)$, there exists a
finite set of prime ideals $\Sigma_{g,\rho}$ such that, for every
$\gotp\notin\Sigma_{g,\rho}$, every $a\ge1$, and every
$c\in (D/\gotp^a)^*/((D/\gotp^a)^*)^2$
one has
\[
\left|
\left\{
x\in D/\gotp^a :
g(x)\in c
\right\}
\right|\le\rho\  |D/\gotp^a|.
\]
\end{lem}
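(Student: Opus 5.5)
The plan is to reduce to the residue field and then lift by Hensel's lemma. Since $\operatorname{char}(F)\neq2$, for $\gotp$ outside the finitely many primes above $2$ (when $F$ is a number field), reduction induces an isomorphism
\[
(D/\gotp^a)^*/((D/\gotp^a)^*)^2\simeq(D/\gotp)^*/((D/\gotp)^*)^2 ,
\]
because $1+\gotp$ consists of squares by Hensel's lemma. A class $c$ is therefore determined by a class $\bar c\in\F_\gotp^*/(\F_\gotp^*)^2$, where $\F_\gotp:=D/\gotp$. Moreover, $g(x)\in c$ holds if and only if $g(\bar x)\in\bar c$ in $\F_\gotp$, where $\bar x$ is the reduction of $x$ and the condition includes $g(\bar x)\neq0$. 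Each $\bar x\in\F_\gotp$ has exactly $q_\gotp^{a-1}$ lifts to $D/\gotp^a$. So the left-hand side of the statement equals $q_\gotp^{a-1}\cdot N_\gotp(\bar c)$, where
\[
N_\gotp(\bar c):=\big|\{\bar x\in\F_\gotp: g(\bar x)\in\bar c\}\big| ,
\]
and it suffices to show $N_\gotp(\bar c)\le\rho\,q_\gotp$ for all but finitely many $\gotp$.

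Next, count with the quadratic character $\eta$ of $\F_\gotp^*$, extended by $\eta(0)=0$. For either square class $\bar c$ (with sign $\epsilon=\pm1$) we have
\[
N_\gotp(\bar c)\le\tfrac12\sum_{\bar x\in\F_\gotp}\big(1+\epsilon\,\eta(g(\bar x))\big)=\tfrac{q_\gotp}{2}+\tfrac{\epsilon}{2}\sum_{\bar x}\eta(g(\bar x)).
\]
The key input is the Weil bound $\big|\sum_{\bar x}\eta(g(\bar x))\big|\le(\deg g-1)\sqrt{q_\gotp}$. This holds whenever the reduction $\bar g$ is not a constant times a square in $\overline{\F}_\gotp[x]$.

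The main obstacle is to guarantee this non-square condition for almost all $\gotp$. Since $g$ is irreducible and separable over $F$, its discriminant $\Delta$ is a nonzero element of $F$. For $\gotp$ not dividing $\Delta$ or the leading coefficient of $g$ (finitely many exceptions, which also absorb any denominators), the reduction $\bar g$ has the same degree as $g$ and is separable. A separable polynomial of degree at least $1$ is not a constant times a square, so the Weil bound applies.

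We then take $\Sigma_{g,\rho}$ to consist of:
\begin{itemize}
\item the primes above $2$,
\item the primes dividing $\Delta$ or the leading coefficient of $g$,
\item the finitely many $\gotp$ with $\frac12+\frac{\deg g-1}{2\sqrt{q_\gotp}}>\rho$.
\end{itemize}
The last set is finite because $\rho>1/2$ and only finitely many primes have bounded norm. Combining the two displays gives $N_\gotp(\bar c)\le\rho\,q_\gotp$, and multiplying by $q_\gotp^{a-1}$ yields the claim for every $a\ge1$.
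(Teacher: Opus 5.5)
Your proof is correct and follows essentially the same route as the paper. Both use the square-class isomorphism $(D/\gotp^a)^*/((D/\gotp^a)^*)^2\simeq(D/\gotp)^*/((D/\gotp)^*)^2$ for $\gotp\nmid 2$, the $q_\gotp^{a-1}$ lifts of each residue, the quadratic-character count with the Weil bound, and a finite exceptional set that absorbs the primes of small norm; you only do the lifting step first. Your discriminant argument makes explicit the paper's claim that $\bar g$ is squarefree for almost all $\gotp$, and it needs only the separability of $g$, not irreducibility over $F$ (which can fail for $g$ irreducible in $D[x]$ when $D$ is not a UFD).
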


\begin{prf}
 After removing finitely many prime ideals, we may
assume that $\gotp$ does not lie above $2$ and that the reduction $\bar g$ of
$g$ modulo $\gotp$ is nonconstant and squarefree. In particular, $\bar g$ is
not a square in $(D/\gotp)[x]$.

Let $\chi_\gotp$ be the quadratic character of $(D/\gotp)^*$, extended by
$\chi_\gotp(0)=0$. By the Weil bound for multiplicative character sums \cite[Theorem~11.23]{ik},
\[
\left|
\sum_{x\in D/\gotp}\chi_\gotp(\bar g(x))
\right|
\le
(\deg g-1)q_\gotp^{1/2}.
\]
For  $ c\in (D/\gotp)^*/((D/\gotp)^*)^2$ we therefore get
\[
\left|
\left\{
x\in D/\gotp:\bar g(x)\in  c
\right\}
\right|
\le
\frac12q_\gotp+\frac12(\deg g-1)q_\gotp^{1/2}.
\]
Enlarging $\Sigma_{g,\rho}$ if necessary, this is at most $\rho\cdot q_\gotp$
for every $\gotp\notin\Sigma_{g,\rho}$.

Now let $a\ge1$. Since $\gotp\nmid 2$, the squaring map is an automorphism of $1+\gotp/\gotp^a$.
Hence the reduction map induces an isomorphism
\[
(D/\gotp^a)^*/((D/\gotp^a)^*)^2
\simeq
(D/\gotp)^*/((D/\gotp)^*)^2.
\]
Thus the square class of a unit modulo $\gotp^a$ is determined by its reduction
modulo $\gotp$. Each class modulo $\gotp$ has exactly $q_\gotp^{a-1}$ lifts
modulo $\gotp^a$, so $\left|
\left\{
x\in D/\gotp^a :
g(x)\in c
\right\}
\right|
\le
\rho\cdot q_\gotp^a=
\rho\cdot |D/\gotp^a|$.
\end{prf}

\begin{rmk} 
If $D^*$ is finite, the assumption that $\operatorname{char}(F)\neq2$ can be dropped.
Indeed, in that case the exceptional pieces $t_\gotp D^*$ involve only
finitely many values modulo each $\gotn$. One therefore replaces the square-class estimate by the elementary  estimate
\[
\left|
\{x\in D/\gotq^a:\ g(x)\equiv c \bmod \gotq^a\}
\right|
\le
\rho\, |D/\gotq^a|
\]
after enlarging a finite set $\Sigma$ of prime ideals, for every $\gotq\notin\Sigma$, uniformly in $a$ and in the finitely many relevant values $c$. This follows from the fact that, modulo $\gotq$, a non-zero polynomial equation $g(x)=c$ has at most $\deg g$ solutions,  after excluding finitely many primes. 
\end{rmk}

\begin{lem}\label{l:unit-values-product}
Let $h\in D[x]$ be a nonzero product of finitely many separable polynomials.
Then, for every $\lambda\in(0,1)$, there exists a finite set of prime ideals
$\Sigma_{h,\lambda}$ such that, for every $\gotp\notin\Sigma_{h,\lambda}$ and
every $a\ge1$, one has
\[
\left|
\left\{
x\in D/\gotp^a :
h(x)\in (D/\gotp^a)^*
\right\}
\right|
\ge
\lambda\, |D/\gotp^a|.
\]
\end{lem}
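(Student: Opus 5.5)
The plan is to reduce the count modulo $\gotp^a$ to a count modulo $\gotp$, and then bound the number of roots of $h$ modulo $\gotp$.

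\emph{Reduction to level $a=1$.} Whether an element of $D/\gotp^a$ is a unit depends only on its reduction modulo $\gotp$. The same holds for $h(x)$: its class modulo $\gotp$ depends only on $x \bmod \gotp$. Hence
\[
\bigl|\{x\in D/\gotp^a : h(x)\in (D/\gotp^a)^*\}\bigr| = q_\gotp^{a-1}\,\bigl|\{y\in D/\gotp : \bar h(y)\neq 0\}\bigr|,
\]
where $\bar h$ is the reduction of $h$ modulo $\gotp$. So it suffices to prove, for all $\gotp$ outside a finite set,
\[
\bigl|\{y\in D/\gotp : \bar h(y)\neq 0\}\bigr|\ge \lambda q_\gotp .
\]

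\emph{Bounding the roots modulo $\gotp$.} Write $h=c\,x^d+\dots$ with $c\neq 0$ its leading coefficient. We put into $\Sigma_{h,\lambda}$ all primes $\gotp$ with $c\in\gotp$; there are finitely many, since $c$ lies in only finitely many primes of the Dedekind domain $D$. For the remaining $\gotp$, $\bar h$ is a nonzero polynomial of degree $d=\deg h$ over the field $D/\gotp$, so it has at most $d$ roots. Hence
\[
\bigl|\{y\in D/\gotp : \bar h(y)\neq 0\}\bigr|\ge q_\gotp-d .
\]

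\emph{Choice of $\Sigma_{h,\lambda}$.} We have $q_\gotp-d\ge\lambda q_\gotp$ as soon as $q_\gotp\ge d/(1-\lambda)$. Only finitely many prime ideals have norm below a fixed bound: in the number field case this is finiteness of ideals of bounded norm, and in the function field case there are finitely many places of bounded degree. We add these primes to $\Sigma_{h,\lambda}$, which completes the argument.

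The separability hypothesis is not used here; presumably it matters elsewhere, in combination with Lemma \ref{l:square-class-nonconcentration}. The only point needing care is the finiteness of primes of bounded norm in an arbitrary global field, which is standard.
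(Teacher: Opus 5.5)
Your proof is correct and follows the paper's argument. The paper likewise reduces to level $a=1$ (each residue class mod $\gotp$ has $q_\gotp^{a-1}$ lifts), bounds the number of roots of $\bar h$ by $\deg h$ after discarding the finitely many primes where $\bar h$ degenerates, and then discards the finitely many $\gotp$ with $q_\gotp<\deg h/(1-\lambda)$. Your explicit treatment of the leading coefficient and of the finiteness of primes of bounded norm just spells out the paper's phrase ``after removing finitely many prime ideals.''
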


\begin{prf}
 After removing finitely many prime ideals, we
assume that the reduction $\bar h$ of $h$ modulo $\gotp$ is nonzero. Let
$r_\gotp$ be the number of roots of $\bar h$ in $D/\gotp$. Then
$r_\gotp\le \deg h$.
Since $h(x)$ is a unit modulo $\gotp^a$ if and only if its reduction modulo
$\gotp$ is nonzero, we have
\[
\left|
\left\{
x\in D/\gotp^a :
h(x)\in (D/\gotp^a)^*
\right\}
\right|
=
q_\gotp^{a-1}(q_\gotp-r_\gotp).
\]
Hence $q_\gotp^{a-1}(q_\gotp-r_\gotp)
\ge
q_\gotp^{a-1}(q_\gotp-\deg h)$.
After enlarging $\Sigma_{h,\lambda}$, we assume
$q_\gotp-\deg h\ge \lambda q_\gotp$
for every $\gotp\notin\Sigma_{h,\lambda}$. Therefore $\left|
\left\{
x\in D/\gotp^a :
h(x)\in (D/\gotp^a)^*
\right\}
\right|\ge\lambda q_\gotp^a=
\lambda |D/\gotp^a|$.
\end{prf}

Let $f_1,\ldots,f_k\in D[x]$ be irreducible separable polynomials, and let
\[
\bff\colon\da\rightarrow\da^k
\qquad
a\mapsto (f_1(a),\ldots,f_k(a)).
\]

\begin{prop}\label{p:measpol}
Assume that $\operatorname{char}(F)\neq 2$.
Assume also that $\bff^{-1}\big((\da^*)^k\big)\neq\emptyset$.
Then
\[
\mu_{\bff^{-1}((\widehat{\calp(D)})^k)}
=
\mu_{\bff^{-1}((\da^*)^k)}.
\]
\end{prop}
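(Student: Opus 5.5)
The strategy is to prove that the two sets in the statement form a close pair in the sense of Definition~\ref{d:clpr}, and then to conclude with Proposition~\ref{p:muST}. Write $Z:=\bff^{-1}((\da^*)^k)$ and $Y:=\bff^{-1}((\widehat{\calp(D)})^k)$. Since $(\da^*)^k=\prod_\gotp(\dap^*)^k$ is openly Eulerian, Proposition~\ref{p:invimgelr} shows that $\mu_Z$ exists. More precisely, \eqref{e:plynmprd} gives $Z=\prod_\gotp Z_\gotp$ with $Z_\gotp=\{x\in\dap: f_i(x)\in\dap^*\ \forall i\}$ compact open, and $Z_\gotp\neq\emptyset$ because $Z\neq\emptyset$. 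Both sets are closed, and $Z\subseteq Y$ by \eqref{e:clirr}. So it suffices to show
\[
\lim_{\gotn\to0}\frac{|\hpi_\gotn(Y)-\hpi_\gotn(Z)|}{|\hpi_\gotn(Z)|}=0 .
\]

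\emph{Lower bound for the denominator.} Write $\gotn=\prod_\gotq\gotq^{a_\gotq}$. Since $Z$ is a product, $|\hpi_\gotn(Z)|=\prod_{\gotq\mid\gotn}|\hpi_{\gotq^{a_\gotq}}(Z_\gotq)|$. For $\gotq$ outside the finite set $\Sigma_{h,\lambda}$, with $h=f_1\cdots f_k$, Lemma~\ref{l:unit-values-product} gives $|\hpi_{\gotq^a}(Z_\gotq)|\ge\lambda\,|D/\gotq^a|$. For the finitely many $\gotq\in\Sigma_{h,\lambda}$, the ratio $|\hpi_{\gotq^a}(Z_\gotq)|/|D/\gotq^a|$ is at least $\mu_{\dap}(Z_\gotq)>0$ (by \eqref{e:musc1}). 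Hence
\[
|\hpi_\gotn(Z)|\ge c_1\,\lambda^{\omega(\gotn)}\,|D/\gotn|
\]
for a constant $c_1>0$.

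\emph{Upper bound for the difference.} Let $x\in Y$ with $\hpi_\gotn(x)\notin\hpi_\gotn(Z)$. Modulo $\gotn$, each $f_j(x)$ lies in $\hpi_\gotn(\widehat{\calp(D)})=(D/\gotn)^*\sqcup P_\gotn$. As in the proof of Theorem~\ref{t:clprim}, and using that $Z$ is a product of sets defined by unit conditions, some $f_i(x)$ must lie in $\hpi_\gotn(t_\gotp\wds)$ for a principal prime $\gotp\mid\gotn$. So the difference is contained in the union, over $i\le k$ and $\gotp\mid\gotn$, of
\[
E_{i,\gotp}:=\{\bar x\in D/\gotn:\ f_i(\bar x)\in\hpi_\gotn(t_\gotp D^*)\}.
\]
To bound $|E_{i,\gotp}|$, project to the prime-to-$\gotp$ part $\gotn'$ of $\gotn$. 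Every element of $t_\gotp D^*$ is a unit modulo $\gotn'$, and its vector of square classes in $\prod_{\gotq\mid\gotn'}(D/\gotq^{a_\gotq})^*/((D/\gotq^{a_\gotq})^*)^2$ takes at most $|t_\gotp D^*/(D^*)^2|\le 2^{|S|}$ values. For each fixed vector, CRT together with Lemma~\ref{l:square-class-nonconcentration} bounds the number of admissible $x$ modulo $\gotq^{a_\gotq}$ by $\rho\,|D/\gotq^{a_\gotq}|$ for every $\gotq\notin\Sigma_{f_i,\rho}$; for the remaining primes we use the trivial bound. This yields
\[
|E_{i,\gotp}|\le c_2\,2^{|S|}\rho^{\omega(\gotn)}|D/\gotn|,
\]
with $c_2$ absorbing the finitely many exceptional primes and the factor lost at $\gotp$ itself. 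Summing over $i$ and over $\gotp\mid\gotn$, the ratio is at most
\[
C\,k\,\omega(\gotn)\,(\rho/\lambda)^{\omega(\gotn)} .
\]
Choosing $\rho=0.6$ and $\lambda=0.9$, this tends to $0$ because $\omega(\gotn)\to\infty$ as $\gotn\to0$, exactly as at the end of the proof of Theorem~\ref{t:clprim}.

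The main obstacle is the bookkeeping in the bound for $E_{i,\gotp}$. The element $t_\gotp u$ is not a unit at $\gotp$, so the square-class argument has to be restricted to $\gotn'$. One must also check that membership of $f_i(x)$ in $\hpi_\gotn(t_\gotp\wds)$ really forces the square classes at all $\gotq\mid\gotn'$ to lie in the image of $D^*$ twisted by $t_\gotp$; this holds because $\hpi_\gotn(\wds)=\hpi_\gotn(D^*)$. Finally, the finitely many exceptional primes from Lemmas~\ref{l:square-class-nonconcentration} and~\ref{l:unit-values-product}, and primes dividing $\gotn$ to high order, must be shown to contribute only bounded constants. Once the close-pair property is established, Proposition~\ref{p:muST} gives $\mu_Y=\mu_Z$.
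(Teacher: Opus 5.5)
Your proposal is correct and is essentially the paper's own argument. You identify $\hpi_\gotn\bigl(\bff^{-1}((\da^*)^k)\bigr)$ with the residues at which all $f_i$ take unit values, and you bound the excess by preimages of $\hpi_\gotn(t_\gotp\wds)$ for principal $\gotp\mid\gotn$. You control these using at most $|D^*/(D^*)^2|$ square-class vectors together with Lemma~\ref{l:square-class-nonconcentration} at the good primes $\gotq\neq\gotp$, compare with the lower bound from Lemma~\ref{l:unit-values-product}, and conclude via Propositions~\ref{p:invimgelr} and~\ref{p:muST}. The differences are only cosmetic: you count with $\omega(\gotn)$ and absorb the exceptional primes into constants where the paper uses $m(\gotn)$, and you fix $\rho=0.6$, $\lambda=0.9$.
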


\begin{prf}
Set
\[
X:=\bff^{-1}((\da^*)^k)
\qquad
\text{and}
\qquad
Z:=\bff^{-1}((\widehat{\calp(D)})^k).
\]
Since $\da^*\subseteq\widehat{\calp(D)}$, one has $X\subseteq Z$.

Let $\gotn$ be a non-zero ideal and set $G_\gotn:=(D/\gotn)^*$.
We also set
$P_\gotn:=
\hpi_\gotn(\widehat{\calp(D)})-G_\gotn$,
so that
\[
\hpi_\gotn(\widehat{\calp(D)})
=
G_\gotn\sqcup P_\gotn.
\]

We first identify the finite image of $X$. Put
\[
X_\gotn:=
\{a\in D/\gotn : f_i(a)\in G_\gotn
\text{ for every }i\}.
\]
Then $\hpi_\gotn(X)=X_\gotn$.
The inclusion $\hpi_\gotn(X)\subseteq X_\gotn$ is immediate. Conversely,
take $a\in X_\gotn$, and choose $b\in X$. By the Chinese remainder theorem,
one can choose $\widetilde a\in\da$ such that $\widetilde a\equiv a{\gotp^{v_\gotp(\gotn)}}$ 
for every $\gotp\mid\gotn$, and $\widetilde a=b$ in $\dap$ for every $\gotp\nmid\gotn$.
Then every $f_i(\widetilde a)$ is a local unit, so
$\widetilde a\in X$, and $\hpi_\gotn(\widetilde a)=a$.

Now, if $a\in \hpi_\gotn(Z)-\hpi_\gotn(X)$, then for every $i$ one has $f_i(a)\in G_\gotn\sqcup P_\gotn$,
but for at least one $i$ one has $f_i(a)\notin G_\gotn$.
Hence
\[
\hpi_\gotn(Z)-\hpi_\gotn(X)
\subseteq
\bigcup_{i=1}^k f_{i,\gotn}^{-1}(P_\gotn).
\]
It is therefore enough to prove
\begin{equation}\label{e:reduction}
\lim_{\gotn\to0}
\frac{
\sum_{i=1}^k |f_{i,\gotn}^{-1}(P_\gotn)|
}{
|X_\gotn|}
=0.
\end{equation}

Let $h:=f_1\cdots f_k$.
We choose constants
\[
\frac12<\rho<\lambda<1.
\]
For each $i=1,\ldots,k$, apply Lemma
\ref{l:square-class-nonconcentration} to $f_i$ with this value of $\rho$. We then apply Lemma \ref{l:unit-values-product} to $h$ with this value of $\lambda$; after replacing the corresponding exceptional sets by their union, we obtain
a finite set of prime ideals $\Sigma$ such that, for every
$\gotp\notin\Sigma$ and every $a\ge1$, the following two estimates hold.

First, for every $i=1,\ldots,k$ and every 
$c\in (D/\gotp^a)^*/((D/\gotp^a)^*)^2$,
one has
\[
\left|
\left\{
x\in D/\gotp^a :
f_i(x)\in c
\right\}
\right|
\le
\rho |D/\gotp^a|.
\]
On the other hand,
\[
\left|
\left\{
x\in D/\gotp^a :
h(x)\in (D/\gotp^a)^*
\right\}
\right|
\ge
\lambda |D/\gotp^a|.
\]

We then set $m(\gotn):=|\{\gotp\mid\gotn:\gotp\notin\Sigma\}|$,
and it is immediate that $\lim_{\gotn\to0}m(\gotn)=\infty$. We also define $C_D:=|D^*/(D^*)^2|$
which is finite by the $S$-unit theorem.

Fix $i$. We now estimate $f_{i,\gotn}^{-1}(P_\gotn)$. Let us recall the decomposition \eqref{e:clirr}
\[
\widehat{\calp(D)}
=
\da^*
\sqcup
\bigsqcup_{t_\gotp\in T}t_\gotp\wds,
\]
from which only primes $\gotp\mid\gotn$ can contribute to $P_\gotn$. Hence
\[
P_\gotn
\subseteq
\bigcup_{\substack{t_\gotp\in T\\ \gotp\mid\gotn}}
\hpi_\gotn(t_\gotp\wds).
\]

Fix such a $\gotp$. If $\gotq\mid\gotn$ and $\gotq\neq\gotp$, then
$t_\gotp$ is a unit modulo $\gotq$. Thus, as $u$ varies in $D^*$, the
square-class vector of $t_\gotp u$ over the primes
\[
\gotq\mid\gotn\qquad \gotq\notin\Sigma \qquad \gotq\neq\gotp
\]
takes at most $C_D$ possible values. For each of these values,
Lemma~\ref{l:square-class-nonconcentration} gives a factor at most $\rho$
at each such prime $\gotq$. We impose no condition at $\gotp$ itself and no
condition at the primes in $\Sigma$. Therefore, for $\gotn$ sufficiently small,
\[
|f_{i,\gotn}^{-1}(\hpi_\gotn(t_\gotp\wds))|
\le
C_D\,\rho^{m(\gotn)-1}|D/\gotn|.
\]
Since there are at most $\omega(\gotn)$ principal prime ideals dividing
$\gotn$, we obtain
\[
|f_{i,\gotn}^{-1}(P_\gotn)|
\le
C_D\,\omega(\gotn)\,\rho^{m(\gotn)-1}|D/\gotn|.
\]

On the other hand, by the Chinese remainder theorem,
$|X_\gotn|
=\prod_{\gotp\mid\gotn}
q_\gotp^{a_\gotp-1}(q_\gotp-r_\gotp)$
where $a_\gotp=v_\gotp(\gotn)$
and $r_\gotp=
|\{x\in D/\gotp : h(x)\equiv0{\gotp}\}|$.
For $\gotp\notin\Sigma$, Lemma~\ref{l:unit-values-product}, applied to $h$,
gives $q_\gotp-r_\gotp\ge \lambda q_\gotp$.
For $\gotp\in\Sigma$, we have that $\delta_\gotp:=\frac{q_\gotp-r_\gotp}{q_\gotp}>0$, as $X\neq\emptyset$. Put $c_\Sigma:=\prod_{\gotp\in\Sigma}\delta_\gotp>0$.
Then
\[
|X_\gotn|
=
|D/\gotn|\cdot
\prod_{\gotp\mid\gotn}\frac{q_\gotp-r_\gotp}{q_\gotp}
\ge
c_\Sigma\,\lambda^{m(\gotn)}|D/\gotn|.
\]
Therefore
\[
\frac{|f_{i,\gotn}^{-1}(P_\gotn)|}{|X_\gotn|}
\le
\frac{C_D}{c_\Sigma}\,
\omega(\gotn)\,
\rho^{m(\gotn)-1}\lambda^{-m(\gotn)}
=
C'\,\omega(\gotn)
\left(\frac{\rho}{\lambda}\right)^{m(\gotn)-1}.
\]
Set $\theta:=\frac{\rho}{\lambda}<1$.
The finitely many primes in $\Sigma$ contribute only a bounded amount to
$\omega(\gotn)$, so
\[
\omega(\gotn)\theta^{m(\gotn)-1}
\le
(m(\gotn)+|\Sigma|)\theta^{m(\gotn)-1}
\ .
\]
Thus \eqref{e:reduction} follows.
Summing over $i=1,\ldots,k$ we get
\[
\lim_{\gotn\to0}
\frac{
|\hpi_\gotn(Z)-\hpi_\gotn(X)|
}{
|\hpi_\gotn(X)|
}
=0.
\]
Since $X\subseteq Z$, this implies
\[
\lim_{\gotn\to0}
\frac{
|\hpi_\gotn(Z)\triangle\hpi_\gotn(X)|
}{
|\hpi_\gotn(Z)|
}
=0.
\]
Hence $X$ and $Z$ form a close pair. By Proposition~\ref{p:invimgelr}, the procounting measure
$\mu_X=\mu_{\bff^{-1}((\da^*)^k)}$
exists. By Proposition~\ref{p:muST}, closeness implies that $\mu_Z$ exists and
equals $\mu_X$.
\end{prf}

\subsection{A profinite Bateman--Horn conjecture} We are now  almost ready to state the main conjecture of this paper. Henceforth, we shall keep the assumption
\[
\text{char}(F)\neq2\;.
\]

\begin{lem} \label{l:bunyakcnd} For $f\in D[x]$, the equality $f(\da)\cap\da^*=\emptyset$ is verified if and only if there is a prime ideal $\gotp$ such that $f(D)\subseteq\gotp$. \end{lem}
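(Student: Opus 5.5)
Since $\da^*=\prod_\gotp\dap^*$ and $f$ acts componentwise by \eqref{e:plynmprd}, the plan is to reduce everything to statements at a single prime. The key observations are that $f(\da)=\prod_\gotp f(\dap)$ (from \eqref{e:plynmprd} with $A_\gotp=\dap$) and that $f(\da)\cap\da^*=\prod_\gotp\big(f(\dap)\cap\dap^*\big)$. A product of sets is empty if and only if some factor is empty. Hence $f(\da)\cap\da^*=\emptyset$ if and only if there is a nonzero prime $\gotp$ with $f(\dap)\cap\dap^*=\emptyset$, that is, with $f(\dap)\subseteq\gotp\dap$.

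It remains to show that, for a fixed $\gotp$, $f(\dap)\subseteq\gotp\dap$ is equivalent to $f(D)\subseteq\gotp$.
\begin{itemize}
\item Suppose $f(\dap)\subseteq\gotp\dap$. Then $D\subseteq\dap$ and $\gotp\dap\cap D=\gotp$ give $f(D)\subseteq\gotp$.
\item Conversely, suppose $f(D)\subseteq\gotp$. Unit-ness in $\dap$ depends only on the reduction modulo $\gotp$, and $f(x)\bmod\gotp$ depends only on $x\bmod\gotp$. Since $D\to D/\gotp=\dap/\gotp\dap$ is surjective, every $x\in\dap$ is congruent modulo $\gotp\dap$ to some $d\in D$. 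Therefore $f(x)\equiv f(d)\equiv0$ modulo $\gotp\dap$.
\end{itemize}
Both directions are immediate.

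Alternatively, one can avoid local fields altogether. Use $\hpi_\gotn$ and the fact that $a\in\da^*$ if and only if $\hpi_\gotp(a)\ne0$ for every $\gotp$. The nontrivial direction is then: if for every $\gotp$ there is $d_\gotp\in D$ with $f(d_\gotp)\notin\gotp$, build $a\in\da$ with $\hpi_{\gotp}(a)=d_\gotp \bmod \gotp$ for all $\gotp$ simultaneously. This is possible because $\da=\prod_\gotp\dap$ by \eqref{e:crt}, so one may choose components independently, for instance $a_\gotp=d_\gotp$. Then $f(a)$ has unit component at each $\gotp$, hence $f(a)\in\da^*$.

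The only point requiring care, and the main (minor) obstacle, is justifying that the intersection of the product $\prod_\gotp f(\dap)$ with $\da^*$ is again the product of the local intersections. Equivalently, one must check that choosing the components of $a$ independently across infinitely many primes is legitimate. This is exactly the content of \eqref{e:crt} and \eqref{e:plynmprd}, which I will cite. No appeal to Dirichlet-type results or closures of $D$ is needed.
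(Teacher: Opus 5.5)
Your proof is correct, but it takes a different route from the paper's.

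The paper argues topologically. Since $\da^*$ is the complement of the union of the open ideals $\widehat\gotp$ and $f(\da)$ is compact, $f(\da)\cap\da^*=\emptyset$ forces $f(\da)\subseteq\widehat\gotp_1\cup\dots\cup\widehat\gotp_n$ for finitely many primes. A Chinese-remainder (prime-avoidance) argument then shows that $f(\da)$ lies in a single $\widehat\gotp_i$: otherwise one picks $x_i$ with $f(x_i)\notin\widehat\gotp_i$ and glues the $x_i$ modulo the $\widehat\gotp_i$. Finally, density of $D$ turns $f(\da)\subseteq\widehat\gotp$ into $f(D)\subseteq\gotp$.

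You instead use the full decomposition \eqref{e:crt} to write $f(\da)\cap\da^*=\prod_\gotp\bigl(f(\dap)\cap\dap^*\bigr)$, so emptiness is detected by a single factor. You then finish with a purely local check on the residue field.

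What each route buys:
\begin{itemize}
\item Yours handles all primes at once, needs no compactness, and produces an explicit witness $a=(d_\gotp)_\gotp$ with $f(a)\in\da^*$.
\item The paper's uses the Chinese remainder theorem only for finitely many primes (that is, only the finite levels $D/\gotn$), together with compactness. It therefore leans less on the infinite product structure \eqref{e:crt}.
\end{itemize}
Since \eqref{e:crt} is available in the paper anyway, both arguments are fully legitimate here.
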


\begin{prf} The intersection is empty if and only if $f(\da)$ is contained in a finite union of prime ideals $\widehat\gotp_i$, because $\da^*$ is the complement of $\bigcup_\gotp\widehat\gotp$,
each $\widehat\gotp$ is open and $f(\da)$ is compact. Nonetheless
$$f(\da)\subseteq\widehat\gotp_1\cup\dots\cup\widehat\gotp_n$$
is possible only if the image is entirely contained in a single prime ideal (if not, then for each $i$ there is $x_i\in\da$ such that $f(x_i)\notin\widehat\gotp_i$ and taking $x\equiv x_i\widehat\gotp_i\;\forall\,i$ one obtains a contradiction). Finally, $f(\da)\subseteq\widehat\gotp$ is equivalent to $f(D)\subseteq\gotp$, by density.
\end{prf}

\begin{cnj}[Profinite Bateman--Horn] \label{cnj:pBHD} Let $f_1,\dots,f_k\in D[x]$ be non-associate separable irreducible polynomials and denote their product by $\bff$. Assume $f(\da)\cap\da^*\neq\emptyset$. Then the following limits exist and the equalities hold:
\begin{equation} \label{e:pBHD} \lim_{\gotn\to 0}\mu_{\bff|_D^{-1}(\calp(D)^k),\gotn}=\mu_{\bff^{-1}((\da^*)^k)}\end{equation}

\begin{equation} \label{e:pBHDd} \lim_{\gotn\to 0}\mu_{\bff(D)\cap\calp(D)^k,\gotn}=\mu_{\bff(\da)\cap(\da^*)^k}\,.\end{equation}
We call \eqref{e:pBHDd} the {\em direct image}  profinite Bateman--Horn conjecture.
\end{cnj}

\begin{rmks} \label{r:lclcndBH} \begin{itemize} \item[]\end{itemize}
\noindent{\bf 1.}  By Lemma \ref{l:bunyakcnd}, the hypothesis $f(\da)\cap\da^*\neq\emptyset$ is simply a reformulation, in the adelic language, of the usual Bunyakovsky hypothesis on the absence of local obstructions. Indeed, in the case $D=\Z$, this amounts to say that there is no prime $p$ which divides $f(n)$ for all $n\in\N$, as in the usual formulation of the Bateman--Horn conjecture.\\
\noindent{\bf 2.} If $f(\da)\cap\da^*$  is empty then so is $\bff^{-1}((\da^*)^k)$, but $\bff^{-1}(\calp(D)^k)$ might still contain some points (even infinitely many,  as shown in Example \ref{eg:prdinfdlt}). If this happens then Conjecture \ref{cnj:pBHD} fails, because then $\mu_{\bff^{-1}((\da^*)^k)}$ is the $0$ distribution, while the limit on the left-hand side of \eqref{e:pBHD}, if it exists, is not.  
\end{rmks}

\subsubsection{Relation with Schinzel's hypothesis H}

We now formulate a profinite version of Schinzel's hypothesis.

Let $f_1,...,f_d$ be a collection of non-associate separable irreducible polynomials over $D$  such that there is no prime $\gotp$ which divides $f_1(n)\cdots f_{d}(n)$ for all $n\in D$. Consider the map 
\[
\bff\colon D\rightarrow D^d\qquad n\mapsto (f_1(n),\dots,f_d(n))\ .
\]
Then Schinzel's hypothesis states that $f_i(n)$'s are all prime for infinitely many $n$. Let $f$ denote the product of the $f_i$'s.

\begin{cnj}[Extended Schinzel's Hypothesis H]\label{conjH} If $f(\widehat{D})\cap \widehat{D}^*\neq \emptyset$ then the set $\bff(D)\cap\calp(D)^d$  is infinite. \end{cnj}

It is immediate to note that Conjecture \ref{conjH} is equivalent to the classical Schinzel's hypothesis.

\begin{prop} \label{p:pbhsh}
Conjecture \ref{cnj:pBHD} implies Conjecture \ref{conjH}.
\end{prop}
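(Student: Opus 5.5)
The plan is to argue by contradiction, using the direct-image form \eqref{e:pBHDd} of Conjecture \ref{cnj:pBHD}. Assume $f(\da)\cap\da^*\neq\emptyset$ and suppose $Y:=\bff(D)\cap\calp(D)^d$ is finite. Lemma \ref{l:fntspprt} and the example $S=\{x\}$ suggest that the procounting measure of a finite set is atomic: for $\gotn$ small enough the points of $Y$ have distinct images, so $\mu_{Y,\gotn}$ (lifted) equals $\frac{1}{|Y|}\sum_{y\in Y}\delta_y$ evaluated on compact opens. Hence the left-hand side of \eqref{e:pBHDd} converges to $\frac1{|Y|}\sum_{y\in Y}\delta_y$. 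If $Y=\emptyset$, it is the zero distribution.

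Next I would show that the right-hand side $\mu_{\bff(\da)\cap(\da^*)^d}$ cannot be such a finitely supported measure. Set $W:=\bff(\da)\cap(\da^*)^d$. This set is nonempty: take $a\in\da$ with $f(a)\in\da^*$; since $\da^*$ is the unit group and $f=\prod f_i$, every $f_i(a)$ is a unit. The measure $\mu_W$ is a probability measure (if it exists, which the conjecture asserts), so the case $Y=\emptyset$ is immediately contradictory.

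For $Y$ nonempty, I would show that $W$ has no isolated points, which contradicts Lemma \ref{l:isolated} and Lemma \ref{l:fntspprt}. By \eqref{e:plynmprd}, $W$ is the image of the compact set $\bff^{-1}((\da^*)^d)=\prod_\gotp\bff^{-1}((\dap^*)^d)$. Each factor is open in $\dap$ and, for almost all $\gotp$, has positive measure (Lemma \ref{l:unit-values-product}).

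The key step is that $W$ is infinite and perfect. Take $w=\bff(a)\in W$ and any neighbourhood, which constrains only finitely many primes. Change $a$ at some other prime $\gotq$ within the open local preimage. Since $\bff$ is nonconstant (the $f_i$ are separable irreducible, hence $f_1'\neq0$), $f_1$ takes infinitely many values on a nonempty open subset of $\da_\gotq$. This gives points of $W$ distinct from $w$ inside the neighbourhood. So $W$ is infinite with no isolated points, and by Lemma \ref{l:isolated}-type reasoning (each $\widetilde\mu_{W,\gotn}$ of a small neighbourhood of a point tends to $0$, since the images of $W$ grow without bound within that neighbourhood while the total grows), $\mu_W$ has no atoms. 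In particular $\mu_W\neq\frac1{|Y|}\sum_y\delta_y$, a contradiction.

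The main obstacle is making the atomlessness of $\mu_W$ rigorous. I would try first to bound $\widetilde\mu_{W,\gotn}(\b1_U)$ for $U$ a small neighbourhood of $y$. By changing coordinates at many primes, $|\hpi_\gotn(W)\cap\hpi_\gotn(U)|/|\hpi_\gotn(W)|$ is at most $1/N$ once $U$ is split into $N$ disjoint open pieces, each meeting $W$, using the product structure of $W$ and the local factor at a prime outside $U$'s support.
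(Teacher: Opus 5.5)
\textbf{Gap: the atomlessness of $\mu_W$ is not established.} The set-up is sound. With the direct-image form \eqref{e:pBHDd}, finiteness of $Y=\bff(D)\cap\calp(D)^d$ makes the left-hand side either $0$ or $\frac{1}{|Y|}\sum_{y\in Y}\delta_y$. The set $W=\bff(\da)\cap(\da^*)^d$ is nonempty, so the case $Y=\emptyset$ is immediate.

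The gap is the step you flag yourself. You cannot get atomlessness from ``$W$ has no isolated points'': Lemma~\ref{l:isolated} only says isolated points carry no mass. In your parenthetical, both $|\hpi_\gotn(W)\cap\hpi_\gotn(U)|$ and $|\hpi_\gotn(W)|$ grow, which says nothing about their ratio.

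Perfect sets can in fact have atomic procounting measures. In $\Z_p$, let $C_n\subseteq p^n\Z_p^*$ be the set of $p^n(1+\sum_{j\ge1}a_jp^j)$ with $a_j\in\{0,1\}$ and $a_j=0$ whenever $(n+1)\mid j$. Then $|\pi_N(C_n)|=2^{h_n(N)}$ with $h_n(N)\sim\frac{n}{n+1}N$, so $|\pi_N(C_{n+1})|/|\pi_N(C_n)|\to\infty$. The argument of Example~\ref{eg:ptaccm}.1 then gives $\mu_S=\delta_0$ for the perfect set $S=\{0\}\cup\bigcup_nC_n$.

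Your splitting sketch does not close the gap either. Pigeonhole only shows that \emph{some} piece has mass at most $1/N$, not the piece containing $y$. What you would need is a bound, uniform in $a$ and valid at infinitely many $\gotq$, on the proportion of $\hpi_{\gotq^a}(W_\gotq)$ lying over a single class modulo $\gotq$. For $k\ge2$ the paper gives no description of $\mu_W$ at all, beyond the existence granted by the conjecture.

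\textbf{Fix: the two supports are disjoint, so atomlessness is not needed.} A prime element $p$ satisfies $\hpi_{pD}(p)=0$, so $\calp(D)\cap\da^*=\emptyset$; this is the disjointness in \eqref{e:clirr}. Choose a non-zero ideal $\gotn\subseteq\bigcap_{y\in Y}y_1D$ and put $U:=\hpi_\gotn^{-1}(\hpi_\gotn(Y))$.
\begin{itemize}
\item Every point of $\hpi_\gotn(Y)$ has a non-unit first coordinate.
\item Every point of $\hpi_\gotn(W)$ lies in $((D/\gotn)^*)^d$.
\end{itemize}
Hence for every $\gotm\subseteq\gotn$, formula \eqref{e:prmusa} gives $\widetilde\mu_{Y,\gotm}(\b1_U)=1$ and $\widetilde\mu_{W,\gotm}(\b1_U)=0$. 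The two sides of \eqref{e:pBHDd} therefore differ already on $\b1_U$, and you need nothing about $\mu_W$ beyond its existence.

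\textbf{Comparison with the paper.} The paper uses the preimage form \eqref{e:pBHD} instead. It first passes from finitely many prime values to finitely many arguments via finiteness of the fibres. It then only has to rule out that $\mu_{\bff^{-1}((\da^*)^d)}=\bigotimes_\gotp\mu_{X(\gotp)}$ is finitely supported; this is a product of normalized Haar measures on nonempty compact open local sets, so it has no atoms. Your direct-image route saves the fibre step but loses any explicit knowledge of the right-hand side. The disjointness argument above is what makes your route work, and it would equally shorten the paper's version, since $\bff(a)\notin(\da^*)^d$ whenever $\bff(a)\in\calp(D)^d$.
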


\begin{prf}
Let $\bff\colon D\rightarrow D^d$,
$a\mapsto (f_1(a),\dots,f_d(a))$,
and set $f:=f_1\cdots f_d$. Assume $f(\da)\cap \da^*\neq\emptyset$.
Equivalently, $X:=\bff^{-1}\big((\da^*)^d\big)\neq\emptyset$,
because $f(x)$ is a unit if and only if all $f_i(x)$'s are units.

Suppose, by contradiction, that Conjecture \ref{conjH} fails. Then $\bff(D)\cap\calp(D)^d$
is finite. Since the fibers of $\bff\colon D\rightarrow D^d$ are finite, the set
$A:=\{a\in D:\bff(a)\in\calp(D)^d\}$ is finite.

We claim that the left-hand side of Conjecture \ref{cnj:pBHD} has finite
support. If $A=\emptyset$, it is the zero distribution. Otherwise, for every
compact open $U\subseteq\da$ and every sufficiently small $\gotn$, the set
$U$ is $\gotn$-saturated and the map $A\rightarrow D/\gotn$ is injective. Hence $\widetilde\mu_{A,\gotn}(\b1_U)=\frac{|U\cap A|}{|A|}$.
Thus
\[
\lim_{\gotn\to0}\widetilde\mu_{A,\gotn}
=
\frac1{|A|}\sum_{a\in A}\delta_a.
\]
In both cases, by Lemma~\ref{l:fntspprt}, the limit distribution has finite
support.

By Conjecture~\ref{cnj:pBHD}, this limit distribution equals $\mu_X$.
Hence $\mu_X$ has finite support. On the other hand, $X$ is a nonempty
compact open subset of $\da$, and Lemma~\ref{l:mucmpap} gives $\mu_X=\frac{\b1_X}{\mu_{\da}(X)}\mu_{\da}$.
Therefore $\supp(\mu_X)=X$.
Since every nonempty compact open subset of $\da$ is infinite, this is a
contradiction. Hence $A$ is infinite. Since the fibers of
$\bff\colon D\rightarrow D^d$ are finite, it follows that $\bff(D)\cap\calp(D)^d$ is infinite.
\end{prf}

Note that, contrarily to the classical case, we can have infinitely many primes which generates the same ideal. Therefore {\em a priori} there could be an infinite preimage that covers only a finite number of ideals. We check this is not the case.

\begin{lem}\label{e:inf}
Let $F$ be a number field, let $D=\calo_{F,S}$ be a ring of $S$-integers, and let
$f\in D[x]$ be a nonconstant polynomial with at least two distinct roots in $\overline F$.
Fix $p\in \calp(D)$. Then the set
$$
\{(n,u)\in D\times D^*: f(n)=up\}
$$
is finite.
\end{lem}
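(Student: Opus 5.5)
The plan is to reduce the statement to finiteness of $S$-integral points on a curve of genus zero with at least three points at infinity (Siegel's theorem in its $S$-integral form, or Mahler--Lang), absorbing the unit $u$ via a bounded power. Since $D^*$ is finitely generated (Dirichlet's $S$-unit theorem), fix an integer $m\ge 3$ to be chosen later. The quotient $D^*/(D^*)^m$ is finite; choose representatives $u_1,\dots,u_r$. Every $u\in D^*$ can then be written $u=u_j w^m$ with $w\in D^*$. The equation $f(n)=up$ becomes
\[
f(n)=u_j p\, w^m .
\]
It therefore suffices to show that, for each fixed $j$, the curve $C_j\colon f(x)=u_jp\,y^m$ has only finitely many points $(x,y)\in D\times D^*$. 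The map $(n,u)\mapsto(n,w)$ has finite fibres, since given $n$, $u=f(n)/p$ is determined and $w$ is determined up to an $m$-th root of unity. Hence finiteness for all $j$ gives the claim.

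Next, apply the Siegel--Mahler $S$-integral point theorem in the following form. If $g\in F[x]$ has at least two distinct roots and $m\ge 3$ is such that $y^m=c\,g(x)$ defines an irreducible curve of positive genus (or, more generally, of genus $0$ with at least three points at infinity), then it has finitely many $S$-integral points. This is classical: see, for example, the superelliptic case in Baker's or Siegel's theorem, or Lang's $S$-integral Siegel theorem. Here a cleaner route exists, because $y$ is required to be an $S$-unit. Write $f=a\prod_i (x-\alpha_i)^{e_i}$ with at least two distinct roots $\alpha_1\neq\alpha_2$. Pass to a finite extension $L\supseteq F$ containing the $\alpha_i$, and enlarge $S$ to $S_L$ so that $a$, $p$, the $u_j$, and the differences $\alpha_i-\alpha_k$ are $S_L$-units. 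The main obstacle is to deduce from $f(n)\in p\,D^*$ that each factor $n-\alpha_i$ is an $S_L$-unit up to finitely many choices. Indeed $\prod (n-\alpha_i)^{e_i}$ is an $S_L$-unit, each factor is $S_L$-integral (after clearing the finitely many denominators of the $\alpha_i$, absorbed into $S_L$), so each $n-\alpha_i$ is an $S_L$-unit.

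Then $(n-\alpha_1)-(n-\alpha_2)=\alpha_2-\alpha_1$ gives the $S_L$-unit equation
\[
\frac{n-\alpha_1}{\alpha_2-\alpha_1}-\frac{n-\alpha_2}{\alpha_2-\alpha_1}=1,
\]
which has finitely many solutions by the Siegel--Mahler--Lang theorem on $S$-unit equations. This bounds $n$ to a finite set, and then $u=f(n)/p$ is determined. This argument even avoids the $m$-th power reduction, so I would present the unit-equation argument as the main line. The delicate step is checking integrality of $n-\alpha_i$ (choose $S_L$ containing the primes dividing the leading coefficient and the denominators of the $\alpha_i$) and that $f(n)\ne0$. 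The latter holds because $f(n)=up\neq0$.
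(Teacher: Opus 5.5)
Your argument is correct, but it takes a somewhat different route from the paper's. The paper adjoins to $S$ the primes dividing $p$, so that $p\in D'^*$ and the condition becomes $f(n)\in D'^*$ for $n\in D'$. It then cites Siegel's theorem as a black box: such $n$ are $S'$-integral points of $\mathbb{P}^1$ minus the roots of $f$ and $\infty$, and this boundary has at least three geometric points.

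You instead do the following:
\begin{itemize}
\item pass to a field $L$ splitting $f$;
\item enlarge the set of places so that the leading coefficient, $p$, the denominators of the roots and the differences $\alpha_i-\alpha_k$ all become $S_L$-units;
\item deduce that each $n-\alpha_i$ is an $S_L$-unit;
\item finish with the two-term $S_L$-unit equation.
\end{itemize}
This is essentially the standard proof of Siegel's theorem in genus zero, written out.

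What your version buys:
\begin{itemize}
\item the only deep input is the unit-equation theorem;
\item the fact that the boundary points need not be $F$-rational is handled explicitly by the base change to $L$;
\item it is transparent where the two distinct roots are used (to get two unit-valued factors);
\item it is equally clear where the number-field hypothesis is used: the unit equation can have infinitely many solutions in positive characteristic, which is exactly what the paper's Frobenius example $\varpi x(x-1)$ over $\F_q[t,t^{-1},(t-1)^{-1}]$ exploits.
\end{itemize}
The paper's version is a one-line citation, but it needs the form of Siegel's theorem that counts geometric poles.

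Your opening reduction via representatives of $D^*/(D^*)^m$, together with the loosely stated superelliptic Siegel--Mahler theorem, is unnecessary, as you note yourself, and is best dropped. The representatives $u_j$ also play no role in the unit-equation argument.
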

\begin{prf}
Let $S'$ be obtained from $S$ by adjoining the finitely many non-archimedean places
corresponding to the prime ideals dividing $p$. Then $D\subseteq D':=\calo_{F,S'}$ and,
by construction, $p\in D'^*$. If $(n,u)\in D\times D^*$ satisfies $f(n)=up$, then, viewed in $D'$, one has $f(n)\in D'^*$. Therefore it is enough to prove that the set
$$
\{\,n\in D': f(n)\in D'^*\,\}
$$
is finite. 
\\Let $\Sigma\subseteq \mathbb P^1(\overline F)$ be the set consisting of the zeros of $f$
together with the point at infinity. Since $f$ has at least two distinct roots, one has
$|\Sigma|\ge 3$. Now $n\in D'$ satisfies $f(n)\in D'^*$ if and only if $n$ is an $S'$-integral point on
the affine curve
$\mathbb P^1-\Sigma$
which is the projective line minus at least three points. Then Siegel's theorem \cite[Theorem~D.9.1]{HS} implies that it has only finitely many $S'$-integral points. 
\end{prf}

\begin{rmk}
For $D=\F_q[t]$, one has $D^*=\F_q^*$, so the set
$\{up:\ u\in D^*\}$ is finite. Now fix $u\in D^*$. The equation
$f(n)=up$ is equivalent to $(f-up)(n)=0$.
Since $f$ is nonconstant, the polynomial $f-up\in D[x]$ is nonzero, hence it
has only finitely many roots in $D$. Thus the set
\[
\{(n,u)\in D\times D^*: f(n)=up\}
\]
is finite in this case.
We stress that the finiteness of $D^*$ is essential here. In fact, the analogous statement is
false for general rings of $S$-integers in function fields. For instance, take
$D=\F_q[t,t^{-1},(t-1)^{-1}]$
and choose an irreducible polynomial $\varpi\in\F_q[t]-\{t,t-1\}$. Then $\varpi$ is a prime element of $D$. Let $f(x)=\varpi x(x-1)\in D[x]$.
This polynomial is nonconstant and has two distinct roots. Indeed, via the Frobenius,
$t^{q^m}-1=(t-1)^{q^m}$.
Therefore $f(t^{q^m})
=\varpi t^{q^m}(t^{q^m}-1)
=\varpi t^{q^m}(t-1)^{q^m}$
with $t^{q^m}(t-1)^{q^m}\in D^*$.
As the $t^{q^m}$'s are pairwise distinct,  this gives infinitely many solutions to $f(n)=u\varpi$ with $n\in D$ and $u\in D^*$.
\end{rmk}

\subsection{Some examples}\label{s:da-eg}

We now provide some specific examples, making our conjecture explicit in the following cases: \begin{itemize}
\item Dirichlet's theorem on arithmetic progressions, for $k=1$ and $f(x)=ax+b$, for $(a,b)=1$;
\item twin primes conjecture, for $k=2$, $f_1(x)=x$ and $f_2(x)=x+2$;
\item Landau's conjecture, for $k=1$, and $f(x)=x^2+1$. 
\end{itemize}

\subsubsection{Dirichlet's theorem} \label{sss:btmnhrndg1}

Let $f(x)=a+bx$, with $b\neq0$ and $(a,b)=1$. Set
\[
A:=f^{-1}(\calp)\qquad\text{and}\qquad X:=f^{-1}(\za^*).
\]
The map $f\colon\za\rightarrow a+b\za$ is a homeomorphism onto its image. By
Dirichlet's theorem, every admissible residue class modulo $n$ contains a
prime of the form $a+bm$. Hence
\begin{equation} \label{e:chsrinvim}
\widehat A=f^{-1}(\cpr).
\end{equation}
Moreover $A$ and $X$ form a close pair: the only residue classes in
$\pi_n(A)$ which are not in $\pi_n(X)$ come from primes dividing $n$, and
these are negligible compared with $|\pi_n(X)|$. Therefore, by Proposition
\ref{p:muST},
\[
\mu_{f^{-1}(\calp)}=\mu_{f^{-1}(\za^*)}.
\]
Let us now compute the local factors. Since
$f^{-1}(\za^*)=\prod_p f^{-1}(\Z_p^*)$,
we have
\[
f^{-1}(\Z_p^*)=
\begin{cases}
\Z_p, & p\mid b,\\[4pt]
\Z_p-\big(-ab^{-1}+p\Z_p\big), & p\nmid b.
\end{cases}
\]
Thus
\[
\mu_{\Z_p}\big(\boldsymbol{1}_{f^{-1}(\Z_p^*)}\big)=
\begin{cases}
1, & p\mid b,\\[4pt]
\dfrac{p-1}{p}, & p\nmid b.
\end{cases}
\]
By Lemma \ref{l:mucmpap} and Proposition \ref{p:prddstr2}, we obtain
\begin{equation} \label{e:cfrdrc}
\mu_{f^{-1}(\calp)}
=
\bigotimes_{p\mid b}\mu_{\Z_p}
\otimes
\bigotimes_{p\nmid b}
\frac{p}{p-1}\,
\mu_{\Z_p}\big|_{f^{-1}(\Z_p^*)}\ .
\end{equation}

\subsubsection{Twin primes}

Let $\bff=(f_1,f_2)$, with $f_1(x)=x$ and $f_2(x)=x+2$.
The classical twin prime conjecture predicts
\[
\big|\{m\in\N\cap[2,x]: m\in\calp,\ m+2\in\calp\}\big|
\approx
2\prod_{p>2}\frac{p(p-2)}{(p-1)^2}
\int_2^x\frac{dt}{(\log t)^2}.
\]
In this case Conjecture \ref{cnj:pBH} reads
\[
\lim_{n\to0}
\widetilde\mu_{\bff^{-1}(\calp^2),n}
=
\mu_{\bff^{-1}((\za^*)^2)}.
\]
We now compute the right-hand side. We set
\[
X:=\bff^{-1}((\za^*)^2).
\]
For $p=2$, the conditions $x\in\Z_2^*$ and $x+2\in\Z_2^*$ are equivalent. For $p>2$, set
\[
A_p:=\Z_p^*\cap(\Z_p^*-2)
=\Z_p-\big(p\Z_p\cup(-2+p\Z_p)\big).
\]
Thus
\begin{equation}\label{e:preimagetwinprimes}
X
=
\Z_2^*\times\prod_{p>2}A_p.
\end{equation}

For $p>2$, the set $A_p$ consists of all unit residue classes modulo $p$
except the class $-2$. Hence
\[
\mu_{\Z_p^*}(A_p)=\frac{p-2}{p-1}
\qquad\text{and}\qquad
\mu_{\Z_p}(A_p)=\frac{p-2}{p}\ .
\]
Therefore
\begin{equation}\label{e:measurezero}
\mu_{\za^*}(X)
=
\prod_{p>2}\left(1-\frac{1}{p-1}\right)
=
0.
\end{equation}

Nevertheless, the procounting distribution of $X$ is obtained by normalizing
each local factor. By Lemma \ref{l:mucmpap} and Proposition \ref{p:prddstr2},
\[
\mu_X
=
\mu_{\Z_2^*}
\otimes
\bigotimes_{p>2}
\frac{p-1}{p-2}\,
\mu_{\Z_p^*}\big|_{A_p}.
\]

Finally, the usual twin-prime Euler factors are recovered from
\[
\frac{\mu_{\Z_2}(\boldsymbol{1}_{\Z_2^*})}{\mu_{\Z_2}(\boldsymbol{1}_{\Z_2^*})^2}=2
\qquad\text{and}\qquad
\frac{\mu_{\Z_p}(\boldsymbol{1}_{A_p})}{\mu_{\Z_p}(\boldsymbol{1}_{\Z_p^*})^2}
=\frac{p(p-2)}{(p-1)^2}
\quad\text{for}\;\;p>2.
\]

\begin{rmk} The same computations hold, {\em mutatis mutandis}, for the case $D=\F_q[t]$. In particular, the results are compatible with \cite[(1.4)]{sstwin}, which goes as follows. For $h\in\F_q[t]-\{0\}$, and for $q$ a power of a prime $p$ such that $q> 685090p^2$,
\[ \big|\big\{f\in\F_q[t] : q^{\deg_t(f)}=X,\;\text{with } f,\ f+h\in\calp(\F_q[t]). \big\}\big|\;\approx\; \prod_{P\in\calp(\F_q[t])} \dfrac{(1-q^{-\deg_t(P)}-q^{-\deg_t(P)}\mathbf{1}_{P\nmid h})}{(1-q^{-\deg_t(P)})^{2}}\cdot \dfrac{X}{\log_q^2 X} \]
as $X\to\infty$ through powers of $q$, where $\mathbf{1}_{P\nmid h}$ equals $1$ if $h$ is not divisible by $P$, and $0$ otherwise. We remark that the formula above reproduces exactly the same result showed in \cite[Theorem 1.1]{sstwin}.
\end{rmk}

\subsubsection{Landau's conjecture}

Let $f(x)=x^2+1$. Landau's conjecture predicts that $f(n)$ is prime
for infinitely many $n\in\N$. In our setting, Conjecture \ref{cnj:pBH}
predicts
\[
\lim_{n\to0}\widetilde\mu_{f^{-1}(\calp),n}
=
\mu_{f^{-1}(\za^*)}.
\]
We now compute the right-hand side.
Clearly $f^{-1}(\za^*)=\prod_p f^{-1}(\Z_p^*)$. For $p=2$, one has
\[
x^2+1\in\Z_2^*
\quad\text{if and only if}\quad
x\in 2\Z_2.
\]
For $p>2$, the condition is $x^2\not\equiv -1\bmod p$. Hence
\[
f^{-1}(\Z_p^*)=
\begin{cases}
\Z_p, & p\equiv 3 \bmod 4,\\[4pt]
\Z_p-U_p, & p\equiv 1 \bmod 4,
\end{cases}
\]
where $U_p=(\alpha_p+p\Z_p)\sqcup(\beta_p+p\Z_p)$
and $\alpha_p,\beta_p\in\Z_p$ are the two roots of $x^2+1$ in $\Z_p$.
Thus
\[
f^{-1}(\za^*)
=
2\Z_2
\times
\prod_{p\equiv 3 \bmod 4}\Z_p
\times
\prod_{p\equiv 1 \bmod 4}(\Z_p-U_p).
\]
In particular,
\[
\mu_{\za}\big(f^{-1}(\za^*)\big)
=
\frac12
\prod_{p\equiv 1 \bmod 4}
\left(1-\frac2p\right)
=
0.
\]

The procounting distribution is obtained by normalizing each local factor.
By Lemma \ref{l:mucmpap} and Proposition \ref{p:prddstr2}, we get
\begin{equation}\label{landau}
\mu_{f^{-1}(\za^*)}
=
2\,\mu_{\Z_2}\big|_{2\Z_2}
\otimes
\bigotimes_{p\equiv 3\bmod 4}\mu_{\Z_p}
\otimes
\bigotimes_{p\equiv 1\bmod 4}
\frac{p}{p-2}\,
\mu_{\Z_p}\big|_{\Z_p-U_p}.
\end{equation}

Finally, we obtain
\[
\omega_f(p)=
\begin{cases}
1, & p=2,\\
2, & p\equiv 1 \bmod  4,\\
0, & p\equiv 3 \bmod 4.
\end{cases}
\]
Therefore the Bateman--Horn constant, in the notation of
\eqref{e:cffBH}, is
\[
\prod_p
\frac{1-\omega_f(p)/p}{1-1/p}.
\]
\subsubsection{A numerical experiment up to $n=10!$}

We have numerically tested Conjecture \ref{cnj:pBH} in the particular case of the twin prime conjecture for $D=\Z$. For $\bff(x)=(x,x+2)$, define
$$S:=\bff^{-1}((\za^{*})^2)$$
and consider
$S_n=\{a\in \Z/n\Z : a, a+2 \in (\Z/n\Z)^{*}\}$. We say that $S_n$ is {\em covered by  primes} if for every $x\in S_n$, there is some prime $p$ such that $\pi_n(p)=x$ and $p+2$ is prime.
Consider the function
\[ F(n):=\min_{m\in\N}\big\{  S_n\;\text{is covered by primes in}\;[0,m] \big\}. \]
It is natural to wonder, if our Conjecture \ref{cnj:pBH} is true, how $F(n)$ varies for $n\to\infty$. This is ``morally'' equivalent to a comparison between Conjecture \ref{cnj:pBH} and the classical Bateman--Horn. We have verified this fact for $n=k!$, where $k\le10$. It may be interesting to note that $10^9$ is not significantly bigger than $10!$. The software we used is Python, and the list of twin primes was generated by the computer as well using a list of the first prime numbers up to one billion. This turned out to be necessary for $k=8$. 

We invite the interested reader to visit our \href{https://github.com/Francesco-M-Saettone/Profinite-distributions-and-the-Bateman-Horn-conjecture}{GitHub repository} for the Python code and for the lists of numbers it generates.

\section{A first comparison}\label{s:cmpr}

Let $F_v$ denote the completion of $F$ at $v$, and set
\[
F_\cals:=\prod_{v\in\cals}F_v.
\]
We use the diagonal embedding $D\hookrightarrow F_\cals$,
under which $D$ is a lattice. We also fix, once and for all, a metric on $F_\cals^n$ compatible with its product topology, and denote by $B(0,r)$ the ball of radius $r\in\mathbb R_{>0}$.

\subsection{Relative densities}\label{ss:dens}
Let $ Y\subseteq D^n$ and assume that $\mu_Y$ exists. Denote by $2^{Y}$ the power set of $D^n$, and consider two functions
\[ 
d^+_Y,\;d_Y^-\colon 2^{Y}\rightarrow [0,1]
\]
which we respectively call {\em upper} and {\em lower relative density}, satisfying the properties $(\text{Dn}1),\dots,(\text{Dn}7)$ below. If $d_Y^-=d_Y^+$ , we denote the common value by $d_Y$, which we call the {\em relative density} with respect to $Y$.

Let $X\subseteq Y$ be almost openly Eulerian. We 
have that:\begin{itemize} 
\item[(Dn1)] $d_Y^-(Y)=1$\,;
\item[(Dn2)] $d_Y^-(X)\le d_Y^+(X)$\,;
\item[(Dn3)] $A\subseteq B\subseteq Y$ implies $d_Y^\circ(A)\le d_Y^\circ(B)$, with $\circ\in\{+,-\}$\,;
\item[(Dn4)] if $Y$ is the disjoint union of $A$ and $B$, then $d_Y^+(A)+d_Y^-(B)=1$;
\item[(Dn5)] if $A\cap B=\emptyset$, then
$$d_Y^-(A\sqcup B)\ge d_Y^-(A)+d_Y^-(B)\quad\text{and}\quad d_Y^+(A\sqcup B)\le d_Y^+(A)+d_Y^+(B)\ ;$$
\item[(Dn7)] if $A\subset Y$ such that, for $a\in A$ we have $A=\widehat{Y}\cap(a+\gotn D^n)$, then
\[ d_Y(A)=\mu_Y\big(\b1_{A}\big). \]
\end{itemize}

The above numbering is taken from \cite[Section~4.2.1]{dl}. We remark that the first five axioms are of set-theoretic nature, while the seventh is arithmetic. Note also that the absence of $(\text{Dn}6)$ is due to the fact that $Y$ is a subset of a $D$-module, but it has not additive structure.

\begin{eg}[Asymptotic relative densities]
Following \cite[Section~4.3.2]{dl}, for $X\subseteq Y\subseteq D^n$ we set
\[
d_Y^-(X):=
\liminf_{r\to+\infty}
\frac{|X\cap B(0,r)\cap Y|}{|B(0,r)\cap Y|},
\qquad
d_Y^+(X):=
\limsup_{r\to+\infty}
\frac{|X\cap B(0,r)\cap Y|}{|B(0,r)\cap Y|}.
\]
If $d_Y^-(X)=d_Y^+(X)$, we denote the common value by $d_Y(X)$.

Set $Y_r:=B(0,r)\cap Y$.
The formal properties involving only inclusions and finite unions follow
directly from the definition. However, $(\operatorname{Dn}7)$ is not automatic.
It follows if  for every fixed $d\in D^n$, there exist
$r_1=r_1(r,d)$ and $r_2=r_2(r,d)$, with $r_1,r_2\to+\infty$, such that
\[
Y_{r_1}
\subseteq
Y_r\cap(-d+Y_r)
\subseteq
Y_r\cup(-d+Y_r)
\subseteq
Y_{r_2},
\]
and $\dfrac{|Y_{r_i}|}{|Y_r|}\rightarrow1$ for $i=1,2$.
Indeed, these inclusions imply
\[
|Y_r\ \triangle\ (-d+Y_r)|
\le|Y_{r_2}|-|Y_{r_1}|=o(|Y_r|).
\]
Hence translating by $d$ changes the numerators in the above ratios by
$o(|Y_r|)$, and therefore does not change the asymptotic relative density.

The analogue of $(\text{Dn}7)$ is a separate equidistribution statement.
For example, if $Y=\calp$ and $X=(a+b\Z)\cap\calp$, with $(a,b)=1$, then Dirichlet's theorem gives
\[
d_\calp(X)
=
\lim_{r\to+\infty}
\frac{|(a+b\Z)\cap\calp\cap B(0,r)|}
     {|\calp\cap B(0,r)|}
=
\frac{1}{\varphi(b)}
=
\frac{1}{|(\Z/b\Z)^*|}.
\]
Thus, in this case, $(\text{Dn}7)$ is precisely the equidistribution of
primes among the admissible residue classes modulo $b$.
\end{eg}

We now exploit the above axiomatization to show the analogue of \cite[Lemma 4.12]{dl}.

\begin{lem} \label{l:dsgdnsmsr} 
Let $d_Y^\circ$, for $\circ\in\{\pm\}$, satisfy the axioms $(\operatorname{Dn}1),\dots,(\operatorname{Dn}7)$. For $X\subseteq Y$, if $d_Y$ exists, we have
$$d_Y(X)\le\mu_{Y}\big(\widehat{X}\cap \widehat{Y}\big).$$ 
\end{lem}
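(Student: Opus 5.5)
The plan is to mirror the proof of \cite[Lemma 4.12]{dl}, replacing the Haar measure by $\mu_Y$. The only arithmetic input will be (Dn7), which gives $d_Y(A)=\mu_Y(\b1_A)$ for sets $A$ cut out in $Y$ by a single residue class. Everything else is monotonicity and finite additivity, i.e. (Dn3)--(Dn5).

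Fix a non-zero ideal $\gotn$ and let $U_\gotn:=\hpi_\gotn^{-1}(\hpi_\gotn(\xa))$, a compact open subset of $\da^n$ containing $\xa$. Decompose it into the finitely many cosets $a_1+\widehat\gotn\da^n,\dots,a_m+\widehat\gotn\da^n$ that meet $\xa$; we may take $a_j\in X\subseteq Y$ by density. Put $A_j:=Y\cap(a_j+\gotn D^n)$, so that $X\subseteq\bigsqcup_jA_j$. Applying (Dn3) and then the upper half of (Dn5) repeatedly gives
\[ d_Y^+(X)\le\sum_{j=1}^m d_Y^+(A_j). \]
Each $A_j$ is of the shape required in (Dn7), because its closure inside $\widehat Y$ is $\widehat Y\cap(a_j+\widehat\gotn\da^n)$. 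Hence $d_Y(A_j)$ exists and equals $\mu_Y$ of that compact open set. Summing, and using finite additivity of the distribution $\mu_Y$, we obtain
\[ d_Y(X)\le\mu_Y(\b1_{U_\gotn}). \]

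Next let $\gotn\to0$. Since $\mu_Y$ is a Radon measure supported in $\widehat Y$ (Corollary \ref{c:musintr} and the remark following \eqref{e:musc1}), formula \eqref{e:musc} applied to the closed set $\xa$ gives $\mu_Y(\b1_{U_\gotn})\to\mu_Y(\xa)=\mu_Y(\xa\cap\widehat Y)$. This yields the inequality.

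The main obstacle is the use of (Dn7). The axiom is stated for $X$ almost openly Eulerian with the identification $A=\widehat Y\cap(a+\gotn D^n)$, and one has to check that the sets $A_j$ really satisfy its hypothesis. In particular, the identity $\widehat{A_j}=\widehat Y\cap(a_j+\widehat\gotn\da^n)$ requires that $Y\cap(a_j+\gotn D^n)$ be dense in $\widehat Y\cap(a_j+\widehat\gotn\da^n)$, which holds since that set is open in $\widehat Y$ and $Y$ is dense in $\widehat Y$. A second point to handle is that (Dn7) evaluates $\mu_Y$ on $\b1_A$ where $A$ is only relatively compact open in $\widehat Y$. This is harmless, since $\mu_Y$ lives in $\cald(\widehat Y,\R)$ by the ambient-independence results of \S\ref{sss:ambspc}.
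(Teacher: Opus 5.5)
Your proof is correct and is essentially the paper's argument: cover $X$ by its $\gotn$-saturation, apply (Dn7) on each residue class and combine via (Dn3)/(Dn5) to get $d_Y(X)\le\mu_Y\bigl(\hpi_\gotn^{-1}(\hpi_\gotn(X))\cap\widehat Y\bigr)$, then let $\gotn\to0$ using that these compact open sets decrease to $\widehat X\cap\widehat Y$. Because you run the estimate with $d_Y^+$ and subadditivity, you in fact obtain $d_Y^+(X)\le\mu_Y(\widehat X)$ without assuming that $d_Y(X)$ exists, and this stronger form is the one the paper actually invokes later.
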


\begin{prf}
We note that $X_\gotn:=\hpi_\gotn^{-1}(\hpi_\gotn(X))$
is a finite disjoint union of congruence classes modulo $\gotn$ in
$\da^n$. More precisely,
\[
X_\gotn
=
\bigsqcup_{\bar a\in\hpi_\gotn(X)}
\hpi_\gotn^{-1}(\bar a).
\]
Choosing representatives $a\in D^n$ for the classes
$\bar a\in\hpi_\gotn(X)$, we have $\hpi_\gotn^{-1}(\bar a)=a+\gotn\da^n$. Hence, since $Y\subseteq D^n$
\[
X_\gotn\cap Y
=
\bigsqcup_{\bar a\in\hpi_\gotn(X)}
\bigl(Y\cap(a+\gotn D^n)\bigr).
\]
By $(\text{Dn}7)$ on each congruence class and by finite additivity
$(\text{Dn}5)$, we get
\[
d_Y(X_\gotn\cap Y)
=
\mu_Y\bigl(X_\gotn\cap\widehat Y\bigr).
\]
Moreover, the sets $X_\gotn\cap\widehat Y$ form a decreasing family of
compact open neighbourhoods of $\widehat X\cap\widehat Y$, and $\widehat X\cap\widehat Y=
\bigcap_\gotn (X_\gotn\cap\widehat Y)$.
Therefore 
\[
\mu_Y\bigl(\widehat X\cap\widehat Y\bigr)=\inf_\gotn
\mu_Y\bigl(X_\gotn\cap\widehat Y\bigr)=
\inf_\gotn d_Y(X_\gotn\cap Y)\ .
\]
Since $X\subseteq X_\gotn\cap Y$ for every $\gotn$, monotonicity gives
$d_Y(X)\le d_Y(X_\gotn\cap Y)$ for every $\gotn$. 
\end{prf}

\begin{eg} \label{eg:zerodensity} Let $Y=\calp$ and $X=\{p\in \calp: p+2\in\calp\}$. It is well known that $d_\calp(X)=0$ for the relative asymptotic density $d_\calp$.  We now show how our results imply it for any relative density satisfying $(\text{Dn}1)$,...,$(\text{Dn}7)$. This works also, {\em mutatis mutandis}, for the Landau's primes.

Indeed, we have that  $\widehat{\calp}=\calp\sqcup \za^*$, hence $\widehat{X}\subseteq X\sqcup W$, for some $W\subseteq \za^*$. As $X$ is countable,  $\sigma$-additivity yields $\mu_\calp\big(\widehat{X}\big)=\mu_\calp\big(W\big)$. By \eqref{e:preimagetwinprimes} we have that $W\subset \bff^{-1}((\za^*)^2)$, for $\bff(x)=(x,x+2)$.\\
By Corollary~\ref{c:msrprmd} the procounting measure $\mu_\calp$ equals the Haar measure on $\za^*$, denoted by $\mu_{\za^*}$. We therefore reduce to compute $\mu_{\za^*}(W)$. By \eqref{e:measurezero}, we have 
\[
\mu_{\za^*}\big(W\big)\le \mu_{\za^*}\big(\bff^{-1}((\za^*)^2)\big)=0.
\]
By Lemma \ref{l:dsgdnsmsr} we also conclude that $d_\calp(W)=0$. \end{eg}

\begin{lem} Let $S\subseteq T\subseteq D^n$, and assume that the procounting measure $\mu_T$ exists. If $S=o(T)$, then $d_T^+(S)=0$. In particular, if $d_T(S)$ exists, then $d_T(S)=0$. \end{lem}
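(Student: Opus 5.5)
The plan is to imitate the proof of Lemma \ref{l:dsgdnsmsr}, using the saturations of $S$ modulo $\gotn$ as compact open neighbourhoods and bounding densities through axiom (Dn7). Fix a non-zero ideal $\gotn$ and put $S_\gotn:=\hpi_\gotn^{-1}(\hpi_\gotn(S))$. This is the finite disjoint union of the classes $a+\gotn\da^n$ with $\bar a\in\hpi_\gotn(S)$. Since $S\subseteq T\subseteq D^n$, we get
\[
S\subseteq S_\gotn\cap T=\bigsqcup_{\bar a\in\hpi_\gotn(S)}\bigl(T\cap(a+\gotn D^n)\bigr).
\]

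By (Dn3), $d_T^+(S)\le d_T^+(S_\gotn\cap T)$. The sub-additivity half of (Dn5) then gives
\[
d_T^+(S_\gotn\cap T)\le\sum_{\bar a\in\hpi_\gotn(S)}d_T^+\bigl(T\cap(a+\gotn D^n)\bigr).
\]
Each class $T\cap(a+\gotn D^n)$ is of the shape in (Dn7), so its density exists and equals $\mu_T$ of the class. Here $a\in S\subseteq T$ guarantees the class meets $T$. Summing, we obtain $d_T^+(S)\le\mu_T(\b1_{S_\gotn})$.

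Next we evaluate $\mu_T(\b1_{S_\gotn})$ through the definition of procounting measure. The set $S_\gotn$ is $\gotn$-saturated, so by \eqref{e:prmusa}, for every $\gotm\subseteq\gotn$,
\[
\widetilde\mu_{T,\gotm}(\b1_{S_\gotn})=\frac{|\hpi_\gotm(S_\gotn)\cap\hpi_\gotm(T)|}{|\hpi_\gotm(T)|}.
\]
Letting $\gotm\to0$ gives $\mu_T(\b1_{S_\gotn})$. Since $S_\gotn$ is $\gotn$-saturated, the numerator equals $|\hpi_\gotm(S_\gotn\cap\widehat T)|$, which, unlike the count in \eqref{e:commensurable}, need not be comparable to $|\hpi_\gotm(S)|$. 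So the limit over $\gotm$ cannot be bounded directly by $|\hpi_\gotm(S)|/|\hpi_\gotm(T)|$ for fixed $\gotn$. The fix is to take the infimum over $\gotn$ first: as in \eqref{e:musc},
\[
\inf_\gotn\mu_T(\b1_{S_\gotn})=\mu_T\bigl(\widehat S\bigr)=\lim_{\gotm}\frac{|\hpi_\gotm(S)|}{|\hpi_\gotm(T)|}.
\]
The decreasing closed sets $S_\gotn\cap\widehat T$ intersect in $\widehat S$, because $\widehat S\subseteq\widehat T$ and by Lemma \ref{l:clsr}. The right-hand limit is $0$ by the hypothesis $S=o(T)$. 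Hence $d_T^+(S)\le\inf_\gotn\mu_T(\b1_{S_\gotn})=0$.

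For the final clause, if $d_T(S)$ exists it equals $d_T^+(S)=0$; one may also invoke (Dn2) together with the non-negativity of $d_T^-$. The main obstacle I expect is this interchange: identifying $\inf_\gotn\mu_T(S_\gotn)$ with $\mu_T(\widehat S)$ by Radon regularity, and then applying \eqref{e:musc} with $C=\widehat S\subseteq\widehat T$. The earlier Remark states \eqref{e:musc} for closed $C\subseteq\widehat T$, which is exactly this situation, so I would check that carefully but expect it to go through.
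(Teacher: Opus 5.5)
Your route is the paper's: bound $d_T^+(S)$ by $\mu_T(\widehat S)$ through (Dn3), (Dn5) and (Dn7) as in Lemma~\ref{l:dsgdnsmsr}, then evaluate $\mu_T(\widehat S)$ with \eqref{e:musc}. The first half is fine. It is even slightly more careful than the citation of Lemma~\ref{l:dsgdnsmsr}, which is stated for $d_Y$ rather than $d_Y^+$.

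The gap is exactly the step you flagged, and it does not go through. The second equality in \eqref{e:musc}, namely $\mu_T(\widehat S)=\lim_\gotm|\hpi_\gotm(S)\cap\hpi_\gotm(T)|/|\hpi_\gotm(T)|$, is false in general. Put $U_\gotm:=\hpi_\gotm^{-1}(\hpi_\gotm(\widehat S))$; these sets decrease as $\gotm\to0$, so positivity of $\widetilde\mu_{T,\gotm}$ only gives
\[
\limsup_{\gotm}\frac{|\hpi_\gotm(S)\cap\hpi_\gotm(T)|}{|\hpi_\gotm(T)|}=\limsup_{\gotm}\widetilde\mu_{T,\gotm}(\b1_{U_\gotm})\le\inf_{\gotn}\mu_T(\b1_{U_\gotn})=\mu_T(\widehat S).
\]
This is the wrong direction for you. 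Concretely, let $D=\Z$, $T=\{0\}\cup\{n!:n\in\N\}$ and $S=\{0\}$. By Example~\ref{eg:ptaccm}.1, $\mu_T=\delta_0$. Also $|\hpi_m(S)|/|\hpi_m(T)|=1/|\hpi_m(T)|\to0$, so $S=o(T)$. Yet $\mu_T(\widehat S)=1$. The paper's proof passes through the same equality, so it offers no way around this.

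In fact the statement cannot be proved as it stands if $d_T^\pm$ is only required to satisfy (Dn1)--(Dn7). In the same example put $d_T^+=d_T^-:=\delta_0$, i.e.\ $d_T(A)=1$ if $0\in A$ and $0$ otherwise. This satisfies (Dn1)--(Dn5). It also satisfies (Dn7) in the form used in Lemma~\ref{l:dsgdnsmsr}, because $d_T\bigl(T\cap(a+m\Z)\bigr)=1$ iff $m\mid a$ iff $\delta_0(a+m\za)=1$. But $d_T(S)=1$.

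So you need a hypothesis on $T$ that supplies a cofinal set of levels at which the counting ratios compute $\mu_T$ exactly. Suppose, for instance, that $T$ forms a close pair with an openly Eulerian $Z$ (e.g.\ $T=\calp$). Let $\gotm$ run over the cofinal set of ideals such that each $Z(\gotp)$ with $\gotp\mid\gotm$ is $\gotp^{v_\gotp(\gotm)}$-saturated. The product formula of Theorem~\ref{t:opnelr} then gives $\mu_Z(\b1_U)=\widetilde\mu_{Z,\gotm}(\b1_U)$ for every $\gotm$-saturated $U$. Hence, along such $\gotm$,
\[
\mu_T(\widehat S)=\mu_Z(\widehat S)\le\mu_Z(\b1_{U_\gotm})\le\frac{|\hpi_\gotm(S)|}{|\hpi_\gotm(Z)|}=\frac{|\hpi_\gotm(S)|}{|\hpi_\gotm(T)|}\cdot\frac{|\hpi_\gotm(T)|}{|\hpi_\gotm(Z)|}\longrightarrow0,
\]
since the last factor tends to $1$ by closeness. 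If instead $\mu_{\da^n}(\widehat T)>0$, the analogous estimate holds at every level, by Proposition~\ref{p:msrS>0} and \eqref{e:musc1}. Under either hypothesis, your first half then completes the proof.
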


\begin{prf}
By Lemma \ref{l:dsgdnsmsr} we have $d_T^+(S)\le
\mu_T\big(\widehat S\cap\widehat T\big)$.
Since $\widehat S\cap\widehat T$ is closed in $\widehat T$, the definition of $\mu_T$ gives $\mu_T\big(\widehat S\cap\widehat T\big)=\lim_{\gotn\to0}
\frac{|\hpi_\gotn(\widehat S\cap\widehat T)|}
     {|\hpi_\gotn(T)|}$.
Moreover $\hpi_\gotn(\widehat S\cap\widehat T)
\subseteq
\hpi_\gotn(\widehat S)=
\hpi_\gotn(S)$.
Therefore
\[
0\le
\mu_T\big(\widehat S\cap\widehat T\big)
\le
\lim_{\gotn\to0}
\frac{|\hpi_\gotn(S)|}{|\hpi_\gotn(T)|}
=0.
\]
Thus $d_T^+(S)=0$. If $d_T(S)$ exists, then $0\le d_T(S)\le d_T^+(S)=0$.
\end{prf}

\begin{rmk}
This comparison with profinite closures is closely related to Golomb's $\pi$-measure. In \cite[Section~III]{gol2}, Golomb assigns to an arithmetic
progression $A=a+b\N$ the value
\[
\pi(A)=
\begin{cases}
1/\varphi(b) & \text{if } (a,b)=1,\\
0 & \text{otherwise},
\end{cases}
\]
and interprets it, using Dirichlet's theorem, as the relative asymptotic
frequency of primes in $A$. In profinite terms this value is simply
\[
\mu_{\za^*}
\bigl(\widehat A\cap \za^*\bigr).
\]
More generally, as observed in \cite[Section~3.3.2]{lms}, for
$X\subseteq\mathcal P$ one has
\[
\pi^+(X)
\le
\mu_{\za^*}
\bigl(\widehat X\cap \za^*\bigr).
\]
Thus the Haar measure of the profinite closure provides a natural upper bound for Golomb's upper $\pi$-measure, and suggests a profinite explanation of the phenomena appearing in \cite[Theorems~12 and~13]{gol2}.
The reader may recognize the influence of Furstenberg's topological proof of the infinitude of primes \cite{fur} via \eqref{e:prmZ}. In fact, Golomb in \cite{gol2} was the earliest to harvest artihmetic information from Furstenberg's approach, although he did not carry his investigation very far. For the connection between Furstenberg, Euclid, and Euler's proof, see \cite[Remark~3.28]{lms} and the references therein.
\end{rmk}

\medskip\noindent The following Lemma and Corollary amount to the relative versions of \cite[Proposition~6.4 and Corollary~6.5]{dl}.

\begin{lem}\label{l:4.4}
Let $Y\subseteq D^n$ be openly Eulerian, and assume that exists a pair $(d_Y^+,d_Y^-)$ satisfying the axioms $(\operatorname{Dn}1),\dots,(\operatorname{Dn7})$. Let $X\subseteq Y$. Assume that $\widehat X\subseteq C\cap \widehat Y$, and $C=\prod_{\gotp}C_{\gotp}$,
where every $C_{\gotp}\subseteq \da_{\gotp}^{\,n}$ is open, and assume $\prod_\gotp\mu_Y(C_\gotp\cap \widehat Y)>0$.
If $d_Y^+(X)\ge \mu_Y(C\cap \widehat Y)$,
then $\widehat X=C\cap \widehat Y$.
In particular, $X(\gotp)=C_{\gotp}\cap Y(\gotp)$
for every $\gotp$.
\end{lem}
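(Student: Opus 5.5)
The plan is to argue by contradiction, mirroring the absolute case of \cite[Proposition~6.4]{dl}, with Lemma \ref{l:dsgdnsmsr} in place of the Haar upper bound. First, since $\widehat X$ is closed and contained in $C\cap\widehat Y$, Lemma \ref{l:dsgdnsmsr} together with (Dn2) gives the chain
\[
\mu_Y(C\cap\widehat Y)\le d_Y^+(X)\le \mu_Y\big(\widehat X\cap\widehat Y\big)=\mu_Y(\widehat X)\le\mu_Y(C\cap\widehat Y).
\]
Here I expect the inequality $d_Y^+(X)\le\mu_Y(\widehat X\cap\widehat Y)$ to come from the same argument as Lemma \ref{l:dsgdnsmsr} applied to the upper density: use $X\subseteq X_\gotn\cap Y$, (Dn3) and (Dn7). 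Hence $\mu_Y(\widehat X)=\mu_Y(C\cap\widehat Y)$, i.e.\ $\mu_Y\big((C\cap\widehat Y)-\widehat X\big)=0$.

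Next I show that this null difference must be empty. Suppose $z\in (C\cap\widehat Y)-\widehat X$. Since $\widehat X$ is closed, there is a compact open neighbourhood $V=\hpi_\gotn^{-1}(\hpi_\gotn(z))$ with $V\cap\widehat X=\emptyset$. Shrinking $\gotn$, and using that each $C_\gotp$ is open, I may assume $V\cap C=V\cap\widehat Y$-locally a product: $V\cap C\cap\widehat Y=\prod_{\gotp\mid\gotn}(V_\gotp\cap C_\gotp\cap Y(\gotp))\times\prod_{\gotp\nmid\gotn}(C_\gotp\cap Y(\gotp))$. Here I use that $Y$ is openly Eulerian, so $\widehat Y=\prod_\gotp Y(\gotp)$ with open factors. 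By Theorem \ref{t:opnelr}, $\mu_Y=\bigotimes_\gotp c_{Y,\gotp}\b1_{Y(\gotp)}\mu_{\dap^n}$. Then $\mu_Y(V\cap C\cap\widehat Y)$ is the product, via \eqref{e:tnsr2}, of the finitely many factors at $\gotp\mid\gotn$ and the tail $\prod_{\gotp\nmid\gotn}\mu_{Y(\gotp)}(C_\gotp\cap Y(\gotp))$. Each finite factor is positive, because $z_\gotp$ lies in the nonempty open set $V_\gotp\cap C_\gotp\cap Y(\gotp)$ and Haar measure charges nonempty opens. The tail is positive because it is a subproduct of the convergent product $\prod_\gotp\mu_Y(C_\gotp\cap\widehat Y)>0$, whose factors lie in $(0,1]$. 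Hence $\mu_Y\big((C\cap\widehat Y)-\widehat X\big)\ge\mu_Y(V\cap C\cap\widehat Y)>0$, which is a contradiction. Therefore $\widehat X=C\cap\widehat Y$.

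For the final claim, I project to $\dap^n$. We have $X(\gotp)=\overline{\hpi_{\gotp^\infty}(X)}=\hpi_{\gotp^\infty}(\widehat X)$, by compactness. Also $\hpi_{\gotp^\infty}\big(\prod_\mathfrak q(C_\mathfrak q\cap Y(\mathfrak q))\big)=C_\gotp\cap Y(\gotp)$, since all factors are nonempty; this holds because the product of their measures is positive.

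The main obstacle I expect is the positivity step. I need to justify carefully that the interpretation of the hypothesis $\prod_\gotp\mu_Y(C_\gotp\cap\widehat Y)>0$ as a product of the local normalized measures is legitimate. I also need to check that its tails, and the products with finitely many modified factors, remain positive. The plan is to handle this by writing everything through \eqref{e:tnsr2} and using monotone convergence of partial products with factors in $(0,1]$.
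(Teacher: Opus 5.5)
Your proposal is correct and follows the paper's proof. Both arguments use the chain $d_Y^+(X)\le\mu_Y(\widehat X)\le\mu_Y(C\cap\widehat Y)$ to force equality, and then obtain the contradiction from a basic open set that misses $\widehat X$ but meets $C\cap\widehat Y$ in a set of positive $\mu_Y$-measure. You are in fact more explicit than the paper in two places: the paper cites \cite[Proposition~6.4]{dl} for the positivity that you derive directly (finite factors because Haar measure charges nonempty opens, the tail as a subproduct of a positive product with factors in $(0,1]$), and it applies Lemma~\ref{l:dsgdnsmsr} to $d_Y^+$ without your remark that its argument must be rerun for the upper density using (Dn3) and (Dn7).
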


\begin{proof}
By Corollary~\ref{c:3.4} we have that $\mu_Y$ exists. By Lemma \ref{l:dsgdnsmsr} we have $d_Y^+(X)\le \mu_Y(\widehat X)$.
Since $\widehat X\subseteq C\cap\widehat Y$, we get
\[
d_Y^+(X)\le \mu_Y(\widehat X)\le \mu_Y(C\cap\widehat Y).
\]
The hypothesis forces equality.

We assume by contradiction that $\widehat X\neq C\cap\widehat Y$. Choose a basic open set
$U=\widehat\pi_{\mathfrak a}^{-1}(x)$ such that $U\cap\widehat X=\emptyset$ and $U\cap C\cap\widehat Y\neq\emptyset$.
Since $Y$ is openly Eulerian,
$C\cap\widehat Y=
\prod_{\gotp}\bigl(C_{\mathfrak p}\cap Y(\gotp)\bigr)$.
Thus, exactly as in \cite[Proposition~6.4]{dl}, $\mu_Y(U\cap C\cap\widehat Y)>0$.
Hence $\mu_Y((C\cap\widehat Y)-U)<\mu_Y(C\cap\widehat Y)$.
But $\widehat X\subseteq (C\cap\widehat Y)-U$, so
\[
\mu_Y(\widehat X)<\mu_Y(C\cap\widehat Y)
\]
yields a contradiction; this implies that $\widehat X=C\cap\widehat Y$.
Finally, $\widehat X=C\cap\widehat Y$ gives immediately $X(\gotp)=\hpi_{\gotp^\infty}(\widehat X)=\hpi_{\gotp^{\infty}}(C\cap \widehat Y)=C_{\gotp}\cap Y(\gotp)$.
\end{proof}

\begin{cor}
Let $Y$ and $(d_Y^+,d_Y^-)$ be as in Lemma~\ref{l:4.4}.
Assume that every $X(\gotp)$ is open in $Y(\gotp)$, and that $\prod_{\gotp}
\mu_{Y(\gotp)}\bigl(X(\gotp)\bigr)>0$.
If
\[
d_Y^+(X)=
\prod_{\gotp}
\mu_{Y(\gotp)}\bigl(X(\gotp)\bigr),
\]
then  $X$ is openly Eulerian.
\end{cor}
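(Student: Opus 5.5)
The plan is to apply Lemma~\ref{l:4.4} with the set $C:=\prod_\gotp C_\gotp$ built from the local closures, and then to upgrade the conclusion from closed to open local factors. For each $\gotp$ let $C_\gotp\subseteq\dap^n$ be the open set with $C_\gotp\cap Y(\gotp)=X(\gotp)$. Such a set exists because $X(\gotp)$ is open in $Y(\gotp)$. Since $Y$ is openly Eulerian, $Y(\gotp)$ is itself open in $\dap^n$, so we may simply take $C_\gotp:=X(\gotp)$, which is open in $\dap^n$. Then $C\cap\widehat Y=\prod_\gotp\big(X(\gotp)\cap Y(\gotp)\big)=\prod_\gotp X(\gotp)$, because $X\subseteq Y$ gives $X(\gotp)\subseteq Y(\gotp)$.

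Next I check the hypotheses of Lemma~\ref{l:4.4}. First, the inclusion $\widehat X\subseteq\prod_\gotp X(\gotp)=C\cap\widehat Y$ always holds, by the Remark after Example~\ref{eg:fourier-arithmetic-progressions}. Second, positivity: by Theorem~\ref{t:opnelr}, $\mu_Y=\bigotimes_\gotp\mu_{Y(\gotp)}$ with $\mu_{Y(\gotp)}$ the normalized Haar measure restricted to the compact open $Y(\gotp)$. Formula \eqref{e:tnsr2} extends from compact opens to the closed product set $\prod_\gotp X(\gotp)$ by \eqref{e:musc1}, using the decreasing approximation by the sets $\prod_{\gotp\in K}X(\gotp)\times\prod_{\gotp\notin K}Y(\gotp)$. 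This yields
\[
\mu_Y(C\cap\widehat Y)=\prod_\gotp\mu_{Y(\gotp)}\big(X(\gotp)\big),
\]
and the right-hand side is $>0$ by assumption. This same identity is what the lemma's positivity hypothesis $\prod_\gotp\mu_Y(C_\gotp\cap\widehat Y)>0$ is meant to express. Third, the hypothesis $d_Y^+(X)=\prod_\gotp\mu_{Y(\gotp)}(X(\gotp))$ becomes exactly $d_Y^+(X)\ge\mu_Y(C\cap\widehat Y)$.

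Lemma~\ref{l:4.4} then gives $\widehat X=C\cap\widehat Y=\prod_\gotp X(\gotp)$, so $X$ is Eulerian. Each factor $X(\gotp)$ is open in $Y(\gotp)$, which is open in $\dap^n$, so each factor is open in $\dap^n$. Hence $X$ is openly Eulerian.

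The main obstacle is the measure identity for the infinite product of closed local factors. It needs the limit interchange in \eqref{e:musc1} together with the convergence of $\prod_{\gotp\in K}\mu_{Y(\gotp)}(X(\gotp))$ as $K$ grows. This convergence holds because the factors lie in $[0,1]$, so the partial products decrease monotonically to their infimum.
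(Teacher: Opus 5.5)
Your proposal is correct and follows the paper's own proof. Like the paper, you take $C_\gotp=X(\gotp)$, observe $\widehat X\subseteq C=C\cap\widehat Y$ and $\mu_Y(C)=\prod_\gotp\mu_{Y(\gotp)}(X(\gotp))$, apply Lemma~\ref{l:4.4}, and conclude from the openness of the factors.

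One small fix: the sets $\prod_{\gotp\in K}X(\gotp)\times\prod_{\gotp\notin K}Y(\gotp)$ are not compact open in general, so \eqref{e:tnsr2} does not apply to them directly. Use $\prod_{\gotp\in K}X(\gotp)\times\prod_{\gotp\notin K}\dap^n$ instead; these have the same $\mu_Y$-measure because $\mu_Y$ is supported on $\widehat Y$.
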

\begin{proof}
Since $X\subseteq Y$, one has $X(\gotp)\subseteq Y(\gotp)$ for every
$\gotp$. We set $C:=\prod_{\gotp}X(\gotp)$.
As $Y$ is openly Eulerian, $C\subseteq \widehat Y$
and $\mu_Y(C)=\prod_{\gotp}
\mu_{Y(\gotp)}\bigl(X(\gotp)\bigr)$.
Moreover $\widehat X\subseteq C$. Thus the equality hypothesis says
\[
d_Y^+(X)=\mu_Y(C).
\]
Applying Lemma \ref{l:4.4}, with
$C_{\mathfrak p}=X(\mathfrak p)$, gives $\widehat X=C=\prod_{\mathfrak p}X(\mathfrak p)$.
Since every $X(\mathfrak p)$ is open, $X$ is openly Eulerian.
\end{proof}

The next lemma is a straightforward adaptation of \cite[Lemma~4.1]{dl} to our relative setting. 

\begin{lem}\label{l:lemma4.1dl} 
For $X\subset Y\subset \big(\da\big)^n$,  assume that $\mu_Y$ exists. We have
 \[ \mu_Y\big(\widehat{X}\big)=\lim_{\gota\to 0}\mu_Y\big(X_\gota\big) 
 \]
where $\gota$ varies among non-zero ideals of $D$ and $X_\gota:= \widehat{\pi}^{-1}_\gota\big(\widehat{\pi}_\gota(X)\big)$.
\end{lem}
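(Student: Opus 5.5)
The plan is to mimic the argument behind \eqref{e:musc1} and \eqref{e:musc}: $\mu_Y$ is a Radon measure on $\da^n$ with support inside $\widehat Y$, so outer regularity on the closed set $\widehat X$ will do all the work. The proof has three steps.

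First, I will record the structure of the sets $X_\gota$. By Corollary \ref{c:dl213}, each $X_\gota=\hpi_\gota^{-1}(\hpi_\gota(X))$ is compact open in $\da^n$, being $\gota$-saturated. If $\gotb\subseteq\gota$, then $\hpi_\gotb(X)\subseteq(\pi_\gota^\gotb)^{-1}(\hpi_\gota(X))$, which gives $X_\gotb\subseteq X_\gota$. So the family $\{X_\gota\}$ decreases along the directed set of non-zero ideals. Lemma \ref{l:clsr} then gives
\[
\widehat X=\bigcap_\gota X_\gota .
\]
Since $\mu_Y$ is non-negative, the net $\mu_Y(X_\gota)$ is monotone non-increasing and bounded below by $\mu_Y(\widehat X)$. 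Hence the limit exists and equals $\inf_\gota\mu_Y(X_\gota)$.

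Second, I will show that this infimum equals $\mu_Y(\widehat X)$. As in the Remark following \eqref{e:musc1}, $\mu_Y$ is a positive functional bounded by $1$ on compact open sets. By Lemma \ref{l:msrdstrb} and the Riesz representation theorem it is therefore a regular Borel measure, and outer regularity gives $\mu_Y(\widehat X)=\inf\{\mu_Y(V): V\supseteq\widehat X \text{ open}\}$.

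The main obstacle is to pass from arbitrary open neighbourhoods to the particular sets $X_\gota$. I will argue by compactness:
\begin{itemize}
\item given an open $V\supseteq\widehat X$, the complement $\da^n-V$ is compact, and the sets $(\da^n-V)\cap X_\gota$ are closed and decreasing with empty intersection;
\item since the index set is directed, the finite intersection property forces $X_\gota\subseteq V$ for some $\gota$;
\item consequently $\inf_\gota\mu_Y(X_\gota)\le\mu_Y(V)$ for every such $V$.
\end{itemize}
Taking the infimum over $V$ gives $\inf_\gota\mu_Y(X_\gota)\le\mu_Y(\widehat X)$, which combined with the first step proves equality.

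Alternatively, one can avoid regularity by working directly with the $\sigma$-additivity of the Radon measure along a countable cofinal chain $\{\gotm_n\}$ of ideals, which exists as noted after \eqref{e:crt}. Continuity from above for the decreasing sequence $X_{\gotm_n}$ gives $\mu_Y(\widehat X)=\lim_n\mu_Y(X_{\gotm_n})$. Monotonicity of the net, together with cofinality of the chain, then shows that the full net limit over all $\gota$ agrees with this sequential limit.
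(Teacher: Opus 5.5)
Your proof is correct and follows the same route as the paper: the inclusion $\widehat X\subseteq X_\gota$ gives the monotone bound, and Lemma \ref{l:clsr} gives $\widehat X=\bigcap_\gota X_\gota$. The paper leaves implicit the step from this intersection to the limit of the measures, which is the outer-regularity argument already used in \eqref{e:musc1}; your compactness argument, or alternatively continuity from above along a countable cofinal chain, supplies exactly that missing step.
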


\begin{prf} By definition, $\widehat{X}\subseteq X_\gota$, hence $\mu_Y\big(\b1_{\widehat{X}}\big)\le \mu_Y\big(\b1_{X_\gota}\big)$. By Lemma~\ref{l:clsr} we have $\widehat{X}=\bigcap_\gota X_\gota$.
\end{prf}

\begin{prop} \label{p:poonenstoll} 
Let $Y$ such that $\mu_Y$ exists and that $(d_Y^+,d_Y^-)$ exists and satisfy $(\text{Dn}1),\dots,(\text{Dn}7)$. Suppose that $X$ is a subset of $Y$ such that
\[ 
\lim_{\gota\to 0} d_Y^+(X_\gota - X)=0. 
\]
If $d_Y(X)$ exists,  
then we have
$$\mu_Y\big(\xa\big)=d_Y(X).$$
\end{prop}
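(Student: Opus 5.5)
We squeeze $d_Y(X)$ between two expressions that both converge to $\mu_Y(\xa)$. The upper bound comes from Lemma \ref{l:dsgdnsmsr}, and the lower bound from the hypothesis on $X_\gota - X$ combined with axiom (Dn7) applied to the saturated sets $X_\gota\cap Y$.

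\emph{Upper bound.} Since $d_Y(X)$ exists, Lemma \ref{l:dsgdnsmsr} gives $d_Y(X)\le\mu_Y(\xa\cap\widehat Y)$. We may regard $\mu_Y$ as a Radon measure supported on $\widehat Y$ (Corollary \ref{c:musintr}). Hence this bound reads $d_Y(X)\le\mu_Y(\xa)$.

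\emph{Lower bound.} Fix a non-zero ideal $\gota$ and set $X_\gota:=\hpi_\gota^{-1}(\hpi_\gota(X))$. As in the proof of Lemma \ref{l:dsgdnsmsr}, $X_\gota\cap Y$ is a finite disjoint union of sets $Y\cap(a+\gota D^n)$ with $a\in X$. Axioms (Dn7) and (Dn5) then give
\[ d_Y(X_\gota\cap Y)=\mu_Y(X_\gota\cap\widehat Y)=\mu_Y(X_\gota). \]
This step also makes (Dn7) legitimate for these pieces: (Dn5) supplies the inequalities in both directions for disjoint unions, so $d_Y^+$ and $d_Y^-$ agree on each piece and hence $d_Y$ exists for the union. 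I read $X_\gota - X$ in the hypothesis as $(X_\gota\cap Y)- X$, a subset of $Y$. Write $X_\gota\cap Y=X\sqcup\big((X_\gota\cap Y)- X\big)$. The subadditivity half of (Dn5) for $d^+_Y$ gives
\[ \mu_Y(X_\gota)=d_Y^+(X_\gota\cap Y)\le d_Y^+(X)+d_Y^+(X_\gota- X)=d_Y(X)+d_Y^+(X_\gota-X). \]

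\emph{Passing to the limit.} Let $\gota\to0$. Lemma \ref{l:lemma4.1dl} gives $\mu_Y(X_\gota)\to\mu_Y(\xa)$, and the hypothesis gives $d_Y^+(X_\gota-X)\to0$. Therefore $\mu_Y(\xa)\le d_Y(X)$, which combined with the upper bound proves the equality.

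\emph{Main obstacle.} The delicate step is the bookkeeping in the lower bound: identifying $X_\gota\cap Y$ as a finite union of the congruence slices $Y\cap(a+\gota D^n)$ to which (Dn7) applies, and checking that $d_Y$ of this finite union exists so that it can be compared with $d_Y^+$. I would handle this exactly as in Lemma \ref{l:dsgdnsmsr}, using both inequalities of (Dn5) to trap $d^\pm_Y$ of the union between the sums of the (equal) upper and lower densities of the pieces. Lemma \ref{l:lemma4.1dl} itself is only a monotone limit; it holds because $\mu_Y$ is a Radon measure, and the $X_\gota$ form a decreasing family of compact open sets with intersection $\xa$.
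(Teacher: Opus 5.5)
Your proof is correct. Its skeleton matches the paper's: the upper bound from Lemma~\ref{l:dsgdnsmsr}, (Dn7) applied to the congruence slices of $X_\gota\cap Y$, and Lemma~\ref{l:lemma4.1dl} to pass to the limit. Reading $X_\gota - X$ as $(X_\gota\cap Y)-X$ is also what the paper implicitly does, since its argument needs $W\subseteq Y$.

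The two arguments differ only in the lower bound:
\begin{itemize}
\item The paper proves the identity $d_Y^-(X)=d_Y(W)-d_Y^+(W-X)$, the analogue of \cite[Lemma~4.5]{dl}, by passing to complements in $Y$ with (Dn4) and (Dn5). It then applies this with $W=X_\gota\cap Y$.
\item You get $\mu_Y(X_\gota)\le d_Y^+(X)+d_Y^+(X_\gota-X)$ in one line from subadditivity of $d_Y^+$, and then replace $d_Y^+(X)$ by $d_Y(X)$.
\end{itemize}

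What each route buys:
\begin{itemize}
\item Your route is shorter and never uses (Dn4). However, it genuinely needs the hypothesis that $d_Y(X)$ exists; without it you only obtain $d_Y^+(X)=\mu_Y(\xa)$.
\item The paper's inequality $d_Y^-(X)\ge d_Y(W)-d_Y^+(W-X)$ does not use that hypothesis. Combined with $d_Y^+(X)\le\mu_Y(\xa)$, it gives $\mu_Y(\xa)\le d_Y^-(X)\le d_Y^+(X)\le\mu_Y(\xa)$. So the paper's route shows that $d_Y(X)$ exists automatically once $d_Y^+(X_\gota-X)\to0$, making the existence assumption redundant.
\end{itemize}

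One small point: (Dn5) alone does not show that $d_Y(X_\gota\cap Y)$ exists. You also need (Dn2) for the middle inequality in
$\sum_a d_Y\big(Y\cap(a+\gota D^n)\big)\le d_Y^-(X_\gota\cap Y)\le d_Y^+(X_\gota\cap Y)\le\sum_a d_Y\big(Y\cap(a+\gota D^n)\big)$.
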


\begin{prf} By Lemma \ref{l:dsgdnsmsr} we have that $d_Y^+(X)\le \mu_Y\big(\b1_{\widehat{X}}\big)$, so it is enough to show that, under our hypothesis,
$d^-_Y(X)=\mu_Y\big(\b1_{\widehat{X}})$.
 
As in \cite[Lemma~4.5]{dl}, if $d_Y$ exists, then we have 
\begin{equation}\label{lemma4.5dl} d_Y^{-}(X)=d_Y(W)-d_Y^+(W-X)  \end{equation}
for all $X\subset W\subseteq Y$ such that $d_Y(W)$ exists. In fact, denote by $Z$ the complement of $W$ in $Y$, so that the disjoint union $X\sqcup Z$ is the complement of $W - X$. By $(\text{Dn}4)$ and $(\text{Dn}5)$ we have
\[ d_Y^+(W-X)=d_Y(W)-d^-_Y(X\sqcup Z)\le d_Y(W)-d_Y^-(Z)-d_Y^-(X)\le d_Y(W)-d^-_Y(X), \]
which gives us
$$d^+_Y(W-X)+d^-_Y(X)\le d_Y(W).$$ 
Similarly, the complement of $X$ is $Z\cup(W-X)$, hence again by $(\text{Dn}4)$ and $(\text{Dn}5)$ we see that
$$d^+_Y(W-X)+d^-_Y(X)\ge d_Y(W),$$
so that we obtain \eqref{lemma4.5dl}. Hence by (Dn7) we have
\[ d_Y^-(X)=d_Y(X_\gota)-d_Y^+(X_\gota-X)=\mu_Y\big(\b1_{X_\gota}\big)-d^+_Y(X_\gota-X) \]
for every $\gota$. Therefore Lemma~\ref{l:lemma4.1dl} implies
\[ \lim_{\gota \to 0}d_Y^+(X_\gota-X)=\lim_{\gota\to\infty}\big(\mu_Y\big(\b1_{X_\gota}\big)-d_Y^-(X) \big)=\mu_Y\big(\b1_{\widehat{X}}\big)-d_Y^-(X). \]
\end{prf}

\subsection{Comparisons} \label{ss:cmpr}

Let $\mu_\cals$ be the normalized Haar measure on $F_\cals$, i.e., the Lebesgue measure for the archimedean primes and the Haar measure for the non-archimedean ones (see \cite[Section~4.3.2]{dl} for more details). 

Fix a metric on $F_\cals^n$ (compatible with its natural topology) and consider the ball  $B(0,r)$ of radius $r$ for $r\in\R$. Let $X\subset D^n$, and consider
\[ 
\mu_X^\cals:=\sum_{x\in X}\delta_x\in\cald(F_\cals^n,\R). 
\]
For $X\subset D^n$ we can thus define
\begin{equation} \label{e:counting} c_X(r):=\mu^\cals_X\big(\b1_{B(0,r)}\big)=\vert B(0,r)\cap X \vert. \end{equation}

We say that $Y$ admits an {\em asymptotic density function} $f_Y$ on $F_\cals$
if
\[
c_Y(r)\approx \int_{B(0,r)}f_Y\mu_\cals
\]
in $\cald(F_\cals,\R)$, for $r\to+\infty$.

\begin{rmk}
Assume that $X\subseteq Y$, that $\xa$ is compact open, and that $Y$
admits an asymptotic density function $f_Y$. Under the hypotheses of
Proposition~\ref{p:poonenstoll}, we have $d_Y(X)=\mu_Y(\b1_{\xa})$.
Therefore the comparison with the asymptotic density function of $Y$ gives
\[
\mu_X^\cals\big(\b1_{B(0,r)}\big)
\approx
d_Y(X)\int_{B(0,r)} f_Y\,d\mu_\cals
=\mu_Y(\b1_{\xa})\int_{B(0,r)} f_Y\,d\mu_\cals.
\]

For example, let $D=\Z$, $S=\{\infty\}$, and $X=\calp\cap(a+b\Z)$,
with $(a,b)=1$.
By $(\mathrm{Dn}7)$ and Lemma~\ref{l:mucmpap},  the congruence condition $x\equiv a \bmod b$
contributes the  local coefficient
\[
d_\calp(X)
=
\mu_\calp(\b1_{\widehat X})
=
\frac{1}{|(\Z/b\Z)^*|}
=
\frac{1}{\varphi(b)}.
\]
Since the prime number theorem gives the asymptotic density function $f_\calp(t)=\dfrac{1}{2\log t}$,
we obtain
\[ \mu_X^S(\b1_{B(0,r)})\approx\frac{1}{\varphi(b)}\frac{1}{2\log t} \cdot r\ .\]
\end{rmk}

\subsubsection{Back to Bateman--Horn}

Let $\bff=(f_1,\dots,f_k)\in (D[x])^k$ be a vector of monic, separable and irreducible polynomials and denote by $\bff_\gotn$ its reduction modulo an ideal $\gotn$. For $a\in(D/\gotn)^k$, we set 
$$\calp_a(D):=\{p\in\calp(D)^k:  \pi_\gotn(p)=a\}.$$
We thus have
\begin{equation} \bff^{-1}(\calp(D)^k)=\bigsqcup_{a\in (D/\gotn)^k}\bff^{-1}(\calp_a(D))=Z\cup\bigsqcup_{b\in W}\bff^{-1}(\calp_{\bff_\gotn(b)}(D)), \end{equation}
where $W:=\bff_\gotn^{-1}\big(((D/\gotn)^*)^k\big)$ and
\[ Z:=\big\{ z\in \bff^{-1}(\calp(D)^k):\pi_\gotn(f_i(z))\notin(D/\gotn)^*\text{ for some }i\big\}. \]
If $D^*$ is finite then so is the set $Z\cap D$, because only finitely many $p\in\calp(D)$ reduce to a non-unit of $D/\gotn$ and each fiber $f_i^{-1}(p)$ is finite since $f_i$ is non-constant. As a consequence, if we assume that $D\cap\bff^{-1}(\calp(D)^k)$ is infinite, then most of it must be distributed among the sets $\bff^{-1}(\calp_{\bff_\gotn(b)}(D))$, with $b$ varying in $W$.

\begin{rmk}
We point out how the finite local factors appearing in Section~\ref{s:da-eg} are reflected in the asymptotic comparison. Since
\[
\mu_\calp=\mu_{\za^*}
=\bigotimes_p \frac{p}{p-1}\,\mu_{\Z_p}\big|_{\Z_p^*},
\]
we can compare this expansion with \eqref{e:cfrdrc}. For the linear polynomial
$f(x)=a+bx$, with $(a,b)=1$, the local coefficients agree for $p\nmid b$,
while for $p\mid b$ the factor $p/(p-1)$ is missing from
$\mu_{f^{-1}(\calp)}$. Hence the finite local correction is
\[
\prod_{p\mid b}\left(1-\frac1p\right)^{-1}
=\frac{b}{\varphi(b)}=C(f),
\]
which is indeed the Bateman--Horn constant for $f$. Thus the coefficient
predicted by the profinite local factors matches the classical density
coefficient in the linear case.
\end{rmk}

We now recall the Bateman--Horn conjecture over $\F_q[t]$ for a single polynomial, which was already stated in \cite[Conjecture 1.1]{sslandau} and in \cite[Conjecture 6.2]{ccg}.

In the next two conjectures we specialize to $D=\F_q[t]$.  We write
\[
\calp_q:=\{P\in\F_q[t]: P\text{ monic and irreducible}\}.
\]
We also set $|P|:=q^{\deg_t P}$.  All products
$\prod_P$ below are taken over $P\in\calp_q$.
\begin{cnj} \label{cnj:BHfinite}  Let $f(x)\in\F_q[t][x]$ be a non-constant irreducible separable monic in $x$ polynomial. Assume $|\{a\in\F_q[t]/(P):f(a)=0\}|<q^{\deg(P)}$ for every $P\in\calp_q$. Then we have
\[ 
\big\vert\{g\in\F_q[t] : |g|=X,\; g\;\text{monic},\;f(g)\in\calp(\F_q[t]) \}\big\vert\approx \frac{C_q(f)}{\deg_x(f)}\cdot\dfrac{X}{\log_q(X)} 
\]
as $X\to\infty$ through powers of $q$, and
\[
C_q(f):=\prod_P
\frac{1-|P|^{-1}\,
\big|\{\alpha\in\F_q[t]/(P): f(\alpha)\equiv0 \bmod P\}\big|}
{1-|P|^{-1}}.
\]
\end{cnj}

The importance of being separable is discussed in \cite{ccg}, where the authors also propose a conjecture for inseparable polynomials.\\\\
We might as well propose a more general formulation of Conjecture \ref{cnj:BHfinite} as follows:

\begin{cnj}\label{cnj:BHmult}
    Let $\bff(x)=(f_1(x),\dots, f_k(x))\in(\F_q[t][x])^k$ be a $k$-tuple of non-constant irreducible separable monic in $x$ pairwise distinct polynomials, and $f:=\prod_{i=1}^kf_i$. Assume $|\{a\in\F_q[t]/(P):f(a)=0\}|<|P|$ for every $P\in\calp_q$.  Then we have
\[ 
\big\vert\{g\in \F_q[t] : |g|=X,\; g\;\text{monic},\;f_i(g)\in\calp(\F_q[t])\text{ }\forall i\leq k \}\big\vert\approx \dfrac{C_q(\bff)}{\prod_{i=1}^{k}\deg_x(f_i)}\cdot\dfrac{X}{(\log_q(X))^k} 
\]
as $X\to\infty$ through powers of $q$, and
\[  
C_q(\bff):=\prod_P
\frac{1-|P|^{-1}\,
\big|\{\alpha\in\F_q[t]/(P): f(\alpha)\equiv0 \bmod P\}\big|}
{(1-|P|^{-1})^k}.
\]
\end{cnj}

In the following the counting functions $c_X$ are defined as in \eqref{e:counting}, with respect to the metric induced by the Euclidean absolute value (if $D=\Z$) or by the degree in $t$ (if $D=\F_q[t]$).

\begin{prop} \label{p:4.11} 
Let $D$ be either $\Z$ or $\F_q[t]$ and let $\bff\in (D[x])^k$ be a tuple of irreducible and separable polynomials. Suppose that one has 
\begin{equation} \label{e:thm4.11} 
c_{\bff^{-1}\big(\calp(D)^k\big)\cap(b+\gotn D)}\approx\frac{1}{|\bff_{\gotn}^{-1}(((D/\gotn)^*)^k)|}\,c_{\bff^{-1}(\calp(D)^k)\cap D} 
\end{equation}
for every $\gotn$ and for every $b\in \bff_\gotn^{-1}(((D/\gotn)^*)^k)$. Then either Conjecture \ref{cnj:BH} or Conjecture \ref{cnj:BHmult} implies Conjecture \ref{cnj:pBHD}, both with respect to $\bff$, for $D=\Z$ or $D=\F_q[t]$ respectively. 
\end{prop}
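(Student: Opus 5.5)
The plan is to show that the net $\widetilde\mu_{A,\gotn}$, with $A:=\bff^{-1}(\calp(D)^k)\cap D$, converges to $\mu_X$, where $X:=\bff^{-1}((\da^*)^k)$. The limit $\mu_X$ exists by Proposition~\ref{p:invimgelr}. I will use only finitely many congruence classes at a time and get the counting information from \eqref{e:thm4.11}. Fix a compact open $U\subseteq\da$. By Corollary~\ref{c:dl213}, $U$ is $\gotm$-saturated for some $\gotm$. By \eqref{e:prmusa}, for $\gotn\subseteq\gotm$,
\[
\widetilde\mu_{A,\gotn}(\b1_U)=\frac{|\hpi_\gotn(A)\cap\hpi_\gotn(U)|}{|\hpi_\gotn(A)|},
\]
while Lemma~\ref{l:mucmpap} and Proposition~\ref{p:prddstr2} give
\[
\mu_X(\b1_U)=\frac{|\hpi_\gotn(X)\cap\hpi_\gotn(U)|}{|\hpi_\gotn(X)|}
\]
for all such $\gotn$, with $\hpi_\gotn(X)=W_\gotn:=\bff_\gotn^{-1}(((D/\gotn)^*)^k)$, as computed in the proof of Proposition~\ref{p:measpol}. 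So it suffices to show that $\hpi_\gotn(A)$ and $W_\gotn$ form a close pair in the sense of Definition~\ref{d:clpr}; Proposition~\ref{p:muST} then finishes the proof.

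\textbf{Step 1: $W_\gotn\subseteq\hpi_\gotn(A)$.} Here Bateman--Horn enters. Conjecture~\ref{cnj:BH}, or Conjecture~\ref{cnj:BHmult}, gives $c_A(r)\to\infty$. Hypothesis \eqref{e:thm4.11} then says that every class $b\in W_\gotn$ satisfies $c_{A\cap(b+\gotn D)}(r)\sim c_A(r)/|W_\gotn|$, which is eventually positive. Hence every $b\in W_\gotn$ contains some element of $A$, so $W_\gotn\subseteq\hpi_\gotn(A)$. The classes of $\hpi_\gotn(A)$ outside $W_\gotn$ come from elements $z$ with some $f_i(z)$ a prime element that is not a unit modulo $\gotn$. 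These lie in $\bigcup_i f_{i,\gotn}^{-1}(P_\gotn)$, with $P_\gotn$ as in Theorem~\ref{t:clprim}.

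\textbf{Step 2: the extra classes are negligible.} The estimate in the proof of Proposition~\ref{p:measpol} gives
\[
\frac{\sum_i|f_{i,\gotn}^{-1}(P_\gotn)|}{|W_\gotn|}\to0 .
\]
It requires $\operatorname{char}\neq2$, or alternatively finiteness of $D^*$, which holds for both $\Z$ and $\F_q[t]$. This is exactly closeness of $\hpi_\gotn(A)$ and $W_\gotn$. Note that $\hpi_\gotn(A)=\hpi_\gotn(\widehat A)$ and $\widehat A\subseteq\bff^{-1}(\widehat{\calp(D)}^k)$, so Lemma~\ref{l:inclprssm} applied to the chain $\hpi_\gotn(X)\subseteq\hpi_\gotn(A)\subseteq\hpi_\gotn(Z)$ gives the claim directly. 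Convergence to $\mu_X$ then follows from Proposition~\ref{p:muST}. This proves \eqref{e:pBHD}.

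\textbf{Step 3: the direct-image statement \eqref{e:pBHDd}.} I would reduce it to \eqref{e:pBHD} through $\bff$. Its images modulo $\gotn$ are $\bff_\gotn(\hpi_\gotn(A))$. By Step 1 these contain $\bff_\gotn(W_\gotn)=\hpi_\gotn(\bff(\da)\cap(\da^*)^k)$, and the excess is again controlled by the same $P_\gotn$ estimate pushed forward. The existence of $\mu_{\bff(\da)\cap(\da^*)^k}$ needs a product argument as in Proposition~\ref{p:dernot0}.

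\textbf{Main obstacle.} The hardest point is the quantifier order in Step 1: \eqref{e:thm4.11} is an asymptotic statement for each fixed $\gotn$, and we need positivity of every class, uniformly. I will extract this as above, using only $c_A(r)\to\infty$ from Bateman--Horn, class by class and for each fixed $\gotn$. The remaining technical point is Step 3, where the images of $\bff$ are not openly Eulerian. There I expect to argue locally via Lemma~\ref{l:qe} that the boundary has measure zero.
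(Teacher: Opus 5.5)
Your Steps 1 and 2 are the paper's proof of this proposition. There too, $c_Y\to\infty$ together with \eqref{e:thm4.11} gives $\bff^{-1}((\da^*)^k)\subseteq\widehat Y\subseteq\bff^{-1}(\widehat{\calp(D)}^k)$; this is your inclusion $W_\gotn\subseteq\hpi_\gotn(A)$, read at every level. Closeness of the two outer sets is taken from the proof of Proposition~\ref{p:measpol}, and Lemma~\ref{l:inclprssm}, Proposition~\ref{p:invimgelr} and Proposition~\ref{p:muST} finish. This establishes \eqref{e:pBHD}. The paper's proof stops there and never addresses the direct-image statement \eqref{e:pBHDd}.

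Your Step 3 therefore goes beyond the paper, and as sketched it does not go through. Suppose ``the same estimate pushed forward'' means bounding the new excess by $|\hpi_\gotn(A)-W_\gotn|$ and reusing Step 2. Then it fails, because Step 2 only gives $|\hpi_\gotn(A)-W_\gotn|\le C\,\omega(\gotn)\theta^{m(\gotn)-1}|W_\gotn|$ with $\theta=\rho/\lambda>1/2$. The new denominator $|\bff_\gotn(W_\gotn)|$ can be exponentially smaller than $|W_\gotn|$. Take $D=\Z$, $f=x^2+1$, and $n$ a product of distinct primes $p\equiv1\bmod 4$. Then $|f_n(W_n)|/|W_n|=\prod_{p\mid n}\frac{p-1}{2(p-2)}$, which is essentially $2^{-\omega(n)}$ when the prime factors are large. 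The resulting bound behaves like $\omega(n)(2\theta)^{\omega(n)}$ and proves nothing.

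For $k=1$, count in the target instead. The excess lies in $P_\gotn$, and $|P_\gotn|\le\omega(\gotn)|D^*|$ (Remark~\ref{r:indcunt}). A fibre of $f$ modulo $\gotp^a$ has at most $\deg f\cdot q_\gotp^{a-1}$ elements, so $|f_\gotn(W_\gotn)|$ is at least a constant times $\prod(q_\gotp-\deg f)/\deg f$, the product running over the $\gotp\mid\gotn$ outside a fixed finite set. Existence of the limit measure then comes from Proposition~\ref{p:dernot0}.

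For $k\ge2$ neither ingredient is available. Proposition~\ref{p:dernot0} and Lemma~\ref{l:qe} concern maps into $\dap$. The local factors $\bff(\dap)\cap(\dap^*)^k$ are Haar-null in $\dap^k$, so Proposition~\ref{p:brd0} says nothing about them. Both the existence of $\mu_{\bff(\da)\cap(\da^*)^k}$ and the excess count require fibre estimates for $\bff_\gotn$. Injectivity of $\bff_\gotn$, as in the twin-prime case, would suffice.
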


\begin{prf}
Let us denote $Y:=D\cap \bff^{-1}(\calp(D)^k)$. Let $x\in \bff^{-1}((\da^*)^k)$, and let $U=\hpi_\gotn^{-1}(b)$ be a basic open neighbourhood of $x$. Since $\bff(x)\in (\da^*)^k$, we have $b\in \bff_\gotn^{-1}\big(((D/\gotn)^*)^k\big)$. By the Bateman--Horn conjecture (either \ref{cnj:BH} for $\Z$ or \ref{cnj:BHmult} for $\F_q[t]$), $c_Y$ is unbounded. Hence the hypothesis $\eqref{e:thm4.11}$ implies $Y\cap(b+\gotn D)\neq\emptyset$. Thus $U\cap Y\neq\emptyset$, and therefore $x\in\widehat Y$. This proves
\[
\bff^{-1}((\da^*)^k)\subseteq \widehat Y.
\]
On the other hand, by continuity of $\bff$, we have $\widehat Y\subseteq \bff^{-1}\big(\widehat{\calp(D)}^k\big)$. Hence we have a chain of inclusions
\[ \bff^{-1}((\da^*)^k)\subseteq\widehat Y\subseteq\bff^{-1}\big(\widehat{\calp(D)}^k\big). \]
By the proof of Proposition~\ref{p:measpol}, the two outer sets form a close pair. Therefore, by Lemma~\ref{l:inclprssm}, also $\bff^{-1}((\da^*)^k)$ and
$\widehat Y$ form a close pair. Since the procounting measure of $\bff^{-1}((\da^*)^k)$ exists by Proposition~\ref{p:invimgelr}, Proposition~\ref{p:muST} yields
\[ \mu_{\bff^{-1}((\da^*)^k)}=\mu_{\widehat Y}=\mu_Y \]
where the second equality holds by Lemma~\ref{l:muS-chs}. Therefore $\mu_{\bff^{-1}(\calp(D)^k)}=\mu_{\bff^{-1}((\da^*)^k)}$, which is Conjecture~\ref{cnj:pBHD} for $\bff$. 
\end{prf}

\begin{rmk} 
Having a reasonable analogue of Conjectures \ref{cnj:BH} and \ref{cnj:BHmult} for rings of $S$-integers in a global field, one could immediately extend Proposition~\ref{p:4.11} accordingly.  
\end{rmk} 

\begin{rmk} \label{r:den} Let $Y:=D\cap \bff^{-1}(\calp(D)^k)$. For $X\subseteq Y$, whenever the limit exists, we define its relative density by
\[ d_Y(X):=\lim_{r\to+\infty}\frac{c_{X\cap Y}(r)}{c_Y(r)}. \]
With this definition, the formal properties $(\mathrm{Dn}1),\ldots,
(\mathrm{Dn}5)$ are purely set-theoretic and follow straightforwardly.
We have to check the arithmetic axiom  $(\mathrm{Dn}7)$. In the present situation, this is precisely hypothesis \eqref{e:thm4.11}: for every non-zero ideal $\gotn$ and every $b\in \bff_\gotn^{-1}\big(((D/\gotn)^*)^k\big)$, one has
\[ c_{Y\cap(b+\gotn D)}\approx\frac{1}{|\bff_\gotn^{-1}(((D/\gotn)^*)^k)|}\,c_Y. \]
Equivalently, the relative density $d_Y(b+\gotn D)$
exists for every admissible class $b+\gotn D$, and satisfies
\[
d_Y(b+\gotn D)
=\frac{1}{|\bff_\gotn^{-1}((D/\gotn)^*)^k|}.
\]
Thus \eqref{e:thm4.11} consists of $(\mathrm{Dn}7)$ for the
relative density $d_Y$.
\end{rmk}

\begin{cor} \label{dirichlet}  
Conjecture \ref{cnj:BH} implies Conjecture \ref{cnj:pBHD} for $D=\Z$, $k=1$ and $f(x)=ax+b$ with $a$ and $b$ coprime. 
\end{cor}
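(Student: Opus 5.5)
We reduce to Proposition \ref{p:4.11}, which says that the classical Bateman--Horn conjecture implies Conjecture \ref{cnj:pBHD} as soon as the equidistribution hypothesis \eqref{e:thm4.11} holds. So it suffices to verify \eqref{e:thm4.11} for $D=\Z$, $k=1$ and $f(x)=ax+b$ with $(a,b)=1$, and then invoke that proposition. (Alternatively, one could bypass Conjecture \ref{cnj:BH} altogether, since in this linear case it is just the prime number theorem in arithmetic progressions.)

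The verification goes as follows. Put $Y:=\Z\cap f^{-1}(\calp)$, the set of $m$ with $am+b$ prime; we may take $a>0$. Fix $n\geq1$ and a class $c\in f_n^{-1}((\Z/n\Z)^*)$, so that $ac+b$ is a unit modulo $n$. The condition $m\equiv c \bmod n$ is equivalent to $am+b\equiv ac+b\bmod an$, together with the constraint that $am+b\equiv b\bmod a$.

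\begin{itemize}
\item[(1)] The map $m\mapsto am+b$ is affine with slope $a$, so counting $m\in Y\cap(c+n\Z)$ with $|m|\le r$ amounts to counting primes $p\le ar+b$ in one residue class modulo $an$. This class is coprime to $an$: it is coprime to $a$ because $(a,b)=1$, and to $n$ by the choice of $c$. Negative $m$ contribute only finitely many terms, since $am+b$ must be positive to be prime. Dirichlet's theorem in its quantitative form (the PNT for arithmetic progressions) then gives
\[
c_{Y\cap(c+n\Z)}(r)\approx\frac{1}{\varphi(an)}\,\frac{ar}{\log r}.
\]
\item[(2)] Summing over all admissible classes $c$, or applying the same argument with $n=1$, gives $c_Y(r)\approx \frac{1}{\varphi(a)}\frac{ar}{\log r}$.
\item[(3)] Taking the ratio, we need the number of admissible classes to satisfy
\[
|f_n^{-1}((\Z/n\Z)^*)|=\frac{\varphi(an)}{\varphi(a)}.
\]
To check this, count the residues $m \bmod n$ with $am+b$ coprime to $n$ via CRT prime by prime. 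If $p\mid n$ and $p\mid a$, then $am+b\equiv b$ is always a unit, since $p\nmid b$, so all $p^{v_p(n)}$ classes qualify. If $p\nmid a$, exactly one class modulo $p$ is excluded, leaving $p^{v_p(n)-1}(p-1)$ classes. The product of these local counts equals $\varphi(an)/\varphi(a)$ by multiplicativity of $\varphi$: a prime $p\mid a$ contributes the factor $p^{v_p(n)}$ to the ratio, while a prime $p\nmid a$ contributes $\varphi(p^{v_p(n)})$.
\end{itemize}

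With (1)--(3), \eqref{e:thm4.11} holds, and Proposition \ref{p:4.11} concludes.

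The main obstacle is the bookkeeping in step (3), namely the matching of the admissible class count with $\varphi(an)/\varphi(a)$, together with making sure the counting function $c$ (based on $|m|\le r$) is compatible with the one in Conjecture \ref{cnj:BH}, which counts over $[0,r]$. The asymptotic equivalence is insensitive to this difference, because only finitely many negative $m$ yield primes.
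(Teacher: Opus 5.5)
Your proposal takes the paper's route: the paper's proof is a one-line reduction to Proposition \ref{p:4.11}, citing the prime number theorem for arithmetic progressions for hypothesis \eqref{e:thm4.11}, and your steps (1)--(3), including the count $|f_n^{-1}((\Z/n\Z)^*)|=\varphi(an)/\varphi(a)$, simply carry out that verification explicitly. One correction: for $D=\Z$ the paper's $\calp(\Z)$ also contains $-\calp$, and $c$ counts $|m|\le r$, so your claim that negative $m$ contribute only finitely many terms is false in that setting; however, applying your argument to $ax-b$ with the class $-c$ shows that these terms equidistribute over the admissible classes in the same proportion, so \eqref{e:thm4.11} still holds.
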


\begin{prf}  
As hypothesis \eqref{e:thm4.11} holds for $f(x)=ax+b$ with $(a,b)=1$ (see, for instance, \cite[Theorem 6.1]{1bh} and the references therein), by Proposition \ref{p:4.11} we conclude. 
\end{prf}

In \cite[Theorem~1.2]{sslandau}, Sawin and Shusterman established the quadratic case of Conjecture \ref{cnj:BHfinite}  for $q$ large enough, where $q$ is a power of some prime $p$. We thus immediately obtain the following Corollary. 

\begin{cor}\label{c:ss} Let $q$ be a power of an odd prime $p$. Assuming \eqref{e:thm4.11} is satisfied, Conjecture \ref{cnj:pBHD} holds for $D=\F_q[t]$ in the following cases: \begin{enumerate}
\item $\bff=(f)$ if $q>2^{10}3^2e^2p^4$ and $f(x)=x^2+1$ is irreducible over $\F_q$; 
\item $\bff=(x,x+h)$ for any non-zero $h\in\F_q[t]$, if $q>685090p^2$.\end{enumerate}
\end{cor}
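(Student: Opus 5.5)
The plan is to obtain Corollary \ref{c:ss} directly from Proposition \ref{p:4.11} with $D=\F_q[t]$. That proposition says: if the equidistribution hypothesis \eqref{e:thm4.11} holds and Conjecture \ref{cnj:BHmult} (or Conjecture \ref{cnj:BHfinite} when $k=1$) holds for $\bff$, then Conjecture \ref{cnj:pBHD} holds for $\bff$. Since \eqref{e:thm4.11} is assumed, three things remain. First, check the standing hypotheses: $\car(F)\neq2$ and the Bunyakovsky-type condition $f(\da)\cap\da^*\neq\emptyset$. Second, check that the polynomials are irreducible and separable. Third, supply the classical asymptotic from the work of Sawin and Shusterman.

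For case (1), $f(x)=x^2+1$. It is separable in $x$ because $p$ is odd, so $f'=2x\neq0$. It is irreducible over $\F_q[t]$ because it is irreducible over $\F_q$: a root in $\F_q(t)$ would be integral over $\F_q[t]$, hence a polynomial in $t$, and comparing degrees forces it to be a constant in $\F_q$.

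The Bunyakovsky condition, via Lemma \ref{l:bunyakcnd}, asks that no prime $P$ contains $f(D)$. This holds because $f(0)=1$ is a unit.

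The classical input is \cite[Theorem~1.2]{sslandau}, which proves Conjecture \ref{cnj:BHfinite} for $x^2+1$ under the stated bound on $q$. In the proof of Proposition \ref{p:4.11}, this is used only to guarantee that the counting function $c_Y$ is unbounded, i.e.\ that $Y=D\cap f^{-1}(\calp(D))$ is infinite. Note that the monic normalisation in Conjecture \ref{cnj:BHfinite} costs nothing, since counting monic $g$ gives a lower bound for the count of all $g$.

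For case (2), $\bff=(x,x+h)$. Both components are linear, hence irreducible and separable, and they are non-associate since $h\neq0$. For the Bunyakovsky condition, $x(x+h)$ must not vanish modulo every $P$. Since $q\ge3$, each residue field $\F_q[t]/(P)$ has at least three elements, so some residue avoids both $0$ and $-h$.

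The classical input is the twin-prime theorem \cite[Theorem~1.1]{sstwin} for $q>685090p^2$, recalled in the remark after the twin prime computation. It is exactly Conjecture \ref{cnj:BHmult} for $\bff$, up to the monic normalisation, and it again gives that $c_Y$ is unbounded.

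With these checks in place, Proposition \ref{p:4.11} applies verbatim in both cases. The main point to handle carefully is that the Sawin--Shusterman statements count monic $g$ with a fixed degree, whereas $c_Y$ counts elements in balls. Since monic elements form a subset of $D$, the infinitude of $Y$ follows directly, and this is all the proof of Proposition \ref{p:4.11} uses beyond \eqref{e:thm4.11}. No other obstacle is expected.
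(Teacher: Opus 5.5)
Your proposal is correct and follows the paper's proof, which simply combines Proposition~\ref{p:4.11} with \cite[Theorem~1.2]{sslandau} for case (1) and \cite[Theorem~1.1]{sstwin} for case (2). Beyond the paper's one-line argument, you also spell out the hypothesis checks it leaves implicit: odd characteristic, separability and irreducibility, and the condition $f(\da)\cap\da^*\neq\emptyset$ via Lemma~\ref{l:bunyakcnd}. You further note that counting monic $g$ of fixed degree suffices, because the proof of Proposition~\ref{p:4.11} only uses that $c_Y$ is unbounded.
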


\begin{prf}
The two assertions follow respectively from
\cite[Theorem~1.2]{sslandau} and \cite[Theorem~1.1]{sstwin},
together with Proposition~\ref{p:4.11}.
\end{prf}

\subsubsection{From profinite to classical?}\label{sss:nonequiv}
We explain why the profinite distributional datum is weaker than the classical Bateman--Horn asymptotic. This is already visible for $D=\Z$ and
$f(x)=x$. 

Let $H\colon\R_{\ge0}\rightarrow\R_{\ge0}$ be increasing, with $H(x)\to\infty$. We construct
$S\subset\Z_{>0}$, disjoint from $\calp\cup\{\pm1\}$, such that
\[
c_S(x)\le H(x)
\]
for all sufficiently large $x$, but $\mu_S=\mu_{\za^*}=\mu_\calp$.
Enumerate all pairs $(n,a)$, with $n\ge1$ and  $a\in(\Z/n\Z)^*$ as $(n_j,a_j)_{j\ge1}$. We choose pairwise coprime positive non-prime
integers $s_j$, inductively, such that
$s_j\equiv a_j\bmod {n_j}$.
Fix $s_1,\dots,s_{j-1}$. Let $P_j$ be the set of
prime divisors of $s_1\cdots s_{j-1}$, and define
\[
M_j:=\prod_{\substack{p\in P_j\\p\nmid n_j}}p.
\]
By the Chinese remainder theorem, we may choose $u_j,v_j>1$, arbitrarily
large, such that
\[
u_j\equiv a_j\bmod {n_j}
\qquad 
u_j\equiv1\bmod {M_j}
\qquad
v_j\equiv1\bmod {n_jM_j}\ .
\]
Then $s_j:=u_jv_j$ is non-prime satisfies $s_j\equiv a_j\bmod {n_j}$, and is coprime to the
previous $s_i$'s.  Choosing $u_j,v_j$ sufficiently large, we may also assume
that $s_j$ is strictly increasing and that $H(x)\ge j$ for  $x\ge s_j$.
We set $S:=\{s_j:j\ge1\}$.  Then $S\cap(\calp\cup\{\pm1\})=\emptyset$, and, if $s_j\le x<s_{j+1}$, we have
\[
c_S(x)=j\le H(x).
\]
We now show that $S$ and $\za^*$ form a close pair. By construction, $S$ meets every unit class modulo every $n$. Hence $(\Z/n\Z)^*\subseteq \pi_n(S)$. If $s_j$ is not a unit modulo $n$, then some prime divisor of $n$ divides
$s_j$.  Since the $s_j$'s are pairwise coprime, each prime divisor of $n$ can occur for at most one $s_j$. Therefore
\[
\left|\pi_n(S)-(\Z/n\Z)^*\right|\le \omega(n).
\]
Thus
\[
\frac{\left|\pi_n(S)\triangle(\Z/n\Z)^*\right|}{\left|\pi_n(S)\cup(\Z/n\Z)^*\right|}
\le
\frac{\omega(n)}{\varphi(n)}\ .
\]
Indeed, the right-hand side tends to $0$ as $n\to0$. Hence $S$ and $\za^*$ form a close pair so by Proposition~\ref{p:muST} we have $\mu_S=\mu_{\za^*}$. Since $\mu_\calp=\mu_{\za^*}$ for $D=\Z$, we get $\mu_S=\mu_{\za^*}=\mu_\calp$.

Taking, for instance, $H(x)=\log\log x$ for $x$ sufficiently large, we obtain a set whose counting function is much smaller than the prime number theorem asymptotic, but whose procounting measure is the same as that of the primes.

\end{document}